\numberwithin{equation}{section}
\definecolor{darkgreen}{rgb}{0,0.45,0} 
\tikzstyle{braid}=[thick]
\tikzset{arr/.style={circle,draw,inner sep=0}}
\tikzset{unit/.style={circle,fill,inner sep=0.05cm}}
\tikzset{empty/.style={inner sep=0pt, minimum size=0pt}}
\definecolor{pinegreen}{rgb}{0.15,0.7,0.15}
 \definecolor{lightgrey}{rgb}{0.666666,0.666666,0.666666}
\renewcommand{\phi}{\varphi}
\newcommand{\cc}{\ensuremath{\mathcal C}\xspace}
\newcommand{\cm}{\ensuremath{\mathcal M}\xspace}
\newcommand{\cq}{\ensuremath{\mathcal Q}\xspace}
\newcommand{\mM}{\ensuremath{\mathbb M}\xspace}
\newcommand{\op}{^{\mathsf{op}}}
\newcommand{\rev}{^{\mathsf{rev}}}
\newcommand{\cop}{_{\mathsf{op}}}
\newcommand{\nto}{\nrightarrow}
  \newtheorem{proposition}[subsection]{Proposition}
  \newtheorem{lemma}[subsection]{Lemma}
  \newtheorem{corollary}[subsection]{Corollary}
  \newtheorem{theorem}[subsection]{Theorem}
  \theoremstyle{definition}
  \newtheorem{definition}[subsection]{Definition}
\theoremstyle{remark}
  \newtheorem{remark}[subsection]{Remark}
  \newcounter{c}
  \renewcommand{\[}{\setcounter{c}{1}$$}
  \newcommand{\etyk}[1]{\vspace{-7.4mm}$$\begin{equation}\Label{#1}
  \addtocounter{c}{1}}
  \renewcommand{\]}{\ifnum \value{c}=1 $$\else \end{equation}\fi}
\begin{document}

\title{Multiplier Hopf monoids}

\author{Gabriella B\"ohm} 
\address{Wigner Research Centre for Physics, H-1525 Budapest 114,
P.O.B.\ 49, Hungary}
\email{bohm.gabriella@wigner.mta.hu}
\author{Stephen Lack}
\address{Department of Mathematics, Macquarie University NSW 2109, Australia}
\email{steve.lack@mq.edu.au}

\date{Nov 2015, revised May 2016}
%\subjclass{}
  
\begin{abstract}
The notion of {\em multiplier Hopf monoid} in any braided monoidal category is
introduced as a multiplier bimonoid whose constituent fusion morphisms are
isomorphisms. In the category of vector spaces over the complex
numbers, Van Daele's definition of multiplier Hopf algebra is
re-obtained. It is shown that the key features of multiplier Hopf algebras
(over fields) remain valid in this more general context. Namely, for a
multiplier Hopf monoid $A$, the existence of a unique antipode is proved ---
in an appropriate, multiplier-valued sense --- which is shown to be a morphism
of multiplier bimonoids from a twisted version of $A$ to $A$. For a {\em
  regular} multiplier Hopf monoid (whose twisted versions are multiplier Hopf
monoids as well) the antipode is proved to factorize through a proper
automorphism of the object $A$. Under mild further assumptions, duals in the
base category are shown to lift to the monoidal categories of modules and of
comodules over a regular multiplier Hopf monoid. Finally, the so-called
Fundamental Theorem of Hopf modules is proved --- which states an equivalence 
between the base category and the category of Hopf modules over a multiplier
Hopf monoid. 
\end{abstract}
  
\maketitle

%%%%%%%%%%%%%%%%%%%%%%%%%%%%%%%  INTRODUCTION  %%%%%%%%%%%%%%%%%%%%%%%%%%%%%%

\section{Introduction}

For an arbitrary field $k$, the vector space of $k$-valued functions on a {\em
finite} group carries a natural {\em Hopf algebra} structure. For not
necessarily finite groups, one considers the vector space of functions of {\em
finite support}. Its structure is reminiscent of that of a Hopf algebra but
there are also some important differences; for example, the (pointwise)
multiplication does not admit a unit (the constant function with value one
does not have finite support). To give an algebraic description of this
situation, Van Daele introduced in \cite{VanDaele:multiplier_Hopf} the notion
of {\em multiplier Hopf algebra}.  

Classically, a Hopf algebra is defined as a particular type of {\em bialgebra} 
with some additional properties which are, in turn, equivalent to the
existence  of a so-called {\em antipode map}. Van Daele's definition of
multiplier Hopf algebra, however, has no version without antipode --- {\em
multiplier bialgebras} do not occur in his works. This notion was introduced
later in \cite{BohmGomezTorrecillasLopezCentella:wmba} (where it appeared
together with its `weak' generalization). Some different 
structures under the same name were discussed in 
\cite{JanVerc:Multiplier} and \cite{Tim:QG}. 

For many applications, it is necessary to consider bialgebras and Hopf
algebras (as well as various related structures) internal to a braided
monoidal category; the classical case of bialgebras or Hopf algebras over a
field corresponds to working in the symmetric monoidal category of vector
spaces over that field. 
    
In \cite{BohmLack:braided_mba} we initiated a program aiming to  develop
the theory of multiplier bialgebras and multiplier Hopf algebras in braided
monoidal categories. We only considered multiplier bialgebras in
\cite{BohmLack:braided_mba}; in this paper we turn to multiplier Hopf
algebras, still in the context of braided monoidal categories, although
sometimes we need to assume that the braiding is a symmetry. 
The resulting analysis applies to the classical case of vector 
spaces, and also to some other cases of interest: to the symmetric 
monoidal categories of group-graded vector spaces, modules over a 
commutative ring, complex Hilbert spaces and continuous linear 
maps, and complete bornological vector spaces; in the case of the 
last two examples we rely heavily on \cite{BohmGom-TorLack:wmbm}.   

First we study the algebraic properties of these multiplier Hopf 
algebras. Next we turn to duality in their monoidal categories of
modules and comodules.
In the particular category of vector spaces, modules over multiplier 
Hopf algebras appeared in \cite{DrabVDaeZhang} and comodules in  
\cite{VDaeZhang:CorpMultipI,KurVDaeZhang:CorpMultipII}. 
Finally we prove a Fundamental Theorem of Hopf Modules over
multiplier Hopf algebras, stating an equivalence between the category 
of Hopf modules and the base category. This generalizes a classical 
result on Hopf algebras over fields in \cite{LarrSwe:FTHM}. A study 
of (relative) Hopf modules for multiplier Hopf algebras over fields 
can be found in \cite{VDaeZhang:HopfGalMultiplier,DeCom:GalObAlgQGr}.

Working in a {\em closed} braided monoidal category, the {\em multiplier
monoid} of a (nice enough) semigroup can be defined: see
\cite{BohmLack:cat_of_mbm}. It generalizes the construction in 
\cite{Dauns:Multiplier} of multiplier algebras of non-unital algebras 
over fields. For a multiplier Hopf monoid $A$ in a closed
braided monoidal category, the antipode can be seen as a morphism from the 
opposite of $A$ to the multiplier monoid of $A$. In more general braided
monoidal categories such a simple interpretation is not available. Still, since
many of the expected properties of multiplier Hopf monoids can be proved at
this higher level of generality, we prefer to work in this setting whenever it
is possible.

{\bf Acknowledgement.} 
We gratefully acknowledge the financial support of the Hungarian Scientific
Research Fund OTKA (grant K108384), 
as well as the Australian Research Council Discovery Grant (DP130101969) and
an ARC Future Fellowship (FT110100385).   
The second named author is grateful for the warm hospitality of his hosts 
during visits to the Wigner Research Centre in Sept-Oct 2014 and Aug-Sept 2015.

%%%%%

\section{Background}

\subsection{The setting}\label{sect:assumptions}

In our earlier papers \cite{BohmLack:braided_mba} and
\cite{BohmLack:cat_of_mbm} we worked in a braided monoidal category \cc. In
this paper we shall often need to suppose that it is symmetric; this will
always be explicitly stated. The composite of morphisms $f\colon A\to B$ and
$g\colon B\to C$ will be denoted by $g.f\colon A\to C$. 
Identity morphisms will be denoted by $1$. 
The monoidal product will be denoted by juxtaposition, the monoidal unit by
$I$ and the braiding by $c$. For $n$ copies of the same object $A$, we also
use the power notation $AA\dots A=A^n$.
We denote by $\overline \cc$ the same monoidal category $\cc$ with the inverse
braiding $c^{-1}_{Y,X}\colon XY\to YX$; of course in the symmetric case this
has no effect, and so $\overline\cc$ is just \cc.  

As in \cite{BohmLack:cat_of_mbm}, we suppose given a class \cq of regular
epimorphisms in \cc which is closed under composition and monoidal product,
contains the isomorphisms, and is right-cancellative in the sense that if
$s\colon A\to B$ and $t.s\colon A\to C$ are in \cq, then so is $t\colon B\to
C$. Since each $q\in\cq$ is a regular epimorphism, it is the
coequalizer of some pair of  morphisms. Finally we suppose that
this pair may be chosen in such a way that the coequalizer is preserved by
taking the monoidal product with a fixed object. The main examples of interest
are where \cq consists of the split epimorphisms and where \cq consists of the
regular epimorphisms. In the latter case, we need to suppose that the regular
epimorphisms are closed under composition and under the monoidal product.

\subsection{Examples} \label{sect:ex_regepi}
We quote from \cite{BohmGom-TorLack:wmbm} some interesting examples 
of categories in which the assumptions of Section 
\ref{sect:assumptions} hold.  

Coequalizers exist in any abelian category. Since in this case any 
epimorphism is regular, the class of regular epimorphisms is closed 
under composition.  All coequalizers are preserved by the 
monoidal product whenever the category is also closed monoidal. This 
includes the example of the category of $G$-graded
{$k$}-modules, for any group $G$ and any commutative ring 
{$k$}; in particular, when $G$ is trivial this reduces to (ungraded)
{$k$}-modules, and when {$k$} is a field, to vector spaces. 

Although the closed symmetric monoidal category of complete 
bornological vector spaces \cite{Hogbe-Nlend,Meyer,Voigt} is not 
abelian, it still has coequalizers, and composites of regular 
epimorphisms are regular epimorphisms. 

The symmetric monoidal category of complex Hilbert spaces and 
continuous linear maps \cite{Kadison/Ringrose:1983} is neither closed 
nor abelian. 
It is nonetheless the case that it has coequalizers, and these are preserved by
the monoidal product, while regular epimorphisms are closed under
composition; the latter fact follows immediately from the fact that
all regular epimorphisms are split. 

\subsection{String diagrams}
\label{sect:strings}

For some proofs we find it convenient to use the string diagrams of
\cite{JoyalStreet:GTC}. These are composed down the page, with monoidal
products being taken from left to right. The braiding $c$ and its inverse are
represented by crossings, as below; the fact that they are mutually inverse
allows calculations as on the right. 
$$
\begin{tikzpicture} %pic1
\path
(0,1) node [empty,name=in1] {}
(0.5,1) node [empty,name=in2] {}
(0,0) node [empty,name=out1] {}
(0.5,0) node[empty,name=out2] {};
\path [braid, name path=bin1] (in1) to[out=270,in=90] (out2);
\draw[braid, name path=bin2] (in2) to[out=270,in=90] (out1);  
\fill[white,name intersections={of=bin1 and bin2}] (intersection-1) circle(0.1);
\draw[braid] (in1) to[out=270,in=90] (out2);
\end{tikzpicture}\qquad 
\begin{tikzpicture} %pic2
\path
(0,1) node [empty,name=in1] {}
(0.5,1) node [empty,name=in2] {}
(0,0) node [empty,name=out1] {}
(0.5,0) node[empty,name=out2] {};
\path [braid, name path=bin2] (in2) to[out=270,in=90] (out1);
\draw[braid, name path=bin1] (in1) to[out=270,in=90] (out2);  
\fill[white,name intersections={of=bin1 and bin2}] (intersection-1) circle(0.1);
\draw[braid] (in2) to[out=270,in=90] (out1);
\end{tikzpicture} \qquad
\begin{tikzpicture} %pic3
 \path 
(0.8,0.5) node[empty] {=}
(1.1,0.5) node[empty] {}
(0,1) node[empty,name=in1] {}
(0.5,1) node[empty,name=in2] {}
(0,0) node[empty, name=out1] {}
(0.5,0) node[empty, name=out2] {}
(0,0.5) node[empty, name=x1] {}
(0.5,0.5) node[empty, name=x2] {};
\path[braid, name path=bin1] (in1) to[out=270,in=90] (x2);
\draw[braid, name path=bin2] (in2) to[out=270,in=90] (x1);
\fill[white,name intersections={of=bin1 and bin2}] (intersection-1) circle(0.1);
\draw[braid] (in1) to[out=270,in=90] (x2);
\draw[braid, name path=bout2] (x1) to[out=270,in=90] (out2);
\path[braid, name path=bout1] (x2) to[out=270,in=90] (out1);
\fill[white,name intersections={of=bout1 and bout2}] (intersection-1) circle(0.1);
\draw[braid] (x2) to[out=270,in=90] (out1);
\end{tikzpicture}
\begin{tikzpicture} %pic4
 \path 
(0.8,0.5) node[empty] {=}
(1.1,0.5) node[empty] {}
(0,1) node[empty,name=in1] {}
(0.5,1) node[empty,name=in2] {}
(0,0) node[empty, name=out1] {}
(0.5,0) node[empty, name=out2] {};
\draw[braid] (in1) to[out=270,in=90] (out1);
\draw[braid] (in2) to[out=270,in=90] (out2);
\end{tikzpicture}
\begin{tikzpicture} %pic5
 \path 
(0,1) node[empty,name=in2] {}
(0.5,1) node[empty,name=in1] {}
(0,0) node[empty, name=out2] {}
(0.5,0) node[empty, name=out1] {}
(0,0.5) node[empty, name=x2] {}
(0.5,0.5) node[empty, name=x1] {};
\path[braid, name path=bin1] (in1) to[out=270,in=90] (x2);
\draw[braid, name path=bin2] (in2) to[out=270,in=90] (x1);
\fill[white,name intersections={of=bin1 and bin2}] (intersection-1) circle(0.1);
\draw[braid] (in1) to[out=270,in=90] (x2);
\draw[braid, name path=bout2] (x1) to[out=270,in=90] (out2);
\path[braid, name path=bout1] (x2) to[out=270,in=90] (out1);
\fill[white,name intersections={of=bout1 and bout2}] (intersection-1) circle(0.1);
\draw[braid] (x2) to[out=270,in=90] (out1);
\end{tikzpicture}
$$
If an object $V$ has a left dual $\overline{V}$, we denote the unit
$\eta\colon I\to V\overline{V}$ and counit $\epsilon\colon \overline{V}V\to I$
of the duality using a ``cap'' and ``cup'' respectively, as in the following
diagram; the triangle equations for the duality say that we may ``straighten''
strings, as in the equations to the right.  
$$
\begin{tikzpicture}
\path
(0.0,0.5) node[empty, name=out1] {}
(1.0,0.5) node[empty, name=out2] {}
(0.5,1) node[empty, name=y] {}
(0.5,0) node[empty] {$\eta$};
\draw[braid] (out1) to[out=90,in=180] (y) to[out=0,in=90] (out2);
\end{tikzpicture}\quad\quad 
\begin{tikzpicture}
\path
(0,1) node[empty, name=in1] {}
(1,1) node[empty, name=in2] {}
(0.5,0.5) node[empty, name=x] {}
(0.5,0) node[empty] {$\epsilon$};
\draw[braid] (in1) to[out=270,in=180] (x) to[out=0,in=270] (in2);
\end{tikzpicture}
\quad\quad
\begin{tikzpicture}
\path 
(1.25,0.5) node[empty] {=}
(0,1) node[empty, name=in1] {}
(1,0) node[empty, name=out1] {}
(0.25,0.25) node[empty, name=x] {}
(0.75,0.75) node[empty, name=y] {};
\draw[braid] (in1) to[out=270,in=180] (x) to[out=0,in=180] (y) to[out=0,in=90] (out1);
\draw[braid] (1.5,1) to (1.5,0);
\end{tikzpicture}\quad\quad
\begin{tikzpicture}
\path 
(1.25,0.5) node[empty] {=}
(0,0) node[empty, name=in1] {}
(1,1) node[empty, name=out1] {}
(0.25,0.75) node[empty, name=x] {}
(0.75,0.25) node[empty, name=y] {};
\draw[braid] (in1) to[out=90,in=180] (x) to[out=0,in=180] (y) to[out=0,in=270] (out1);
\draw[braid] (1.5,1) to (1.5,0);
\end{tikzpicture}
$$

When reasoning about an associative multiplication, it will often be
represented by a joining of two strings, as on the left below, while the
counit for some comultiplication --- or more generally, for a fusion
morphism --- will be represented as on the right. 

$$
\begin{tikzpicture} %pic1
\draw[braid] (.2,1) to[out=270,in=180] (.5,.3) to[out=0,in=270] (.8,1);
\draw[braid] (.5,.3) to (.5,0);
\end{tikzpicture} \quad\quad 
\begin{tikzpicture} %pic2
\path
(0,1) node [empty,name=in1] {}
(0,0) node [empty] {}
(0,0.5) node [unit,name=e] {};
\draw[braid] (in1) to (e);
\end{tikzpicture}
$$

\subsection{Semigroups with non-degenerate multiplication} \label{sect:nondeg}

By a {\em semigroup} in a \linebreak
monoidal category $\cc$ we mean a pair $(A,m)$ consisting of an object $A$ of
$\cc$ and a morphism $m:A^2\to A$ -- called the {\em  multiplication} -- which
obeys the associativity condition $m.m1=m.1m$. If $m$ possesses a {\em unit}
-- that is, a morphism $u:I\to A$ such that $m.u1=1=m.1u$ -- we say that
$(A,m,u)$ is a {\em monoid}. 

The multiplication -- or, alternatively, the semigroup $A$ -- is said to be {\em
non-degenerate} if for any objects $X,Y$ of $\cc$, both maps
\begin{eqnarray*}
&\cc(X,YA) \to \cc(XA,YA),\qquad
&f\mapsto \xymatrix{XA\ar[r]^-{f1}& YA^2\ar[r]^-{1m} &YA}\quad \textrm{and}\\
&\cc(X,AY) \to \cc(AX,AY),\qquad
&g\mapsto \xymatrix{AX\ar[r]^-{1g}& A^2Y\ar[r]^-{m1} &AY}
\end{eqnarray*}
are injective. Evidently, any monoid is non-degenerate.  
 
\subsection{Examples} \label{sect:ex_non-deg} 
Our requirement in Section \ref{sect:nondeg} that these maps be 
injective for {\em any} object $Y$ is quite strong and can often
be avoided. A more nuanced analysis of non-degeneracy is carried out 
in the forthcoming \cite{BohmGom-TorLack:wmbm}. It is shown there 
that for most purposes it is enough if the injectivity condition in 
Section \ref{sect:nondeg} holds for any object $X$, and any
$Y$ belonging to a given class. Possible choices of this class in 
the examples of Section \ref{sect:ex_regepi} are also investigated in 
\cite{BohmGom-TorLack:wmbm}, and we now briefly recall the results of 
that investigation. 
  
In the category of (group graded) vector spaces over a given field, 
and also in the category of complex Hilbert spaces and 
continuous linear maps \cite{Kadison/Ringrose:1983}, the condition of Section
\ref{sect:nondeg} holds for any object $Y$ whenever it holds for $Y$ 
being the base field.

In the category of modules over a commutative ring $k$, the condition 
of Section \ref{sect:nondeg} holds for all locally projective 
\cite{ZimHui} modules $Y$ provided that it holds for the regular 
module $Y=k$. 

In the category of complete bornological vector spaces 
\cite{Hogbe-Nlend,Meyer,Voigt}, the condition of Section \ref{sect:nondeg} 
holds for all complete bornological vector spaces $Y$ with the 
so-called approximation property if it holds for $Y$ taken to be the 
base field.  

\subsection{\mM-morphisms} \label{sect:mM}

If $(B,m)$ is a non-degenerate semigroup and $A$ is any object, we define an
{\em \mM-morphism} from $A$ to $B$ to consist of morphisms $f_1\colon AB\to B$
and $f_2\colon BA\to B$ in \cc for which the first diagram below 
\begin{equation} \label{eq:mM}
\xymatrix{
BAB \ar[r]^{1f_1} \ar[d]_{f_21} & 
B^2 \ar[d]^m \\ 
B^2 \ar[r]_m & 
B }\quad
\xymatrix{
AB^2 \ar[r]^{1m} \ar[d]_{f_11} & 
AB \ar[d]^{f_1} \\ 
B^2 \ar[r]_m & 
B} \quad
\xymatrix{
B^2A \ar[r]^{1f_2} \ar[d]_{m1} & 
B^2 \ar[d]^m \\ 
BA \ar[r]_{f_2} & 
B }
\end{equation}
commutes; it follows by \cite[Lemma~2.8]{BohmLack:cat_of_mbm} that the
other two diagrams also commute.  
Given such an \mM-morphism, we write $f\colon A\nto B$, and call $f_1$ and
$f_2$ the {\em components} of $f$. Using the non-degeneracy of $B$, 
\mM-morphisms $A\nto B$ are seen to be equal if either their first components
coincide or their second components coincide.

If the braided monoidal category \cc is {\em closed} (meaning that for
any object $X$ the functor $X(-):\cc\to \cc$ is left adjoint) and it {\em has
pullbacks}, then one can define the {\em multiplier monoid} $\mM(B)$ of any
non-degenerate semigroup $B$. This is a universal object characterized by the
property that $\mM$-morphisms $A\nto B$ correspond bijectively to morphisms $A
\to\mM(B)$ in \cc: see \cite[Section 2.9]{BohmLack:cat_of_mbm}. For more
general braided monoidal categories \cc, no interpretation of $\mM$-morphisms
as morphisms in \cc is possible. 

An \mM-morphism is said to be {\em dense} if its components lie in \cq. If the
domain $A$ is also a semigroup, the \mM-morphism $f\colon A\nto B$ is {\em
  multiplicative} if either of the diagrams  
\begin{equation} \label{eq:multiplicative}
\xymatrix{
A^2B \ar[r]^{1f_1} \ar[d]_{m1} & 
AB \ar[d]^{f_1} \\ 
AB \ar[r]_{f_1} & 
B}\quad
\xymatrix{
BA^2 \ar[r]^{1m} \ar[d]_{f_2 1} & 
BA \ar[d]^{f_2} \\ 
BA \ar[r]_{f_2} & 
B }
\end{equation}
commutes; it then follows by \cite[Remark~2.7]{BohmLack:cat_of_mbm} that
both do. (If the multiplier monoid $\mM(B)$ exists, then this is
equivalent to the multiplicativity of the corresponding morphism $A\to
\mM(B)$ of \cc in the literal sense.) 

If $f\colon A\nto B$ is an $\mM$-morphism and $g\colon B\nto C$ a dense
multiplicative \mM-morphism, there is an induced \mM-morphism $g\bullet
f\colon A\nto C$ with components determined by commutativity of the following
diagrams 
\begin{equation}\label{eq:bullet-composition}
\xymatrix{
ABC \ar[r]^{1g_1} \ar[d]_{f_1 1} & AC \ar[d]^{(g\bullet f)_1} \\ BC
\ar[r]_{g_1} & C }\quad 
\xymatrix{
CA \ar[d]_{(g\bullet f)_2} & CBA \ar[l]_{g_2 1} \ar[d]^{1f_2} \\ C & CB \ar[l]^{g_2} }
\end{equation}
by virtue of the fact that $1g_1$ and $g_21$ are (regular) epimorphisms. If
$f$ is also dense and multiplicative, then so is $g\bullet f$: see
\cite[Proposition 3.2]{BohmLack:cat_of_mbm}. 

If $f$, $g$, and $h$ are composable \mM-morphisms, with $g$ and $h$ dense and
multiplicative, then the associativity condition $(h\bullet g)\bullet
f=h\bullet (g\bullet f)$ holds: see \cite[Proposition
3.3]{BohmLack:cat_of_mbm}. If the semigroup $B$ is not just non-degenerate
but has multiplication in \cq, then there is a dense multiplicative
\mM-morphism $i\colon B\nto B$ with both components equal to $m$. By
\cite[Proposition 3.4]{BohmLack:cat_of_mbm}, this acts as a two-sided identity
for the composition $\bullet$. 

In particular, there is a category \cm whose objects are the non-degenerate
semigroups with multiplication in \cq, and whose morphisms are the dense
multiplicative \mM-morphisms. 

If $f\colon A\nto B$ is an \mM-morphism and $z\colon Z\to A$ a morphism in
\cc, then there is a composite $f\circ z\colon Z\nto B$ with components  
$$\xymatrix{
ZB \ar[r]^{z1} & AB \ar[r]^{f_1} & B & BA \ar[l]_{f_2} & BZ \ar[l]_{1z} }$$
and now if $y\colon Y\to Z$ is a \cc-morphism and $g\colon B\nto C$ a dense
multiplicative \mM-morphism then also $(g\bullet f)\circ z=g\bullet(f\circ z)$
and $(f\circ z)\circ y=f\circ (z.y)$, and finally $f\circ 1=f$. 

For a non-degenerate semigroup $A$, a morphism $z\colon Z\to A$
in \cc induces an \mM-morphism $z^\#\colon Z\nto A$ with components  
$$\xymatrix{
ZA \ar[r]^{z1} & A^2 \ar[r]^{m} & A & A^2\ar[l]_{m} & AZ \ar[l]_{1z}}$$
and $f\circ z= f\bullet z^\#$ for any  multiplicative and dense
\mM-morphism $f\colon A\nto B$. 
 Consequently, for \cc-morphisms $\xymatrix@C=10pt{X \ar[r]^-x & Z
\ar[r]^-z &A}$, where $A$ is a non-degenerate semigroup,
$z^\#\circ x=(z.x)^\#$. Similarly, if $f\colon A\nto B$ is an \mM-morphism
and $g\colon B\to C$ a multiplicative isomorphism in \cc, then
$(g^\#\bullet f)_1=g.f_1.1g^{-1}$. 

\begin{remark}\label{rem:f_non-degenerate}
Let $\cc$ be a braided monoidal category. Consider an arbitrary 
semigroup  $A$ in $\cc$ and a semigroup $B$ whose multiplication is
non-degenerate. Let $f\colon A\nto B$ be an \mM-morphism, and suppose
that $f_2\colon BA\to B$ is an epimorphism which is preserved by 
taking the monoidal product with any object. Then two morphisms $g'$ and
$g'':X \to BY$ (for any objects $X$ and $Y$ of $\cc$) are equal if and
only if  
\begin{equation}\label{eq:s.f}
\xymatrix{AX\ar[r]^-{1g} & 
ABY\ar[r]^-{f_11}&
 BY}
\end{equation}
does not depend on $g\in \{g',g''\}$. Indeed, if \eqref{eq:s.f} does not depend
on $g$, then since 
$$
\xymatrix{
 BAX\ar[r]^-{11g} \ar[d]_-{f_21} &
 BABY \ar[r]^-{1f_11} \ar[d]_-{f_211} 
\ar@{}[rd]|-{~\eqref{eq:mM}} &
 B^2Y \ar[d]^-{m1} \\
 BX \ar[r]_-{1g} &
 B^2Y \ar[r]_-{m1} &
 BY}
$$
commutes, since $f_21$ is an epimorphism and since $m\colon B^2\to B$ is
non-degenerate, it follows that $g'=g''$.

Symmetrically, if $f_1$ is an epimorphism which is preserved under 
taking the monoidal product with any object, then two morphisms $h'$ and
$h'':X \to YB$ are equal if and only if  
$$
\xymatrix{XA\ar[r]^-{h1} & 
 YBA\ar[r]^-{1f_2}& 
 YB}
$$
does not depend on $h\in \{h',h''\}$. 
\end{remark}

Monoid morphisms in the usual sense are the same as multiplicative
$\mM$-morphisms whose components are (necessarily split) epimorphisms:

\begin{lemma} \label{lem:unit}
Let $A$ and $B$ be monoids and $f:A \nto B$ be a multiplicative
$\mM$-morphism. Then there is a unique multiplicative morphism
$h\colon A\to B$ such
that $f=h^\#$. Moreover, the following conditions are equivalent to each
other. 
\begin{itemize}
\item[{(a)}] The component $f_1\colon AB \to B$ is an epimorphism.
\item[{(b)}] The component $f_1\colon AB \to B$ is a split epimorphism.
\item[{(c)}] The component $f_2\colon BA \to B$ is an epimorphism.
\item[{(d)}] The component $f_2\colon BA \to B$ is a split epimorphism.
\item[{(e)}] $h$ is unital; that is, a morphism of monoids.
\end{itemize}
\end{lemma}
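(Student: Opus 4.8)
The plan is to produce the morphism $h$ explicitly and then read off everything else from it. Write $m_A,m_B$ for the two multiplications and $u_A\colon I\to A$, $u_B\colon I\to B$ for the two units. I would define $h:=f_1.(1u_B)\colon A\to B$ and, as an auxiliary, $h':=f_2.(u_B1)\colon A\to B$. A short diagram chase shows $m_B.(h1)=f_1$: expand $h$, use the middle square of \eqref{eq:mM} to rewrite $m_B.(f_11)$ as $f_1.(1m_B)$, and then collapse $m_B.(u_B1)=1$ by the unit law. Symmetrically, the right-hand square of \eqref{eq:mM} together with $m_B.(1u_B)=1$ gives $m_B.(1h')=f_2$. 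To identify $h$ with $h'$ I would compute $m_B.(h'1)$, this time invoking the first (compatibility) square of \eqref{eq:mM}, namely $m_B.(f_21)=m_B.(1f_1)$, and again the unit law, to obtain $m_B.(h'1)=f_1=m_B.(h1)$. Since $B$ is non-degenerate, the operator $g\mapsto m_B.(g1)$ is injective, so $h=h'$; hence the first and second components of $h^\#$ are exactly $f_1$ and $f_2$, i.e.\ $f=h^\#$, and the same injectivity gives uniqueness of $h$.

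To see that $h$ is multiplicative, i.e.\ $h.m_A=m_B.(hh)$, I would again apply the injective operator $g\mapsto m_B.(g1)$, now on $\cc(A^2,B)$, and compare the two sides on $A^2B$. Using $m_B.(h1)=f_1$ and associativity of $m_B$, the left-hand side becomes $f_1.(m_A1)$ while the right-hand side becomes $f_1.(1f_1)$; these agree precisely by the multiplicativity \eqref{eq:multiplicative} of $f$, and non-degeneracy then yields $h.m_A=m_B.(hh)$.

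For the list of equivalent conditions I would first dispatch the implications from (e). If $h$ is unital, $h.u_A=u_B$, then $u_A1\colon B\to AB$ is a section of $f_1=m_B.(h1)$ and $1u_A\colon B\to BA$ is a section of $f_2=m_B.(1h)$, each computation reducing to a unit law; this gives (e)$\Rightarrow$(b) and (e)$\Rightarrow$(d). The implications (b)$\Rightarrow$(a) and (d)$\Rightarrow$(c) are trivial. What remains, and what is the heart of the matter, is (a)$\Rightarrow$(e) and, symmetrically, (c)$\Rightarrow$(e).

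For (a)$\Rightarrow$(e), suppose $f_1$ is epi and put $w:=h.u_A\colon I\to B$, with $\ell_w:=m_B.(w1)\colon B\to B$ left multiplication by $w$. Precomposing the multiplicativity identity $h.m_A=m_B.(hh)$ with $u_A1$ and using the left unit law of $A$ gives $h=\ell_w.h$. I would then show $\ell_w.f_1=f_1$: expanding $f_1=m_B.(h1)$ and using associativity of $m_B$ turns $\ell_w.f_1$ into $m_B.((\ell_w.h)1)$, which collapses to $m_B.(h1)=f_1$ by the previous identity. Cancelling the epimorphism $f_1$ yields $\ell_w=1_B=\ell_{u_B}$, and a final appeal to non-degeneracy (injectivity of $g\mapsto m_B.(g1)$ on $\cc(I,B)$) forces $w=u_B$, i.e.\ $h$ is unital; the implication (c)$\Rightarrow$(e) is the mirror image, with $f_2$, right multiplication $\rho_w=m_B.(1w)$, and the right unit law of $A$. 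I expect this extraction of unitality from bare epi-ness --- which has to braid together multiplicativity, associativity and non-degeneracy, and only works because the section is forced --- to be the main obstacle; the rest is bookkeeping with the \mM-morphism axioms and the unit laws.
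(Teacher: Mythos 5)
Your proposal is correct and follows essentially the same route as the paper's proof: the same definition $h=f_1.1u$, the same use of \eqref{eq:mM} and \eqref{eq:multiplicative} together with the unit laws to establish $f=h^\#$ and the multiplicativity of $h$, and the same strategy of cancelling the epimorphism $f_1$ (resp.\ $f_2$) to prove (a)$\Rightarrow$(e) (resp.\ (c)$\Rightarrow$(e)). The only deviations are cosmetic: you invoke non-degeneracy of $B$ in places where the paper exploits the unit directly (uniqueness of $h$, the identification $h=h'$, multiplicativity), and in (a)$\Rightarrow$(e) you cancel $f_1$ against left multiplication by $h.u_A$ rather than first extracting $f_1.u1=1$ as the paper does.
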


\begin{proof}
Denote by $u$ the units of the monoids $A$ and $B$, and put
$$
h:=\xymatrix{
A \ar[r]^-{1u} &
AB \ar[r]^-{f_1} &
B}.
$$
Commutativity of the diagrams
$$
\xymatrix{
AB \ar[r]_-{1u1} \ar@{=}@/_1.2pc/[rd] \ar@/^1.2pc/[rr]^-{h1} &
AB^2 \ar[r]_-{f_11} \ar[d]_-{1m} \ar@{}[rd]|-{\eqref{eq:mM}} &
B^2 \ar[d]^-m \\
& AB \ar[r]_-{f_1} &
B}\qquad
\xymatrix{
BA \ar[r]_-{11u} \ar[d]_-{f_2} \ar@/^1.2pc/[rr]^-{1h} &
BAB \ar[r]_-{1f_1} \ar[d]_-{f_21} \ar@{}[rd]|-{\eqref{eq:mM}} &
B^2 \ar[d]^-m \\
B \ar[r]^-{1u} \ar@{=}@/_1.2pc/[rr] &
B^2 \ar[r]^-m &
B}
$$
proves $f=h^\#$. Since the multiplication of $B$ has a unit, $h$ is clearly
unique with this property. By symmetry it admits an equal expression
$h=f_2.u1$. With the help of the first diagram above, also
$$
\xymatrix{
A^2 \ar[r]_-{11u} \ar[d]_-m \ar@/^1.2pc/[rr]^-{1h} &
A^2 B \ar[r]_-{1f_1} \ar[d]_-{m1} \ar@{}[rrd]|-{\eqref{eq:multiplicative}} &
AB \ar[r]^-{h1} \ar[rd]^-{f_1} &
B^2 \ar[d]^-m \\
A \ar[r]^-{1u}
\ar@/_1.4pc/[rrr]_-h &
AB \ar[rr]^-{f_1} &&
B}
$$
is seen to commute, proving the multiplicativity of $h$. 

Concerning the further assertions, (b)$\Rightarrow$(a) and (d)$\Rightarrow$(c)
are trivial. If (e) holds then the first diagram of 
$$
\xymatrix{
B \ar[d]_-{u1} \ar[rd]^-{u1}_-{(e)} \ar@{=}@/^1.4pc/[rrd] \\
AB \ar[r]^-{h1} \ar@/_1.2pc/[rr]_-{f_1} &
B^2 \ar[r]^-m &
B}
\qquad
\xymatrix{
AB \ar[r]_-{u11} \ar[d]_-{f_1} \ar@{=}@/^1.2pc/[rr] &
A^2B \ar[r]_-{m1} \ar[d]_-{1f_1} \ar@{}[rd]|-{\eqref{eq:multiplicative}}&
AB \ar[d]^-{f_1} \\
B \ar[r]_-{u1} &
AB \ar[r]_-{f_1} &
B}
\qquad
\xymatrix{
I \ar[rr]^-u \ar[d]_-u &&
B \ar@{=}[d] \ar[ld]_-{u1} \\
A \ar[r]^-{1u} \ar@/_1.2pc/[rr]_-{h} &
AB \ar[r]^-{f_1} &
B}
$$
commutes, proving (b). A symmetric proof yields (e)$\Rightarrow$(d). Finally,
if (a) holds then commutativity of the second diagram above implies that its
bottom row is the identity morphism (so that in particular (b)
holds). It then follows that the last diagram commutes, proving (e). The
implication (c)$\Rightarrow$(e) follows symmetrically. 
\end{proof}

\subsection{Monoidal structure} \label{sect:M-monoidal}

If $(A,m)$ and $(B,m)$ are non-degenerate semigroups, then so is their 
monoidal product $(AB,mm.1c1)$: see \cite[Proposition
  4.1]{BohmLack:cat_of_mbm}. If $A$ and $B$ have multiplication in \cq, 
then so does $AB$.

If $f\colon A\nto C$ and $g\colon B\nto D$ are \mM-morphisms, then there is an
\mM-morphism $fg\colon AB\to CD$ with components  
$$\xymatrix{
ABCD \ar[r]^-{1c1} & 
ACBD \ar[r]^-{f_1g_1} & 
CD & 
CADB \ar[l]_-{f_2g_2} & 
CDAB \ar[l]_-{1c1} }$$
and this will be dense and multiplicative if $f$ and $g$ are so: see 
\cite[Proposition 4.2]{BohmLack:cat_of_mbm}. This operation is
functorial with respect to the various compositions defined in Section
\ref{sect:mM} (see also \cite[Proposition 4.3]{BohmLack:cat_of_mbm}). 

For any object $X$ and non-degenerate semigroups $A$ and $B$ in a braided
monoidal category $\cc$, consider an \mM-morphism $f\colon X \nto AB$. By
the computation 
$$
\begin{tikzpicture}[scale=.9] % step1
\path (1.25,1) node[arr,name=f1] {$f_1$};
\draw[braid] (.5,2) to[out=270,in=135] (f1);
\draw[braid] (1,2) to[out=270,in=180] (1.25,1.5) to[out=0,in=270] (1.5,2);
\draw[braid] (1.25,1.5) to[out=270,in=90] (f1);
\draw[braid] (2,2) to [out=270,in=45] (f1);
\path[braid,name path=u] (f1) to[out=315,in=180] (2.5,.5) to [out=0,in=270]
(3,2); 
\draw[braid] (2.5,.5) to[out=270,in=90] (2.5,0);
\draw[braid,name path=d] (f1) to[out=225,in=180] (1.5,.5) to [out=0,in=270]
(2.5,2); 
\draw[braid] (1.5,.5) to[out=270,in=90] (1.5,0);
\fill[white,name intersections={of=u and d}] (intersection-1) circle(0.1);
\draw[braid] (f1) to[out=315,in=180] (2.5,.5) to [out=0,in=270] (3,2); 
\draw (3.5,1.1) node {$\stackrel {\eqref{eq:mM}}=$};
\end{tikzpicture}
\begin{tikzpicture}[scale=.9] % step2
\path (1.5,.5) node[arr,name=f1] {$f_1$};
\draw[braid] (.5,2) to[out=270,in=135] (f1);
\draw[braid] (1,2) to[out=270,in=180] (1.25,1.5) to[out=0,in=270] (1.5,2);
\draw[braid,name path=d] (1.25,1.5) to[out=270,in=180] (1.5,1) to[out=0,in=270] (2.5,2);
\draw[braid] (1.5,1) to[out=270,in=90] (f1);
\path[braid,name path=u] (2,2) to[out=270,in=180] (2.5,1) to [out=0,in=270]
(3,2); 
\draw[braid] (2.5,1) to[out=270,in=45] (f1);
\fill[white,name intersections={of=u and d}] (intersection-1) circle(0.1);
\draw[braid] (2,2) to[out=270,in=180] (2.5,1) to [out=0,in=270] (3,2); 
\draw[braid] (f1) to[out=225,in=90] (1,0); 
\draw[braid] (f1) to[out=315,in=90] (2,0); 
\draw (3.5,1.1) node {$\stackrel{\textrm{(ass)}}=$};
\end{tikzpicture}
\begin{tikzpicture}[scale=.9] % step3
\path (1.25,.5) node[arr,name=f1] {$f_1$};
\draw[braid] (.5,2) to[out=270,in=135] (f1);
\draw[braid,name path=d2] (1,2) to[out=270,in=180] (1.25,1) to[out=0,in=270] (2,1.5);
\draw[braid,name path=d1] (1.5,2) to[out=270,in=180] (2,1.5) to[out=0,in=270] (2.5,2);
\draw[braid] (1.25,1) to[out=270,in=90] (f1);
\path[braid,name path=u] (2,2) to[out=270,in=90] (1.5,1.5) to[out=270,in=180]
(2.5,1) to [out=0,in=270] (3,2); 
\draw[braid] (2.5,1) to[out=270,in=45] (f1);
\fill[white,name intersections={of=u and d1}] (intersection-1) circle(0.1);
\fill[white,name intersections={of=u and d2}] (intersection-1) circle(0.1);
\draw[braid] (2,2) to[out=270,in=90] (1.5,1.5) to[out=270,in=180]
(2.5,1) to [out=0,in=270] (3,2); 
\draw[braid] (f1) to[out=225,in=90] (.75,0); 
\draw[braid] (f1) to[out=315,in=90] (1.75,0); 
\draw (3.5,1.1) node {$\stackrel {\eqref{eq:mM}}=$};
\end{tikzpicture}
\begin{tikzpicture}[scale=.9] % step4
\path (1,1.3) node[arr,name=f1] {$f_1$};
\draw[braid] (.5,2) to[out=270,in=135] (f1);
\draw[braid] (1,2) to[out=270,in=90] (f1);
\draw[braid,name path=d1] (1.5,2) to[out=270,in=180] (2,1.3) to[out=0,in=270] (2.5,2);
\path[braid,name path=u1] (2,2) to [out=270,in=45] (f1);
\path[braid,name path=u2] (f1) to[out=315,in=180] (2.25,.5) to [out=0,in=270]
(3,2); 
\draw[braid] (2.25,.5) to[out=270,in=90] (2.25,0);
\draw[braid,name path=d2] (f1) to[out=225,in=180] (1,.5) to [out=0,in=270]
(2,1.3); 
\draw[braid] (1,.5) to[out=270,in=90] (1,0);
\fill[white,name intersections={of=u1 and d1}] (intersection-1) circle(0.1);
\fill[white,name intersections={of=u2 and d2}] (intersection-1) circle(0.1);
\draw[braid] (2,2) to [out=270,in=45] (f1);
\draw[braid] (f1) to[out=315,in=180] (2.25,.5) to [out=0,in=270] (3,2); 
\draw (3.5,1.1) node {$\stackrel{\textrm{(ass)}}=$};
\end{tikzpicture}
\begin{tikzpicture}[scale=.9] % step5
\path (1,1.5) node[arr,name=f1] {$f_1$};
\draw[braid] (.5,2) to[out=270,in=135] (f1);
\draw[braid] (1,2) to[out=270,in=90] (f1);
\draw[braid,name path=d1] (1.5,2) to[out=270,in=90] (2,1.5) to[out=270,in=0] (1,1) to[out=180,in=225] (f1);
\path[braid,name path=u1] (2,2) to [out=270,in=45] (f1);
\path[braid,name path=u2] (f1) to[out=315,in=180] (2.25,.5) to [out=0,in=270]
(3,2); 
\draw[braid] (2.25,.5) to[out=270,in=90] (2.25,0);
\draw[braid,name path=d2] (1,1) to[out=270,in=180] (1.25,.5) to [out=0,in=270]
(2.5,2); 
\draw[braid] (1.25,.5) to[out=270,in=90] (1.25,0);
\fill[white,name intersections={of=u1 and d1}] (intersection-1) circle(0.1);
\fill[white,name intersections={of=u2 and d1}] (intersection-1) circle(0.1);
\fill[white,name intersections={of=u2 and d2}] (intersection-1) circle(0.1);
\draw[braid] (2,2) to [out=270,in=45] (f1);
\draw[braid] (f1) to[out=315,in=180] (2.25,.5) to [out=0,in=270] (3,2); 
\end{tikzpicture}
$$
and non-degeneracy of $AB$, we conclude that the first component of $f$ 
renders commutative the first diagram of 
\begin{equation}\label{eq:lin_one_leg}
\xymatrix{
XA^2B \ar[r]^-{11c^{-1}} \ar[d]_-{1m1} &
XABA \ar[r]^-{f_11} &
ABA \ar[r]^-{1c} &
A^2B \ar[d]^-{m1} \\
XAB \ar[rrr]_-{f_1} &&&
AB}
\quad 
\xymatrix{
XAB^2 \ar[r]^-{f_11} \ar[d]_-{11m} &
AB^2 \ar[d]^-{1m} \\
XAB \ar[r]_-{f_1} &
AB.}
\end{equation}
The second diagram commutes by similar considerations and there are analogous
identities for the second component.  

\subsection{Opposites} \label{sect:op}

For a more abstract approach to the calculations in this section, see
Remark~\ref{rmk:symmetry} below.  

If $(B,m)$ is a semigroup, there is a semigroup $(B, m.c^{-1} )$
which we call $B\op$; clearly it is non-degenerate if and only if $B$ is so.  

\begin{proposition}\label{prop:op}
If $f\colon A\nto B$ is an \mM-morphism, there is an \mM-morphism $f\op\colon
A\nto B\op$ with components $f\op_1$ and $f\op_2$ given by the following
composites. 
$$\xymatrix{
AB \ar[r]^-{c^{-1}} & 
BA \ar[r]^-{f_2} & 
B & 
AB \ar[l]_-{f_1} & 
BA \ar[l]_-{c^{-1}} }$$
This is dense if and only if $f$ is so. If $f$ is multiplicative,
then $f\op$ is a multiplicative \mM-morphism $f\op\colon A\op\nto B\op$.  The
passage from $f$ to $f\op$ is functorial. 
\end{proposition}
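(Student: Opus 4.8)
The plan is to treat the four assertions in turn, doing the real work only for the first. Since $B\op$ is non-degenerate whenever $B$ is, I can appeal to \cite[Lemma~2.8]{BohmLack:cat_of_mbm}: to see that the pair $(f\op_1,f\op_2)=(f_2.c^{-1},f_1.c^{-1})$ is an \mM-morphism $A\nto B\op$, it suffices to verify a single one of the three diagrams \eqref{eq:mM}, with $m$ replaced by the multiplication $m.c^{-1}$ of $B\op$. I would check the first, namely that $m.c^{-1}.(f\op_2 1)=m.c^{-1}.(1f\op_1)$ as morphisms $BAB\to B$; expanding the components, this reads $m.c^{-1}.((f_1.c^{-1})1)=m.c^{-1}.(1(f_2.c^{-1}))$. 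I would prove it by a string-diagram calculation, using naturality of $c$ to slide $f_1$ and $f_2$ past the crossings and a single hexagon move to recombine the braidings, after which the identity collapses onto the first diagram of \eqref{eq:mM} for $f$ itself. (A quick symmetric-category bookkeeping confirms the shape: writing multiplications and components as binary operations, on an element $(b_1,a,b_2)$ the left side becomes $m(b_2,f_1(a,b_1))$ and the right side $m(f_2(b_2,a),b_1)$, and these coincide by \eqref{eq:mM} with $(x,y,z)=(b_2,a,b_1)$.) This step is the main obstacle: everything hinges on getting the $c$ versus $c^{-1}$ bookkeeping and that one hexagon application exactly right.

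Density is then immediate. Since $c$ and $c^{-1}$ are isomorphisms they lie in \cq, and \cq is closed under composition; hence $f\op_1=f_2.c^{-1}$ lies in \cq if and only if $f_2$ does, and $f\op_2=f_1.c^{-1}$ lies in \cq if and only if $f_1$ does. Thus $f\op$ is dense precisely when $f$ is.

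For multiplicativity I would again reduce to a single diagram by \cite[Remark~2.7]{BohmLack:cat_of_mbm}, verifying the first diagram of \eqref{eq:multiplicative} for $f\op$ with the multiplication of $A$ replaced by that of $A\op$, namely $m.c^{-1}$. The same style of calculation applies; the point worth flagging is that the multiplicativity of the first component $f\op_1=f_2.c^{-1}$ reduces to the multiplicativity of the \emph{second} component of $f$ (the second diagram of \eqref{eq:multiplicative}), and it is exactly this crossing of ``first with second'' that forces the source to be opposed as well, explaining the typing $f\op\colon A\op\nto B\op$. (Symmetrically, with $m_{A\op}(a,a')=m_A(a',a)$, one has $f\op_1(m_{A\op}(a,a'),b)=f_2(b,m_A(a',a))=f_2(f_2(b,a'),a)=f\op_1(a,f\op_1(a',b))$.)

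Finally, functoriality. The operation preserves identities: the identity \mM-morphism $i_B$ has both components equal to $m$, so $i_B\op$ has both components $m.c^{-1}$, which is exactly $i_{B\op}$. For composition, given $f\colon A\nto B$ and a dense multiplicative $g\colon B\nto C$, I would show $(g\bullet f)\op=g\op\bullet f\op$ by comparing first components only, which is legitimate since an \mM-morphism is determined by either of its components. Using the first relation of \eqref{eq:bullet-composition} for $g\op\bullet f\op$ together with the second relation of \eqref{eq:bullet-composition} (which computes $(g\bullet f)\op_1=(g\bullet f)_2.c^{-1}$), and invoking that $1g\op_1$ is a regular epimorphism, it suffices to check the two first components agree after precomposition with $1g\op_1$; this follows from the defining relations and naturality of $c$. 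This last part is a routine diagram chase once the component formulas are in hand; alternatively, all four assertions admit the uniform, more conceptual treatment indicated in Remark~\ref{rmk:symmetry}.
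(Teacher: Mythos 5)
Your proposal is correct and follows essentially the same route as the paper's own proof: you verify exactly the first diagram of \eqref{eq:mM} (resp.\ of \eqref{eq:multiplicative}, with the multiplication replaced by $m.c^{-1}$) by sliding $f_1,f_2$ through crossings via naturality plus a hexagon/braid move, thereby reducing to the corresponding diagram for $f$ --- including the key point that multiplicativity of $f\op_1$ rests on the \emph{second} multiplicativity diagram for $f$, which is precisely the region the paper invokes. Your treatment of density (composition with the isomorphisms $c^{\pm1}$) and of functoriality (comparing first components through the relations \eqref{eq:bullet-composition} and cancelling the epimorphism $1g\op_1$) likewise matches the paper's argument.
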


\proof
The fact that $f\op$ is an \mM-morphism follows from commutativity of the
diagram on the left below. 
$$\xymatrix{
BAB \ar[rr]^{1c^{-1}} \ar[dd]_{c^{-1}1} && 
B^2A \ar[r]^{1f_2} \ar[d]^{c^{-1}1} & 
B^2 \ar[dd]^{c^{-1}} \\
&& B^2 A \ar[d]^{1c^{-1}} \\
AB^2 \ar[r]^{1 c^{-1}} \ar[d]_{f_11} & 
AB^2 \ar[r]^{c^{-1} 1} & 
BAB  \ar[r]^{f_21} \ar[d]^{1f_1} \ar@{}[rd]|-{\eqref{eq:mM}}& 
B^2 \ar[d]^{m} \\
B^2 \ar[rr]_{c^{-1}} && 
B^2 \ar[r]_{m} & 
B }\quad 
\xymatrix{
A^2B \ar[rr]^{1c^{-1}} \ar[dd]_{c^{-1}1} && 
ABA \ar[r]^{1f_2} \ar[d]^{c^{-1}1} & 
AB \ar[dd]^{c^{-1}} \\
&& BA^2 \ar[d]^{1c^{-1}} \\ 
A^2B \ar[d]_{m1} \ar[r]^{1c^{-1}} & 
ABA \ar[r]^{c^{-1}1} & 
BA^2 \ar[r]^{f_21} \ar[d]^{1m} \ar@{}[rd]|-{\eqref{eq:multiplicative}}& 
BA \ar[d]^{f_2} \\
AB \ar[rr]_{c^{-1}} && 
BA \ar[r]_{f_2} & 
B }
$$
The fact that $f\op$ is multiplicative if $f$ is so follows from commutativity
of the diagram on the right; the corresponding fact about $f\op$ being dense
is obvious. Whenever $f$ is dense, the top rows of the following
diagrams are epimorphisms. Hence functoriality follows from the equality of
the left-bottom paths in the commutative diagrams
$$
\xymatrix{
ACB \ar[rrr]^-{1f_2} \ar[d]_-{c^{-1} 1} &&&
AC \ar[dd]^-{c^{-1}}
&
ACB \ar[rr]^-{1f_2} &&
AC \ar[ddd]^-{(f^{\mathsf{op}} \bullet g^{\mathsf{op}})_1} \\
CAB \ar[d]_-{1c^{-1}} &&&&
ABC \ar[u]^-{1c^{-1}} \ar[d]_-{c^{-1} 1} \\
CBA \ar[rrr]^-{f_2 1} \ar[d]_-{1 g_2} &&&
CA \ar[d]^-{(f\bullet g)^{\mathsf{op}}_2}
&
BAC \ar[d]_-{g_2 1} \\
CB \ar[rrr]_-{f_2} &&&
C
&
BC \ar[r]_-{c^{-1}} &
CB \ar[r]_-{f_2} &
C}
$$ 
\endproof

\begin{remark}\label{rmk:symmetry}
Write $\cc\rev$ for the monoidal category with the same underlying category 
\cc, but with the reverse monoidal structure, so that the monoidal product of
$A$ and $B$ in $\cc\rev$ is $BA$. The inverse-braiding $c^{-1}$ may be used to
make the identity functor on \cc into a strong monoidal isomorphism
$\cc\rev\to\cc$; such a functor sends semigroups to semigroups, and in this
case sends $A$ to $A\op$. Furthermore, the strong monoidal isomorphism
$(1,c^{-1})\colon \cc\rev\to\cc$ is in fact braided (if we equip $\cc\rev$
with the braiding $c$), from which it follows that the induced functor
$\cm\rev\to\cm$ is also strong monoidal, with the binary part of the strong
monoidal structure being given by $(c^{-1})^\#$. 
\end{remark}

If $A$ and $B$ are semigroups, then the multiplication of $A\op B\op$ is given
by the composite  
$$\xymatrix@C=35pt{
(AB)^2 \ar[r]^-{1c1} & 
A^2B^2 \ar[r]^-{c^{-1}c^{-1}} & 
A^2B^2 \ar[r]^-{mm} & 
AB }$$
while the multiplication of $(AB)\op$ is the composite
$$\xymatrix@C=35pt{
(AB)^2 \ar[r]^-{1c^{-1}1} & 
A^2B^2 \ar[r]^-{c^{-1}c^{-1}} & 
A^2B^2 \ar[r]^-{mm} & 
AB}$$
and clearly these are in general different, but agree if the braiding is a
symmetry. A similar argument works for \mM-morphisms, giving: 

\begin{proposition}\label{prop:monoidal-op}
If the braiding of \cc is a symmetry, then for semigroups $A$ and $B$ we have
$A\op B\op=(AB)\op$, and similarly for \mM-morphisms $f$ and $g$ we have $f\op
g\op=(fg)\op$.
\end{proposition}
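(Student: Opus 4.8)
The identity $A\op B\op=(AB)\op$ for semigroups is exactly what the two displays preceding the statement record: the multiplications of $A\op B\op$ and of $(AB)\op$ are identical except that the first has $1c1$ in the middle where the second has $1c^{-1}1$, and these agree precisely because a symmetry satisfies $c=c^{-1}$. So the only real content is the assertion about \mM-morphisms. Fix \mM-morphisms $f\colon A\nto C$ and $g\colon B\nto D$. Applying the semigroup identity to $C$ and $D$, the two \mM-morphisms $f\op g\op\colon AB\nto C\op D\op$ and $(fg)\op\colon AB\nto(CD)\op$ have the \emph{same} codomain semigroup, so comparing them is meaningful; and since (as noted in Section~\ref{sect:mM}) two \mM-morphisms into a non-degenerate semigroup coincide as soon as their first components do, it suffices to prove that the first components $ABCD\to CD$ agree.

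The plan is to expand both first components using the description of $(-)\op$ on components in Proposition~\ref{prop:op} and the formula for the monoidal product of \mM-morphisms in Section~\ref{sect:M-monoidal}, and to observe that each reduces to $f_2g_2$ precomposed with an isomorphism $ABCD\to CADB$ assembled from braidings. On one side, $(f\op g\op)_1$ first applies $1c1\colon ABCD\to ACBD$ and then $f\op_1g\op_1$; writing $f\op_1=f_2.c^{-1}$ and $g\op_1=g_2.c^{-1}$ exhibits it as $f_2g_2$ precomposed with such an isomorphism $\theta$. On the other side, $(fg)\op_1=(fg)_2.c^{-1}$ with $(fg)_2=f_2g_2.(1c1)$, which is again $f_2g_2$ precomposed with an isomorphism $\theta'\colon ABCD\to CADB$ built from braidings. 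Thus the two first components are $f_2g_2.\theta$ and $f_2g_2.\theta'$, and the problem is reduced to the single braid identity $\theta=\theta'$.

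This braid identity is where the symmetry hypothesis does its work, and is the step I expect to be the main obstacle. A direct inspection of the four strands shows that $\theta$ and $\theta'$ induce the \emph{same} permutation of $ABCD$, namely the one producing $CADB$; but in a general braided category they are built from different numbers of crossings and differ in over/under information, so they are genuinely unequal there. Under the symmetry assumption $c=c^{-1}$ they become equal, either by invoking the coherence theorem for symmetric monoidal categories (any two morphisms composed from the symmetry that realize the same permutation are equal) or, more concretely, by a short string-diagram computation that uses naturality of $c$ together with the relation $c^{-1}=c$ to slide the crossings past one another and cancel. The second components are dealt with by the symmetric argument, or recovered from the first by non-degeneracy; and a more conceptual reading of the whole statement is available through the strong monoidal isomorphism of Remark~\ref{rmk:symmetry}, which becomes strictly compatible with monoidal products exactly when the braiding is a symmetry. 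The only care needed throughout is the bookkeeping of which object crosses which at each stage.
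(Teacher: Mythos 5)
Your proof is correct and takes essentially the same route as the paper: the paper's own argument consists precisely of the two displayed multiplication formulas (which differ only in $1c1$ versus $1c^{-1}1$, hence coincide under a symmetry) together with the remark that ``a similar argument works for \mM-morphisms.'' Your elaboration of that remark --- expanding both first components, reducing to the equality of two composites of symmetries $ABCD\to CADB$ settled by symmetric coherence (or naturality of $c$), and handling second components by non-degeneracy --- is exactly the intended argument, spelled out in more detail.
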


In the case of an arbitrary braiding, for any \cc-morphism $z\colon
X\to Z$ and \mM-morphisms $f\colon Z\nto A$ and $g\colon Y\nto B$, the
\cc-morphism $c\colon (AB)\op \to B\op A\op$ is an isomorphism of semigroups
in \cc and the equalities 
\begin{equation} \label{eq:op-mon}
(f\circ z)\op=f\op\circ z
\quad \textrm{and}\quad
g\op f\op \circ c = c^\# \bullet (fg)\op
\end{equation}
hold. 

%%%%%%%%%%%%%%%%%%%%%%%%%%%%%%%    SEC  2     %%%%%%%%%%%%%%%%%%%%%%%%%%%%%%

\section{Multiplier Hopf monoids}

\subsection{Multiplier bimonoids}\label{sect:mbm}

A {\em multiplier bimonoid} \cite{BohmLack:braided_mba} 
in a braided monoidal category $\cc$ is an object $A$ equipped with morphisms
$t_1:A^2\to A^2$, $t_2:A^2\to A^2$ and $e:A\to I$ subject to the axioms
encoded in the following commutative diagrams (see
\cite{BohmLack:braided_mba}).   
\begin{eqnarray}
\xymatrix{
A^3 \ar[r]^-{1t_1} \ar[d]_-{t_11}&
A^3 \ar[r]^-{c1} &
A^3 \ar[r]^-{1t_1} &
A^3 \ar[r]^-{c^{-1}1} &
A^3 \ar[d]^-{t_11} 
&&
A^2 \ar[rd]^-{1e} \ar[d]_-{t_1} 
%&
%A \ar[d]^-1
\\
A^3 \ar[rrrr]_-{1t_1} &&&&
A^3
&&
A^2 \ar[r]_-{1e} &
A}\label{eq:mbm_ax_1}\\
\xymatrix{A^3 \ar[r]^-{t_21} \ar[d]_-{1t_2} &
A^3 \ar[r]^-{1c} &
A^3 \ar[r]^-{t_21} &
A^3 \ar[r]^-{1c^{-1}} &
A^3 \ar[d]^-{1t_2} 
&&
A^2 \ar[rd]^-{e1} \ar[d]_-{t_2} 
%&
%A \ar[d]^-1 
\\
A^3 \ar[rrrr]_-{t_21} &&&&
A^3
&&
A^2 \ar[r]_-{e1} &
A}\label{eq:mbm_ax_2}\\
\xymatrix{
A^3\ar[r]^-{t_21} \ar[d]_-{1t_1} &
A^3 \ar[d]^-{1t_1}
&&&&&
A^2 \ar[r]^-{t_1}\ar[d]_-{t_2} 
&
A^2 \ar[d]^-{e1} \\
A^3 \ar[r]_-{t_21} &
A^3
&&&&&
A^2 \ar[r]_-{1e} &
A}\label{eq:mbm_ax_compatibility}
\end{eqnarray}
The equalities encoded in the first diagrams of \eqref{eq:mbm_ax_1} and
\eqref{eq:mbm_ax_2} are referred to as the {\em fusion equations} and the
equalities expressed by the second diagrams are termed the {\em counit
conditions}.  

The common diagonal of the second diagram of \eqref{eq:mbm_ax_compatibility}
is an associative multiplication to be denoted by $m$. Postcomposing the equal
paths around the second diagram of \eqref{eq:mbm_ax_1} (or of
\eqref{eq:mbm_ax_2}) with $e$, we deduce the commutativity of 
\begin{equation}\label{eq:e_multiplicative}
\xymatrix{
A^2 \ar[r]^-{1e} \ar[d]_-m &
A \ar[d]^-e \\
A \ar[r]_-e &
I.}
\end{equation}
Postcomposing the equal paths around the first diagram of \eqref{eq:mbm_ax_1}
with $e11$, and postcomposing the equal paths around the first diagram of
\eqref{eq:mbm_ax_2} with $11e$, we obtain the so-called {\em short fusion
  equations} 
\begin{equation}\label{eq:short_fusion}
\xymatrix@C=18pt{
A^3\ar[d]_-{m1} \ar[r]^-{1t_1} &
A^3 \ar[r]^-{c1} &
A^3 \ar[r]^-{1t_1} &
A^3 \ar[r]^-{c^{-1}1} &
A^3 \ar[d]^-{m1}
&
A^3\ar[d]_-{1m} \ar[r]^-{t_21} &
A^3 \ar[r]^-{1c} &
A^3 \ar[r]^-{t_21} &
A^3 \ar[r]^-{1c^{-1}} &
A^3 \ar[d]^-{1m} \\
A^2 \ar[rrrr]_-{t_1} &&&&
A^2
&
A^2 \ar[rrrr]_-{t_2} &&&&
A^2 .}
\end{equation}
Postcomposing by $1e1$ the equal paths around the first diagram of
\eqref{eq:mbm_ax_1}, we obtain 
\begin{equation}\label{eq:t_1_mod_map}
\xymatrix{
A^3 \ar[r]^{t_11} \ar[d]_{1m} & A^3 \ar[d]^{1m} \\ A^2 \ar[r]_{t_1} & A^2. }
\end{equation}
Finally, postcomposing the equal paths around the first diagram of
\eqref{eq:mbm_ax_compatibility} by $1e1$, we obtain
\begin{equation}\label{eq:short_compatibility}
\xymatrix{
A^3 \ar[r]^-{t_21} \ar[d]_-{1t_1} &
A^3 \ar[d]^-{1m} \\
A^3 \ar[r]_-{m1} &
A^2 .}
\end{equation}

In this paper, we only consider multiplier bimonoids $(A,t_1,t_2,e)$
with additional properties: 
\begin{itemize}
\item {\em non-degeneracy of the multiplication} $m:=e1.t_1=1e.t_2$,
\item {\em surjectivity of the multiplication}; in the sense that $m\in \cq$,
\item {\em density of the comultiplication} \cite{BohmLack:cat_of_mbm}; that
is, the property that the components
\begin{equation}\label{eq:d_1-2}
\xymatrix@R=8pt{
d_1:=A^3\ar[r]^-{c1} &
A^3 \ar[r]^-{1t_1} &
A^3 \ar[r]^-{c^{-1}1} &
A^3 \ar[r]^-{m1} &
A^2 \quad \textrm{and}\\
d_2:=A^3\ar[r]^-{1c} &
A^3 \ar[r]^-{t_21} &
A^3 \ar[r]^-{1c^{-1}} &
A^3 \ar[r]^-{1m} &
A^2 \quad {\color{white} \textrm{and}}}
\end{equation} 
of the multiplicative \mM-morphism $d\colon A\nto A^2$ belong to \cq; 
\item {\em density of the counit}; that is, that the counit $e\colon A\to I$
is in \cq, and so can be seen as a dense multiplicative \mM-morphism $A\nto I$. 
\end{itemize} 
Under these assumptions some of the above axioms become redundant, see
\cite{BohmLack:braided_mba}. 

Multiplier bimonoids with these properties were identified in \cite[Theorem
5.1]{BohmLack:cat_of_mbm} with certain comonoids in the monoidal category \cm
described in Section~\ref{sect:mM}.    

Multiplier bimonoids are related to usual bimonoids as follows.

\begin{proposition} \label{prop:unit}
For a multiplier bimonoid $(A,t_1,t_2,e)$, the following assertions are
equivalent.
\begin{itemize}
\item[{(a)}] The multiplication $m:=e1.t_1=1e.t_2$ admits some unit $u$; and
  the counit $e$, as well as the morphisms $d_1$ and $d_2$ of
  \eqref{eq:d_1-2}, are epimorphisms. 
\item[{(b)}] There is a bimonoid $A$ with some monoid structure $(m,u)$, some
  comultiplication $h$, and the given counit $e$, such that $t_1=1m.h1$ and
  $t_2=m1.1h$. 
\end{itemize}
\end{proposition}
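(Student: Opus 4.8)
**The plan is to prove (b)$\Rightarrow$(a) by direct verification, and (a)$\Rightarrow$(b) by recovering a comultiplication from the fusion morphisms.**

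The direction (b)$\Rightarrow$(a) should be the routine one. Given a genuine bimonoid with unit $u$, comultiplication $h$, and counit $e$, and with $t_1 = 1m.h1$ and $t_2 = m1.1h$, I would first check that $m = e1.t_1 = 1e.t_2$ reduces to the given multiplication, using the counit axiom $e1.h = 1 = 1e.h$ for a bimonoid. Then non-degeneracy of $m$ is automatic since $m$ has a unit (as already remarked in Section~\ref{sect:nondeg}). To see that $d_1$ and $d_2$ are epimorphisms, I would exhibit explicit splittings built from $u$: inserting $u$ in the appropriate tensor slot and using the unit axioms $m.u1 = 1 = m.1u$ together with the counit axiom for $h$ should collapse $d_1$ and $d_2$ to identities, so they are split epis, hence epis. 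The epimorphism condition on $e$ is immediate since $e.u = 1$ in any bimonoid.

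The substantive direction is (a)$\Rightarrow$(b): from the multiplier bimonoid data plus the hypotheses in (a), I must \emph{construct} a genuine comultiplication $h\colon A\to A^2$ and verify the bimonoid axioms. The natural candidate is $h := t_1.1u = t_2.u1$, or equivalently to define $h$ as the composite that reintroduces the unit into a fusion morphism; I would first check these two expressions agree, presumably by feeding $u$ into the short compatibility relation \eqref{eq:short_compatibility} or the compatibility diagram \eqref{eq:mbm_ax_compatibility}. Once $h$ is defined, the counit conditions in \eqref{eq:mbm_ax_1} and \eqref{eq:mbm_ax_2} (the second diagrams, namely $1e.t_1 = m$ and $e1.t_2 = m$) should, after precomposing with $u$ in the right slot, yield the bimonoid counit axioms $1e.h = 1 = e1.h$. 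The identities $t_1 = 1m.h1$ and $t_2 = m1.1h$ are then meant to be \emph{recovered}, not assumed: I would derive $t_1 = 1m.h1$ from the short fusion equation \eqref{eq:short_fusion} or the module-map relation \eqref{eq:t_1_mod_map} by inserting the unit, and this is exactly where the epimorphism hypothesis on $d_1$ (respectively $d_2$) does the work --- cancelling $d_1$ on the right to promote an identity that holds after postcomposition into an identity of the fusion morphisms themselves.

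The main obstacle I expect is \textbf{coassociativity of $h$}. Multiplier-bimonoid axioms encode associativity-type data for $t_1,t_2$ (the fusion equations), but coassociativity of the recovered $h$ is not one of the listed axioms and must be deduced. My strategy would be to express $h1.h$ and $1h.h$ each in terms of the fusion morphisms applied to unit-inserted inputs, and then to invoke the fusion equation in the first diagram of \eqref{eq:mbm_ax_1} --- which is the multiplier-level shadow of coassociativity --- after cancelling the relevant epimorphism ($d_1$ or $d_2$, or $e$). Concretely, the epimorphism hypotheses on $d_1$, $d_2$, and $e$ are precisely what let me pass from equalities that hold only after postcomposing with these maps (which the bimonoid axioms readily give) to the bare equalities $h1.h = 1h.h$ and the multiplicativity of $h$ with respect to $m$. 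Checking that $h$ is an algebra morphism $A^2\to (A^2)$ (the bimonoid compatibility between $m$ and $h$) would proceed similarly, translating the compatibility axiom \eqref{eq:mbm_ax_compatibility} into a statement about $h$ and cancelling the appropriate dense epimorphism.
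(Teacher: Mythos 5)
Your direction (b)$\Rightarrow$(a) is essentially the paper's argument: the paper identifies $d$ with $h^\#$ and invokes Lemma~\ref{lem:unit}, whose proof produces exactly the sections you describe ($u11$ for $d_1$, $u$ for $e$). One correction there: what makes $u11$ a section of $d_1$ is not the counit axiom but the \emph{unitality} of $h$ (that $h.u=uu$, part of $h$ being a monoid morphism); the counit axiom alone cannot collapse $d_1$.

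In the direction (a)$\Rightarrow$(b) your skeleton is the same as the paper's ($h:=t_1.1u=t_2.u1$, with the bimonoid axioms recovered by inserting $u$ into the multiplier axioms), but your accounting of where the epimorphism hypotheses act is wrong, and this conceals a genuine gap. First, the steps you earmark for epi-cancellation need no cancellation at all: $t_1=1m.h1$ and $t_2=m1.1h$ follow from \eqref{eq:short_compatibility} (equivalently from \eqref{eq:t_1_mod_map}) by pure precomposition with the unit insertions $u11$, $11u$, $u1u$; coassociativity of $h$ follows by precomposing \eqref{eq:mbm_ax_compatibility} with $u1u$ --- this is literally the paper's coassociativity diagram; and multiplicativity of $h$ with respect to $m$ comes from the fusion equation \eqref{eq:mbm_ax_1} together with the counit laws. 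Note also that you have the roles of the axioms interchanged: with $t_1=1m.h1$ and $t_2=m1.1h$ substituted, \eqref{eq:mbm_ax_compatibility} only ever applies $h$ to the middle tensorand and multiplies by the outer ones, so it is the shadow of \emph{coassociativity} and contains no information about $h$ of a product; your plan to extract the algebra-morphism property of $h$ from it would fail. Conversely, your fusion-equation route to coassociativity produces the term $t_1.uu=h.u$ on one side, so it only closes once one already knows $h.u=uu$.

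That last identity is the actual gap. A bimonoid requires $h$ and $e$ to be monoid morphisms, in particular unital: $h.u=uu$ and $e.u=1$. Your proposal never addresses either condition, and they cannot be obtained by unit insertion: the counit laws only yield $e1.(h.u)=u=1e.(h.u)$, which does not force $h.u=uu$. These two conditions are precisely --- and, as it turns out, the only places --- where the epimorphism hypotheses of (a) are used. Concretely, multiplicativity \eqref{eq:multiplicative} of the $\mM$-morphism $d$ gives $d_1.m11=d_1.1d_1$; precomposing with $u111$ yields $(d_1.u11).d_1=d_1$, and cancelling the epimorphism $d_1$ gives $d_1.u11=1$, whence $h.u=d_1.u11.uu=uu$. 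Similarly, \eqref{eq:e_multiplicative} precomposed with $1u$ gives $(e.u).e=e$, and cancelling the epimorphism $e$ gives $e.u=1$. This is exactly what the paper's appeal to Lemma~\ref{lem:unit} (applied to $e^\#$ and to $d$) accomplishes. Without these steps, the $h$ and $e$ you construct need not be unital, and assertion (b) is not established.
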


\begin{proof}
Assume that (b) holds. Then both expressions $e1.t_1$ and $1e.t_2$ are equal
to the multiplication $m$ of the monoid $A$, which has a unit $u$ by definition.
Being unital, the counit $e$ is a split epimorphism. 
The left-bottom path of the commutative diagram 
$$
\xymatrix{
A^3 \ar[r]^-{h11} \ar[d]_-{1c^{-1}} &
A^4 \ar[rr]^-{1c1} \ar[d]^-{11c^{-1}} &&
A^4 \ar[dd]^-{mm} \\
A^3 \ar[r]^-{h11} \ar@/_1.5pc/[rd]_-{t_11} &
A^4 \ar[d]^-{1m1} \\
& A^3 \ar[r]_-{1c} &
A^3 \ar[r]_-{m1} &
A^2}
$$
is the $d_1$ of \eqref{eq:d_1-2}, proving $d=h^\#$.
So it follows by Lemma \ref{lem:unit} (e)$\Rightarrow$(a) and (c) that $d_1$
and $d_2$ are epimorphisms so that (a) holds. 

Conversely, assume that (a) holds. Then applying Lemma \ref{lem:unit}
(a)$\Rightarrow$(e) to the multiplicative $\mM$-morphism $(e,e)=e^\#:A \nto I$
we see that $e:A \to I$ is a monoid morphism. Also by Lemma \ref{lem:unit}
the multiplicative $\mM$-morphism $d:A\nto A^2$ is of the form $h^\#$ for the
unique monoid morphism  
$$
h=\xymatrix{
A \ar[r]^-{1uu} & 
A^3 \ar[r]^-{d_1} &
A^2= A \ar[r]^-{1u} &
A^2 \ar[r]^-{t_1} &
A^2}
$$ 
which admits the equal form $h=t_2.u1$. It renders commutative both diagrams
$$
\xymatrix{
A^2 \ar[r]_-{u11} \ar[d]_-{t_1} \ar@/^1.2pc/[rr]^-{h1} &
A^3 \ar[r]_-{t_21} \ar[d]_-{1t_1}
\ar@{}[rd]|-{\eqref{eq:mbm_ax_compatibility}\quad} & 
A^3 \ar[d]_-{1t_1} \ar@/^1.2pc/[dd]^-{1m} \\
A^2 \ar[r]^-{u11} \ar@{=}@/_1.5pc/[rd] &
A^3 \ar[d]^-{m1} \ar[r]^-{t_21} &
A^3 \ar[d]_-{1e1} \\
& A^2 \ar@{=}[r] &
A^2}\qquad
\xymatrix{
A^2 \ar[r]_-{11u} \ar[d]_-{t_2}\ar@/^1.2pc/[rr]^-{1h} &
A^3 \ar[r]_-{1t_1} \ar[d]_-{t_21}
\ar@{}[rd]|-{\eqref{eq:mbm_ax_compatibility}\quad} & 
A^3 \ar[d]_-{t_21} \ar@/^1.2pc/[dd]^-{m1} \\
A^2 \ar[r]^-{11u} \ar@{=}@/_1.5pc/[rd] &
A^3 \ar[d]^-{1m} \ar[r]^-{1t_1} &
A^3 \ar[d]_-{1e1} \\
& A^2 \ar@{=}[r] &
A^2}
$$
as needed for (b) to hold. Coassociativity and counitality of $h$ follow by
the commutativity of the diagrams
\[
\xymatrix{
A \ar[r]_-{1u} \ar[d]^-{u1} \ar@/^1.2pc/[rr]^-h \ar@/_1.2pc/[dd]_-h &
A^2 \ar[r]_-{t_1} &
A^2 \ar[d]_-{u11} \ar@/^1.2pc/[dd]^-{h1} \\
A^2 \ar[r]^-{11u} \ar[d]^-{t_2} &
A^3 \ar[r]^-{1t_1} \ar[d]_-{t_21}
\ar@{}[rd]|-{\eqref{eq:mbm_ax_compatibility}\quad} &
A^3 \ar[d]_-{t_21} \\
A^2 \ar[r]^-{11u} \ar@/_1.2pc/[rr]_-{1h} &
A^3 \ar[r]^-{1t_1} &
A^3}\qquad
\xymatrix@C=15pt{
&& A  \ar@{=}[dd] \ar[dl]^-{1u} \ar[rd]_-{u1} 
\ar@/_1.4pc/[lldd]_-h \ar@/^1.4pc/[rrdd]^-{h} \\
& A^2 \ar[rd]^-m \ar[ld]^-{t_1} &&
A^2 \ar[rd]_-{t_2} \ar[ld]_-m \\
A^2 \ar[rr]_-{e1} &&
A  &&
A^2 . \ar[ll]^-{1e}}
%\blue \qedhere \black 
\]
\end{proof}

\subsection{Twisting multiplier bimonoids} \label{claim:twist}
It is immediate from the definition that any multiplier bimonoid
$(A,t_1,t_2,e)$ determines another multiplier bimonoid $(A,c.t_2.c^{-1},$
$c.t_1.c^{-1},e)$, and indeed this construction can be iterated (although
if the braiding is a symmetry then further iterates do not yield anything
new).

We can also understand this, in the case when the assumptions in Section
\ref{sect:mbm} hold, in terms of the corresponding comonoids in \cm. By the
strong monoidality of the functor $(-)\op \colon \cm\rev\to\cm$, if
$C=(A,d,e)$ is a comonoid in \cm, then $A\op$ also becomes a comonoid with
comultiplication    
$$\xymatrix@C=7pt{
A\op \ar[rr]^-{d\op} &
{\raisebox{5pt}{${}_{{}_{/}}$}} &
(AA)\op \ar[rr]^{c^\#} &
{\raisebox{5pt}{${}_{{}_{/}}$}} & 
A\op A\op }$$
and counit  $e\op\colon A\op\nto I$. The components of the comultiplication
are given by the composites  
$$\xymatrix{
A^3 \ar[r]^{1c^{-1}} & A^3 \ar[r]^{d\op_1} & A^2 \ar[r]^{c} & 
A^2 & A^2 \ar[l]_{c} & A^3 \ar[l]_{d\op_2} & A^3 . \ar[l]_{c^{-1}1} }$$

Regard now a multiplier bimonoid $A$  (having the properties listed in Section
\ref{sect:mbm}) as a comonoid in \cm; then $d_1$ and $d_2$ are  defined as  in 
\eqref{eq:d_1-2}. The multiplication for the twisted multiplier bimonoid is
given by 
$$\xymatrix{
A^2 \ar[r]^{c^{-1}} & 
A^2 \ar[r]^{t_2} \ar[d]_{m} & 
A^2 \ar[r]^{c} \ar[d]^{1e} & 
A^2 \ar[d]^{e1} \\
&
A \ar@{=}[r] &
A \ar@{=}[r] & 
A }$$
and so the underlying semigroup is $A\op$.

The counit is just $e\colon A\to I$, which we can equally regard as
$e\op\colon A\op\nto I$.  
The first component of the comultiplication is given by the upper composite of
the diagram  
$$\xymatrix{
A^3 \ar[r]^{1c^{-1}} & 
A^3 \ar[r]^{c^{-1}1} \ar[dd]_{d\op_1} & 
A^3 \ar[r]^{t_21} \ar[d]^{1c^{-1}} & 
A^3 \ar[r]^{c1} \ar[d]^{1c^{-1}} & 
A^3 \ar[r]^{1c} & 
A^3 \ar[d]^{c^{-1}1} \\
&& A^3 \ar[d]^{d_2} & 
A^3 \ar[d]^{1m} && 
A^3 \ar[d]^{m1} \\
& A^2 \ar@{=}[r] & 
A^2 \ar@{=}[r] & 
A^2 \ar[rr]_{c} && 
A^2 }$$
which commutes by naturality of the braiding and definition of $d_2$ and
$d\op_1$. But the lower composite is the first component of $c^\# \bullet
d\op$ which is therefore the comultiplication. 

Thus the twisted multiplier bimonoid corresponds to reversing both the
multiplication and the comultiplication, as one might expect.  

\subsection{Morphisms between multiplier bimonoids} \label{claim:morphism} 
% The notion of morphisms between multiplier bimonoids is motivated by
% the relation -- observed in \cite{BohmLack:cat_of_mbm} -- between multiplier
% bimonoids in a (nice enough) braided monoidal category $\cc$ and certain
% comonoids in the monoidal category \cm described in
% Section~\ref{sect:mM}. 
Let $A$ and $B$ be multiplier bimonoids in a braided monoidal category
$\cc$ having the properties listed in Section \ref{sect:mbm}. Following
\cite{BohmLack:cat_of_mbm}, a morphism between them is defined to be  a
morphism between the corresponding comonoids in the category \cm of
Section~\ref{sect:mM}. Explicitly, this means a  dense and
multiplicative \mM-morphism $f\colon A\nto B$ whose components render
commutative the following diagrams.
\begin{equation} \label{eq:mbm_morphism}
 \xymatrix@C=20pt{
 AB \ar[r]^-{f_1} \ar[rdd]_-{ee} &
 B \ar[dd]^-{e} 
&
 A^2 B^2 \ar[r]^-{1f_11} \ar[d]_-{t_2t_1} &
 AB^2 \ar[d]^-{1t_1} 
&
 BA \ar[r]^-{f_2} \ar[rdd]_-{ ee} &
 B \ar[dd]^-{ e} 
&
 B^2A^2 \ar[r]^-{1f_21}\ar[d]_-{ t_2t_1} &
 B^2A \ar[d]^-{ t_21} \\
&&
 A^2B^2 \ar[d]_-{1c1} &
 AB^2 \ar[d]^-{f_11}
&&&
 B^2A^2 \ar[d]_-{1c1} &
 B^2A \ar[d]^-{1f_2}\\
&I
&
 (AB)^2 \ar[r]_-{f_1f_1} &
 B^2 &
&I
&
 (BA)^2 \ar[r]_-{f_2f_2} &
 B^2 }
\end{equation}
 In fact, since the counit  $e$ of $B$  is dense, the first
and the third diagrams are equivalent to each other; and since the
comultiplication of $ B$ is dense, also the second and the last diagrams
are equivalent to each other;  see
\cite[Section~6.2]{BohmLack:cat_of_mbm}. 

\subsection{Regular multiplier bimonoids}
A {\em regular multiplier
bimonoid} \cite{BohmLack:braided_mba} in a braided monoidal category $\cc$ is
a tuple $(A,t_1,t_2,t_3,t_4,e)$ such that $(A,t_1,t_2,e)$ is a multiplier
bimonoid in $\cc$ and $(A,t_3,t_4,e)$ is a multiplier bimonoid in $\overline
\cc$ such that the following diagrams commute, where $m$ stands for the common
diagonal of the first diagram.   
\begin{equation}\label{eq:reg_mbm}
\xymatrix{
A^2 \ar[r]^-{t_1} \ar[d]^-c &
A^2 \ar[dd]_-{e1}
&
A^3 \ar[r]^-{1t_1} \ar[d]^-{c1} &
A^3 \ar[d]_-{c1}
&
A^3 \ar[r]^-{1t_1} \ar[dd]^-{t_41} &
A^3 \ar[dd]_-{t_41}
&
A^3 \ar[r]^-{t_21} \ar[d]^-{1c} &
A^3 \ar[d]_-{1c} 
&
A^3 \ar[r]^-{t_21} \ar[dd]^-{1t_3} &
A^3 \ar[dd]_-{1t_3} \\
A^2 \ar[d]^-{t_3} &
&
A^3 \ar[d]^-{t_31} &
A^3 \ar[d]_-{1m}
&
&&
A^3 \ar[d]^-{1t_4} &
A^3 \ar[d]_-{m1} \\
A^2 \ar[r]_-{e1} &
A
&
A^3 \ar[r]_-{1m} &
A^2
&
A^3 \ar[r]_-{1t_1} &
A^3
&
A^3 \ar[r]_-{m1} &
A^2
&
A^3 \ar[r]_-{t_21} &
A^3}
\end{equation}
Note that the first and the last diagrams, and the first and the third
diagrams, respectively, imply the commutativity of
\begin{equation}\label{eq:t_2-3_compatibility}
\xymatrix{
A^3 \ar[r]^-{t_21} \ar[d]_-{1t_3} &
A^3 \ar[r]^-{1c^{-1}} &
A^3 \ar[d]^-{1m} 
&
A^3 \ar[d]_-{t_41} \ar[r]^-{1t_1} &
A^3 \ar[r]^-{c^{-1}1} &
A^3 \ar[d]^-{m1} \\
A^3 \ar[rr]_-{m1} &&
A^2
&
A^3 \ar[rr]_-{1m} &&
A^2\ , }
\end{equation}
from which it follows that if the multiplication is non-degenerate, there can
be at most one choice of morphisms $t_3$ and $t_4$ making it into a regular
multiplier bimonoid; thus for a multiplier bimonoid with non-degenerate
multiplication, being regular is a property rather than further structure.  
Applying \eqref{eq:t_1_mod_map} to the multiplier bimonoid
$(A,t_3,t_4,e)$ in $\overline \cc$, we deduce the commutativity of
\begin{equation} \label{eq:t_3_mod_map}
\xymatrix{
A^3 \ar[r]^-{1c^{-1}} \ar[d]_-{t_31} &
A^3 \ar[r]^-{1m} &
A^2 \ar[d]^-{t_3} \\
A^3 \ar[r]_-{1c^{-1}} &
A^3 \ar[r]_-{1m} &
A^2 \ .}
\end{equation}

The multiplication $m$ of the multiplier bimonoid $(A,t_1,t_2,e)$ in $\cc$ is
non-degenerate if and only if the multiplication $m.c^{-1}$ of the multiplier
bimonoid $(A,t_3,t_4,e)$ in $\overline \cc$ is non-degenerate. In this case
many of the above axioms become redundant: see \cite[Proposition
3.11]{BohmLack:braided_mba}. 

The comultiplication of the multiplier bimonoid $(A,t_3,t_4,e)$ in $\overline
\cc$ is obtained from  the comultiplication $d$ of the multiplier bimonoid
$(A,t_1,t_2,e)$ in $\cc$  as $d\op$; hence it is dense if and only if $d$ is
dense, and hence one multiplier bimonoid will satisfy the conditions of 
Section~\ref{sect:mbm} if and only if the other does so. 

Various possible reformulations of the definition of morphism of multiplier
bimonoid are possible in the regular context; we shall need the following
result. 

\begin{proposition}\label{prop:multiplier-bimonoid-t3}
Let $(A,t_1,t_2,t_3,t_4,e)$ and  $(B,t_1,t_2,t_3,t_4,e)$  be
regular multiplier bimonoids having the properties listed in Section
\ref{sect:mbm}. 
If $f$ is a morphism of multiplier bimonoids from
$(A,t_1,t_2,e)$ to  $(B,t_1,t_2,e)$, then the following diagram
commutes.    
\begin{equation}\label{eq:multiplier-bimonoid-t3}
\xymatrix{
A^2 \ B^2 \ar[r]^{t_3t_1} \ar[d]_{c^{-1}11} & 
A^2  B^2 \ar[r]^{1c1} & 
(AB)^2 \ar[r]^{f_1f_1} & 
 B^2   \\
A^2 B^2  \ar[r]_{1f_11} & 
A  B^2  \ar[r]_{1 t_1} & 
A B^2   \ar[r]_{c1} & 
 B  A  B \ar[u]_{1f_1} 
}\end{equation}
\end{proposition}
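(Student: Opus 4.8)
The plan is to read the diagram \eqref{eq:multiplier-bimonoid-t3} as a \emph{legged} form of the comultiplicativity of the action $f_1\colon AB\to B$: up to the braidings, the top path applies the fusions of $A$ and $B$ and then acts twice by $f_1$, while the bottom path first acts by $f_1$, then comultiplies in $B$ and acts once more. The spare leftmost copy of $A$ serves as a ``leg'' recording the comultiplication of $A$, and the content of the statement is that this leg may be presented through the \emph{regular} fusion $t_3$ rather than through the $t_2$ that occurs in the defining morphism condition, the second diagram of \eqref{eq:mbm_morphism}. So the strategy is to derive \eqref{eq:multiplier-bimonoid-t3} from that morphism condition, the extra ingredient being the regularity of $A$, which is exactly what is needed to trade $t_2$ for $t_3$.

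Both sides of \eqref{eq:multiplier-bimonoid-t3} are morphisms $A^2B^2\to B^2$. Since $f$ is dense, its components lie in \cq and hence are epimorphisms preserved by the monoidal product, so Remark~\ref{rem:f_non-degenerate} together with the non-degeneracy of the multiplication of $B$ lets me test such an equality after composing with one further action; this is what permits the leg carried by the extra copy of $A$ to be moved from one tensor factor to the other. On the algebraic side I would use the multiplicativity \eqref{eq:multiplicative} of $f_1$ (so that a product in $A$ acts as two successive actions) and its module law from \eqref{eq:mM} (so that a trailing factor of $B$ passes through $f_1$), in order to reorganise the bottom composite $1f_1\cdot c1\cdot 1t_1\cdot 1f_11\cdot c^{-1}11$ as ``comultiply the single value $f_1(\ldots)$ in $B$, then act by the remaining copy of $A$''. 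Feeding this into \eqref{eq:mbm_morphism} converts it into the double action $f_1f_1$ preceded by the fusions of $A$ and $B$, but with the $A$-fusion still in its $t_2$ form.

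The decisive step is then to convert this $t_2$-presentation of the $A$-fusion into the $t_3$-presentation demanded by the top row. For this I would invoke the regularity relation \eqref{eq:t_2-3_compatibility} (equivalently the second square of \eqref{eq:reg_mbm}), which relates $t_2$ and $t_3$ through the multiplication of $A$; the required occurrences of that multiplication are supplied inside the calculation by the successive-action and module laws used above, while the module-map identity \eqref{eq:t_3_mod_map} keeps $t_3$ compatible with the trailing multiplication. Throughout, the crossings $c$ and $c^{-1}$ must be transported past $f_1$ and past the fusion morphisms using naturality of the braiding, and matched against the $c^{-1}11$ and $c1$ framing the bottom path. I expect this braiding-and-regularity bookkeeping — keeping the two $A$-variables in step while simultaneously re-legging via Remark~\ref{rem:f_non-degenerate} and swapping $t_2$ for $t_3$ — to be the main obstacle; the remainder is the routine reassembly of a single pasting diagram from $A^2B^2$ to $B^2$.
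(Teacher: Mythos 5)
Your proposal follows essentially the same route as the paper's proof: the paper also tests the equality after attaching an extra $A$-leg acted on by $f_1$ (Remark~\ref{rem:f_non-degenerate}, justified by density of $f$ and non-degeneracy of $B$), then runs a string-diagram chain using the morphism condition \eqref{eq:mbm_morphism}, the multiplicativity \eqref{eq:multiplicative} of $f$, and the regularity relation \eqref{eq:t_2-3_compatibility} to trade $t_2$ for $t_3$. Your plan identifies exactly these ingredients and the same decisive step, so it is correct and matches the paper's argument.
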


\begin{proof}
We prove this using string diagrams, as in Section~\ref{sect:strings}. Then
the claim follows by the calculation  
\smallskip

{\color{white} .}\hspace{-.8cm}
\begin{tikzpicture}[scale=.935] % pic1
\path
(0,3) node[empty, name=in1] {}
(0.5,3) node[empty, name=in2] {}
(1,3) node[empty, name=in3] {}
(1.5,3) node[empty, name=in4] {}
(2,3) node[empty, name=in5] {}
(0.5,0) node[empty, name=out1] {}
(1.5,0) node[empty, name=out2] {}
(1.2,2.5) node[arr, name=f1u] {$f_1$}
(1.5,1.5) node[arr, name=t1] { $t_1$}
(0.5,0.5) node[arr, name=f1l] {$f_1$}
(1.5,0.5) node[arr, name=f1r] {$f_1$};
\draw[braid] (in1) to[out=270,in=180] (f1l);
\draw[braid, name path=bin2] (in2) to[out=270,in=135] (f1u);
\path[braid, name path=bin3] (in3) to[out=225, in=135] (f1r);
\draw[braid] (in4) to[out=270,in=45] (f1u);
\draw[braid] (in5) to[out=270,in=45] (t1);
\draw[braid] (f1u) to[out=270,in=135] (t1);
\draw[braid] (t1) to[out=315,in=45] (f1r);
\draw[braid, name path=t1f1l] (t1) to[out=225,in=45] (f1l);
\fill[white,name intersections={of=bin3 and bin2}] (intersection-1) circle(0.1);
\fill[white,name intersections={of=bin3 and t1f1l}] (intersection-1) circle(0.1);
\draw[braid] (in3) to[out=225, in=135] (f1r);
\draw[braid] (f1l) to (out1) ;
\draw[braid] (f1r) to (out2);
\draw (2.5,1.5) node[empty] {$\stackrel{\eqref{eq:mbm_morphism}}=$};
  \end{tikzpicture}
  \begin{tikzpicture}[scale=.935]  % pic2
\path
(0,3) node[empty, name=in1] {}
(0.5,3) node[empty, name=in2] {}
(1,3) node[empty, name=in3] {}
(1.5,3) node[empty, name=in4] {}
(2,3) node[empty, name=in5] {}
(0.25,0) node[empty, name=out1] {}
(1.5,0) node[empty, name=out2] {}
(0.5,2) node[empty, name=x] {}
(1.5,0.5) node[arr, name=f1d] {$f_1$}
(0.25,2.5) node[arr, name=t2] {$t_2$}
(1.75,2.5) node[arr, name=t1] { $t_1$}
(0.25,1.25) node[arr, name=f1l] {$f_1$}
(1.75,1.25) node[arr, name=f1r] {$f_1$};
\draw[braid] (in1) to[out=270,in=135] (t2);
\draw[braid, name path=bin2] (in2) to[out=270,in=45] (t2);
\path[braid, name path=bin3] (in3) to[out=270, in=135] (f1d);
\draw[braid] (in4) to[out=270,in=135] (t1);
\draw[braid] (in5) to[out=270,in=45] (t1);
\draw[braid] (t2) to[out=225,in=135] (f1l);
\draw[braid] (t1) to[out=315,in=45] (f1r);
\draw[braid, name path=t1f1l] (t1) to[out=225,in=45] (x) to[out=225,in=45] (f1l);
\fill[white] (x) circle(0.1);
\draw[braid, name path=t2f1r] (t2) to[out=315,in=135] (x) to[out=315,in=135] (f1r);
\fill[white,name intersections={of=bin3 and t2f1r}] (intersection-1) circle(0.1);
\fill[white,name intersections={of=bin3 and t1f1l}] (intersection-1) circle(0.1);
\draw[braid] (f1r) to[out=270,in=45] (f1d);
\draw[braid] (in3) to[out=270, in=135] (f1d);
\draw[braid] (f1l) to[out=270,in=90] (out1) ;
\draw[braid] (f1d) to (out2);
\draw (2.5,1.5) node[empty] {$\stackrel{\eqref{eq:multiplicative}}=$};
  \end{tikzpicture}
  \begin{tikzpicture}[scale=.935]  % pic3
\path
(0,3) node[empty, name=in1] {}
(0.5,3) node[empty, name=in2] {}
(1,3) node[empty, name=in3] {}
(1.5,3) node[empty, name=in4] {}
(2,3) node[empty, name=in5] {}
(0.25,0) node[empty, name=out1] {}
(1.75,0) node[empty, name=out2] {}
(0.5,2) node[empty, name=x] {}
(1.25,1.5) node[empty, name=m] {}
%(1.5,0.5) node[arr, name=f1d] {$f_1$}
(0.25,2.5) node[arr, name=t2] {$t_2$}
(1.75,2.5) node[arr, name=t1] { $t_1$}
(0.25,1.25) node[arr, name=f1l] {$f_1$}
(1.75,1.0) node[arr, name=f1r] {$f_1$};
\draw[braid] (in1) to[out=270,in=135] (t2);
\draw[braid, name path=bin2] (in2) to[out=270,in=45] (t2);
\draw[braid, name path=bin3] (in3) to[out=270,in=180] (m);
\draw[braid] (m) to[out=270,in=135] (f1r);
\draw[braid] (in4) to[out=270,in=135] (t1);
\draw[braid] (in5) to[out=270,in=45] (t1);
\draw[braid] (t2) to[out=225,in=135] (f1l);
\draw[braid] (t1) to[out=315,in=45] (f1r);
\draw[braid, name path=t1f1l] (t1) to[out=225,in=45] (x) to[out=225,in=45] (f1l);
%\draw[braid] (t2) to[out=315,in=0] (m);
\fill[white] (x) circle(0.1);
\draw[braid, name path=t2f1r] (t2) to[out=315,in=135] (x) to[out=315,in=0] (m);
\fill[white,name intersections={of=bin3 and t2f1r}] (intersection-1) circle(0.1);
\fill[white,name intersections={of=bin3 and t1f1l}] (intersection-1) circle(0.1);
%\draw[braid] (in3) to[out=270, in=135] (f1r);
\draw[braid] (in3) to[out=270,in=180] (m);
\draw[braid] (f1l) to[out=270,in=90] (out1) ;
\draw[braid] (f1r) to (out2);
\draw (2.7,1.5) node[empty] {$\stackrel{\eqref{eq:t_2-3_compatibility}}=$};
  \end{tikzpicture}
  \begin{tikzpicture}[scale=.935]  % pic4
\path
(0,3) node[empty, name=in1] {}
(0.5,3) node[empty, name=in2] {}
(1,3) node[empty, name=in3] {}
(1.5,3) node[empty, name=in4] {}
(2,3) node[empty, name=in5] {}
(0.5,0) node[empty, name=out1] {}
(1.5,0) node[empty, name=out2] {}
(0.5,2) node[empty, name=x] {}
(0.3,1.5) node[empty, name=m] {}
%(1.5,0.5) node[arr, name=f1d] {$f_1$}
(0.75,2.5) node[arr, name=t3] {$t_3$}
(1.75,2.5) node[arr, name=t1] { $t_1$}
(0.5,1.0) node[arr, name=f1l] {$f_1$}
(1.5,1.0) node[arr, name=f1r] {$f_1$};
\draw[braid] (in1) to[out=270,in=180] (m);
\draw[braid] (in2) to[out=270,in=135] (t3);
\draw[braid, name path=bin3] (in3) to[out=270,in=45] (t3);
\draw[braid] (in4) to[out=270,in=135] (t1);
\draw[braid] (in5) to[out=270,in=45] (t1);
\draw[braid] (t3) to[out=225,in=0] (m);
\draw[braid] (t1) to[out=315,in=45] (f1r);
\draw[braid, name path=t1f1l] (t1) to[out=225,in=45] (f1l);
%\draw[braid] (t2) to[out=315,in=0] (m);
%\fill[white] (x) circle(0.1);
\path[braid, name path=t3f1r] (t3) to[out=315,in=135] (f1r);
\draw[braid] (m) to[out=270,in=135] (f1l);
\fill[white,name intersections={of=t1f1l and t3f1r}] (intersection-1) circle(0.1);
\draw[braid] (t3) to[out=315,in=135] (f1r);
%\draw[braid] (in3) to[out=270, in=135] (f1r);
%\draw[braid] (f1l) to[out=270,in=0] (f1d);
\draw[braid] (f1l) to[out=270,in=90]  (out1) ;
\draw[braid] (f1r) to (out2);
\draw (2.5,1.5) node[empty] {$\stackrel{\eqref{eq:multiplicative}}=$};
  \end{tikzpicture}
  \begin{tikzpicture}[scale=.935]  % pic5
\path
(0,3) node[empty, name=in1] {}
(0.5,3) node[empty, name=in2] {}
(1,3) node[empty, name=in3] {}
(1.5,3) node[empty, name=in4] {}
(2,3) node[empty, name=in5] {}
(0.5,0) node[empty, name=out1] {}
(1.75,0) node[empty, name=out2] {}
(0.5,2) node[empty, name=x] {}
(0.5,1.5) node[empty, name=m] {}
%(1.5,0.5) node[arr, name=f1d] {$f_1$}
(0.75,2.5) node[arr, name=t3] {$t_3$}
(1.75,2.5) node[arr, name=t1] { $t_1$}
(0.5,0.5) node[arr, name=f1d] {$f_1$}
(0.75,1.5) node[arr, name=f1l] {$f_1$}
(1.75,1.5) node[arr, name=f1r] {$f_1$};
\draw[braid] (in1) to[out=270,in=135] (f1d);
\draw[braid] (in2) to[out=270,in=135] (t3);
\draw[braid, name path=bin3] (in3) to[out=270,in=45] (t3);
\draw[braid] (in4) to[out=270,in=135] (t1);
\draw[braid] (in5) to[out=270,in=45] (t1);
\draw[braid] (t3) to[out=225,in=135] (f1l);
\draw[braid] (t1) to[out=315,in=45] (f1r);
\draw[braid, name path=t1f1l] (t1) to[out=225,in=45] (f1l);
%\draw[braid] (t2) to[out=315,in=0] (m);
%\fill[white] (x) circle(0.1);
\path[braid, name path=t3f1r] (t3) to[out=315,in=135] (f1r);
\draw[braid] (f1l) to[out=270,in=45] (f1d);
\fill[white,name intersections={of=t1f1l and t3f1r}] (intersection-1) circle(0.1);
\draw[braid] (t3) to[out=315,in=135] (f1r);
%\draw[braid] (in3) to[out=270, in=135] (f1r);
%\draw[braid] (f1l) to[out=270,in=0] (f1d);
\draw[braid] (f1d) to[out=270,in=90]  (out1) ;
\draw[braid] (f1r) to (out2);
  \end{tikzpicture}

\noindent
and Remark~\ref{rem:f_non-degenerate}.
\end{proof}

\subsection{Multiplier Hopf monoids}

\begin{definition}
A {\em multiplier Hopf monoid} in a braided monoidal category $\cc$ is a
multiplier bimonoid $(A,t_1,t_2,e)$ for which  $t_1$ and $t_2$ are isomorphisms
in $\cc$.
\end{definition}

Van Daele's notion of multiplier Hopf algebra \cite{VanDaele:multiplier_Hopf}
is equivalent to a multiplier Hopf monoid in the category of vector spaces
over the complex numbers, whose multiplication is non-degenerate:
see \cite[Section 3]{VanDaele:multiplier_Hopf}.  

\begin{theorem} \label{thm:Hopf}
Let $\cc$ be a braided monoidal category satisfying the standing assumptions
of Section~\ref{sect:assumptions}. Let $(A,t_1,t_2,e)$ be a multiplier
bimonoid  in $\cc$ having the properties listed in Section \ref{sect:mbm}. 
Then $(A,t_1,t_2,e)$ is  a multiplier Hopf monoid  if and only if
there  is an \mM-morphism $s\colon A\nto A$ whose components render
commutative the diagrams  
$$
\xymatrix{
 {}\llap{(a)~~}  
A^3 \ar[r]^-{t_21}\ar[d]_-{d_1} &
A^3 \ar[d]^-{1s_1}
&&
 {}\llap{(b)~~}  
A^3 \ar[r]^-{1t_1} \ar[d]_-{d_2} &
A^3 \ar[d]^-{s_21} \\
A^2 &
A^2 \ar[l]^-{t_1} 
&&
A^2 &
A^2.  \ar[l]^-{t_2} }
$$
Such an \mM-morphism is unique,  dense, and is called the {\em antipode} of
the multiplier Hopf monoid.  
\end{theorem}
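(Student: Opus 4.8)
The plan is to prove the two implications separately and then dispose of uniqueness and density. Throughout I use two consequences of the short fusion equations \eqref{eq:short_fusion}: since $d_1=(m1).(c^{-1}1).(1t_1).(c1)$, the first of them reads $d_1.(1t_1)=t_1.(m1)$, and dually $d_2.(t_2 1)=t_2.(1m)$.

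Assume first that $t_1$ and $t_2$ are isomorphisms, and set $s_1:=e1.t_1^{-1}$ and $s_2:=1e.t_2^{-1}$, which are genuine \cc-morphisms $A^2\to A$ precisely because $t_1$ and $t_2$ are invertible. Feeding $d_1=t_1.(m1).(1t_1^{-1})$ (the rewriting of the first short fusion identity) into diagram~(a) and cancelling the isomorphism $t_1$ shows that (a) is equivalent to $1s_1=m1.(1t_1^{-1}).(t_2^{-1}1)$, and symmetrically that (b) is equivalent to $s_21=1m.(t_2^{-1}1).(1t_1^{-1})$. I would verify these two equalities by a direct string-diagram computation that substitutes the definitions of $s_1$ and $s_2$, cancels $t_1t_1^{-1}=1$ and $t_2t_2^{-1}=1$, and collapses the resulting counit using the counit conditions in \eqref{eq:mbm_ax_1} and \eqref{eq:mbm_ax_2} together with the multiplicativity \eqref{eq:e_multiplicative} of $e$. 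The same ingredients, with associativity of $m$, show that the first diagram of \eqref{eq:mM} commutes for the pair $(s_1,s_2)$, so that $s$ is an \mM-morphism (the remaining diagrams then follow by \cite[Lemma~2.8]{BohmLack:cat_of_mbm}). Moreover $s$ is dense: $t_1^{-1}$ and $t_2^{-1}$ are isomorphisms, $e\in\cq$ by density of the counit, and \cq is closed under monoidal products and composition, so $s_1,s_2\in\cq$.

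For the converse, suppose we are given the \mM-morphism $s$ satisfying (a) and (b). Reading diagram~(a) as $d_1=t_1.\big((1s_1).(t_2 1)\big)$ and using that $d_1\in\cq$ is a regular, hence ordinary, epimorphism, I conclude at once that $t_1$ is an epimorphism, being the leftmost factor of an epimorphism; the analogous reading of (b) shows $t_2$ is an epimorphism. It therefore suffices to produce a retraction $r\colon A^2\to A^2$ with $r.t_1=1$, since a split monomorphism that is also an epimorphism is invertible, and likewise for $t_2$. This construction is the main obstacle. In elementwise terms $r$ must send $a\otimes b$ to $a_{(1)}\otimes S(a_{(2)})b$, and since neither the comultiplication nor the antipode is an honest morphism of \cc, the map $r$ cannot be written down directly; instead I would characterise it through the \mM-calculus. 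Concretely, combining diagram~(a) with the short fusion identity $d_1.(1t_1)=t_1.(m1)$ yields an equation describing the effect of $r$ after a test leg has been multiplied in, and since the components of $d$ lie in \cq and are preserved by the monoidal product, the cancellation criterion of Remark~\ref{rem:f_non-degenerate} pins down $r$ uniquely; the identity $r.t_1=1$ is then a fusion-equation computation.

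Finally, uniqueness runs the existence computation in reverse: once $t_1$ and $t_2$ are known to be invertible, any $s$ satisfying (a) forces $1s_1=m1.(1t_1^{-1}).(t_2^{-1}1)$, so two solutions agree on $1s_1$. Postcomposing $1s_1$ with $e\otimes 1_A$ exhibits it as $s_1.(e\otimes 1_{A^2})$, where $e\otimes 1_{A^2}\in\cq$ is an epimorphism; cancelling it determines $s_1$ uniquely, whence $s=s'$ since an \mM-morphism into the non-degenerate semigroup $A$ is fixed by its first component. As indicated, the one genuinely delicate point is the honest construction of $t_1^{-1}$ from the multiplier-valued antipode in the converse direction.
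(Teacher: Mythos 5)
Your forward direction, and your uniqueness and density arguments, are correct and essentially the paper's own: the paper also defines $s_1:=e1.t_1^{-1}$ and $s_2:=1e.t_2^{-1}$ and verifies (a), (b) and the \mM-morphism condition by diagram chases. (One small inaccuracy: your list of ingredients for that verification — counit conditions and \eqref{eq:e_multiplicative} — is incomplete; the computation hinges on the compatibility axiom \eqref{eq:mbm_ax_compatibility}, which is what lets you interchange $t_21$ and $1t_1$ before cancelling inverses.)

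The converse direction, however, has a genuine gap, precisely at the step you flag as ``the main obstacle''. Your plan is to exhibit $t_1$ as a split monomorphism by building a retraction $r$ with $r.t_1=1$, and you propose to obtain $r$ from Remark~\ref{rem:f_non-degenerate}. But that remark is only a cancellation criterion: it lets you prove that two \emph{already given} morphisms are equal; it cannot manufacture a morphism, and uniqueness of a hypothetical $r$ is not existence. The only construction device available in this setting is descent along a member of \cq (each $q\in\cq$ is a coequalizer of some pair, preserved by tensoring). If you try the natural descent — define $r$ by $r.d_1=1s_1.t_21$, using that (a) reads $d_1=t_1.(1s_1.t_21)$ — you must check that $1s_1.t_21$ coequalizes the pair $(u,v)$ coequalized by $d_1$. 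From (a) you only get $t_1.1s_1.t_21.u=t_1.1s_1.t_21.v$, and cancelling $t_1$ on the left requires $t_1$ to be a monomorphism, which is exactly what you do not yet know. So the construction is circular: the retraction can only be built once monicity of $t_1$ is established, at which point it is superfluous. The paper avoids constructing anything. From (a), the \mM-condition \eqref{eq:mM} and non-degeneracy of $m$ it first derives $s_1.t_1=e1$ (and symmetrically $s_2.t_2=1e$). Then $t_1$ is a \emph{regular} epimorphism, because $d_1\in\cq$ factors as $t_1$ preceded by $1s_1.t_21$, which is an epimorphism ($t_21$ is epi from (b) and $d_2\in\cq$; $1s_1$ is epi since $1s_1.1t_1=1e1$ is). And $t_1$ is a monomorphism, because $1s_1.t_21.1t_1=1s_1.1t_1.t_21=1e1.t_21=m1$ by \eqref{eq:mbm_ax_compatibility}, so the map $f\mapsto t_1.f$ is injective since $f\mapsto m1.1f$ is, by non-degeneracy of $m$. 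A regular epimorphism that is monic is invertible. Replacing your retraction step by this monomorphism-plus-regular-epimorphism argument repairs the proof; the rest of your outline can stand.
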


\begin{proof}
If $t_1$ and $t_2$ are invertible,  define the components of $s$ to be 
\begin{equation}\label{eq:s}
\xymatrix{
s_1:=A^2\ar[r]^-{t_1^{-1}}&
A^2\ar[r]^-{e1} &
A
\qquad \textrm{and}\qquad
s_2:=A^2\ar[r]^-{t_2^{-1}}&
A^2\ar[r]^-{1e} &
A.}
\end{equation}
Commutativity of the first diagram in 
\begin{equation}\label{eq:pr_1}
\xymatrix{
% 1.1
A^3 \ar[rrr]^-{1t_1} \ar[d]^-{t_2^{-1}1} \ar@/_1.3pc/@{=}[ddd] 
\ar@{}[rrd]|-{\eqref{eq:mbm_ax_compatibility}} &&&
% 1.4
A^3 \ar[dd]^-{s_21} \ar[ld]_-{t_2^{-1}1} \\
% 2.1
A^3 \ar[rr]^-{1t_1} \ar[dd]^-{t_21} \ar[rdd]^-{m1} \ar[rrrd]_-{1m} &&
% 2.3
A^3 \ar[rd]^-{1e1} \\
&&&
% 3.4
A^2 \ar[d]^-m \\
% 4.1
A^3 \ar[r]_-{1e1} &
% 4.2
A^2 \ar[rr]_-m \ar@{}[rru]|-{\textrm{(associativity)}} &&
% 4.4
A}
\xymatrix@R=45pt{
% 1.1
A^3 \ar[rrrr]^-{t_21} \ar[dd]_-{d_1} \ar[rrd]^-{1t_1^{-1}}&&
% 1.3
\ar@{}[rd]|-{\eqref{eq:mbm_ax_compatibility}} &&
% 1.5
A^3 \ar[dd]^-{1s_1} \ar[ld]_-{1t_1^{-1}} \\
&&
% 2.3
A^3 \ar[r]^-{t_21} \ar[rrd]_-{m1} \ar@{}[d]|-{\eqref{eq:short_fusion}}&
% 2.4
A^3 \ar[rd]^-{1e1} \\
% 3.1
A^2 &&&&
% 3.5
A^2 \ar[llll]^-{t_1}}
\end{equation}
together with the invertibility of $t_1$ implies
$m.1s_1=m.1e1.1t_1^{-1}=m.s_21$ so that $s_1$ and $s_2$ define an \mM-morphism
$s\colon A\nto A$. Then diagram (a) commutes by the commutativity of the
second diagram of \eqref{eq:pr_1} and a symmetrical reasoning applies to 
diagram (b). 

Suppose conversely that $s$ is an \mM-morphism as in the theorem. 
The first diagram of  
\begin{equation} \label{eq:pr_2}
\xymatrix{
% 1.1
A^3 \ar@{=}[rr] \ar[d]_-{t_21} \ar@{}[rrd]|-{\textrm{(a)}} &&
% 1.3
A^3 \ar[r]^-{1e1} \ar[d]^-{d_1} &
% 1.4
A^2 \ar[dd]^-m \\
% 2.1
A^3 \ar[r]^-{1s_1} \ar[d]_-{s_21} 
\ar@{}[rd]|-{~\eqref{eq:mM}} &
% 2.2
A^2 \ar[r]^-{t_1} \ar[d]^-m &
% 2.3
A^2 \ar[rd]^-{e1} \\
% 3.1
A^2 \ar[r]_-m &
% 3.2
A \ar@{=}[rr] &&
% 3.4
A}
\xymatrix{
% 1.1
A^3 \ar[r]_-{c1} \ar[dd]_-{1e1} \ar@/^1.3pc/[rrrr]^-{d_1}&
% 1.2
A^3 \ar[r]_-{1t_1} \ar[dd]_-{e11} &
% 1.3
A^3 \ar[r]_-{c^{-1}1} \ar[d]^-{e11} &
% 1.4
A^3 \ar[r]_-{m1} \ar[dd]^-{ee1} &
% 1.5
A^2 \ar[dd]^-{e1}\\
&&
% 2.3
A^2 \ar[d]^-{e1} &
% 2.4
\ar@{}[rd]|-{\eqref{eq:e_multiplicative}}\\
% 3.1
A^2 \ar@{=}[r] &
% 3.2
A^2 \ar[r]_-m \ar[ru]^-{t_1}&
% 3.3
A \ar@{=}[r] &
% 3.4
A \ar@{=}[r] &
% 3.5
A. }
\end{equation}
commutes; where the region on the right commutes by the commutativity of 
the second diagram of \eqref{eq:pr_2}. 
By the non-degeneracy of $m$ this implies that the second diagram of 
\begin{equation}\label{eq:s1t1e}
\xymatrix{
A^2 \ar[r]^-{t_1} \ar[rd]_-{e1} &
A^2 \ar[d]^-{s_1} 
&
A^2 \ar[r]^-{t_2} \ar[rd]_-{1e} &
A^2 \ar[d]^-{s_2} \\
&A
&
&A\ }
\end{equation}
commutes, and similarly the first diagram also commutes. 

Now $d_1$ is in \cq by assumption, hence by commutativity of (a), $t_1$ is
certainly an epimorphism preserved by monoidal product, and in fact will be a
regular epimorphism provided that $t_21$ and $1s_1$ are epimorphisms. But
$t_21$ is an epimorphism by commutativity of (b) and the fact that
$d_2\in\cq$, while $1s_1$ will be an epimorphism by commutativity of
\eqref{eq:s1t1e} 
since $e\in \cq$ implies that $1e1$ is an epimorphism. 
Thus $t_1$ is a regular epimorphism; if we can show that it is a monomorphism
then it will be invertible.  

For any object $X$, the function $\cc(X,A^2)\to \cc(AX,A^2)$ which sends $f$
to the equal paths of 
$$\xymatrix{
% 1.1
AX \ar[r]^-{1f} &
% 1.2
A^3 \ar[rr]^-{1t_1} \ar[d]^-{t_21} \ar@/_1.3pc/[dd]_-{m1} 
\ar@{}[rrd]|-{\eqref{eq:mbm_ax_compatibility}} &&
% 1.4
A^3 \ar[d]^-{t_21} \\
&
% 2.2
A^3 \ar[rr]^-{1t_1} \ar[d]^-{1e1} \ar@{}[rrd]|-{\eqref{eq:s1t1e}} &&
% 2.4
A^3 \ar[d]^-{1s_1} \\
& 
% 3.2
A^2 \ar@{=}[rr] &&
% 3.4
A^2}
$$
is injective by the non-degeneracy of $m$; thus the function sending $f$ to
$t_1.f$ is also injective, and so $t_1$ is a monomorphism.  

This proves that $t_1$ is invertible, while $t_2$ is invertible by a symmetric
reasoning.  
Finally the uniqueness and the density of $s$ clearly follow from
the commutativity of \eqref{eq:s1t1e},  since $e\in Q$ and both $t_1$ and
$t_2$ are invertible. 
\end{proof}

\begin{remark} \label{rem:who_is_reg_epi}
Let $\cc$ be a braided monoidal category satisfying our standing assumptions
of Section~\ref{sect:assumptions}. Let $(A,t_1,t_2,e)$ be a multiplier Hopf
monoid in $\cc$ and consider the following assertions.
\begin{itemize}
\item[{(a)}] The counit $e:A\to I$ belongs to \cq. 
\item[{(b)}] The multiplication $m:A^2\to A$  belongs to \cq. 
\item[{(c)}] The component $s_1:A^2\to A$ of the antipode belongs to
  \cq. 
\item[{(d)}] The component $s_2:A^2\to A$ of the antipode  belongs to
  \cq.  
\item[{(e)}] The morphism $d_1:A^3\to A^2$ of \eqref{eq:d_1-2}  belongs
  to \cq. 
\item[{(f)}] The morphism $d_2:A^3\to A^2$ of \eqref{eq:d_1-2}  belongs
  to \cq. 
\end{itemize}
Since $t_1$ is invertible and  $s_1.t_1=e1$, condition (c) is
equivalent to $e1$ belonging to \cq, which follows from (a). Dually (d) is
equivalent to $1e$  belonging to \cq,  and since $1e$ is obtained from $e1$ by
conjugating with the braiding, this is equivalent to (c). Since $m=e1.t_1$, it
belongs to \cq  if and only if  $e1$ does,  thus (b) is equivalent to (c).
Since $d_1=t_1.1s_1. t_2  1$, it  belongs to \cq 
if and only if $1s_1$ does so, thus if and only if $1e1$ does so; this
is in turn equivalent to $d_2$  belonging to \cq, and so (e) is equivalent to
(f), and these follow from (c).  
 
In summary, 
(a) $\Rightarrow$ (b) $\Leftrightarrow$ (c) $\Leftrightarrow$
(d) $\Rightarrow$ (e) $\Leftrightarrow$ (f).  Thus for a multiplier Hopf
monoid $(A,t_1,t_2,e)$ with non-degenerate multiplication, all of the
properties listed in Section \ref{sect:mbm} follow from the single assumption
(a). 
\end{remark}

From Lemma \ref{lem:unit} and Proposition \ref{prop:unit} we obtain the
following. 

\begin{theorem} 
Consider a braided monoidal category $\cc$ satisfying our standing assumptions
of Section~\ref{sect:assumptions}.
For a multiplier Hopf monoid $(A,t_1,t_2,e)$ in $\cc$, the following assertions
are equivalent. 
\begin{itemize}
\item[{(a)}] The multiplication $m:=e1.t_1=1e.t_2$ admits some unit
  $u$, and the counit $e$ is an epimorphism. 
\item[{(b)}] There is a Hopf monoid $A$ with some monoid structure $(m,u)$,
  some comultiplication $h$, and the given counit $e$, such that $t_1=1m.h1$
  and $t_2=m1.1h$.
\end{itemize}
\end{theorem}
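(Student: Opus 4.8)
The plan is to prove the equivalence of (a) and (b) for a multiplier Hopf monoid by combining the already-established Proposition~\ref{prop:unit} (which handles the general multiplier-bimonoid case) with the additional information that $t_1$ and $t_2$ are isomorphisms. The key observation is that, in the Hopf setting, the hypotheses of the two theorems differ only in that Proposition~\ref{prop:unit}(a) demands that the counit $e$ \emph{and} the morphisms $d_1,d_2$ of \eqref{eq:d_1-2} all be epimorphisms, whereas the present statement asks only that $e$ be an epimorphism. So the heart of the argument is to show that, once $t_1$ and $t_2$ are invertible, the condition ``$e$ is an epimorphism'' already forces $d_1$ and $d_2$ to be epimorphisms, at which point the two sets of hypotheses coincide and the equivalence follows verbatim from Proposition~\ref{prop:unit}.

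First I would invoke Remark~\ref{rem:who_is_reg_epi}, which was stated precisely for a multiplier Hopf monoid and establishes the chain of implications (a)~$\Rightarrow$~(b)~$\Leftrightarrow$~(c)~$\Leftrightarrow$~(d)~$\Rightarrow$~(e)~$\Leftrightarrow$~(f) among the six membership-in-$\cq$ conditions there. The relevant consequence for us is weaker but cleaner: the same diagrammatic identities $s_1.t_1=e1$, $m=e1.t_1$, and $d_1=t_1.1s_1.t_21$ show that if $e$ is merely an epimorphism (not necessarily in $\cq$), then so are $e1$, $1e1$, $1s_1$, and hence $d_1$, using only that $t_1,t_2$ are isomorphisms and that the monoidal product with a fixed object preserves epimorphisms. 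A symmetric computation, or an appeal to the opposite via Proposition~\ref{prop:op} and Section~\ref{claim:twist}, gives that $d_2$ is an epimorphism as well. Thus the condition in (a) of the present theorem is equivalent, for a multiplier Hopf monoid, to the stronger-looking condition in Proposition~\ref{prop:unit}(a).

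With that reduction in hand, the proof is essentially immediate: assuming (a) here, the hypotheses of Proposition~\ref{prop:unit}(a) hold, so Proposition~\ref{prop:unit} yields a bimonoid structure $(m,u,h,e)$ on $A$ with $t_1=1m.h1$ and $t_2=m1.1h$. To promote this bimonoid to a Hopf monoid I would exhibit the ordinary antipode as arising from the multiplier antipode $s$ of Theorem~\ref{thm:Hopf}: since a unit $u$ now exists, the $\mM$-morphism $s\colon A\nto A$ is of the form $S^\#$ for a genuine morphism $S\colon A\to A$ (by the unital specialization of the non-degeneracy machinery, just as $d$ became $h^\#$ in Proposition~\ref{prop:unit}), and the antipode axioms $m.1S.h=u.e=m.S1.h$ follow from the defining squares (a) and (b) of Theorem~\ref{thm:Hopf} after precomposing with the unit to strip off the multiplier legs. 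Conversely, (b)~$\Rightarrow$~(a) is the easy direction: a Hopf monoid is in particular a bimonoid with unit, so Proposition~\ref{prop:unit}(b)~$\Rightarrow$~(a) already gives that $e$ is a (split) epimorphism, completing the equivalence.

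The main obstacle I anticipate is bookkeeping rather than conceptual: one must check that reducing the hypothesis from ``$e,d_1,d_2$ epi'' to ``$e$ epi'' is legitimate when the ambient $\cq$-membership assumptions of Section~\ref{sect:mbm} are separately in force, and that the passage from the multiplier antipode $s$ to an honest antipode $S\colon A\to A$ correctly produces the classical convolution-inverse identities. Care is needed because the equalities $s_1.t_1=e1$ live at the level of components of $\mM$-morphisms, so extracting the pointwise antipode equations requires precomposing with $u$ and using non-degeneracy (or Remark~\ref{rem:f_non-degenerate}) to cancel the remaining $A$-legs; this is routine but is where a sign- or leg-tracking slip is most likely.
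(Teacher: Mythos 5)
Your overall strategy --- reduce the Hopf-monoid statement to Proposition~\ref{prop:unit} by showing that, for a multiplier Hopf monoid, hypothesis (a) already forces $d_1$ and $d_2$ to be epimorphisms --- is exactly the paper's, and your treatment of (b)$\Rightarrow$(a) and of the final upgrade from bimonoid to Hopf monoid is sound (the paper simply cites invertibility of $t_1,t_2$ at that point, where you construct the antipode $S$ explicitly from $s$; both are fine). However, your reduction step has a genuine gap: you deduce that $e1$, $1e1$, $1s_1$ and hence $d_1$ are epimorphisms from the assumption that ``the monoidal product with a fixed object preserves epimorphisms.'' This is \emph{not} among the standing assumptions of Section~\ref{sect:assumptions}, and it fails in general braided monoidal categories. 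The standing assumptions only guarantee closure under the monoidal product for the distinguished class \cq of regular epimorphisms (which is precisely why Remark~\ref{rem:who_is_reg_epi}, phrased in terms of \cq-membership, works); they give no control whatsoever over $e1$ when $e$ is merely an epimorphism. So from ``$e$ epi'' alone your chain of implications collapses at its first link.

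The repair is available from the hypotheses, and it is what the paper does: hypothesis (a) also provides a unit $u$ for $m$, so $A$ is a monoid and $e^\#=(e,e)\colon A\nto I$ is a multiplicative \mM-morphism between monoids. Lemma~\ref{lem:unit} (a)$\Rightarrow$(b) (equivalently (a)$\Rightarrow$(e), giving $e.u=1$) then shows that $e$ is a \emph{split} epimorphism. Split epimorphisms are preserved by every functor, in particular by tensoring, so $1e1$ is an epimorphism, hence so are $1s_1=1e1.1t_1^{-1}$ and $d_1=t_1.1s_1.t_21$, which differ from it by isomorphisms; finally $d_2$ is an epimorphism by Lemma~\ref{lem:unit} (a)$\Rightarrow$(c) applied to the multiplicative \mM-morphism $d$. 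With this substitution --- splitness via the unit, rather than an unjustified preservation property --- your argument closes and coincides with the paper's proof.
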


\begin{proof}
(b) implies (a) by Proposition \ref{prop:unit}.

Conversely, if (a) holds then applying Lemma \ref{lem:unit}
(a)$\Rightarrow$(b) to the multiplicative $\mM$-morphism $(e,e):A \nto I$ we
see that $e$ is a split epimorphism. Then $1e1:A^3\to A^2$ is an
epimorphism as well and therefore so are $1s_1=1e1.1t_1^{-1}$ and
$d_1=t_1.1s_1.t_21$ differing from it by isomorphisms. Then also $d_2$ is an
epimorphism by Lemma \ref{lem:unit} (a)$\Rightarrow$(c) and thus we can apply
Proposition \ref{prop:unit} to conclude that $A$ is a bimonoid whose
multiplication $m$ and comultiplication $h$ obey $t_1=1m.h1$ and
$t_2=m1.1h$. Since $t_1$ and $t_2$ are isomorphisms by assumption, $A$ is a
Hopf monoid proving (b). 
\end{proof}

\begin{theorem} \label{thm:s_mbm_morphism}
Consider a braided monoidal category $\cc$ satisfying our standing assumptions
of Section~\ref{sect:assumptions}. Let $(A,t_1,t_2,e)$ be a multiplier Hopf
monoid in $\cc$ with non-degenerate multiplication and dense
counit. Then its antipode $s$ is a morphism of multiplier bimonoids
$(A,c.t_2.c^{-1},$ $c.t_1.c^{-1},e) \to (A,t_1,t_2,e)$ in the sense of
Paragraph~\ref{claim:morphism}.  
\end{theorem}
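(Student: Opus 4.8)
The plan is to check directly that $s$ satisfies the definition of a morphism of multiplier bimonoids given in Paragraph~\ref{claim:morphism}. Since the target $A$ has dense counit (by hypothesis) and hence, by Remark~\ref{rem:who_is_reg_epi}, dense comultiplication, that definition reduces to three things: that $s$ is a dense multiplicative $\mM$-morphism from the underlying semigroup $A\op$ of the twisted bimonoid to the underlying semigroup $A$ of the target; that $e.s_1=ee$ (the first diagram of \eqref{eq:mbm_morphism}); and that one of the two equivalent comultiplication diagrams of \eqref{eq:mbm_morphism} commutes, now read with the source fusion operators $t_2'=c.t_1.c^{-1}$ and $t_1'=c.t_2.c^{-1}$ of the twisted bimonoid and the operators $t_1,t_2$ of the target. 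Density of $s$ is already part of Theorem~\ref{thm:Hopf}, so the work lies in multiplicativity and the two displayed conditions.

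The counit condition is immediate: using $s_1=e1.t_1^{-1}$ from \eqref{eq:s} together with $e.e1=ee$, one has $e.s_1=ee.t_1^{-1}$, and postcomposing the counit condition $1e.t_1=1e$ (the second diagram of \eqref{eq:mbm_ax_1}) with $e$ gives $ee.t_1=ee$, whence $e.s_1=ee$. The engine for the remaining conditions will be a pair of identities obtained by combining the short fusion equations \eqref{eq:short_fusion} with the antipode axioms of Theorem~\ref{thm:Hopf}. Indeed, the first short fusion equation reads $t_1.m1=d_1.1t_1$ (its right-hand composite being exactly the $d_1$ of \eqref{eq:d_1-2} followed by $1t_1$), while axiom~(a) gives $d_1=t_1.1s_1.t_21$; cancelling the invertible $t_1$, and arguing symmetrically from the second short fusion equation and axiom~(b), yields
$$ m1=1s_1.t_21.1t_1 \qquad\textrm{and}\qquad 1m=s_21.1t_1.t_21. $$

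For multiplicativity I would verify the condition \eqref{eq:multiplicative} for $s\colon A\op\nto A$, which asks that $s_1$ turn the opposite multiplication $m.c^{-1}$ into the iterated application $s_1.1s_1$ (equivalently its $s_2$-version); this is the anti-multiplicativity of the antipode. The plan is to feed the displayed identities and the fusion equations into the module-type relations carried by the $\mM$-morphism structure of $s$ — the diagrams \eqref{eq:mM}, which give $m.1s_1=m.s_21$, $s_1.1m=m.s_11$ and $s_2.m1=m.1s_2$ — and to discharge the residual multiplier ambiguity (which is where the absence of a unit is felt) by the cancellation in Remark~\ref{rem:f_non-degenerate}, applicable since density makes $s_1$ and $s_2$ epimorphisms stable under the monoidal product.

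The main obstacle is the comultiplication compatibility. Here I would first make the source comultiplication explicit: by Paragraph~\ref{claim:twist} the comultiplication of the twisted bimonoid is $c^\#\bullet d\op$, with $d\op$ computed from $d$ by Proposition~\ref{prop:op}, so what must be checked is the comonoid-morphism square expressing that $s$ carries $d$ to this reversed comultiplication. I would translate this into a single identity of \cc-morphisms $A^4\to A^2$ and verify it by a string-diagram computation, substituting the two displayed identities, using the fusion equations \eqref{eq:mbm_ax_1}--\eqref{eq:mbm_ax_2} and naturality of the braiding to slide the antipode components past the crossings, and finally cancelling via Remark~\ref{rem:f_non-degenerate}. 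This four-strand calculation, rather than the counit or multiplicativity steps, is where the real bookkeeping lies; I expect it to be the hardest part and the place where care with the source-versus-target roles of $t_1,t_2,t_1',t_2'$ is essential.
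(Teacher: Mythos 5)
Your overall strategy coincides with the paper's: reduce via Remark~\ref{rem:who_is_reg_epi} and Paragraph~\ref{claim:twist} to showing that $s$ is a dense multiplicative $\mM$-morphism $A\op\nto A$ that preserves the counit and the comultiplication (the latter in the form $d\bullet s = ss\bullet c^\#\bullet d\op$), with the formula $s_1=e1.t_1^{-1}$ as the main computational handle. Everything you actually wrote out is correct: the counit argument is exactly the paper's, and your two identities $m1=1s_1.t_21.1t_1$ and $1m=s_21.1t_1.t_21$ are valid consequences of the short fusion equations \eqref{eq:short_fusion} and diagrams (a), (b) of Theorem~\ref{thm:Hopf}.

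However, the proposal stops where the real work begins, so there is a genuine gap of execution. For multiplicativity you only list ingredients (the \eqref{eq:mM} relations, the derived identities, Remark~\ref{rem:f_non-degenerate}); in fact no multiplier cancellation is needed there --- the paper's verification is a direct computation $s_1.m1.c^{-1}1 = e1.t_1^{-1}.m1.c^{-1}1=\dots=s_1.1s_1$ using only \eqref{eq:short_fusion}, multiplicativity of the counit \eqref{eq:e_multiplicative}, and naturality of the braiding, so your proposed detour is workable but heavier than necessary. More seriously, the comultiplication compatibility --- which occupies essentially the whole of the paper's proof --- is deferred entirely to ``a string-diagram computation'' that is never exhibited. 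That computation is not routine bookkeeping: one must first use \eqref{eq:lin_one_leg} and non-degeneracy to reduce the comparison of $(d\bullet s)_1$ with the first component of $ss\bullet c^\#\bullet d\op$ to a concrete identity (the paper's \eqref{eq:coass}, an equality of morphisms $A^5\to A^2$, not $A^4\to A^2$ as you estimate); then compute $(d\bullet s)_1=1s_1.c1.t_1^{-1}1$ via the fusion equation \eqref{eq:mbm_ax_1}; and finally establish \eqref{eq:coass} by a chain of equalities invoking multiplicativity of $s$, the defining property of $s_1$, the compatibility \eqref{eq:mbm_ax_compatibility}, and the definition of $m$. Since none of this appears, what you have is a correct plan with the right tools and correct preliminary lemmas, but the central calculation that constitutes the proof is missing.
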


\begin{proof}
By Remark \ref{rem:who_is_reg_epi}, also the multiplication $m$ and the
components $d_1$ and $d_2$ of the comultiplication belong to \cq. 
In light of Paragraph~\ref{claim:twist}, it will suffice to show that $s$
defines a morphism of comonoids in \cm, from $A\op\cop$ to $A$. We already
know that $s$ is a dense $\mM$-morphism; it is multiplicative by commutativity
of the diagram   
$$\xymatrix{
% 1.1
A^3 \ar[r]^-{c^{-1}1} \ar[d]^-{1t_1^{-1}} \ar@/_1.3pc/[dd]_-{1s_1} 
\ar@{}[rrrd]|-{\eqref{eq:short_fusion}}&
% 1.2
A^3 \ar[rr]^-{m1} &&
% 1.4
A^2 \ar[d]_-{t_1^{-1}} \ar@/^1.3pc/[dd]^-{s_1} \\
% 2.1
A^3 \ar[r]^-{c^{-1}1} \ar[d]^-{1e1} &
% 2.2
A^3 \ar[r]^-{1t_1^{-1}} \ar[d]^-{e11} &
% 2.3
A^3 \ar[r]^-{m1} \ar[d]_-{e11} \ar@{}[rd]|-{\eqref{eq:e_multiplicative}}&
% 2.4
A^2 \ar[d]_-{e1} \\
% 3.1
A^2 \ar@{=}[r] &
% 3.2
A^2 \ar[r]^-{t_1^{-1}} \ar@/_1.2pc/[rr]_-{s_1} &
% 3.3
A^2 \ar[r]^-{e1} &
% 3.4
A}$$
and so is at least a morphism in \cm. Next we should show that it preserves
the comonoid structure. Preservation of the counit follows by commutativity of
the diagram 
$$
\xymatrix{
A^2 \ar[r]_-{t_1^{-1}} \ar[d]_-{1e} \ar@/^1.3 pc/[rr]^-{s_1} 
\ar@{}[rd]|-{\eqref{eq:mbm_ax_1}} &
A^2 \ar[r]_-{e1} \ar[d]^-{1e} &
A\ar[d]^-e \\
A \ar@{=}[r] &
A \ar[r]_-e &
I}
$$
and so it remains to show that $s$ preserves the comultiplication.

We do this using string diagrams, as in Section~\ref{sect:strings}. 
Introduce the shorthand notation $f:= d\bullet s:A\nto A^2$, so that
$f_1.1d_1=d_1.s_111$. Then 
$$
\begin{tikzpicture} % pic1
\path
(0,3) node[empty,name=in1] {}
(0.5,3) node[empty,name=in2] {}
(1,3) node[empty,name=in3] {} 
(1.5,3) node[empty,name=in4] {}
(1,0) node[empty,name=out1] {}
(1.5,0) node[empty,name=out2] {}
(1.0,0.5) node[empty,name=m] {}
(1.5,2.5) node[empty,name=x] {};
\path 
(1.0,1.5) node[arr,name=ds1] {$f_1$}
(1.0,2.5) node[arr,name=t1] {$t_1$};
\draw[braid] (in1) to[out=270,in=135] (ds1);
\draw[braid] (in2) to[out=270,in=135] (t1);
%\draw[braid] (in) to[out=270,in=135] (t1);
\draw[braid] (t1) to[out=225,in=90] (ds1);
%\draw[braid] (m) to (ds1);
\path[braid, name path=bin4] (in4) to[out=270,in=45] (t1);
\path[braid, name path=t1ds1] (t1) to[out=315,in=45] (ds1);
\path[braid, name path=bout2] (ds1) to[out=315,in=90] (out2);
\draw[braid, name path=bin3] (in3) to[out=270,in=90] (x) to[out=270,in=0] (m);
\fill[white,name intersections={of=bin3 and bin4}] (intersection-1) circle(0.1);
\fill[white,name intersections={of=bin3 and bout2}] (intersection-1) circle(0.1);
\draw[braid] (in4) to[out=270,in=45] (t1);
\draw[braid] (t1) to[out=315,in=45] (ds1);
\draw[braid] (ds1) to[out=225,in=180] (m);
\draw[braid] (ds1) to[out=315,in=90] (out2);
\draw[braid] (m) to[out=270,in=90] (out1);
\draw (2,1.5) node[empty] {$\stackrel{\eqref{eq:lin_one_leg}}=$};
\end{tikzpicture}
\begin{tikzpicture} % pic2
\path
(0,3) node[empty,name=in1] {}
(0.5,3) node[empty,name=in2] {}
(1,3) node[empty,name=in3] {} 
(1.5,3) node[empty,name=in4] {}
(0.5,0) node[empty,name=out1] {}
(1.5,0) node[empty,name=out2] {}
(1.0,1.5) node[empty,name=m] {}
(1.5,2.5) node[empty,name=x] {};
\path 
(1.0,1.0) node[arr,name=ds1] {$f_1$}
(1.0,2.5) node[arr,name=t1] {$t_1$};
\draw[braid] (in1) to[out=270,in=135] (ds1);
\draw[braid] (in2) to[out=270,in=135] (t1);
%\draw[braid] (in) to[out=270,in=135] (t1);
\draw[braid] (t1) to[out=225,in=180] (m);
\draw[braid] (m) to (ds1);
\path[braid, name path=bin4] (in4) to[out=270,in=45] (t1);
\path[braid, name path=t1ds1] (t1) to[out=315,in=45] (ds1);
\draw[braid, name path=bin3] (in3) to[out=270,in=90] (x) to[out=270,in=0] (m);
\fill[white,name intersections={of=bin3 and bin4}] (intersection-1) circle(0.1);
\fill[white,name intersections={of=bin3 and t1ds1}] (intersection-1) circle(0.1);
\draw[braid] (in4) to[out=270,in=45] (t1);
\draw[braid] (t1) to[out=315,in=45] (ds1);
\draw[braid] (ds1) to[out=225,in=90] (out1);
\draw[braid] (ds1) to[out=315,in=90] (out2);
\draw (2,1.5) node[empty] {$=$};
\end{tikzpicture}
\begin{tikzpicture} % pic3
\path
(0,3) node[empty,name=in1] {}
(0.5,3) node[empty,name=in2] {}
(1,3) node[empty,name=in3] {} 
(1.5,3) node[empty,name=in4] {}
(0.5,0) node[empty,name=out1] {}
(1.5,0) node[empty,name=out2] {}
(0.5,0.5) node[empty,name=m] {}
(1.5,1) node[empty,name=x] {};
\path 
(0.25,2) node[arr,name=s1] {$s_1$}
(0.5,1) node[arr,name=t1] {$t_1$};
\draw[braid] (in1) to[out=270,in=135] (s1);
\draw[braid] (in2) to[out=270,in=45] (s1);
\draw[braid] (s1) to[out=270,in=135] (t1);
\draw[braid] (t1) to[out=225,in=180] (m);
\draw[braid] (m) to (out1);
\path[braid, name path=bin4] (in4) to[out=270,in=45] (t1);
\path[braid, name path=bout2] (t1) to[out=315,in=90] (out2);
\draw[braid, name path=bin3] (in3) to[out=270,in=90] (x) to[out=270,in=0] (m);
\fill[white,name intersections={of=bin3 and bin4}] (intersection-1) circle(0.1);
\fill[white,name intersections={of=bin3 and bout2}] (intersection-1) circle(0.1);
\draw[braid] (in4) to[out=270,in=45] (t1);
\draw[braid] (t1) to[out=315,in=90] (out2);
\end{tikzpicture} \quad .
$$
By non-degeneracy of the multiplication the resulting equality 
is in turn equivalent to the equality  
$$ 
\begin{tikzpicture} % pic1
\path
(.25,2) node[empty,name=in1] {}
(0.75,2) node[empty,name=in2] {}
(1.25,2) node[empty,name=in4] {}
(0.75,0) node[empty,name=out1] {}
(1.25,0) node[empty,name=out2] {};
\path 
(1.0,0.5) node[arr,name=ds1] {$f_1$}
(1.0,1.5) node[arr,name=t1] {$t_1$};
\draw[braid] (in1) to[out=270,in=135] (ds1);
\draw[braid] (in2) to[out=270,in=135] (t1);
\draw[braid] (t1) to[out=225,in=90] (ds1);
\path[braid, name path=bin4] (in4) to[out=270,in=45] (t1);
\path[braid, name path=t1ds1] (t1) to[out=315,in=45] (ds1);
\path[braid, name path=bout2] (ds1) to[out=315,in=90] (out2);
\draw[braid] (in4) to[out=270,in=45] (t1);
\draw[braid] (t1) to[out=315,in=45] (ds1);
\draw[braid] (ds1) to[out=225,in=90] (out1);
\draw[braid] (ds1) to[out=315,in=90] (out2);
\draw (1.7,1.0) node[empty] {$=$};
\end{tikzpicture}
\begin{tikzpicture} % pic2
\path
(0,2) node[empty,name=in1] {}
(0.5,2) node[empty,name=in2] {}
(1,2) node[empty,name=in4] {}
(0.25,0) node[empty,name=out1] {}
(.75,0) node[empty,name=out2] {};
\path 
(0.25,1.5) node[arr,name=s1] {$s_1$}
(0.5,0.5) node[arr,name=t1] {$t_1$};
\draw[braid] (in1) to[out=270,in=135] (s1);
\draw[braid] (in2) to[out=270,in=45] (s1);
\draw[braid] (s1) to[out=270,in=135] (t1);
\draw[braid] (t1) to[out=225,in=90] (out1);
\draw[braid] (in4) to[out=270,in=45] (t1);
\draw[braid] (t1) to[out=315,in=90] (out2);
\end{tikzpicture} 
\quad  .
$$
We may now use the inverse of $t_1$ and the formula $s_1=e1.t^{-1}_1$ to write
$f_1= (d\bullet s)_1$ as the expression on the left below 
$$
\begin{tikzpicture} % pic1
\path 
(0,3) node[empty,name=in1] {}
(1,3) node[empty,name=in2] {} 
(2,3) node[empty,name=in3] {}
(1,0) node[empty,name=out1] {}
(2,0) node[empty,name=out2] {};
\path
(1.5,2.5) node[arr,name=tinv1] {${t^{\scriptscriptstyle-1}_1}$} % {${}_{t^{-1}_1}$}
(0.5,1.5) node[arr,name=tinv2]  {${t^{\scriptscriptstyle-1}_1}$}  % {${}_{t^{-1}_1}$}
(1.5,0.5) node[arr,name=t1] {$\,t_1\,$}
(0,0.5) node[unit,name=e] {}; 
\draw[braid] (in1) to[out=270,in=135] (tinv2);
\draw[braid] (in2) to[out=270,in=135] (tinv1);
\draw[braid] (in3) to[out=270,in=45] (tinv1);
\draw[braid] (tinv1) to[out=225,in=45] (tinv2);
\draw[braid] (tinv1) to[out=315,in=45] (t1);
\draw[braid] (tinv2) to[out=225,in=45] (e);
\draw[braid] (tinv2) to[out=315,in=135] (t1);
\draw[braid] (t1) to[out=225,in=90] (out1);
\draw[braid] (t1) to[out=315,in=90] (out2);
\draw (2.5,1.5) node[empty] {$=$};
\end{tikzpicture}
\begin{tikzpicture} % pic2
\path 
(0,3) node[empty,name=in1] {}
(1,3) node[empty,name=in2] {} 
(2,3) node[empty,name=in3] {}
(0.5,0) node[empty,name=out1] {}
(2,0) node[empty,name=out2] {};
\path
(0.5,2.5) node[arr,name=tinv1] {${t^{\scriptscriptstyle-1}_1}$}
(1.5,1.5) node[arr,name=tinv2] {${t^{\scriptscriptstyle-1}_1}$}
(1,0.5) node[unit,name=e] {};
\draw[braid] (in1) to[out=270,in=135] (tinv1);
\draw[braid] (in2) to[out=270,in=45] (tinv1);
\draw[braid] (in3) to[out=270,in=45] (tinv2);
\path[braid, name path=tinv1tinv2] (tinv1) to[out=225,in=135] (tinv2);
\draw[braid, name path=bout1] (tinv1) to[out=315,in=90] (out1);
\fill[white,name intersections={of=tinv1tinv2 and bout1}] (intersection-1) circle(0.1);
\draw[braid] (tinv1) to[out=225,in=135] (tinv2);
\draw[braid] (tinv2) to[out=225,in=45] (e);
\draw[braid] (tinv2) to[out=315,in=90] (out2);
\end{tikzpicture}
$$
which, by the fusion equation \eqref{eq:mbm_ax_1}, is equal to
the expression on the right. Using the formula $s_1=e1.t^{-1}_1$ once again,
we see that $(d\bullet s)_1$ is given by the composite  
$$\xymatrix{
A^3 \ar[r]^{t^{-1}_11} & A^3 \ar[r]^{c1} & A^3 \ar[r]^{1s_1} & A^2. }$$
We must show that this is equal to the first component of $ss\bullet c^\#\bullet
d\op$; in other words, we must show that the equality  
\begin{equation}\label{eq:coass}
\begin{tikzpicture} % pic1 
\path
(0,3) node[empty,name=in1] {}
(0.5,3) node[empty,name=in2] {}
(1.0,3) node[empty,name=in3] {}
(1.5,3) node[empty,name=in4] {}
(2.0,3) node[empty,name=in5] {}
(0.5,0) node[empty,name=out1] {}
(1.5,0) node[empty,name=out2] {};
\path 
(0.5,2) node[arr,name=d2] {$d_2$}
(0.5,1) node[arr,name=s1l]  {$\,s_1\,$}
(1.5,1) node[arr,name=s1r] {$\,s_1\,$};
\draw[braid, name path=bin1] (in1) to[out=315,in=45] (d2);
\path[braid, name path=bin2] (in2) to[out=225,in=135] (d2);
\path[braid, name path=bin3] (in3) to[out=225,in=90] (d2);
\fill[white,name intersections={of=bin1 and bin2}] (intersection-1) circle(0.1);
\fill[white,name intersections={of=bin1 and bin3}] (intersection-1) circle(0.1);
\draw[braid] (in2) to[out=225,in=135] (d2);
\draw[braid] (in3) to[out=225,in=90] (d2);
\path[braid, name path=d2s1r] (d2) to[out=225,in=135] (s1r);
\draw[braid, name path=d2s1l] (d2) to[out=315,in=135] (s1l);
\draw[braid, name path=bin4] (in4) to[out=270,in=45] (s1l);
\fill[white,name intersections={of=d2s1r and d2s1l}] (intersection-1) circle(0.1);
\fill[white,name intersections={of=d2s1r and bin4}] (intersection-1) circle(0.1);
\draw[braid] (in5) to[out=270,in=45] (s1r);
\draw[braid] (d2) to[out=225,in=135] (s1r);
\draw[braid] (s1l) to (out1);
\draw[braid] (s1r) to (out2);
\draw (2.5,1.5) node[empty] {$=$};
\end{tikzpicture}
\begin{tikzpicture} % pic2
\path
(0,3) node[empty,name=in1] {}
(0.5,3) node[empty,name=in2] {}
(1.0,3) node[empty,name=in3] {}
(1.5,3) node[empty,name=in4] {}
(2.0,3) node[empty,name=in5] {}
(0.5,0) node[empty,name=out1] {}
(1.5,0) node[empty,name=out2] {};
\path 
%(0.5,2) node[arr,name=d2] {$d_2$}
(0.5,1.5) node[arr,name=tinv] {${t^{\scriptscriptstyle -1}_1}$}
(0.5,2.25) node[arr,name=s1l]  {$\,s_1\,$}
(1.5,2.25) node[arr,name=s1r] {$\,s_1\,$}
(1.5,0.5) node[arr,name=s1d] {$\,s_1\,$};
\draw[braid, name path=bin1] (in1) to[out=270,in=135] (tinv);
\path[braid, name path=bin2] (in2) to[out=270,in=135] (s1r);
\draw[braid, name path=bin3] (in3) to[out=270,in=135] (s1l);
\draw[braid, name path=bin4] (in4) to[out=270,in=45] (s1l);
\draw[braid] (in5) to[out=270,in=45] (s1r);
\fill[white,name intersections={of=bin4 and bin2}] (intersection-1) circle(0.1);
\fill[white,name intersections={of=bin2 and bin3}] (intersection-1) circle(0.1);
\draw[braid] (in2) to[out=270,in=135] (s1r);
\draw[braid] (s1l) to[out=270,in=45] (tinv);
\draw[braid] (s1r) to[out=270,in=45] (s1d);
\path[braid, name path=tinvs1d] (tinv) to[out=225,in=135] (s1d);
\draw[braid, name path=bout1] (tinv) to[out=315,in=90] (out1);
\fill[white,name intersections={of=bout1 and tinvs1d}] (intersection-1) circle(0.1);
\draw[braid] (tinv) to[out=225,in=135] (s1d);
\draw[braid] (s1d) to (out2);
\end{tikzpicture}
\end{equation}
holds. By the definition of $d_2$ and routine braid calculations the left hand
side of this is equal to the left hand side of the following chain of
calculations.  
$$
\begin{tikzpicture} % pic1 
\path
(.3,3) node[empty,name=in1] {}
(0.7,3) node[empty,name=in2] {}
(1.0,3) node[empty,name=in3] {}
(1.5,3) node[empty,name=in4] {}
(2.0,3) node[empty,name=in5] {}
(1,1.5) node[empty,name=m] {} 
(0.5,0) node[empty,name=out1] {}
(1.5,0) node[empty,name=out2] {};
\path 
(0.5,2.5) node[arr,name=t2] {$\,t_2\,$}
(0.5,0.5) node[arr,name=s1l]  {$\,s_1\,$}
(1.5,0.5) node[arr,name=s1r] {$\,s_1\,$};
\draw[braid, name path=bin1] (in1) to[out=315,in=45] (t2);
\path[braid, name path=bin2] (in2) to[out=225,in=135] (t2);
\fill[white,name intersections={of=bin1 and bin2}] (intersection-1) circle(0.1);
\draw[braid] (in2) to[out=225,in=135] (t2);
\path[braid, name path=bin3] (in3) to[out=270,in=180] (m);
\path[braid, name path=t2s1r] (t2) to[out=225,in=135] (s1r);
\draw[braid, name path=t2m] (t2) to[out=315,in=0] (m);
\fill[white,name intersections={of=t2m and bin3}] (intersection-1) circle(0.1);
\draw[braid] (in3) to[out=270,in=180] (m);
\draw[braid, name path=ms1l] (m) to[out=270,in=135] (s1l);
\draw[braid, name path=bin4] (in4) to[out=270,in=45] (s1l);
\fill[white,name intersections={of=t2s1r and ms1l}] (intersection-1) circle(0.1);
\fill[white,name intersections={of=t2s1r and bin4}] (intersection-1) circle(0.1);
\draw[braid] (t2) to[out=225,in=135] (s1r);
\draw[braid] (in5) to[out=270,in=45] (s1r);
\draw[braid] (s1l) to (out1);
\draw[braid] (s1r) to (out2);
\draw (2.5,1.5) node[empty] {$=$};
\end{tikzpicture}
\begin{tikzpicture} % pic2
\path
(.3,3) node[empty,name=in1] {}
(0.7,3) node[empty,name=in2] {}
(1.0,3) node[empty,name=in3] {}
(1.5,3) node[empty,name=in4] {}
(2.0,3) node[empty,name=in5] {}
%(1,1.5) node[empty,name=m] {} 
(0.5,0) node[empty,name=out1] {}
(1.5,0) node[empty,name=out2] {};
\path 
(0.5,2.5) node[arr,name=t2] {$\,t_2\,$}
(1.25,2.5) node[arr,name=s1u] {$\,s_1\,$}
(0.5,0.5) node[arr,name=s1l]  {$\,s_1\,$}
(1.5,0.5) node[arr,name=s1r] {$\,s_1\,$};
\draw[braid, name path=bin1] (in1) to[out=315,in=45] (t2);
\path[braid, name path=bin2] (in2) to[out=225,in=135] (t2);
\fill[white,name intersections={of=bin1 and bin2}] (intersection-1) circle(0.1);
\draw[braid] (in2) to[out=225,in=135] (t2);
%\path[braid, name path=bin3] (in3) to[out=270,in=180] (m);
\path[braid, name path=t2s1r] (t2) to[out=225,in=135] (s1r);
\draw[braid, name path=t2s1l] (t2) to[out=315,in=135] (s1l);
%\fill[white,name intersections={of=t2m and bin3}] (intersection-1) circle(0.1);
\draw[braid] (in3) to[out=270,in=135] (s1u);
\draw[braid, name path=s1us1l] (s1u) to[out=270,in=45] (s1l);
\draw[braid, name path=bin4] (in4) to[out=270,in=45] (s1u);
\fill[white,name intersections={of=t2s1r and s1us1l}] (intersection-1) circle(0.1);
\fill[white,name intersections={of=t2s1r and t2s1l}] (intersection-1) circle(0.1);
\draw[braid] (t2) to[out=225,in=135] (s1r);
\draw[braid] (in5) to[out=270,in=45] (s1r);
\draw[braid] (s1l) to (out1);
\draw[braid] (s1r) to (out2);
\draw (2.5,1.5) node[empty] {$=$};
\end{tikzpicture}
\begin{tikzpicture} % pic3
\path
(.3,3) node[empty,name=in1] {}
(0.7,3) node[empty,name=in2] {}
(1.0,3) node[empty,name=in3] {}
(1.5,3) node[empty,name=in4] {}
(2.0,3) node[empty,name=in5] {}
(0.2,1.5) node[empty,name=x] {} 
(0.5,0) node[empty,name=out1] {}
(1.5,0) node[empty,name=out2] {};
\path 
(0.5,2.5) node[arr,name=t2] {$\,t_2\,$}
(1.25,2.5) node[arr,name=s1u] {$\,s_1\,$}
(1,1.8) node[arr,name=tinv]  {${t^{\scriptscriptstyle-1}_1}$}
(0.5,1.5) node[unit,name=e] {}
(1.5,0.5) node[arr,name=s1r] {$\,s_1\,$};
\draw[braid, name path=bin1] (in1) to[out=315,in=45] (t2);
\path[braid, name path=bin2] (in2) to[out=225,in=135] (t2);
\fill[white,name intersections={of=bin1 and bin2}] (intersection-1) circle(0.1);
\draw[braid] (in2) to[out=225,in=135] (t2);
\path[braid, name path=t2s1r] (t2) to[out=225,in=90] (x) to[out=270,in=135] (s1r);
\draw[braid, name path=t2tinv] (t2) to[out=315,in=135] (tinv);
\draw[braid] (in3) to[out=270,in=135] (s1u);
\draw[braid] (tinv) to (e);
\draw[braid, name path=bout1] (tinv) to[out=315,in=90] (out1);
\draw[braid, name path=s1utinv] (s1u) to[out=270,in=45] (tinv);
\draw[braid, name path=bin4] (in4) to[out=270,in=45] (s1u);
\fill[white,name intersections={of=t2s1r and bout1}] (intersection-1) circle(0.1);
%\fill[white,name intersections={of=t2s1r and t2s1l}] (intersection-1) circle(0.1);
\draw[braid] (t2) to[out=225,in=90] (x) to[out=270,in=135] (s1r);
\draw[braid] (in5) to[out=270,in=45] (s1r);
\draw[braid] (s1r) to (out2);
\draw (2.5,1.5) node[empty] {$=$};
\end{tikzpicture}
\begin{tikzpicture} % pic4
\path
(0.3,3) node[empty,name=in1] {}
(0.8,3) node[empty,name=in2] {}
(1.0,3) node[empty,name=in3] {}
(1.5,3) node[empty,name=in4] {}
(1.8,3) node[empty,name=in5] {}
(0.4,1.0) node[empty,name=x] {} 
(0.5,0) node[empty,name=out1] {}
(1.5,0) node[empty,name=out2] {};
\path 
(0.5,1.3) node[arr,name=t2] {$\,t_2\,$}
(1.25,2.5) node[arr,name=s1u] {$\,s_1\,$}
(1,1.8) node[arr,name=tinv]  {${t^{\scriptscriptstyle-1}_1}$}
(0.75,1) node[unit,name=e] {}
(1.5,0.5) node[arr,name=s1r] {$\,s_1\,$};
\draw[braid, name path=bin1] (in1) to[out=315,in=135] (tinv);
\path[braid, name path=bin2] (in2) to[out=225,in=135] (t2);
\fill[white,name intersections={of=bin1 and bin2}] (intersection-1) circle(0.1);
\draw[braid] (in2) to[out=225,in=135] (t2);
\path[braid, name path=t2s1r] (t2) to[out=225,in=135] (s1r);
\draw[braid, name path=t2tinv] (t2) to[out=45,in=225] (tinv);
\draw[braid] (in3) to[out=270,in=135] (s1u);
\draw[braid] (t2) to (e);
\draw[braid, name path=bout1] (tinv) to[out=315,in=90] (out1);
\draw[braid, name path=s1utinv] (s1u) to[out=270,in=45] (tinv);
\draw[braid, name path=bin4] (in4) to[out=270,in=45] (s1u);
\fill[white,name intersections={of=t2s1r and bout1}] (intersection-1) circle(0.1);
%\fill[white,name intersections={of=t2s1r and t2s1l}] (intersection-1) circle(0.1);
\draw[braid] (t2) to[out=225,
%in=90] (x) to[out=270,
in=135] (s1r);
\draw[braid] (in5) to[out=270,in=45] (s1r);
\draw[braid] (s1r) to (out2);
\draw (2.3,1.5) node[empty] {$=$};
\end{tikzpicture}
\begin{tikzpicture} % pic5
\path
(0.3,3) node[empty,name=in1] {}
(0.8,3) node[empty,name=in2] {}
(1.0,3) node[empty,name=in3] {}
(1.5,3) node[empty,name=in4] {}
(1.8,3) node[empty,name=in5] {}
(0.2,1.0) node[empty,name=x] {} 
(0.4,1.3) node[empty,name=m] {}
(0.5,0) node[empty,name=out1] {}
(1.5,0) node[empty,name=out2] {};
\path 
(1.25,2.5) node[arr,name=s1u] {$\,s_1\,$}
(1,1.8) node[arr,name=tinv]  {${t^{\scriptscriptstyle-1}_1}$}
(1.5,0.5) node[arr,name=s1r] {$\,s_1\,$};
\draw[braid, name path=bin1] (in1) to[out=315,in=135] (tinv);
\path[braid, name path=bin2] (in2) to[out=225,in=180] (m);
\fill[white,name intersections={of=bin1 and bin2}] (intersection-1) circle(0.1);
\draw[braid] (in2) to[out=225,in=180] (m);
\path[braid, name path=ms1r] (t2) to[out=270,in=135] (s1r);
\draw[braid, name path=tinvm] (tinv) to[out=225,in=0] (m);
\draw[braid] (in3) to[out=270,in=135] (s1u);
\draw[braid, name path=bout1] (tinv) to[out=315,in=90] (out1);
\draw[braid, name path=s1utinv] (s1u) to[out=270,in=45] (tinv);
\draw[braid, name path=bin4] (in4) to[out=270,in=45] (s1u);
\fill[white,name intersections={of=ms1r and bout1}] (intersection-1) circle(0.1);
%\fill[white,name intersections={of=t2s1r and t2s1l}] (intersection-1) circle(0.1);
\draw[braid] (m) to[out=270,in=135] (s1r);
\draw[braid] (in5) to[out=270,in=45] (s1r);
\draw[braid] (s1r) to (out2);
\end{tikzpicture}
$$
Here the first equality holds by multiplicativity of $s$, the second by the
defining property of (the bottom left) $s_1$, the third by
\eqref{eq:mbm_ax_compatibility}, and the fourth by (one) definition of the
multiplication $m$; finally the right hand side is equal to the right hand
side of \eqref{eq:coass} by multiplicativity of $s$ once again. 
\end{proof} 

\begin{definition}
A {\em regular multiplier Hopf monoid} in a braided monoidal category $\cc$ is
a regular multiplier bimonoid $(A,t_1,t_2,t_3,t_4,e)$ such that
$(A,t_1,t_2,e)$ is a multiplier Hopf monoid in $\cc$ and $(A,t_3,t_4,e)$ is a
multiplier Hopf monoid in $\overline \cc$; that is, such that $t_1$, $t_2$,
$t_3$, and $t_4$ are all isomorphisms in $\cc$.
\end{definition}

Van Daele's notion of regular multiplier Hopf algebra
\cite{VanDaele:multiplier_Hopf} is equivalent to a regular multiplier Hopf
monoid in the category of vector spaces over the complex numbers,
whose multiplication is non-degenerate. 

A Hopf monoid --- regarded as a multiplier Hopf monoid --- is regular if and
only if the antipode is invertible.

\begin{corollary} \label{cor:s'}
Let  $\cc$ be a braided monoidal category satisfying our standing assumptions
of Section~\ref{sect:assumptions}.  
Let $(A,t_1,t_2,t_3,t_4,e)$ be a regular multiplier Hopf monoid in $\cc$ with
non-degenerate multiplication and dense counit. 
Applying Theorem \ref{thm:Hopf} to the multiplier Hopf monoid $(A,t_3,t_4,e)$
in $\overline \cc$, we conclude that the components 
\begin{equation}\label{eq:s'}
\xymatrix{
s'_1:= A^2 \ar[r]^-{t_3^{-1}} &
A^2 \ar[r]^-{e1} &
A
\quad \textrm{and}\quad
s'_2:= A^2 \ar[r]^-{t_4^{-1}} &
A^2 \ar[r]^-{1e} &
A}
\end{equation}
of the antipode $s'\colon A\nto A\op$ of $(A,t_3,t_4,e)$
render commutative the diagrams
$$
\xymatrix{
A^3 \ar[r]^-{c1} \ar[d]_-{1c} &
A^3 \ar[r]^-{s'_11} &
A^2 \ar[d]^-m 
&
A^3 \ar[r]^-{c_{A^2,A}} \ar[d]_-{d_2} &
A^3 \ar[r]^-{t_41} &
A^3 \ar[d]^-{1s'_1}
&
A^3 \ar[r]^-{c_{A,A^2}} \ar[d]_-{d_1} &
A^3 \ar[r]^-{1t_3} &
A^3 \ar[d]^-{s'_21} \\
A^3 \ar[r]_-{1s'_2} &
A^2 \ar[r]_-m &
A
&
A^2 &&
A^2 \ar[ll]^-{t_3}
&
A^2 &&
A^2 . \ar[ll]^-{t_4}}
$$
\end{corollary}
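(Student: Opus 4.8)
The plan is to run the construction of Theorem~\ref{thm:Hopf} inside $\overline\cc$ and then to re-express each piece of the output in terms of the structure of $\cc$. By definition of a regular multiplier Hopf monoid, $(A,t_3,t_4,e)$ is a multiplier Hopf monoid in $\overline\cc$; its multiplication is $m.c^{-1}$, so its underlying semigroup is $A\op$, and, as recorded in the discussion of regular multiplier bimonoids, it again enjoys non-degenerate multiplication and dense counit. Theorem~\ref{thm:Hopf} therefore applies in $\overline\cc$ and yields a unique dense $\mM$-morphism $s'$ whose components are those of \eqref{eq:s} computed in $\overline\cc$. Since the counit $e$ and the monoidal product are unaffected by the passage to $\overline\cc$, these components are exactly the morphisms \eqref{eq:s'}. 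The three diagrams to be established are then precisely the three outputs of Theorem~\ref{thm:Hopf} in $\overline\cc$: the $\mM$-morphism condition that makes $s'$ well defined, and the two defining diagrams (a) and (b), each rewritten in $\cc$.

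For the first diagram, I would begin from the $\mM$-morphism condition $m.c^{-1}.1s'_1=m.c^{-1}.s'_2 1$ for $s'\colon A\nto A\op$, the target multiplication being $m.c^{-1}$ as in \eqref{eq:mM}. Pushing the braiding past $s'_1$ and $s'_2$ by naturality rewrites the two sides as $m.s'_1 1.c^{-1}_{A^2,A}$ and $m.1s'_2.c^{-1}_{A,A^2}$; precomposing with the isomorphism $c_{A^2,A}.c1$ and simplifying by the hexagon and braid relations then produces exactly the asserted equality $m.s'_1 1.c1=m.1s'_2.1c$. This step is pure braiding coherence and uses no Hopf-theoretic input.

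The second and third diagrams arise from diagrams (a) and (b) of Theorem~\ref{thm:Hopf}, read in $\overline\cc$, which state $\bar d_1=t_3.1s'_1.t_4 1$ and $\bar d_2=t_4.s'_2 1.1t_3$, where $\bar d_1,\bar d_2$ denote the comultiplication components of $(A,t_3,t_4,e)$ in $\overline\cc$. The crux is to identify these with the data of $\cc$. Expanding \eqref{eq:d_1-2} in $\overline\cc$ (replacing $m$ by $m.c^{-1}$ and interchanging $c$ with $c^{-1}$) gives $\bar d_1=m1.1t_3.c^{-1}1$ and $\bar d_2=1m.t_4 1.1c^{-1}$; the regularity identities \eqref{eq:t_2-3_compatibility} then rewrite $m1.1t_3$ and $1m.t_4 1$, and the hexagon relations $c_{A^2,A}=c1.1c$ and $c_{A,A^2}=1c.c1$ yield $\bar d_1=d_2.c^{-1}_{A^2,A}$ and $\bar d_2=d_1.c^{-1}_{A,A^2}$. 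Substituting these into the two $\overline\cc$-diagrams and cancelling the inverse braidings against the $c_{A^2,A}$ and $c_{A,A^2}$ displayed in the statement delivers the second and third diagrams. Alternatively, one may invoke that the comultiplication of $(A,t_3,t_4,e)$ in $\overline\cc$ is the opposite $d\op$ of the comultiplication $d$ of $(A,t_1,t_2,e)$, and read $\bar d_1,\bar d_2$ off the component formulas of Proposition~\ref{prop:op}.

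I expect this last identification to be the only real obstacle: all the genuine content sits in the passage from $\bar d_1,\bar d_2$ to $d_2,d_1$, where the regular compatibility relations \eqref{eq:t_2-3_compatibility} are indispensable and where one must take care not to interchange $c_{A^2,A}$ with $c_{A,A^2}$. Once these two identities are secured, the corollary follows formally from Theorem~\ref{thm:Hopf}.
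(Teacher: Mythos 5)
Your proposal is correct and takes essentially the same approach as the paper: the corollary has no separate proof there, being precisely the result of applying Theorem~\ref{thm:Hopf} to $(A,t_3,t_4,e)$ in $\overline\cc$, with the translation back to $\cc$ left implicit. Your identifications $s'_1=e1.t_3^{-1}$, $s'_2=1e.t_4^{-1}$, $\bar d_1=d_2.c^{-1}_{A^2,A}$, $\bar d_2=d_1.c^{-1}_{A,A^2}$ (via \eqref{eq:t_2-3_compatibility} or Proposition~\ref{prop:op}) and the Yang--Baxter/hexagon bookkeeping for the first diagram correctly supply exactly those implicit details.
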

 
\begin{theorem}\label{thm:s-inverse}
Let $\cc$ be a braided monoidal category satisfying our standing
assumptions of Section~\ref{sect:assumptions}. Let $(A,t_1,t_2,t_3,t_4,e)$ be
a regular multiplier Hopf monoid in $\cc$ with non-degenerate
multiplication and dense counit. Then the $\mM$-morphisms
$s\colon A\op \nto A$ in Theorem \ref{thm:Hopf} and $s'\colon A\nto
A\op$ in Corollary \ref{cor:s'} are mutually inverse in \cm.   
\end{theorem}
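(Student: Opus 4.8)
The plan is to verify the two equalities $s\bullet s'=i_A$ and $s'\bullet s=i_{A\op}$ in \cm, where $i_A\colon A\nto A$ denotes the identity of $A$ (with both components $m$) and $i_{A\op}\colon A\op\nto A\op$ the identity of $A\op$ (with both components $m.c^{-1}$). These identities are available because $m\in\cq$, and hence $m.c^{-1}\in\cq$, by Remark~\ref{rem:who_is_reg_epi}; and both composites are defined, since $s$ and $s'$ are dense and multiplicative --- $s$ by Theorem~\ref{thm:Hopf} and Theorem~\ref{thm:s_mbm_morphism}, and $s'$ by the same two results applied to the regular multiplier Hopf monoid $(A,t_3,t_4,e)$ in $\overline\cc$. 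Since two \mM-morphisms into a non-degenerate semigroup coincide as soon as their first components do (Section~\ref{sect:mM}), it is enough to compare first components.

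I would establish $s\bullet s'=i_A$ first, for arbitrary such data. Feeding $f=s'$ and $g=s$ into the composition rule \eqref{eq:bullet-composition} and using that $1s_1$ is a regular epimorphism, the equality $(s\bullet s')_1=m$ becomes the identity
\begin{equation*}
m.1s_1=s_1.(s'_1 1)\colon A^3\to A.
\end{equation*}
The inverses hidden in $s_1=e1.t_1^{-1}$ (see \eqref{eq:s}) and $s'_1=e1.t_3^{-1}$ (see \eqref{eq:s'}) make a naive expansion circular --- it would presuppose the very relation being proved. The device that breaks the circularity is to precompose the whole identity with the isomorphism $t_31$: the right-hand side collapses to $s_1.e11$ because $s'_1.t_3=e1$, so that the claim reduces to
\begin{equation*}
m.1s_1.t_31=s_1.e11\colon A^3\to A,
\end{equation*}
in which no inverse of $t_3$ survives.

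This last identity is the heart of the matter, and where I expect the real work to lie; I would prove it by a string-diagram computation in the style of Section~\ref{sect:strings}. The ingredients are the multiplicativity of $s$ as an \mM-morphism out of $A\op$ (the categorical form of anti-multiplicativity, established in the proof of Theorem~\ref{thm:s_mbm_morphism}), the defining relation $m=e1.t_1$ in the guise $m.t_1^{-1}=e1$ (so that a trailing $s_1$ against an $m$ produces the counit), and the regular-structure compatibilities \eqref{eq:reg_mbm} and \eqref{eq:t_2-3_compatibility} governing the interaction between the fusion morphism $t_3$ of $\overline\cc$ and the multiplication $m$. Schematically, $t_31$ splits its first input and multiplies the second leg onto the duplicate; multiplicativity of $s$ then distributes the outer $s_1$ across this product, and the relation $m.t_1^{-1}=e1$ absorbs the resulting factor, leaving precisely $s_1.e11$. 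The main obstacle is to carry this out in the correct order while keeping track of the two braiding conventions (the morphism $t_1$ lives in \cc, whereas $t_3$ lives in $\overline\cc$). As a sanity check, in the classical Hopf case $s_1(a\otimes b)=S(a)b$ and $s'_1(a\otimes b)=bS^{-1}(a)$, and both sides of the reduced identity evaluate to $\epsilon(a)S(b)c$, the computation using only $S(xy)=S(y)S(x)$ and $a_{(1)}S(a_{(2)})=\epsilon(a)$ --- and never $S\circ S^{-1}=\mathrm{id}$.

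Finally, rather than repeat the calculation for the reverse composite, I would deduce $s'\bullet s=i_{A\op}$ by symmetry. The statement just proved holds for any category satisfying the standing assumptions and any regular multiplier Hopf monoid; applying it to $\overline\cc$ and to the regular multiplier Hopf monoid $(A,t_3,t_4,t_1,t_2,e)$ interchanges the roles of $s$ and $s'$, and replaces $A$ by its underlying semigroup $A\op$ in $\overline\cc$ (with multiplication $m.c^{-1}$). Because the composition rule \eqref{eq:bullet-composition} makes no reference to the braiding, the composite it computes there is literally $s'\bullet s$ and the identity it produces is literally $i_{A\op}$, so the $\overline\cc$-instance reads $s'\bullet s=i_{A\op}$ in \cc. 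The two equalities together exhibit $s$ and $s'$ as mutually inverse in \cm.
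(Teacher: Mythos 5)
Your proposal is correct, and at its core it takes a genuinely different route from the paper's. The skeleton is the same: both arguments reduce, via non-degeneracy and the fact that $1s_1$ lies in \cq, to the single first-component identity $s_1.s'_11=m.1s_1$, and both dispatch the second inverse law by symmetry. The difference is in how that identity is proved. The paper attacks it head-on: it expands $s_1=e1.t_1^{-1}$ and $s'_1=e1.t_3^{-1}$ simultaneously and eliminates both inverses using the fusion-type compatibility \eqref{eq:t1t3} between $t_1$ and $t_3$, quoted from \cite[Remark~3.10]{BohmLack:braided_mba}. You instead remove $t_3^{-1}$ once and for all by precomposing with the isomorphism $t_31$, reducing to $m.1s_1.t_31=s_1.e11$, and your proposed ingredients --- anti-multiplicativity of $s$ from Theorem~\ref{thm:s_mbm_morphism}, the compatibility \eqref{eq:t_2-3_compatibility}, and $m.t_1^{-1}=e1$ --- do suffice, so \eqref{eq:t1t3} is never needed and your argument is self-contained relative to the present paper. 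One step of your schematic should be made explicit, since as literally phrased it runs into the standard multiplier-theoretic obstruction: multiplicativity of $s$ cannot be applied to the product hidden in the second output leg of $t_3$, because that product is not exhibited through explicit legs. The cure is the usual spectator-leg trick: by non-degeneracy of $m$ it suffices to prove the identity after forming $m.(1\otimes-)$; then \eqref{eq:t_2-3_compatibility}, in the form $m1.1t_3=1m.1c^{-1}.t_21$, converts the $t_3$-splitting (with the spectator multiplied on) into a $t_2$-splitting in which the offending factor becomes an explicit leg; multiplicativity $s_1.((m.c^{-1})1)=s_1.(1s_1)$ then applies, and the computation closes with $m.1s_1.t_21=m.1e1$, which follows from $s_1.t_1=e1$, $1e.t_2=m$, the commutation \eqref{eq:mbm_ax_compatibility} of $t_1$ with $t_2$, and $m.t_1^{-1}=e1$. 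Finally, your symmetry step tacitly uses that $(A,t_3,t_4,t_1,t_2,e)$ is again a regular multiplier Hopf monoid in $\overline\cc$, i.e.\ that the axioms \eqref{eq:reg_mbm} are stable under the swap $(t_1,t_2,c)\leftrightarrow(t_3,t_4,c^{-1})$; this holds in the non-degenerate setting, and in any case it is exactly the level of justification of the paper's own appeal to ``a symmetric argument''. What each approach buys: the paper's is a shorter single string calculation at the price of importing \eqref{eq:t1t3} from earlier work; yours trades that import for a somewhat longer computation built only from identities established in this paper, and your vector-space sanity check transparently explains why no cancellation of $S$ against $S^{-1}$ ever occurs.
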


\proof
We shall prove that $s\bullet s'$ is the identity \mM-morphism;
the other inverse law holds by a symmetric argument. Furthermore, by
non-degeneracy, it will suffice to show that the first components agree; in
other words, that $(s\bullet s')_1=m$, or equivalently that $s_1.s'_11=m.1s_1$. 

To do this we use the compatibility condition 
\begin{equation}\label{eq:t1t3}
  \begin{tikzpicture}
  % diag1
  \path 
(0.75,2) node[arr,name=t1u] {$t_1$} 
(0.75,1) node[arr,name=t1d]  {$t_1$} 
(0.25,0.5) node[arr,name=t3] {$t_3$}; 
%(-0.5,-0.2) node[empty,name=m] {};
\path 
(1,2.5) node[empty,name=in3] {} 
(0.5,2.5) node[empty,name=in2] {} 
(0,2.5) node[empty,name=in1] {} 
(0,0) node[empty,name=out1] {} 
(0.5,0) node[empty,name=out2] {} 
(1,0) node[empty,name=out3] {}; 
\draw[braid] (in3) to[out=270,in=45] (t1u);
\draw[braid] (in2) to[out=270,in=135] (t1u);
\draw[braid] (t1u) to[out=315,in=45] (t1d);
\draw[braid] (t1d) to[out=315,in=90] (out3);
\path[braid,name path=bin1] (in1) to[out=270,in=135] (t1d);
\draw[braid,name path=t1ut3] (t1u) to[out=225,in=135] (t3);
\fill[white, name intersections={of=bin1 and t1ut3}] (intersection-1) circle(0.1);
%\path[braid,name path=t1dt3] (t1d) to[out=210,in=0] (0,0.8) [out=180,in=180] (t3);
%\fill[white, name intersections={of=t1dt3 and t1ut3}] (intersection-1) circle(0.1);
\draw[braid] (in1) to[out=270,in=135] (t1d);
\draw[braid] (t1d)  to[out=225,in=45] (t3);
\draw[braid] (t3) to[out=225,in=90] (out1);
\draw[braid] (t3) to[out=315,in=90] (out2);
\draw (2,1.35) node {$=$};
\end{tikzpicture} 
\begin{tikzpicture}
  % diag2
\path
(0.75, 0.5) node[arr,name=t1] {$t_1$}
(0.25,1.5) node[arr,name=t3] {$t_3$};
\path 
(1.0,2.5) node[empty,name=in3] {} 
(0.5,2.5) node[empty,name=in2] {} 
(0,2.5) node[empty,name=in1] {} 
(0,0) node[empty,name=out1] {} 
(0.5,0) node[empty,name=out2] {} 
(1,0) node[empty,name=out3] {}; 
\draw[braid] (in3) to[out=270,in=45] (t1);
\path[braid, name path=bin1] (in1) to[out=270,in=45](t3);
\draw[braid, name path=bin2] (in2) to[out=270,in=135](t3);
\fill[white, name intersections={of=bin1 and bin2}] (intersection-1) circle(0.1);
\draw[braid] (in1) to[out=270,in=45](t3);
\draw[braid] (t3) to[out=225,in=90](out1);
\draw[braid] (t3) to[out=315,in=135] (t1);
\draw[braid] (t1) to[out=225,in=90] (out2);
\draw[braid] (t1) to[out=315,in=90] (out3);
% \draw[braid] (in2) to[out=270,in=135] (t1u);
% \draw[braid] (t1u) to[out=315,in=45] (t1d);
% \draw[braid] (t1d) to[out=315,in=45] (out2);
\end{tikzpicture}
\end{equation}
which appeared in \cite[Remark~3.10]{BohmLack:braided_mba}, in the following
calculation.
$$
\begin{tikzpicture} 
% step 1
\path (1.25,.7) node[arr,name=s1] {$\,s_1\,$}
(.75,1.4) node[arr,name=s'1] {$s'_1$};
\draw[braid] (.5,2) to[out=270,in=135] (s'1);
\draw[braid] (1,2) to[out=270,in=45] (s'1);
\draw[braid] (1.5,2) to[out=270,in=45] (s1);
\draw[braid] (s'1) to[out=270,in=135] (s1);
\draw[braid] (s1) to[out=270,in=90] (1.25,0);
\draw (2,1) node {$=$};
\end{tikzpicture} 
\begin{tikzpicture} 
% step 1a
\path (1.25,.7) node[arr,name=t1i] {${t_1^{\scriptscriptstyle -1}}$}
(.75,1.4) node[arr,name=t3i] {${t_3^{\scriptscriptstyle -1}}$}
(0.5,.2) node[unit,name=e1] {} 
(1,.2) node[unit,name=e3] {} ;
\draw[braid] (.5,2) to[out=270,in=135] (t3i);
\draw[braid] (1,2) to[out=270,in=45] (t3i);
\draw[braid] (1.5,2) to[out=270,in=45] (t1i);
\draw[braid] (t3i) to[out=315,in=135] (t1i);
\draw[braid] (t1i) to[out=315,in=90] (1.5,0);
%\path[braid,name path=t1i>e] (t1i) to[out=225,in=45] (e1);
\draw[braid,name path=t3i>e] (t3i) to[out=225,in=90] (e1);
%\fill[white, name intersections={of=t1i>e and t3i>e}] (intersection-1) circle(0.1);
\draw[braid] (t1i) to[out=225,in=90] (e3);
\draw (2,1) node {$=$};
\end{tikzpicture} 
\begin{tikzpicture} 
% step 2
\path (1.25,.7) node[arr,name=t1i] {${t_1^{\scriptscriptstyle-1}}$}
(.75,1.4) node[arr,name=t3i] {${t_3^{\scriptscriptstyle -1}}$}
(0.5,.2) node[unit,name=e1] {} 
(1,.2) node[unit,name=e3] {} ;
\draw[braid] (.5,2) to[out=270,in=135] (t3i);
\draw[braid] (1,2) to[out=270,in=45] (t3i);
\draw[braid] (1.5,2) to[out=270,in=45] (t1i);
\draw[braid] (t3i) to[out=315,in=135] (t1i);
\draw[braid] (t1i) to[out=315,in=90] (1.5,0);
\path[braid,name path=t1i>e] (t1i) to[out=225,in=45] (e1);
\draw[braid,name path=t3i>e] (t3i) to[out=225,in=135] (e3);
\fill[white, name intersections={of=t1i>e and t3i>e}] (intersection-1) circle(0.1);
\draw[braid] (t1i) to[out=225,in=45] (e1);
\draw (2,1) node {$=$};
\end{tikzpicture} 
\begin{tikzpicture} 
% step 3
\path (1.25,1.7) node[arr,name=t1i] {${t_1^{\scriptscriptstyle -1}}$}
(.75,1.2) node[arr,name=t3i] {${t_3^{\scriptscriptstyle-1}}$}
(1.25,.4) node[arr,name=t1] {$\,t_1\,$}
(.9,0) node[unit,name=e1] {} 
(.5,.2) node[unit,name=e3] {} ;
\draw[braid] (.5,2) to[out=270,in=135] (t3i);
\draw[braid] (1,2) to[out=270,in=135] (t1i);
\draw[braid] (1.5,2) to[out=270,in=45] (t1i);
\draw[braid] (t1i) to[out=225,in=45] (t3i);
\draw[braid] (t1i) to[out=315,in=45] (t1);
\draw[braid] (t1) to[out=315,in=90] (1.5,0);
\draw[braid] (t1) to[out=225,in=45] (e1);
\draw[braid,name path=t3i>t1] (t3i) to[out=225,in=135] (t1);
\path[braid,name path=t3i>e] (t3i) to[out=315,in=45] (e3);
\fill[white, name intersections={of=t3i>t1 and t3i>e}] (intersection-1) circle(0.1);
\draw[braid] (t3i) to[out=315,in=45] (e3);
\draw (2,1) node {$=$};
\end{tikzpicture} 
\begin{tikzpicture} 
% step 4
\path (1.25,1.5) node[arr,name=t1i] {${t_1^{\scriptscriptstyle-1}}$}
(1,.5) node[arr,name=t1] {$\,t_1\,$}
(0.7,.2) node[unit,name=e1] {} 
(.9,1) node[unit,name=ei1] {} ;
\draw[braid] (.5,2) to[out=270,in=135] (t1);
\draw[braid] (1,2) to[out=270,in=135] (t1i);
\draw[braid] (1.5,2) to[out=270,in=45] (t1i);
\draw[braid] (t1i) to[out=315,in=45] (t1);
\draw[braid] (t1) to[out=315,in=90] (1.3,0);
\draw[braid] (t1i) to[out=225,in=45] (ei1);
\draw[braid] (t1) to[out=225,in=45] (e1);
\draw (2,1) node {$=$};
\end{tikzpicture} 
\begin{tikzpicture} 
% step 5
\path (1.25,1.5) node[arr,name=s1] {$\,s_1\,$};
\draw[braid] (.5,2) to[out=270,in=180] (.75,.7) to[out=0,in=270] (s1);
\draw[braid] (1,2) to[out=270,in=135] (s1);
\draw[braid] (1.5,2) to[out=270,in=45] (s1);
\draw[braid] (.75,.7) to[out=270,in=90] (.75,0);
\end{tikzpicture} 
$$
\endproof

\begin{theorem} \label{thm:reg_antipode}
Let  $\cc$ be a braided monoidal category satisfying our standing assumptions
of Section~\ref{sect:assumptions}. 
For a regular multiplier Hopf monoid $(A,t_1,t_2,t_3,t_4,e)$ whose 
multiplication is non-degenerate and 
whose counit is dense, the following assertions hold. 
\begin{enumerate}
\item There is a unique morphism $\overline s:A \to A$ such that 
  $s=\overline s^\#$. 
\item There is a unique morphism $\overline s':A \to A$ such that
$s'=\overline {s}^{\prime \#}$. 
\item The morphisms $\overline s$ in part (1) and $\overline s'$ in part (2)
  are mutually inverse isomorphisms in $\cc$.
\end{enumerate}
\end{theorem}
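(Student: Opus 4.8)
The plan is to manufacture the honest antipode $\overline s$ by descending one component of $s$ along the \emph{inverse} antipode $s'$, whose very existence is what regularity provides. Uniqueness in (1) and (2) is immediate and I would dispose of it first: if $z,w\colon A\to A$ satisfy $z^\#=w^\#$ then in particular $m.z1=m.w1$, so $z=w$ by non-degeneracy of $m$ (and likewise with $m.c^{-1}$ for $A\op$). For existence I would begin by recording the compatibility relations coming from Theorem~\ref{thm:s-inverse}. Reading off first components of $s\bullet s'=\mathrm{id}_A$ via \eqref{eq:bullet-composition} gives $s_1.s'_11=m.1s_1$, while the defining \mM-morphism identity \eqref{eq:mM} for $s$ gives $m.1s_1=m.s_21$; combining them yields the key equation, which I will call $(\ast)$,
\[ s_1.s'_11=m.s_21 \]
between morphisms $A^3\to A$. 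By Remark~\ref{rem:who_is_reg_epi} applied to the regular partner $(A,t_3,t_4,e)$, density of the counit guarantees $s'_1\in\cq$, so $s'_1$ is the coequalizer of some pair $p,q\colon P\rightrightarrows A^2$.

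The heart of (1) is then to show that $s_2\colon A^2\to A$ factors through $s'_1$. Since $s'_1.p=s'_1.q$, precomposing $(\ast)$ with $p1$ and with $q1$ and using $s_1.(s'_1.p)1=s_1.(s'_1.q)1$ gives $m.(s_2.p)1=m.(s_2.q)1$, whence $s_2.p=s_2.q$ by non-degeneracy of $m$. Thus $s_2$ coequalizes $(p,q)$ and factors uniquely as $s_2=\overline{s}.s'_1$ for some $\overline{s}\colon A\to A$. To see that this $\overline{s}$ does the job, I would compute, using $s_2=\overline{s}.s'_1$ and then $(\ast)$,
\[ m.\overline{s}1.s'_11=m.(\overline{s}.s'_1)1=m.s_21=s_1.s'_11, \]
and cancel the epimorphism $s'_11\in\cq$ to obtain $m.\overline{s}1=s_1$. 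Since $s$ and $\overline{s}^\#$ are \mM-morphisms into the non-degenerate object $A$ agreeing in their first components, they coincide (Section~\ref{sect:mM}), proving $s=\overline{s}^\#$ and hence (1).

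Part (2) I would obtain by applying (1) verbatim inside $\overline\cc$ to the regular multiplier Hopf monoid $(A,t_3,t_4,e)$, whose antipode is the $s'$ of Corollary~\ref{cor:s'} and whose inverse antipode is $s$; this yields $\overline{s}'\colon A\to A$ with $s'=\overline{s}^{\prime\#}$ (the $\#$ taken with respect to $A\op$). For (3) I would feed these factorizations back into Theorem~\ref{thm:s-inverse} through the identity $f\circ z=f\bullet z^\#$ of Section~\ref{sect:mM}, valid since $s$ and $s'$ are dense multiplicative. Taking $f=s$, $z=\overline{s}'$ gives $s\circ\overline{s}'=s\bullet\overline{s}^{\prime\#}=s\bullet s'=\mathrm{id}_A$, whose first component reads $s_1.\overline{s}'1=m$; substituting $s_1=m.\overline{s}1$ produces $m.(\overline{s}.\overline{s}')1=m$, so $\overline{s}.\overline{s}'=1$ by non-degeneracy. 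The symmetric computation with $s'\bullet s=\mathrm{id}_{A\op}$, substituting $s'_1=m.c^{-1}.\overline{s}'1$, gives $\overline{s}'.\overline{s}=1$, so $\overline{s}$ and $\overline{s}'$ are mutually inverse isomorphisms.

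The one genuinely delicate point is the factorization $s_2=\overline{s}.s'_1$: this is exactly where regularity is indispensable, since it is the presence of the inverse antipode $s'$ together with the identity $(\ast)$ that converts the a priori only multiplier-valued $s$ into a morphism factoring through the object $A$. Everything else is bookkeeping with the operations $\#$, $\bullet$, $\circ$ and repeated appeals to non-degeneracy; the only things to check carefully are that all the morphisms being cancelled ($s'_1$, $m$, and their monoidal products with $1$) lie in \cq, which Remark~\ref{rem:who_is_reg_epi} supplies under the dense-counit hypothesis.
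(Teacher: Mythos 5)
Your proof is correct, but it takes a different route from the paper. The paper disposes of the whole theorem in one line: it combines Theorem~\ref{thm:s-inverse} with a citation to Proposition~3.10 of the companion paper \cite{BohmLack:cat_of_mbm}, which is precisely the general statement that mutually inverse morphisms in \cm arise, via $(-)^\#$, from mutually inverse isomorphisms in \cc. What you have done is reconstruct, from scratch, the proof of (the relevant instance of) that cited proposition: your key move --- presenting $s'_1\in\cq$ as a coequalizer of some pair $(p,q)$, using the relation $s_1.s'_11=m.s_21$ (obtained from $(s\bullet s')_1=m$ via \eqref{eq:bullet-composition} and \eqref{eq:mM}) together with non-degeneracy of $m$ to show $s_2$ coequalizes $(p,q)$, and then descending to $\overline s$ with $s_2=\overline s.s'_1$ and $m.\overline s1=s_1$ --- is exactly the descent argument that the external proposition encapsulates. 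Your handling of uniqueness, of part (2) by working in $\overline\cc$ with $(A,t_3,t_4,t_1,t_2,e)$, and of part (3) via $s\circ\overline s'=s\bullet\overline s^{\prime\#}=s\bullet s'=\mathrm{id}$ followed by non-degeneracy, is all sound; the needed memberships in \cq ($s'_1$, $s_1$, and their products with identities) are correctly supplied by Remark~\ref{rem:who_is_reg_epi} and the closure properties of \cq. The trade-off is the expected one: the paper's proof is shorter but outsources the technical content, while yours makes the argument self-contained within this paper, at the cost of redoing work that \cite{BohmLack:cat_of_mbm} already did in greater generality.
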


\begin{proof}
This follows immediately from Theorem~\ref{thm:s-inverse} and 
\cite[Proposition~3.10]{BohmLack:cat_of_mbm}.
\end{proof}

If \cc is the category of vector spaces over the field of complex
numbers, then Theorem \ref{thm:reg_antipode} reduces to \cite[Proposition
5.2]{VanDaele:multiplier_Hopf}. 

\begin{corollary}
Applying \cite[Example 6.3]{BohmLack:cat_of_mbm}, we conclude from 
Theorem \ref{thm:s_mbm_morphism} and Theorem \ref{thm:reg_antipode} that 
for a regular multiplier Hopf monoid $(A,t_1,t_2,t_3,t_4,e)$ whose 
multiplication is non-degenerate and whose counit is dense, the
morphism $\overline s:A \to A$ in Theorem \ref{thm:reg_antipode} obeys 
$e.\overline s=e$ and it renders commutative the diagram
$$
\xymatrix{
A^2 \ar[r]^-{c^{-1}} \ar[d]_-{\overline s\, \overline s} &
A^2 \ar[r]^-{t_2} &
A^2 \ar[r]^-c &
A^2 \ar[d]^-{\overline s\, \overline s} \\
A^2 \ar[rrr]_-{t_1} &&&
A^2}
$$
(so that $\overline s.m.c^{-1}=m.\overline s\,\overline s$).
\end{corollary}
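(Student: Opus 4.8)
The plan is to obtain everything by translating, via \cite[Example~6.3]{BohmLack:cat_of_mbm}, the abstract statement of Theorem~\ref{thm:s_mbm_morphism}---that the antipode $s$ is a morphism of multiplier bimonoids from the twisted bimonoid $(A,c.t_2.c^{-1},c.t_1.c^{-1},e)$ to $(A,t_1,t_2,e)$---into concrete identities for the underlying $\cc$-morphism $\overline s$. The second ingredient is Theorem~\ref{thm:reg_antipode}, by which $s=\overline{s}^{\#}$ for a (necessarily invertible) morphism $\overline s\colon A\to A$. Thus no essentially new calculation is needed: the three defining conditions of a morphism of multiplier bimonoids in \eqref{eq:mbm_morphism}---multiplicativity of the underlying $\mM$-morphism, preservation of the counit, and preservation of the comultiplication---need only be rewritten in terms of $\overline s$, and this is precisely the dictionary supplied by \cite[Example~6.3]{BohmLack:cat_of_mbm}.

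Carrying this out, I would first recall from Paragraph~\ref{claim:twist} that the underlying semigroup of the twisted source bimonoid is $A\op$, with multiplication $m.c^{-1}$. Multiplicativity of the $\mM$-morphism $s=\overline{s}^{\#}$ then says exactly that $\overline s$ is a morphism of semigroups $A\op\to A$, i.e.\ $\overline s.m.c^{-1}=m.\overline s\,\overline s$, which is the parenthetical identity. For the counit, the first diagram of \eqref{eq:mbm_morphism} gives $e.s_1=ee$; substituting $s_1=m.(\overline s\,1)$ and using $e.m=ee$ (a consequence of \eqref{eq:e_multiplicative}) turns this into $(e.\overline s)\otimes e=e\otimes e$, and cancelling the epimorphism $e$ on the second factor---legitimate because density of the counit makes $1e$ an epimorphism preserved by the monoidal product---yields $e.\overline s=e$.

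It remains to produce the displayed square from the comultiplication-preservation condition. Under $s=\overline{s}^{\#}$, preservation of the comultiplication specialises to the statement that $\overline s\,\overline s$ intertwines the first fusion morphisms of source and target; since the source is twisted, its first fusion morphism is $c.t_2.c^{-1}$ rather than $t_1$, so the condition becomes $t_1.\overline s\,\overline s=\overline s\,\overline s.c.t_2.c^{-1}$, which is exactly the commutativity asserted (the top--right path equalling the left--bottom path). I expect the only real point requiring care to be this bookkeeping of the twist inside \cite[Example~6.3]{BohmLack:cat_of_mbm}: one must feed in the opposite/twisted structure on the source so that the top edge of the square reads $c^{-1}$, then $t_2$, then $c$. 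Once the dictionary is instantiated with the correct twisted source, the three identities drop out with no further computation, all the analytic work having already been done in Theorems~\ref{thm:s_mbm_morphism} and~\ref{thm:reg_antipode}.
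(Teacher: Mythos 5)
Your proposal is correct and is essentially the paper's own argument: the paper proves this corollary exactly by feeding Theorem~\ref{thm:s_mbm_morphism} (that $s$ is a morphism of multiplier bimonoids out of the twisted structure $(A,c.t_2.c^{-1},c.t_1.c^{-1},e)$) and the identification $s=\overline{s}^{\#}$ from Theorem~\ref{thm:reg_antipode} into the dictionary of \cite[Example 6.3]{BohmLack:cat_of_mbm}, precisely the translation you spell out (multiplicativity giving the semigroup-morphism identity, the counit diagram giving $e.\overline s=e$ after cancelling the epimorphism $1e$, and comultiplication preservation giving the displayed square with the twisted fusion morphism $c.t_2.c^{-1}$ on the source side). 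The only cosmetic difference is that you obtain $\overline s.m.c^{-1}=m.\overline s\,\overline s$ directly from multiplicativity of $s$, whereas the paper's ``so that'' presents it as a consequence of the square together with $e.\overline s=e$; the two derivations are trivially equivalent.
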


%%%%%%%%%%%%%%%%%%%%%%%%%%%%%%%    SEC  3     %%%%%%%%%%%%%%%%%%%%%%%%%%%%%%

\section{Comodules over multiplier Hopf monoids}

For a regular multiplier bimonoid in  a braided monoidal category \cc, the
category of comodules was shown in \cite{BohmLack:braided_mba} to have a
monoidal structure lifting that of \cc. The aim in this section is to show that
if the underlying multiplier bimonoid is Hopf, then a comodule possesses a dual
if and only if the underlying object of \cc does so.  

\subsection{Comodules}
\label{claim:comodule}
A {\em comodule} \cite{BohmLack:braided_mba} over a regular multiplier bimonoid 
$(A,t_1,t_2,t_3,t_4,e)$ is a tuple $(V,v^1,v^3)$, where $v^1:VA\to VA$ is a 
comodule over $t_1$ \cite{BohmLack:braided_mba}, in the sense that it renders 
commutative the diagrams
\begin{equation}\label{eq:t_1_comodule}
\xymatrix{
VA^2 \ar[r]^-{1t_1} \ar[d]_-{v^11} &
VA^2 \ar[r]^-{c1} &
AVA \ar[r]^-{1v^1} &
AVA \ar[r]^-{c^{-1}1} &
VA^2 \ar[d]^-{v^11} 
&
VA \ar[r]^-{v^1} \ar[rd]_-{1e} &
VA\ar[d]^-{1e} \\
VA^2 \ar[rrrr]_-{1t_1} &&&&
VA^2
&
& V\ ;}
\end{equation}
and $v^3:VA\to VA$  is a comodule over $t_3$ 
\cite{BohmLack:braided_mba}, in the sense that it renders commutative the
diagrams 
\begin{equation}\label{eq:t_3_comodule}
\xymatrix{
VA^2 \ar[r]^-{1t_3} \ar[d]_-{v^31} &
VA^2 \ar[r]^-{c^{-1}1} &
AVA \ar[r]^-{1v^3} &
AVA \ar[r]^-{c1} &
VA^2 \ar[d]^-{v^31} 
&
VA \ar[r]^-{v^3} \ar[rd]_-{1e} &
VA\ar[d]^-{1e} \\
VA^2 \ar[rrrr]_-{1t_3} &&&&
VA^2
&
& V\ ; }
\end{equation}
and where finally $v_1$ and $v_3$ satisfy the compatibility condition asserting 
the commutativity of 
\begin{equation}\label{eq:comodule_compatibility}
\xymatrix{
AVA \ar[r]^-{1v^1} \ar[d]_-{c1} &
AVA \ar[r]^-{c1} &
VA^2 \ar[d]^-{1m} \\
VA^2 \ar[r]_-{v^31} &
VA^2 \ar[r]_-{1m} &
VA\ .}
\end{equation}
A {\em morphism of comodules} $(V,v^1,v^3)\to (W,w^1,w^3)$ is a morphism
$f\colon V\to W$ in $\cc$ such that the following diagrams commute. 
\begin{equation} \label{eq:morphism_of_comodules}
\xymatrix{
VA \ar[r]^-{f1} \ar[d]_-{v^1} &
 WA \ar[d]^-{w^1}
&&
VA \ar[r]^-{f1} \ar[d]_-{v^3} &
 WA \ar[d]^-{w^3} \\
VA \ar[r]_-{f1} &
 WA
&&
VA \ar[r]_-{f1} &
WA}
\end{equation}

If the multiplication is non-degenerate, then it is an easy consequence of
\eqref{eq:comodule_compatibility} that the following diagrams commute.
\begin{equation}\label{eq:v^1-3_module_maps}
\xymatrix{
VA^2 \ar[r]^-{v^11} \ar[d]_-{1m} &
VA^2 \ar[d]^-{1m} \\
VA \ar[r]_-{v^1} &
VA} \quad
\xymatrix{
VA^2 \ar[r]^-{c^{-1}1} \ar[d]_-{1m} &
AVA \ar[r]^-{1v^3} &
AVA \ar[r]^-{c1} &
VA^2 \ar[d]^-{1m} \\
VA \ar[rrr]_-{v^3} &&&
VA}
\end{equation}
In the second of these, the top path is equal to $1c^{-1}.v^31.1c$.

\begin{lemma} \label{lem:comodule_nd}
Let $(A,t_1,t_2,t_3,t_4,e)$ be a regular multiplier bimonoid in a braided
monoidal category $\cc$. Assume that it satisfies the conditions listed in
Section \ref{sect:mbm}. For any morphisms $v^1:VA \to VA$ and $v^3:VA \to
VA$ rendering commutative \eqref{eq:comodule_compatibility}, the following
assertions are equivalent.  
\begin{itemize}
\item[{(a)}] $(V,v^1,v^3)$ is an $A$-comodule. 
\item[{(b)}] $(V,v^1)$ is a $t_1$-comodule (that is, it renders commutative
the diagrams of \eqref{eq:t_1_comodule}).
\item[{(c)}] $(V,v^3)$ is a $t_3$-comodule (that is, it renders commutative
the diagrams of \eqref{eq:t_3_comodule}).
\end{itemize}
\end{lemma}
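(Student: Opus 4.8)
The plan is to note first that, with the compatibility condition \eqref{eq:comodule_compatibility} assumed throughout, statement (a) is exactly the conjunction of (b) and (c): by the definition in Paragraph~\ref{claim:comodule} an $A$-comodule is a triple $(V,v^1,v^3)$ in which $v^1$ is a $t_1$-comodule, $v^3$ is a $t_3$-comodule, and \eqref{eq:comodule_compatibility} holds. Thus (a)$\Rightarrow$(b) and (a)$\Rightarrow$(c) are immediate, and the content of the lemma is the equivalence (b)$\Leftrightarrow$(c). Here I would exploit the symmetry of the regular structure: passing to $\overline\cc$ replaces $(A,t_1,t_2,t_3,t_4,e)$ by the regular multiplier bimonoid $(A,t_3,t_4,t_1,t_2,e)$, interchanging $t_1$ with $t_3$ and $c$ with $c^{-1}$, while a short diagram check confirms that \eqref{eq:comodule_compatibility} is carried to the same condition for $(V,v^3,v^1)$. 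Under this interchange the roles of (b) and (c) are swapped, so that (c)$\Rightarrow$(b) is an instance of (b)$\Rightarrow$(c) applied in $\overline\cc$. It therefore suffices to prove the single implication (b)$\Rightarrow$(c).

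So I assume that $v^1$ is a $t_1$-comodule and that \eqref{eq:comodule_compatibility} holds, and I must verify the two diagrams of \eqref{eq:t_3_comodule} for $v^3$. The counit condition $1e.v^3=1e$ is the easy part. Post-composing \eqref{eq:comodule_compatibility} with $1e\colon VA\to V$ and using the multiplicativity \eqref{eq:e_multiplicative} of the counit (so that $1e.1m$ collapses both $A$-legs by $e$), together with the counit condition $1e.v^1=1e$ for $v^1$, one obtains an equality of two morphisms $AVA\to V$ in which the final $A$-leg appears only through $e$; both sides therefore factor through $11e\colon VA^2\to VA$, which is an epimorphism because $e\in\cq$ and \cq is closed under the monoidal product. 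Cancelling this leg and then straightening one braiding yields exactly $1e.v^3=1e$.

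The substantial part is the coassociativity diagram, the first diagram of \eqref{eq:t_3_comodule}, which I plan to derive from the coassociativity of $v^1$ (the first diagram of \eqref{eq:t_1_comodule}) by translating the $t_1$-data into $t_3$-data. Concretely, I would use \eqref{eq:comodule_compatibility}, equivalently the module-map identities \eqref{eq:v^1-3_module_maps}, to trade each occurrence of $v^3$ for a $v^1$ sitting under a multiplication, then apply the $t_1$-coassociativity of $v^1$, and finally use the regularity relations connecting the two fusion morphisms --- namely \eqref{eq:t_2-3_compatibility} and the $t_1$--$t_3$ compatibility \eqref{eq:t1t3} --- to re-express the outcome through $t_3$ and $v^3$. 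Since \eqref{eq:comodule_compatibility} relates $v^1$ and $v^3$ only after composition with $m$, the identity would first be established after post-composing both sides with a multiplication, and the bare coassociativity equation recovered by cancelling that multiplication through the non-degeneracy of $m$, exactly as in the injectivity argument in the proof of Theorem~\ref{thm:Hopf} (or, equivalently, via Remark~\ref{rem:f_non-degenerate} applied to the comultiplication $d$, whose components $d_1$ and $d_2$ lie in \cq).

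The main obstacle I anticipate is precisely this coassociativity computation: it is a multi-step string-diagram manipulation, in the style of Section~\ref{sect:strings} and Proposition~\ref{prop:multiplier-bimonoid-t3}, in which two coactions and two copies of $t_3$ must be disentangled while the crossings are tracked carefully, since $\cc$ need not be symmetric. The delicate points are to arrange the braidings so that the $t_1$-coassociativity and the $t_1$--$t_3$ regularity relation become literally applicable, and to set up the auxiliary multiplication so that the concluding non-degenerate cancellation is valid.
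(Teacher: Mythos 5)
Your proposal follows essentially the same route as the paper's own proof: reduce (a) to the conjunction of (b) and (c), prove only (b)$\Rightarrow$(c) and get the converse by the $t_1\leftrightarrow t_3$ symmetry, verify the counit law for $v^3$ by cancelling the epimorphism $11e$ after combining \eqref{eq:comodule_compatibility}, \eqref{eq:e_multiplicative} and the counit law for $v^1$, and derive the coassociativity of $v^3$ from that of $v^1$ by a string computation that trades $v^3$ for $v^1$ through the compatibility condition, invokes the regularity relations, and concludes by cancelling the non-degenerate multiplication via Remark~\ref{rem:f_non-degenerate}. The only divergence is that you leave the central string-diagram chain as a plan (and cite \eqref{eq:t1t3} where the paper's chain actually runs through \eqref{eq:d_1-2}, \eqref{eq:short_compatibility}, \eqref{eq:reg_mbm}, \eqref{eq:short_fusion} and \eqref{eq:t_2-3_compatibility}), but the ingredients, their order of use, and the concluding cancellation coincide with the paper's computation.
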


\begin{proof}
Since (a) means that (b) and (c) simultaneously hold, it suffices to prove the
equivalence of (b) and (c). We show that (b) implies (c); the opposite
implication follows symmetrically.

Since $11e:VA^2 \to VA$ is an epimorphism by assumption, the second diagram of
\eqref{eq:t_3_comodule} commutes by the commutativity of 
$$
\xymatrix{
% 1.1
VA \ar[d]^-{c^{-1}} \ar@{=}@/_1.5pc/[dddd] &
% 1.2
VA^2 \ar[l]_-{11e} \ar[rr]^-{11e} \ar[d]_-{c^{-1}1} \ar[rdd]^-{v^31} &&
% 1.4
VA \ar[dd]^-{v^3} \\
% 2.1
AV \ar@{=}[d] \ar@{}[rd]|-{\eqref{eq:t_1_comodule}}&
% 2.2
AVA \ar[l]_-{11e} \ar[d]_-{1v^1} \\
% 3.1
AV \ar[dd]^-c &
% 3.2
AVA \ar[l]_-{11e} \ar[d]_-{c1} 
\ar@{}[r]|-{\eqref{eq:comodule_compatibility}} & 
% 3.3
VA^2 \ar[r]^-{11e} \ar[d]^-{1m} 
\ar@{}[rdd]|-{\eqref{eq:e_multiplicative}} &
% 3.4
VA \ar[dd]^-{1e} \\
&
% 4.2
VA^2 \ar[r]^-{1m} \ar[ld]_-{11e} \ar@{}[rd]|-{\eqref{eq:e_multiplicative}}&
% 4.3
VA \ar[rd]_-{1e} & \\
% 5.1
VA \ar[rrr]_-{1e} &&&
% 5.4
V}
$$
In order to see that the first diagram of \eqref{eq:t_3_comodule} commutes, we
use the following equalities 
$$
  \begin{tikzpicture} %pic1
    \path
(0,4.5) node[empty, name=in1] {}
(0.5,4.5) node[empty, name=in2] {}
(1,4.5) node[empty, name=in3] {}
(1.5,4.5) node[empty, name=in4] {}
(2,4.5) node[empty, name=in5] {}
(0.5,0) node[empty, name=out1] {}
(1.0,0) node[empty, name=out2] {}
(1.5,0) node[empty, name=out3] {}
(0.25,3) node[empty, name=x] {}
(1.5,0.5) node[empty, name=m] {};
\path
(0.75,4.0) node[arr, name=t3] {$t_3$}
(0.75,3.0) node[arr,name=v3u] {$v^3$}
(0.25,2.0) node[arr,name=v3d] {$v^3$}
(1,1) node[arr,name=d2] {$d_2$};
\draw[braid]  (in5) to[out=270,in=0] (m);
\draw[braid] (in4) to[out=270,in=45] (d2);
\draw[braid] (in3) to[out=270,in=45] (t3);
\draw[braid] (in2) to[out=270,in=135] (t3);
\draw[braid, name path=bin1] (in1) to[out=270,in=135] (v3u);
\draw[braid] (t3) to[out=315,in=45] (v3u);
\path[braid, name path=t3v3d] (t3) to[out=225, in=90] (x) to[out=270,in=45] (v3d);
\draw[braid] (v3u) to[out=315,in=90] (d2);
\draw[braid, name path=v3uv3d] (v3u) to[out=225,in=135] (v3d);
\fill[white, name intersections={of=bin1 and t3v3d}] (intersection-1) circle(0.1);
\fill[white, name intersections={of=v3uv3d and t3v3d}] (intersection-1) circle(0.1);
\draw[braid] (t3) to[out=225, in=90] (x) to[out=270,in=45] (v3d);
\draw[braid] (v3d) to[out=225,in=90] (out1);
\draw[braid] (v3d) to[out=315,in=135] (d2);
\draw[braid] (d2) to[out=315,in=180] (m);
\draw[braid] (d2) to[out=225,in=90] (out2);
\draw[braid] (m) to (out3);
\draw (3,2.7) node[empty] {$\stackrel{~\eqref{eq:d_1-2}}=$};
  \end{tikzpicture}
  \begin{tikzpicture} %pic2
    \path
(0,4.5) node[empty, name=in1] {}
(0.5,4.5) node[empty, name=in2] {}
(1,4.5) node[empty, name=in3] {}
(1.5,4.5) node[empty, name=in4] {}
(2,4.5) node[empty, name=in5] {}
(0.5,0) node[empty, name=out1] {}
(1.0,0) node[empty, name=out2] {}
(1.5,0) node[empty, name=out3] {}
(0.25,3) node[empty, name=x] {}
(1.25,0.75) node[empty, name=m2] {}
(1.5,0.5) node[empty, name=m] {};
\path
(0.75,4.0) node[arr, name=t3] {$t_3$}
(0.75,3.0) node[arr,name=v3u] {$v^3$}
(0.25,2.0) node[arr,name=v3d] {$v^3$}
(1.5,1.5) node[arr,name=t2] {$t_2$};
\draw[braid]  (in5) to[out=270,in=0] (m);
\draw[braid] (in4) to[out=270,in=45] (t2);
\draw[braid] (in3) to[out=270,in=45] (t3);
\draw[braid] (in2) to[out=270,in=135] (t3);
\draw[braid, name path=bin1] (in1) to[out=270,in=135] (v3u);
\draw[braid] (t3) to[out=315,in=45] (v3u);
\path[braid, name path=t3v3d] (t3) to[out=225, in=90] (x) to[out=270,in=45] (v3d);
\path[braid, name path=v3um2] (v3u) to[out=315,in=180] (m2);
\draw[braid, name path=v3uv3d] (v3u) to[out=225,in=135] (v3d);
\fill[white, name intersections={of=bin1 and t3v3d}] (intersection-1) circle(0.1);
\fill[white, name intersections={of=v3uv3d and t3v3d}] (intersection-1) circle(0.1);
\draw[braid] (t3) to[out=225, in=90] (x) to[out=270,in=45] (v3d);
\draw[braid] (v3d) to[out=225,in=90] (out1);
\draw[braid, name path=v3dt2] (v3d) to[out=315,in=135] (t2);
\draw[braid] (t2) to[out=315,in=0] (m2);
\draw[braid, name path=bout2] (t2) to[out=225,in=90] (out2);
\fill[white, name intersections={of=v3dt2 and v3um2}] (intersection-1) circle(0.1);
\fill[white, name intersections={of=bout2 and v3um2}] (intersection-1) circle(0.1);
\draw[braid] (v3u) to[out=315,in=180] (m2);
\draw[braid] (m2) to[out=270,in=180]  (m);
\draw[braid] (m) to (out3);
\draw (3,2.63) node[empty] {$\stackrel{\mathrm{(ass)}}=$};
  \end{tikzpicture}
  \begin{tikzpicture} %pic3
    \path
(0,4.5) node[empty, name=in1] {}
(0.5,4.5) node[empty, name=in2] {}
(1,4.5) node[empty, name=in3] {}
(1.5,4.5) node[empty, name=in4] {}
(2,4.5) node[empty, name=in5] {}
(0.5,0) node[empty, name=out1] {}
(1.0,0) node[empty, name=out2] {}
(1.5,0) node[empty, name=out3] {}
(0.25,3) node[empty, name=x] {}
(1.85,0.85) node[empty, name=m] {}
(1.5,0.5) node[empty, name=m2] {};
\path
(0.75,4.0) node[arr, name=t3] {$t_3$}
(0.75,3.0) node[arr,name=v3u] {$v^3$}
(0.25,2.0) node[arr,name=v3d] {$v^3$}
(1.5,1.5) node[arr,name=t2] {$t_2$};
\draw[braid]  (in5) to[out=270,in=45] (m);
\draw[braid] (in4) to[out=270,in=45] (t2);
\draw[braid] (in3) to[out=270,in=45] (t3);
\draw[braid] (in2) to[out=270,in=135] (t3);
\draw[braid, name path=bin1] (in1) to[out=270,in=135] (v3u);
\draw[braid] (t3) to[out=315,in=45] (v3u);
\path[braid, name path=t3v3d] (t3) to[out=225, in=90] (x) to[out=270,in=45] (v3d);
\path[braid, name path=v3um2] (v3u) to[out=315,in=180] (m2);
\draw[braid, name path=v3uv3d] (v3u) to[out=225,in=135] (v3d);
\fill[white, name intersections={of=bin1 and t3v3d}] (intersection-1) circle(0.1);
\fill[white, name intersections={of=v3uv3d and t3v3d}] (intersection-1) circle(0.1);
\draw[braid] (t3) to[out=225, in=90] (x) to[out=270,in=45] (v3d);
\draw[braid] (v3d) to[out=225,in=90] (out1);
\draw[braid, name path=v3dt2] (v3d) to[out=315,in=135] (t2);
\draw[braid] (t2) to[out=315,in=135] (m);
\draw[braid, name path=bout2] (t2) to[out=225,in=90] (out2);
\fill[white, name intersections={of=v3dt2 and v3um2}] (intersection-1) circle(0.1);
\fill[white, name intersections={of=bout2 and v3um2}] (intersection-1) circle(0.15);
\draw[braid] (v3u) to[out=315,in=180] (m2);
\draw[braid] (m) to[out=270,in=0]  (m2);
\draw[braid] (m2) to (out3);
\draw (3,2.7) node[empty] {$\stackrel{~\eqref{eq:short_compatibility}}=$};
  \end{tikzpicture}
  \begin{tikzpicture} %pic4
    \path
(0,4.5) node[empty, name=in1] {}
(0.5,4.5) node[empty, name=in2] {}
(1,4.5) node[empty, name=in3] {}
(1.5,4.5) node[empty, name=in4] {}
(2,4.5) node[empty, name=in5] {}
(0.5,0) node[empty, name=out1] {}
(1.0,0) node[empty, name=out2] {}
(1.5,0) node[empty, name=out3] {}
(0.25,3) node[empty, name=x] {}
(1.5,1.5) node[empty, name=m] {}
(1.5,0.5) node[empty, name=m2] {};
\path
(0.75,4.0) node[arr, name=t3] {$t_3$}
(0.75,3.0) node[arr,name=v3u] {$v^3$}
(0.25,2.0) node[arr,name=v3d] {$v^3$}
(1.75,3.5) node[arr,name=t1] {$t_1$};
\draw[braid]  (in5) to[out=270,in=45] (t1);
\draw[braid] (in4) to[out=270,in=135] (t1);
\draw[braid] (in3) to[out=270,in=45] (t3);
\draw[braid] (in2) to[out=270,in=135] (t3);
\draw[braid, name path=bin1] (in1) to[out=270,in=135] (v3u);
\draw[braid] (t3) to[out=315,in=45] (v3u);
\path[braid, name path=t3v3d] (t3) to[out=225, in=90] (x) to[out=270,in=45] (v3d);
\path[braid, name path=v3um2] (v3u) to[out=315,in=180] (m2);
\draw[braid, name path=v3uv3d] (v3u) to[out=225,in=135] (v3d);
\fill[white, name intersections={of=bin1 and t3v3d}] (intersection-1) circle(0.1);
\fill[white, name intersections={of=v3uv3d and t3v3d}] (intersection-1) circle(0.1);
\draw[braid] (t3) to[out=225, in=90] (x) to[out=270,in=45] (v3d);
\draw[braid] (v3d) to[out=225,in=90] (out1);
\draw[braid, name path=v3dm] (v3d) to[out=315,in=180] (m);
\draw[braid] (t1) to[out=315,in=0] (m2);
\draw[braid] (t1) to[out=225,in=0] (m);
\draw[braid, name path=bout2] (m) to[out=225,in=90] (out2);
\fill[white, name intersections={of=v3dm and v3um2}] (intersection-1) circle(0.1);
\fill[white, name intersections={of=bout2 and v3um2}] (intersection-1) circle(0.15);
\draw[braid] (v3u) to[out=315,in=180] (m2);
\draw[braid] (m2) to (out3);
\draw (3,2.7) node[empty] {$\stackrel{~\eqref{eq:comodule_compatibility}}=$};
  \end{tikzpicture}
$$

$$
  \begin{tikzpicture} %pic6
  \path
(0,4.5) node[empty, name=in1] {}
(0.5,4.5) node[empty, name=in2] {}
(1,4.5) node[empty, name=in3] {}
(1.5,4.5) node[empty, name=in4] {}
(2,4.5) node[empty, name=in5] {}
(0.25,0) node[empty, name=out1] {}
(1.0,0) node[empty, name=out2] {}
(1.75,0) node[empty, name=out3] {}
(0,4) node[empty, name=x] {}
(0.75,3) node[empty, name=y] {}
(1.75,1.0) node[empty, name=z] {}
(1.75,2) node[empty, name=m] {}
(1.0,0.5) node[empty, name=m2] {};
\path
(0.75,4.0) node[arr, name=t3] {$t_3$}
(1.75,3.0) node[arr,name=v1] {$v^1$}
(1,1.5) node[arr,name=v1d] {$v^1$}
(1.75,4.0) node[arr,name=t1] {$t_1$};
\draw[braid]  (in5) to[out=270,in=45] (t1);
\draw[braid] (in4) to[out=270,in=135] (t1);
\draw[braid] (in3) to[out=270,in=45] (t3);
\draw[braid] (in2) to[out=270,in=135] (t3);
\draw[braid, name path=t1v1d] (t1) to[out=225,in=45] (v1d);
\path[braid, name path=bin1] (in1) to[out=270,in=90] (x) to[out=270,in=135] (v1);
\fill[white, name intersections={of=bin1 and t1v1d}] (intersection-1) circle(0.1);
\draw[braid] (in1) to[out=270,in=90] (x) to[out=270,in=135] (v1);
\path[braid, name path=t3m] (t3) to[out=315,in=90] (y) to[out=270,in=180] (m);
\path[braid, name path=t3m2] (t3) to[out=225,in=180] (m2);
\path[braid, name path=v1v3] (v1) to[out=225,in=135] (v1d);
\fill[white, name intersections={of=bin1 and t3m}] (intersection-1) circle(0.1);
\fill[white, name intersections={of=bin1 and t3m2}] (intersection-1) circle(0.1);
\fill[white, name intersections={of=t1v1d and v1v3}] (intersection-1) circle(0.1);
\fill[white, name intersections={of=t1v1d and t3m}] (intersection-1) circle(0.1);
\draw[braid] (t1) to[out=315,in=45] (v1);
\draw[braid] (v1) to[out=315,in=0] (m);
\draw[braid] (v1) to[out=225,in=135] (v1d);
\fill[white, name intersections={of=t3m and v1v3}] (intersection-1) circle(0.1);
\fill[white, name intersections={of=t3m2 and v1v3}] (intersection-1) circle(0.1);
\draw[braid] (m) to (z) to[out=270,in=90]  (out3);
\draw[braid] (v1d) to[out=315,in=0] (m2);
\draw[braid, name path=bout1] (v1d) to[out=225,in=90] (out1);
\draw[braid] (t3) to[out=315,in=90] (y) to[out=270,in=180] (m);
\fill[white, name intersections={of=bout1 and t3m2}] (intersection-1) circle(0.1);
\draw[braid] (t3) to[out=225,in=180] (m2);
%\draw[braid] (v1d) to[out=225,in=90] (out1);
\draw[braid] (out2) to (m2);
\draw (2.5,2.7) node[empty] {$\stackrel{~\eqref{eq:t_1_comodule}}=$};
  \end{tikzpicture}
  \begin{tikzpicture} %pic7
    \path
(0,4.5) node[empty, name=in1] {}
(0.5,4.5) node[empty, name=in2] {}
(1,4.5) node[empty, name=in3] {}
(1.5,4.5) node[empty, name=in4] {}
(2,4.5) node[empty, name=in5] {}
(0.25,0) node[empty, name=out1] {}
(1.0,0) node[empty, name=out2] {}
(1.75,0) node[empty, name=out3] {}
% (0,4) node[empty, name=x] {}
% (0.75,3) node[empty, name=y] {}
% (1.75,1.0) node[empty, name=z] {}
(1,0.5) node[empty, name=m] {}
(1.75,0.5) node[empty, name=m2] {};
\path
(0.25,3.5) node[arr, name=t3] {$t_3$}
(1.25,3.5) node[arr,name=v1] {$v^1$}
(1.75,2.0) node[arr,name=t1] {$t_1$};
\draw[braid]  (in5) to[out=270,in=45] (t1);
\draw[braid] (in4) to[out=270,in=45] (v1);
\path[braid, name path=bin3] (in3) to[out=270,in=45] (t3);
\path[braid, name path=bin2] (in2) to[out=270,in=135] (t3);
\draw[braid, name path=bin1] (in1) to[out=270,in=135] (v1);
\fill[white, name intersections={of=bin1 and bin2}] (intersection-1) circle(0.1);
\fill[white, name intersections={of=bin1 and bin3}] (intersection-1) circle(0.1);
\draw[braid] (in3) to[out=270,in=45] (t3);
\draw[braid] (in2) to[out=270,in=135] (t3);
\draw[braid] (v1) to[out=315,in=135] (t1);
\draw[braid, name path=bout1] (v1) to[out=225,in=90] (out1);
\draw[braid] (t1) to[out=315,in=0] (m2);
\draw[braid, name path=t1m] (t1) to[out=225,in=0] (m);
\path[braid, name path=t3m2] (t3) to[out=315,in=180] (m2);
\path[braid, name path=t3m] (t3) to[out=225,in=180] (m);
\fill[white, name intersections={of=t3m2 and t1m}] (intersection-1) circle(0.1);
\fill[white, name intersections={of=t3m and t1m}] (intersection-1) circle(0.1);
\fill[white, name intersections={of=t3m2 and bout1}] (intersection-1) circle(0.1);
\fill[white, name intersections={of=t3m and bout1}] (intersection-1) circle(0.1);
\draw[braid] (t3) to[out=315,in=180] (m2);
\draw[braid] (t3) to[out=225,in=180] (m);
\draw[braid] (m) to (out2);
\draw[braid] (m2) to (out3);
\draw (2.6,2.7) node[empty] {$\stackrel{~\eqref{eq:reg_mbm}}=$};
  \end{tikzpicture}
  \begin{tikzpicture} %pic8
    \path
(0,4.5) node[empty, name=in1] {}
(0.5,4.5) node[empty, name=in2] {}
(1,4.5) node[empty, name=in3] {}
(1.5,4.5) node[empty, name=in4] {}
(2,4.5) node[empty, name=in5] {}
(0.25,0) node[empty, name=out1] {}
(1.0,0) node[empty, name=out2] {}
(1.75,0) node[empty, name=out3] {}
 (2,1) node[empty, name=x] {}
% (0.75,3) node[empty, name=y] {}
% (1.75,1.0) node[empty, name=z] {}
(1,0.5) node[empty, name=m] {}
(1.75,0.5) node[empty, name=m2] {};
\path
(0.25,3.5) node[arr, name=t3] {$t_3$}
(1.25,3.5) node[arr,name=v1] {$v^1$}
(1.25,2.0) node[arr,name=t3d] {$t_3$};
\draw[braid]  (in5) to[out=270,in=90] (x) to[out=270,in=0] (m2);
\draw[braid] (in4) to[out=270,in=45] (v1);
\path[braid, name path=bin3] (in3) to[out=270,in=45] (t3);
\path[braid, name path=bin2] (in2) to[out=270,in=135] (t3);
\draw[braid, name path=bin1] (in1) to[out=270,in=135] (v1);
\fill[white, name intersections={of=bin1 and bin2}] (intersection-1) circle(0.1);
\fill[white, name intersections={of=bin1 and bin3}] (intersection-1) circle(0.1);
\draw[braid] (in3) to[out=270,in=45] (t3);
\draw[braid] (in2) to[out=270,in=135] (t3);
\draw[braid, name path=v1t3d] (v1) to[out=315,in=135] (t3d);
\draw[braid, name path=bout1] (v1) to[out=225,in=90] (out1);
\draw[braid] (t3d) to[out=315,in=180] (m2);
\draw[braid] (t3d) to[out=225,in=0] (m);
\path[braid, name path=t3t3d] (t3) to[out=315,in=45] (t3d);
\path[braid, name path=t3m] (t3) to[out=225,in=180] (m);
\fill[white, name intersections={of=t3t3d and v1t3d}] (intersection-1) circle(0.1);
\fill[white, name intersections={of=t3t3d and bout1}] (intersection-1) circle(0.1);
\fill[white, name intersections={of=t3m and bout1}] (intersection-1) circle(0.1);
\draw[braid] (t3) to[out=315,in=45] (t3d);
\draw[braid] (t3) to[out=225,in=180] (m);
\draw[braid] (m) to (out2);
\draw[braid] (m2) to (out3);
\draw (2.5,2.7) node[empty] {$\stackrel{~\eqref{eq:short_fusion}}=$};
  \end{tikzpicture}
  \begin{tikzpicture} %pic9
    \path
(0,4.5) node[empty, name=in1] {}
(0.5,4.5) node[empty, name=in2] {}
(1,4.5) node[empty, name=in3] {}
(1.5,4.5) node[empty, name=in4] {}
(2,4.5) node[empty, name=in5] {}
(0.25,0) node[empty, name=out1] {}
(1.0,0) node[empty, name=out2] {}
(1.75,0) node[empty, name=out3] {}
 (2,1) node[empty, name=x] {}
(0.75,3.5) node[empty, name=y] {}
% (1.75,1.0) node[empty, name=z] {}
(1,2.5) node[empty, name=m] {}
(1.75,0.5) node[empty, name=m2] {};
\path
(0.25,3.5) node[arr,name=v1] {$v^1$}
(1.25,2.0) node[arr,name=t3] {$t_3$};
\draw[braid]  (in5) to[out=270,in=90] (x) to[out=270,in=0] (m2);
\draw[braid, name path=bin4] (in4) to[out=270,in=45] (v1);
\path[braid, name path=bin3] (in3) to[out=270,in=45] (t3);
\path[braid, name path=bin2] (in2) to[out=270,in=90] (y) to[out=270,in=180] (m);
\draw[braid, name path=bin1] (in1) to[out=270,in=135] (v1);
\fill[white, name intersections={of=bin4 and bin2}] (intersection-1) circle(0.1);
\fill[white, name intersections={of=bin4 and bin3}] (intersection-1) circle(0.1);
\draw[braid, name path=v1m] (v1) to[out=315,in=0] (m);
\fill[white, name intersections={of=v1m and bin2}] (intersection-1) circle(0.1);
\draw[braid] (in3) to[out=270,in=45] (t3);
\draw[braid] (in2) to[out=270,in=90] (y) to[out=270,in=180] (m);
\draw[braid] (m) to[out=270,in=135] (t3);
\draw[braid] (t3) to[out=315,in=180] (m2);
\draw[braid, name path=bout1] (v1) to[out=225,in=90] (out1);
\draw[braid] (t3) to[out=225,in=90] (out2);
\draw[braid] (m2) to (out3);
\draw (2.5,2.7) node[empty] {$\stackrel{~\eqref{eq:comodule_compatibility}}=$};
  \end{tikzpicture}
$$

$$
  \begin{tikzpicture} %pic10
    \path
(0,4.5) node[empty, name=in1] {}
(0.5,4.5) node[empty, name=in2] {}
(1,4.5) node[empty, name=in3] {}
(1.5,4.5) node[empty, name=in4] {}
(2,4.5) node[empty, name=in5] {}
(0.25,0) node[empty, name=out1] {}
(1.0,0) node[empty, name=out2] {}
(1.75,0) node[empty, name=out3] {}
 (2,1) node[empty, name=x] {}
(0.75,3.5) node[empty, name=y] {}
% (1.75,1.0) node[empty, name=z] {}
(1,2.5) node[empty, name=m] {}
(1.75,0.5) node[empty, name=m2] {};
\path
(0.25,3.5) node[arr,name=v3] {$v^3$}
(1.25,2.0) node[arr,name=t3] {$t_3$};
\draw[braid]  (in5) to[out=270,in=90] (x) to[out=270,in=0] (m2);
\path[braid, name path=bin3] (in3) to[out=270,in=45] (t3);
\draw[braid, name path=bin4] (in4) to[out=270,in=0] (m);
\draw[braid, name path=bin2] (in2) to[out=270,in=45] (v3);
\draw[braid, name path=bin1] (in1) to[out=270,in=135] (v1);
\fill[white, name intersections={of=bin4 and bin3}] (intersection-1) circle(0.1);
\draw[braid] (in3) to[out=270,in=45] (t3);
\draw[braid, name path=bout1] (v3) to[out=225,in=90] (out1);
\draw[braid] (v3) to[out=315,in=180] (m);
\draw[braid] (m) to[out=270,in=135] (t3);
\draw[braid] (t3) to[out=225,in=90] (out2);
\draw[braid] (t3) to[out=315,in=180] (m2);
\draw[braid] (m2) to (out3);
\draw (2.5,2.7) node[empty] {$\stackrel{~\eqref{eq:short_fusion}}=$};
  \end{tikzpicture}
  \begin{tikzpicture} %pic11
    \path
(0,4.5) node[empty, name=in1] {}
(0.5,4.5) node[empty, name=in2] {}
(1,4.5) node[empty, name=in3] {}
(1.5,4.5) node[empty, name=in4] {}
(2,4.5) node[empty, name=in5] {}
(0.25,0) node[empty, name=out1] {}
(1.0,0) node[empty, name=out2] {}
(1.75,0) node[empty, name=out3] {}
 (2,1) node[empty, name=x] {}
(1.75,2.5) node[empty, name=y] {}
% (1.75,1.0) node[empty, name=z] {}
(1,0.5) node[empty, name=m] {}
(1.75,0.5) node[empty, name=m2] {};
\path
(0.25,3.5) node[arr,name=v3] {$v^3$}
(1.0,2.5) node[arr,name=t3] {$t_3$}
(1.25,1.5) node[arr,name=t3d] {$t_3$};
\draw[braid]  (in5) to[out=270,in=90] (x) to[out=270,in=0] (m2);
\draw[braid, name path=bin4] (in4) to[out=270,in=90] (y) to[out=270,in=135] (t3d);
\draw[braid, name path=bin3] (in3) to[out=270,in=45] (t3);
\draw[braid, name path=bin2] (in2) to[out=270,in=45] (v3);
\draw[braid, name path=bin1] (in1) to[out=270,in=135] (v1);
\path[braid, name path=t3t3d] (t3) to[out=315,in=45] (t3d);
\fill[white, name intersections={of=bin4 and t3t3d}] (intersection-1) circle(0.1);
\draw[braid] (t3) to[out=315,in=45] (t3d);
\draw[braid, name path=bout1] (v3) to[out=225,in=90] (out1);
\draw[braid] (v3) to[out=315,in=135] (t3);
\draw[braid] (t3) to[out=225,in=180] (m);
\draw[braid] (t3d) to[out=225,in=0] (m);
\draw[braid] (t3d) to[out=315,in=180] (m2);
\draw[braid] (m) to[out=270,in=90] (out2);
\draw[braid] (m2) to (out3);
\draw (2.5,2.7) node[empty] {$\stackrel{~\eqref{eq:t_2-3_compatibility}}=$};
\end{tikzpicture}
  \begin{tikzpicture} %pic12
    \path
(0,4.5) node[empty, name=in1] {}
(0.5,4.5) node[empty, name=in2] {}
(1,4.5) node[empty, name=in3] {}
(1.5,4.5) node[empty, name=in4] {}
(2,4.5) node[empty, name=in5] {}
(0.25,0) node[empty, name=out1] {}
(1.0,0) node[empty, name=out2] {}
(1.75,0) node[empty, name=out3] {}
 (2,1) node[empty, name=x] {}
(1.75,2.5) node[empty, name=y] {}
 (1.5,1.5) node[empty, name=z] {}
(1.5,1) node[empty, name=m] {}
(1.75,0.5) node[empty, name=m2] {};
\path
(0.25,3.5) node[arr,name=v3] {$v^3$}
(1.0,2.5) node[arr,name=t3] {$t_3$}
(1.0,1.5) node[arr,name=t2] {$t_2$};
\draw[braid]  (in5) to[out=270,in=90] (x) to[out=270,in=0] (m2);
\draw[braid, name path=bin4] (in4) to[out=270,in=90] (y) to[out=270,in=45] (t2);
\draw[braid, name path=bin3] (in3) to[out=270,in=45] (t3);
\draw[braid, name path=bin2] (in2) to[out=270,in=45] (v3);
\draw[braid, name path=bin1] (in1) to[out=270,in=135] (v1);
\draw[braid] (t3) to[out=225,in=135] (t2);
\path[braid, name path=t3m] (t3) to[out=315,in=90] (z) to[out=270,in=180] (m);
\draw[braid, name path=t2m] (t2) to[out=335,in=0] (m);
\fill[white, name intersections={of=bin4 and t3m}] (intersection-1) circle(0.1);
\fill[white, name intersections={of=t2m and t3m}] (intersection-1) circle(0.1);
\draw[braid] (t3) to[out=315,in=90] (z) to[out=270,in=180] (m);
\draw[braid] (m) to[out=315,in=180] (m2);
\draw[braid, name path=bout1] (v3) to[out=225,in=90] (out1);
\draw[braid] (v3) to[out=315,in=135] (t3);
\draw[braid] (t2) to[out=225,in=90] (out2);
\draw[braid] (m2) to (out3);
\draw (2.5,2.7) node[empty] {$\stackrel{~\eqref{eq:d_1-2}}=$};
  \end{tikzpicture}
  \begin{tikzpicture} %pic13
    \path
(0,4.5) node[empty, name=in1] {}
(0.5,4.5) node[empty, name=in2] {}
(1,4.5) node[empty, name=in3] {}
(1.5,4.5) node[empty, name=in4] {}
(2,4.5) node[empty, name=in5] {}
(0.25,0) node[empty, name=out1] {}
(1.0,0) node[empty, name=out2] {}
(1.75,0) node[empty, name=out3] {}
 (2,1) node[empty, name=x] {}
(1.75,2.5) node[empty, name=y] {}
 (1.5,1.5) node[empty, name=z] {}
(1.5,1) node[empty, name=m] {}
(1.75,0.5) node[empty, name=m2] {};
\path
(0.25,3.5) node[arr,name=v3] {$v^3$}
(1.0,2.5) node[arr,name=t3] {$t_3$}
(1.25,1.5) node[arr,name=d2] {$d_2$};
\draw[braid]  (in5) to[out=270,in=90] (x) to[out=270,in=0] (m2);
\draw[braid, name path=bin4] (in4) to[out=270,in=90] (y) to[out=270,in=45] (d2);
\draw[braid, name path=bin3] (in3) to[out=270,in=45] (t3);
\draw[braid, name path=bin2] (in2) to[out=270,in=45] (v3);
\draw[braid, name path=bin1] (in1) to[out=270,in=135] (v1);
\draw[braid] (t3) to[out=225,in=135] (d2);
\draw[braid] (t3) to[out=315,in=90] (d2);
\draw[braid] (d2) to[out=315,in=180] (m2);
\draw[braid, name path=bout1] (v3) to[out=225,in=90] (out1);
\draw[braid] (v3) to[out=315,in=135] (t3);
\draw[braid] (d2) to[out=225,in=90] (out2);
\draw[braid] (m2) to (out3);
  \end{tikzpicture}
$$
and then the claim follows from equality of the first and last of these, the
non-degeneracy of $m$, and Remark~\ref{rem:f_non-degenerate}. 
\end{proof}

\begin{lemma} \label{lem:comodule_morphism_nd} Let $(A,t_1,t_2,t_3,t_4,e)$ be a
regular multiplier bimonoid in a braided monoidal category $\cc$ having the
properties listed in Section \ref{sect:mbm}.  
For any $A$-comodules $(V,v^1,v^3)$, $(W,w^1,w^3)$ and any morphism
$f:V\to W$ in $\cc$, the following assertions are equivalent. 
\begin{itemize}
\item[{(a)}] $f$ is a morphism of comodules.
\item[{(b)}] $f$ renders commutative the first diagram of
  \eqref{eq:morphism_of_comodules}. 
\item[{(c)}] $f$ renders commutative the second diagram of
  \eqref{eq:morphism_of_comodules}. 
\end{itemize}
\end{lemma}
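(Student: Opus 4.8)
Since assertion (a) is precisely the conjunction of (b) and (c), it suffices to establish the equivalence of (b) and (c). I would prove the implication (b)$\Rightarrow$(c) directly, and then deduce (c)$\Rightarrow$(b) from the symmetry that interchanges $\cc$ with $\overline\cc$ together with the pairs $(t_1,t_2)$ and $(t_3,t_4)$ and, correspondingly, $v^1$ with $v^3$; under this symmetry a $t_1$-comodule structure in $\cc$ becomes a $t_3$-comodule structure in $\overline\cc$ (compare the first diagrams of \eqref{eq:t_1_comodule} and \eqref{eq:t_3_comodule}, which differ only by $c\leftrightarrow c^{-1}$), and the first diagram of \eqref{eq:morphism_of_comodules} is carried to the second. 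Thus (c)$\Rightarrow$(b) in $\cc$ is just (b)$\Rightarrow$(c) applied in $\overline\cc$.

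For (b)$\Rightarrow$(c), assume $w^1.f1=f1.v^1$. The plan is to take the compatibility condition \eqref{eq:comodule_compatibility} for $W$, namely $1m.w^31.c1=1m.c1.1w^1$ as morphisms $AWA\to WA$, and precompose both sides with $1f1\colon AVA\to AWA$. On the left, naturality of the braiding yields $c1.1f1=f11.c1$, so the left side becomes $1m.w^31.f11.c1=1m.(w^3.f1)1.c1$. On the right I would proceed in four moves: first replace $1w^1.1f1$ by $1f1.1v^1$, which is hypothesis (b) tensored on the left by $A$; next use naturality again to pass $f$ through the braiding; then the interchange law $1m.f11=f1.1m$; and finally the compatibility \eqref{eq:comodule_compatibility} for $V$ in the form $1m.c1.1v^1=1m.v^31.c1$. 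The right side then reads $f1.1m.v^31.c1=1m.(f1.v^3)1.c1$.

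Since $c1=c_{A,V}1$ is an isomorphism, it may be cancelled on the right, leaving the single identity $1m.(w^3.f1)1=1m.(f1.v^3)1\colon VA^2\to WA$. This is exactly the hypothesis of Remark~\ref{rem:f_non-degenerate}, applied to the identity $\mM$-morphism $i\colon A\nto A$ whose two components are both the multiplication $m$: indeed $m\in\cq$ is an epimorphism preserved under monoidal product with any object, so two morphisms $VA\to WA$ coincide as soon as they agree after whiskering by $A$ on the right and postcomposing with $1m$. Hence $w^3.f1=f1.v^3$, which is (c).

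The computation is routine once the ingredients are lined up; the one point needing care — and the only place where the comodule data is genuinely used — is the twofold appearance of the compatibility condition \eqref{eq:comodule_compatibility}, for $W$ to open the chain and for $V$ to close it, with hypothesis (b) inserted between them. The step I expect to be the main (if modest) obstacle is recognising that, after cancelling the isomorphism $c1$, the resulting equation is already of the precise shape governed by Remark~\ref{rem:f_non-degenerate} for the multiplication $\mM$-morphism, so that no fresh non-degeneracy argument is required; a secondary care point is checking that $m$, and hence $1m$, really is an epimorphism stable under tensoring, which is immediate from $m\in\cq$ and the closure properties of \cq.
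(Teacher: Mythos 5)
Your proof is correct, and its computational core is the same as the paper's: the chain consisting of \eqref{eq:comodule_compatibility} for $W$, hypothesis (b), naturality of the braiding, and \eqref{eq:comodule_compatibility} for $V$, followed by cancelling the isomorphism $c1$ and applying non-degeneracy of $m$, is exactly the paper's pasting diagram read in one direction. The genuine difference lies in how the converse is obtained. The paper notes that, thanks to non-degeneracy of $m$, condition (b) is \emph{equivalent} to commutativity of the exterior of its diagram and (c) is \emph{equivalent} to commutativity of the inner square, so a single diagram delivers both implications with no duality argument. You instead extract only (b)$\Rightarrow$(c) from the computation and get (c)$\Rightarrow$(b) from the $\cc\leftrightarrow\overline\cc$ symmetry. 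That route is legitimate --- the paper argues the same way in Lemma~\ref{lem:comodule_nd}, and the transfer facts you need are available in the paper: the Section~\ref{sect:mbm} properties hold for $(A,t_1,t_2,e)$ in $\cc$ iff they hold for $(A,t_3,t_4,e)$ in $\overline\cc$, and (per the discussion following Proposition~\ref{prop:f_*}) a pair satisfying \eqref{eq:comodule_compatibility} yields $(v^3,v^1)$ satisfying it for $A\op$ in $\overline\cc$. But your description of the symmetry omits two points it silently relies on: the multiplication on the other side is the opposite one $m.c^{-1}$, and it is the preservation of \eqref{eq:comodule_compatibility}, not merely of the $t_1$-/$t_3$-comodule diagrams, that makes $(V,v^3,v^1)$ and $(W,w^3,w^1)$ genuine comodules in $\overline\cc$, so that the lemma's hypotheses really transfer. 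A final small point: your appeal to Remark~\ref{rem:f_non-degenerate} for the identity $\mM$-morphism is valid (it uses $m\in\cq$), but the equation $1m.(w^3.f1)1=1m.(f1.v^3)1$ is already literally an instance of the injectivity defining non-degeneracy in Section~\ref{sect:nondeg} (with $X=VA$, $Y=W$), which is what the paper invokes and which requires no surjectivity of $m$.
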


\begin{proof}
Since (a) means that (b) and (c) simultaneously hold, it suffices to prove the
equivalence of (b) and (c). This follows by noting that -- thanks to the
non-degeneracy of $m$ -- (b) is equivalent to the commutativity of the
exterior; and (c) is equivalent to the commutativity of the inner square, in
\[
\xy *!D\xybox{
\xymatrix{
% 1.1
AVA \ar[rr]^-{1v^1} \ar[ddd]_-{1f1} \ar[rd]^-{c1} &
% 1.2
\ar@{}[rrd]|-{\eqref{eq:comodule_compatibility}} &
% 1.3
AVA \ar[r]^-{c1} &
% 1.4
VA^2 \ar[r]^-{1m} &
% 1.5
VA \ar[ddd]^-{f1} \ar@{=}[ld] \\
&
% 2.2
VA^2 \ar[r]^-{v^31} \ar[d]_-{f11} &
% 2.3
VA^2 \ar[r]^-{1m} &
% 2.4
VA \ar[d]^-{f1} \\
& 
% 3.2
 W A^2 \ar[r]^-{ w^3 1} 
\ar@{}[rrd]|-{\eqref{eq:comodule_compatibility}} &
% 3.3
 WA^2 \ar[r]^-{1m} &
% 3.4
 WA \ar@{=}[rd] \\
% 4.1
 AWA \ar[rr]_-{ 1w^1} \ar[ru]^-{c1}&&
% 4.3
 AWA \ar[r]_-{c1} &
% 4.4
 WA^2 \ar[r]_-{1m} &
% 4.5
 WA\ .} }\endxy 
%\blue \qedhere \black  
\]
\end{proof}

\subsection{Change of base for comodules}

A morphism of (counital) coalgebras induces a functor between the
corresponding categories of comodules. There is a corresponding construction
for comodules over a regular multiplier bimonoid.  

\begin{proposition}\label{prop:f_*}
Let  $A$ and $B$ be regular multiplier bimonoids having the properties listed
in Section \ref{sect:mbm}, and let $f\colon A\nto B$ be a dense multiplicative
\mM-morphism.   
\begin{enumerate}
\item If $v^1\colon VA\to VA$ and $v^3\colon VA\to VA$ satisfy
  \eqref{eq:comodule_compatibility}, then there is a unique pair of morphisms
  $w^1\colon VB\to VB$ and $w^3\colon VB\to VB$ which satisfy
  \eqref{eq:comodule_compatibility} and make the following diagrams commute.  
\begin{equation}\label{eq:induced-comodule}
\xymatrix{
VAB \ar[r]^{1f_1} \ar[d]_{v^11} & VB \ar[d]^{w^1} \\
VAB \ar[r]_{1f_1} & VB } \quad
\xymatrix{
VAB \ar[r]^{1c^{-1}} \ar[d]_{v^31} & VBA \ar[r]^{1f_2} & VB \ar[d]^{w^3} \\
VAB \ar[r]_{1c^{-1}} & VBA \ar[r]_{1f_2} & VB }
\end{equation}
\item If $f$ is a morphism of multiplier bimonoids and $(V,v^1,v^3)$ is an
  $A$-comodule, then $(V,w^1,w^3)$ is a $B$-comodule. 
\end{enumerate}
The construction in part (2) gives the object map of a functor $f_*$ 
acting on the morphisms as the identity map, from the category of
$A$-comodules to the category of $B$-comodules.  For morphisms
$A\stackrel f \nto B \stackrel g \nto C$ of multiplier bimonoids, the
composite functor $g_*f_*$ is naturally isomorphic to $(g\bullet f)_*$ and the
identity $\mM$-morphism $A\nto A$ induces the identity
functor. If we regard \cm as a bicategory with only identity 2-cells, then
this defines a pseudofunctor from \cm to the 2-category of categories,
functors and natural transformations. 
\end{proposition}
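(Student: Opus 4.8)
The plan is to construct the induced maps $w^1,w^3$ by the universal property of suitable regular epimorphisms, to check the comodule axioms by reducing them along those epimorphisms to the already-known axioms for $v^1,v^3$, and to read off the (pseudo)functoriality from the resulting uniqueness statements.

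For part (1), density of $f$ gives $f_1,f_2\in\cq$; since $\cq$ contains the isomorphisms and is closed under composition and monoidal product, the maps $1f_1\colon VAB\to VB$ and $(1f_2).(1c^{-1})\colon VAB\to VB$ again lie in $\cq$, hence are regular epimorphisms whose presenting coequalizers survive tensoring with a fixed object. I would define $w^1$ and $w^3$ as the unique factorizations of $(1f_1).(v^11)$ and of $(1f_2).(1c^{-1}).(v^31)$ through these epimorphisms, so that uniqueness is automatic. The factorizations exist because these composites coequalize the pair presenting $1f_1$ (respectively its $w^3$-analogue): using only the module-map property \eqref{eq:v^1-3_module_maps} of $v^1$ (itself a consequence of \eqref{eq:comodule_compatibility} under non-degeneracy) and the multiplicativity \eqref{eq:multiplicative} of $f$, one checks that $1f_1$ identifies $1m1$ with $11f_1$ on $VA^2B$ while $v^11$ commutes past both. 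The compatibility \eqref{eq:comodule_compatibility} for $w^1,w^3$ then follows by precomposing with $1f_1$ and invoking \eqref{eq:comodule_compatibility} for $v^1,v^3$ together with the multiplicativity of $f$.

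For part (2), assuming $f$ is a bimonoid morphism and $(V,v^1,v^3)$ an $A$-comodule, Lemma \ref{lem:comodule_nd} reduces the task to showing that $(V,w^1)$ is a $t_1$-comodule over $B$, since \eqref{eq:comodule_compatibility} is already in hand. The counit condition $1e.w^1=1e$ I would get by precomposing with $1f_1$ and combining the counit-preservation diagram of \eqref{eq:mbm_morphism} (that is, $e.f_1=ee$) with the counit law for $v^1$. The coassociativity half of \eqref{eq:t_1_comodule} is the main obstacle: I would precompose with an epimorphism assembled from $f_1$ to pass from the $B$-level fusion data down to the $A$-level, and then invoke the comultiplication-preservation square of \eqref{eq:mbm_morphism} (which relates the $t_1$ of $B$ to the $t_1,t_2$ of $A$ through $f_1$) together with the $t_1$-comodule axiom for $v^1$. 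This is where essentially all the diagram-chasing lives.

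Finally, for the functor and the pseudofunctor claim I would proceed formally. That an $A$-comodule morphism $g\colon V\to W$ is a $B$-comodule morphism $f_*(V)\to f_*(W)$ follows, via Lemma \ref{lem:comodule_morphism_nd}, from the first square of \eqref{eq:morphism_of_comodules}, which I verify by precomposing with $1f_1$, cancelling, and using only the defining relation of $w^1$ and the interchange law; functoriality in $g$ is then trivial since $f_*$ is the identity on morphisms. For the comparison $g_*f_*\cong(g\bullet f)_*$ I would show the two induced $C$-coactions agree outright: both satisfy the same relation after precomposition with the $\cq$-map $(1g_1).(1f_11)\colon VABC\to VC$, the decisive input being the defining identity \eqref{eq:bullet-composition} of $(g\bullet f)_1$, so that equality follows because this map is a regular epimorphism; hence the comparison cell is the identity. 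The identity $\mM$-morphism induces the identity functor because, its components being $m$, the defining relation for the induced coaction collapses through \eqref{eq:v^1-3_module_maps} to $w^1=v^1$ and $w^3=v^3$. Since all comparison cells are identities, the coherence axioms of the pseudofunctor reduce to the associativity and unit laws for $\bullet$ recalled in Section \ref{sect:mM}, which hold; the only real difficulty here is the bookkeeping of tensor-factor positions in the $(g\bullet f)_1$ computation.
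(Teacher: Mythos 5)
There is a genuine gap in your part (1), and it sits at the crux of the whole proposition. You correctly verify that $1f_1.v^11$ coequalizes the \emph{specific} pair $(1m1,11f_1)\colon VA^2B\to VAB$, using \eqref{eq:v^1-3_module_maps}, \eqref{eq:multiplicative} and the interchange law; but what the factorization requires is that it coequalize the pair that \emph{presents} $1f_1$ as a coequalizer. The standing assumptions of Section~\ref{sect:assumptions} only provide some abstract pair $x,x'\colon X\to AB$ with $f_1$ their coequalizer (preserved by tensoring); you get no control over which pair this is. Your argument would be complete only if $f_1$ were itself the coequalizer of $(m1,1f_1)$ --- that is, if $B$ were a ``firm'' $A$-module via $f_1$. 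Multiplicativity says $f_1$ \emph{coequalizes} this pair, but a regular epimorphism coequalizing a pair is far weaker than being the coequalizer of that pair, and firmness is neither assumed nor proved in this setting. A telltale sign that something is missing: your existence argument never uses $v^3$, whereas the proposition deliberately takes as input a compatible pair $(v^1,v^3)$; the compatibility \eqref{eq:comodule_compatibility} is exactly what existence needs.

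The paper's proof handles an arbitrary presenting pair as follows: tensor with $A$ on the left and rewrite the composite $f_11.1c^{-1}.11f_1.1v^11\colon AVAB\to BV$, using \eqref{eq:comodule_compatibility} (this is where $v^3$ enters) and \eqref{eq:multiplicative} twice, into a composite whose first step is $11f_1$, hence one that coequalizes $11x$ and $11x'$; then the outer $f_11$ is cancelled by Remark~\ref{rem:f_non-degenerate} (non-degeneracy of $B$ plus density of $f$), so that $1f_1.v^11$ itself coequalizes $1x,1x'$ and factors through $1f_1$. Secondarily, in part (2) your fusion-law step is only a sketch, and the ingredients you name (the comultiplication square of \eqref{eq:mbm_morphism} plus the $t_1$-comodule law for $v^1$) are not the ones that make it go through: the paper first derives the characterization \eqref{eq:w1-nd} of $w^1$ in terms of $v^3$, then precomposes with the epimorphism $m1.1t_3=d_2.1c^{-1}\in\cq$ --- built from $A$'s regular structure, not from $f_1$ --- and uses the $t_3$-comodule law \eqref{eq:t_3_comodule} together with Proposition~\ref{prop:multiplier-bimonoid-t3} (proved in the paper precisely for this purpose) before cancelling via Remark~\ref{rem:f_non-degenerate}. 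Your treatment of the functoriality and pseudofunctoriality claims, by contrast, is sound and in fact more detailed than the paper's.
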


\begin{proof}
First we verify the existence of $w^1$ and $w^3$ satisfying the defining
relations \eqref{eq:induced-comodule}. For the case of $w^1$, we use the fact
that $f_1$ is the coequalizer of some pair of morphisms 
$x,x'\colon X\to AB$, and that this coequalizer is preserved by  taking
the monoidal product with $V$. 
In the diagram
$$
\xymatrix{
AVAB \ar@{=}[r] \ar[dd]_-{11f_1} &
AVAB \ar[r]^-{1v^11} \ar[d]_-{c11} 
\ar@{}[rdd]|-{\eqref{eq:comodule_compatibility}} &
AVAB \ar[r]^-{11f_1} \ar[d]^-{c11} &
AVB \ar[r]^-{1c^{-1}} \ar[d]^-{c1} &
ABV \ar[ddd]^-{f_11} \\
& VA^2B \ar[d]_-{v^311} &
VA^2B\ar[r]^-{11f_1} \ar[d]^-{1m1} \ar@{}[rdd]|-{\eqref{eq:multiplicative}} &
VAB \ar[dd]^-{1f_1} \\
AVB \ar[d]_-{c1} &
VA^2B \ar[r]^-{1m1} \ar[d]_-{11f_1} \ar@{}[rd]|-{\eqref{eq:multiplicative}} &
VAB \ar[d]^-{1f_1} \\
VAB \ar[r]_-{v^31} &
VAB \ar[r]_-{1f_1} &
VB \ar@{=}[r] &
VB \ar[r]_-{c^{-1}} &
BV}
$$
the left-bottom composite coequalizes $11x$ and $11x'$, hence so does the
top-right composite. Now by Remark~\ref{rem:f_non-degenerate} it follows that
$c^{-1}.1f_1.v^11$ coequalizes $1x$ and $1x'$, hence so too does
$1f_1.v^11$. The existence of $w^1$ follows.  
The case of $w^3$ is similar.  

By the calculation
$$
\begin{tikzpicture} % pic1 
\path
(0,3) node[empty, name=in1] {}
(0.5,3) node[empty, name=in2] {}
(1,3) node[empty, name=in3] {}
(1.5,3) node[empty, name=in4] {}
(0.5,0) node[empty, name=out1] {}
(1,0) node[empty, name=out2] {}
(1.25,2.5) node[arr, name=f1u] {$f_1$}
(1,1.5) node[arr, name=w1] {$w_1$}
(1,0.5) node[arr, name=f1d] {$f_1$};
\path[braid, name path=bin1] (in1) to[out=270,in=135] (f1d);
\draw[braid] (in2) to[out=270,in=135] (w1);
\draw[braid] (in3) to[out=270,in=135] (f1u);
\draw[braid] (in4) to[out=270, in=45] (f1u);
\draw[braid] (f1u) to[out=270,in=45] (w1);
\draw[braid] (w1) to[out=315,in=45] (f1d);
\draw[braid, name path=bout1] (w1) to[out=225, in=90] (out1);
\fill[white, name intersections={of=bin1 and bout1}] (intersection-1) circle(0.1);\draw[braid] (in1) to[out=270,in=135] (f1d);
\draw[braid] (f1d) to (out2);
\path (2,1.5) node {$=$};
\end{tikzpicture}
\begin{tikzpicture} % pic2
\path
(0,3) node[empty, name=in1] {}
(0.5,3) node[empty, name=in2] {}
(1,3) node[empty, name=in3] {}
(1.5,3) node[empty, name=in4] {}
(0.5,0) node[empty, name=out1] {}
(1,0) node[empty, name=out2] {}
(1.25,1.5) node[arr, name=f1u] {$f_1$}
(0.75,2.5) node[arr, name=v1] {$v_1$}
(1,0.5) node[arr, name=f1d] {$f_1$};
\path[braid, name path=bin1] (in1) to[out=270,in=135] (f1d);
\draw[braid] (in2) to[out=270,in=135] (v1);
\draw[braid] (in3) to[out=270,in=45] (v1);
\draw[braid] (in4) to[out=270, in=45] (f1u);
\draw[braid] (v1) to[out=315,in=135] (f1u);
\draw[braid] (f1u) to[out=270,in=45] (f1d);
\draw[braid, name path=bout1] (v1) to[out=225, in=90] (out1);
\fill[white, name intersections={of=bin1 and bout1}] (intersection-1) circle(0.1);
\draw[braid] (in1) to[out=270,in=135] (f1d);
\draw[braid] (f1d) to (out2);
\path (2,1.5) node {$=$};
\end{tikzpicture}
\begin{tikzpicture} % pic3
\path
(0,3) node[empty, name=in1] {}
(0.5,3) node[empty, name=in2] {}
(1,3) node[empty, name=in3] {}
(1.5,3) node[empty, name=in4] {}
(0.5,0) node[empty, name=out1] {}
(1,0) node[empty, name=out2] {}
(0.75,1.5) node[empty, name=m] {}
(0.75,2.5) node[arr, name=v1] {$v_1$}
(1,0.5) node[arr, name=f1] {$f_1$};
\path[braid, name path=bin1] (in1) to[out=270,in=180] (m);
\draw[braid] (in2) to[out=270,in=135] (v1);
\draw[braid] (in3) to[out=270,in=45] (v1);
\draw[braid] (in4) to[out=270, in=45] (f1);
\draw[braid] (v1) to[out=315,in=0] (m);
\draw[braid] (m) to[out=270,in=135] (f1);
\draw[braid, name path=bout1] (v1) to[out=225, in=90] (out1);
\fill[white, name intersections={of=bin1 and bout1}] (intersection-1) circle(0.1);
\draw[braid] (in1) to[out=270,in=180] (m);
\draw[braid] (f1) to (out2);
\path (2,1.5) node {$=$};
\end{tikzpicture}
\begin{tikzpicture} % pic4
\path
(0,3) node[empty, name=in1] {}
(0.5,3) node[empty, name=in2] {}
(1,3) node[empty, name=in3] {}
(1.5,3) node[empty, name=in4] {}
(0.5,0) node[empty, name=out1] {}
(1,0) node[empty, name=out2] {}
(0.75,1.5) node[empty, name=m] {}
(0.25,2.5) node[arr, name=v3] {$v_3$}
(1,0.5) node[arr, name=f1] {$f_1$};
\path[braid, name path=bin1] (in1) to[out=270,in=45] (v3);
\draw[braid, name path=bin2] (in2) to[out=270,in=135] (v3);
\draw[braid] (in3) to[out=270,in=0] (m);
\draw[braid] (in4) to[out=270, in=45] (f1);
\draw[braid] (v3) to[out=315,in=180] (m);
\draw[braid] (m) to[out=270,in=135] (f1);
\draw[braid, name path=bout1] (v3) to[out=225, in=90] (out1);
\fill[white, name intersections={of=bin1 and bin2}] (intersection-1) circle(0.1);
\draw[braid] (in1) to[out=270,in=45] (v3);
\draw[braid] (f1) to (out2);
\path (2,1.5) node {$=$};
\end{tikzpicture}
\begin{tikzpicture} % pic5
\path
(0,3) node[empty, name=in1] {}
(0.5,3) node[empty, name=in2] {}
(1,3) node[empty, name=in3] {}
(1.5,3) node[empty, name=in4] {}
(0.5,0) node[empty, name=out1] {}
(1,0) node[empty, name=out2] {}
(0.75,1.5) node[empty, name=m] {}
(0.25,2.5) node[arr, name=v3] {$v_3$}
(1.25,2.5) node[arr, name=f1u] {$f_1$}
(1,0.5) node[arr, name=f1] {$f_1$};
\path[braid, name path=bin1] (in1) to[out=270,in=45] (v3);
\draw[braid, name path=bin2] (in2) to[out=270,in=135] (v3);
\draw[braid] (in3) to[out=270,in=135] (f1u);
\draw[braid] (in4) to[out=270, in=45] (f1u);
\draw[braid] (v3) to[out=315,in=135] (f1);
\draw[braid] (f1u) to[out=270,in=45] (f1);
\draw[braid, name path=bout1] (v3) to[out=225, in=90] (out1);
\fill[white, name intersections={of=bin1 and bin2}] (intersection-1) circle(0.1);
\draw[braid] (in1) to[out=270,in=45] (v3);
\draw[braid] (f1) to (out2);
\end{tikzpicture}
$$
and density of $f$, we see that the diagram
\begin{equation}\label{eq:w1-nd}
\xymatrix{
AVB \ar[r]^{1w^1} \ar[d]_{c1} & AVB \ar[r]^{c1} & VAB \ar[d]^{1f_1} \\
VAB \ar[r]_{v^31} & VAB \ar[r]_{1f_1} & VB }
\end{equation}
commutes. (In light of Remark~\ref{rem:f_non-degenerate} this actually
characterizes $w^1$.) 

The compatibility condition \eqref{eq:comodule_compatibility} for $w^1$ and
$w^3$ follows from the calculation  
$$
\begin{tikzpicture} % pic1
\path 
(0,3) node[empty, name=in1] {}
(0.5,3) node[empty, name=in2] {}
(1,3) node[empty, name=in3] {}
(1.5,3) node[empty, name=in4] {}
(2,3) node[empty, name=in5] {}
(0.5,0) node[empty, name=out1] {}
(1,0) node[empty, name=out2] {}
(0.25,2.5) node[arr, name=f2] {$f_2$}
(1.75,2.5) node[arr, name=f1] {$f_1$}
(1.25,1.5) node[arr, name=w1] {$w_1$}
(1,0.5) node[empty, name=m] {};
\draw[braid] (in1) to[out=270,in=135] (f2);
\draw[braid] (in2) to[out=270, in=45] (f2);
\draw[braid] (in3) to[out=270,in=135] (w1);
\draw[braid] (in4) to[out=270,in=135] (f1);
\draw[braid] (in5) to[out=270, in=45] (f1);
\draw[braid] (f1) to[out=270,in=45] (w1);
\draw[braid, name path=bout1] (w1) to[out=225,in=90] (out1);
\draw[braid] (w1) to[out=315,in=0] (m);
\draw[braid] (m) to (out2);
\path[braid, name path=f2m] (f2) to[out=270,in=180] (m);
\fill[white, name intersections={of=f2m and bout1}] (intersection-1) circle(0.1);
\draw[braid] (f2) to[out=270,in=180] (m);
\path (2.5,1.5) node {$=$};
\end{tikzpicture}
\begin{tikzpicture} % pic2
\path 
(0,3) node[empty, name=in1] {}
(0.5,3) node[empty, name=in2] {}
(1,3) node[empty, name=in3] {}
(1.5,3) node[empty, name=in4] {}
(2,3) node[empty, name=in5] {}
(0.5,0) node[empty, name=out1] {}
(1,0) node[empty, name=out2] {}
(0.25,2.5) node[arr, name=f2] {$f_2$}
(1.75,1.5) node[arr, name=f1] {$f_1$}
(1.25,2.5) node[arr, name=v1] {$v_1$}
(1,0.5) node[empty, name=m] {};
\draw[braid] (in1) to[out=270,in=135] (f2);
\draw[braid] (in2) to[out=270, in=45] (f2);
\draw[braid] (in3) to[out=270,in=135] (v1);
\draw[braid] (in4) to[out=270,in=45] (v1);
\draw[braid] (in5) to[out=270, in=45] (f1);
\draw[braid] (v1) to[out=315,in=135] (f1);
\draw[braid, name path=bout1] (v1) to[out=225,in=90] (out1);
\draw[braid] (f1) to[out=270,in=0] (m);
\draw[braid] (m) to (out2);
\path[braid, name path=f2m] (f2) to[out=270,in=180] (m);
\fill[white, name intersections={of=f2m and bout1}] (intersection-1) circle(0.1);
\draw[braid] (f2) to[out=270,in=180] (m);
\path (2.5,1.5) node {$=$};
\end{tikzpicture}
\begin{tikzpicture} % pic3
\path 
(0,3) node[empty, name=in1] {}
(0.5,3) node[empty, name=in2] {}
(1,3) node[empty, name=in3] {}
(1.5,3) node[empty, name=in4] {}
(2,3) node[empty, name=in5] {}
(0.5,0) node[empty, name=out1] {}
(1,0) node[empty, name=out2] {}
(1.25,1.0) node[arr, name=f1d] {$f_1$}
(1.75,1.75) node[arr, name=f1] {$f_1$}
(1.25,2.5) node[arr, name=v1] {$v_1$}
(1,0.5) node[empty, name=m] {};
\path[braid, name path=bin1] (in1) to[out=270,in=180] (m);
\path[braid, name path=bin2] (in2) to[out=270, in=135] (f1d);
\draw[braid] (in3) to[out=270,in=135] (v1);
\draw[braid] (in4) to[out=270,in=45] (v1);
\draw[braid] (in5) to[out=270, in=45] (f1);
\draw[braid] (v1) to[out=315,in=135] (f1);
\draw[braid, name path=bout1] (v1) to[out=225,in=90] (out1);
\fill[white, name intersections={of=bin1 and bout1}] (intersection-1) circle(0.1);
\fill[white, name intersections={of=bin2 and bout1}] (intersection-1) circle(0.1);
\draw[braid] (in1) to[out=270,in=180] (m);
\draw[braid] (in2) to[out=270, in=135] (f1d);
\draw[braid] (f1) to[out=270,in=45] (f1d);
\draw[braid] (f1d) to[out=270,in=0] (m);
\draw[braid] (m) to (out2);
% \path[braid, name path=f2m] (f2) to[out=270,in=180] (m);
% 
% \draw[braid] (f2) to[out=270,in=180] (m);
\path (2.5,1.5) node {$=$};
\end{tikzpicture}
\begin{tikzpicture} % pic4
\path 
(0,3) node[empty, name=in1] {}
(0.5,3) node[empty, name=in2] {}
(1,3) node[empty, name=in3] {}
(1.5,3) node[empty, name=in4] {}
(2,3) node[empty, name=in5] {}
(0.5,0) node[empty, name=out1] {}
(1,0) node[empty, name=out2] {}
(1.75,1.2) node[arr, name=f1] {$f_1$}
%(1.75,1.75) node[arr, name=f1] {$f_1$}
(1.25,2.5) node[arr, name=v1] {$v_1$}
(1.25,1.75) node[empty, name=mu] {}
(1,0.5) node[empty, name=m] {};
\path[braid, name path=bin1] (in1) to[out=270,in=180] (m);
\path[braid, name path=bin2] (in2) to[out=270, in=180] (mu);
\draw[braid] (in3) to[out=270,in=135] (v1);
\draw[braid] (in4) to[out=270,in=45] (v1);
\draw[braid] (v1) to[out=315,in=0] (mu);
\draw[braid] (in5) to[out=270, in=45] (f1);
\draw[braid] (mu) to[out=270,in=135] (f1);
\draw[braid, name path=bout1] (v1) to[out=225,in=90] (out1);
\fill[white, name intersections={of=bin1 and bout1}] (intersection-1) circle(0.1);
\fill[white, name intersections={of=bin2 and bout1}] (intersection-1) circle(0.1);
\draw[braid] (in1) to[out=270,in=180] (m);
\draw[braid] (in2) to[out=270, in=180] (mu);
\draw[braid] (f1) to[out=270,in=0] (m);
\draw[braid] (m) to (out2);
% \path[braid, name path=f2m] (f2) to[out=270,in=180] (m);
% 
% \draw[braid] (f2) to[out=270,in=180] (m);
\path (2.5,1.5) node {$=$};
\end{tikzpicture}
$$

$$
\begin{tikzpicture} % pic5
\path 
(0,3) node[empty, name=in1] {}
(0.5,3) node[empty, name=in2] {}
(1,3) node[empty, name=in3] {}
(1.5,3) node[empty, name=in4] {}
(2,3) node[empty, name=in5] {}
(0.5,0) node[empty, name=out1] {}
(1,0) node[empty, name=out2] {}
(1.75,1.2) node[arr, name=f1] {$f_1$}
%(1.75,1.75) node[arr, name=f1] {$f_1$}
(0.75,2.5) node[arr, name=v3] {$v_3$}
(1.25,1.75) node[empty, name=mu] {}
(1,0.5) node[empty, name=m] {};
\path[braid, name path=bin1] (in1) to[out=270,in=180] (m);
\path[braid, name path=bin2] (in2) to[out=270, in=45] (v3);
\draw[braid, name path=bin3] (in3) to[out=270,in=135] (v3);
\draw[braid] (v3) to[out=315,in=180] (mu);
\draw[braid] (in4) to[out=270,in=0] (mu);
\draw[braid] (in5) to[out=270, in=45] (f1);
\draw[braid] (mu) to[out=270,in=135] (f1);
\draw[braid, name path=bout1] (v3) to[out=225,in=90] (out1);
\fill[white, name intersections={of=bin1 and bout1}] (intersection-1) circle(0.1);
\fill[white, name intersections={of=bin2 and bin3}] (intersection-1) circle(0.1);
\draw[braid] (in1) to[out=270,in=180] (m);
\draw[braid] (in2) to[out=270, in=45] (v3);
\draw[braid] (f1) to[out=270,in=0] (m);
\draw[braid] (m) to (out2);
\path (2.5,1.5) node {$=$};
\end{tikzpicture}
\begin{tikzpicture} % pic6
\path 
(0,3) node[empty, name=in1] {}
(0.5,3) node[empty, name=in2] {}
(1,3) node[empty, name=in3] {}
(1.5,3) node[empty, name=in4] {}
(2,3) node[empty, name=in5] {}
(0.5,0) node[empty, name=out1] {}
(1,0) node[empty, name=out2] {}
(1.5,1.25) node[arr, name=f1d] {$f_1$}
(1.75,2.5) node[arr, name=f1] {$f_1$}
(0.75,2.5) node[arr, name=v3] {$v_3$}
%(1.25,1.75) node[empty, name=mu] {}
(1,0.5) node[empty, name=m] {};
\path[braid, name path=bin1] (in1) to[out=270,in=180] (m);
\path[braid, name path=bin2] (in2) to[out=270, in=45] (v3);
\draw[braid, name path=bin3] (in3) to[out=270,in=135] (v3);
\draw[braid] (v3) to[out=315,in=135] (f1d);
\draw[braid] (in4) to[out=270,in=135] (f1);
\draw[braid] (in5) to[out=270, in=45] (f1);
\draw[braid] (f1) to[out=270,in=45] (f1d);
\draw[braid, name path=bout1] (v3) to[out=225,in=90] (out1);
\fill[white, name intersections={of=bin1 and bout1}] (intersection-1) circle(0.1);
\fill[white, name intersections={of=bin2 and bin3}] (intersection-1) circle(0.1);
\draw[braid] (in1) to[out=270,in=180] (m);
\draw[braid] (in2) to[out=270, in=45] (v3);
\draw[braid] (f1d) to[out=270,in=0] (m);
\draw[braid] (m) to (out2);
\path (2.5,1.5) node {$=$};
\end{tikzpicture}
\begin{tikzpicture} % pic7
\path 
(0,3) node[empty, name=in1] {}
(0.5,3) node[empty, name=in2] {}
(1,3) node[empty, name=in3] {}
(1.5,3) node[empty, name=in4] {}
(2,3) node[empty, name=in5] {}
(0.5,0) node[empty, name=out1] {}
(1.5,0) node[empty, name=out2] {}
(1,1.25) node[arr, name=f2] {$f_2$}
(1.75,2.5) node[arr, name=f1] {$f_1$}
(0.75,2.5) node[arr, name=v3] {$v_3$}
%(1.25,1.75) node[empty, name=mu] {}
(1.5,0.5) node[empty, name=m] {};
\path[braid, name path=bin1] (in1) to[out=270,in=135] (f2);
\path[braid, name path=bin2] (in2) to[out=270, in=45] (v3);
\draw[braid, name path=bin3] (in3) to[out=270,in=135] (v3);
\draw[braid] (v3) to[out=315,in=45] (f2);
\draw[braid] (in4) to[out=270,in=135] (f1);
\draw[braid] (in5) to[out=270, in=45] (f1);
\draw[braid] (f1) to[out=270,in=0] (m);
\draw[braid, name path=bout1] (v3) to[out=225,in=90] (out1);
\fill[white, name intersections={of=bin1 and bout1}] (intersection-1) circle(0.1);
\fill[white, name intersections={of=bin2 and bin3}] (intersection-1) circle(0.1);
\draw[braid] (in1) to[out=270,in=135] (f2);
\draw[braid] (in2) to[out=270, in=45] (v3);
\draw[braid] (f2) to[out=270,in=180] (m);
\draw[braid] (m) to (out2);
\path (2.5,1.5) node {$=$};
\end{tikzpicture}
\begin{tikzpicture} % pic8
\path 
(0,3) node[empty, name=in1] {}
(0.5,3) node[empty, name=in2] {}
(1,3) node[empty, name=in3] {}
(1.5,3) node[empty, name=in4] {}
(2,3) node[empty, name=in5] {}
(0.5,0) node[empty, name=out1] {}
(1.5,0) node[empty, name=out2] {}
(1,2.25) node[arr, name=f2] {$f_2$}
(1.75,2.5) node[arr, name=f1] {$f_1$}
(0.75,1.5) node[arr, name=w3] {$w_3$}
%(1.25,1.75) node[empty, name=mu] {}
(1.5,0.5) node[empty, name=m] {};
\path[braid, name path=bin1] (in1) to[out=270,in=135] (f2);
\path[braid, name path=bin2] (in2) to[out=270, in=45] (f2);
\draw[braid, name path=bin3] (in3) to[out=225,in=135] (w3);
\draw[braid] (f2) to[out=270,in=45] (w3);
\draw[braid] (in4) to[out=270,in=135] (f1);
\draw[braid] (in5) to[out=270, in=45] (f1);
\draw[braid] (f1) to[out=270,in=0] (m);
\draw[braid, name path=bout1] (w3) to[out=225,in=90] (out1);
\draw[braid] (w3) to[out=315,in=180] (m);
\fill[white, name intersections={of=bin1 and bin3}] (intersection-1) circle(0.1);
\fill[white, name intersections={of=bin2 and bin3}] (intersection-1) circle(0.1);
\draw[braid] (in1) to[out=270,in=135] (f2);
\draw[braid] (in2) to[out=270, in=45] (f2);
%\draw[braid] (f2) to[out=270,in=180] (m);
\draw[braid] (m) to (out2);
%\path (2.5,1.5) node {$=$};
\end{tikzpicture}
$$
and the density of $f$. 

This proves (1); now we turn to (2). 
The counit condition follows easily by commutativity of the diagram
$$
\xymatrix{
VAB \ar@{=}[r] \ar[dd]_-{1f_1} &
VAB \ar@{=}[r] \ar[d]^-{v^11} &
VAB \ar[r]^-{1f_1} \ar[d]^-{1e1} &
VB \ar[d]^-{1e} \\
& VAB \ar[r]^-{1e1} \ar[d]^-{1f_1} &
VB \ar[r]^-{1e} &
V \ar@{=}[d] \\
VB \ar[r]_-{w^1} &
VB \ar[rr]_-{1e} &&
V}
$$
and density of $f$.

It remains only to prove the fusion law. For this, we use the fact that
$m1.1t_3$ is in \cq, since by \eqref{eq:t_2-3_compatibility} it is equal to
$d_2.1c^{-1}$. Now calculate as follows 

%\draw (3,2.7) node[empty] {$\stackrel{~\eqref{eq:d_1-2}}=$};
$$
\begin{tikzpicture}[scale=.9] %pic1 
\path 
(0,5) node[empty, name=in1] {}  
(0.5,5) node[empty, name=in2] {}  
(1,5) node[empty, name=in3] {}  
(1.5,5) node[empty, name=in4] {}  
(2,5) node[empty, name=in5] {}  
(2.5,5) node[empty, name=in6] {}  
(0.0,0) node[empty, name=out1] {}  
(1.0,0) node[empty, name=out2] {}  
(2.0,0) node[empty, name=out3] {} 
(1.25,4.5) node[arr, name=t3] {$t_3$}
(2.25,4.5) node[arr, name=t1] {$t_1$}
(0.75,4.1) node[empty, name=m] {}
(0.75,2.5) node[empty, name=x] {}
(1.0,2.25) node[empty, name=y] {}
(2.5,2.5) node[arr, name=w1u] {$w_1$}
(1.65,1.5) node[arr, name=w1d] {$w_1$}
(1.0,0.5) node[arr, name=f1l] {$f_1$}
(2.0,0.5) node[arr, name=f1r] {$f_1$};
\path[braid, name path=bin1] (in1) to[out=270, in=135] (w1u);
\draw[braid] (in2) to[out=270,in=180] (m);
\draw[braid] (in3) to[out=270,in=135] (t3);
\draw[braid] (in4) to[out=270,in=45] (t3);
\draw[braid] (in5) to[out=270,in=135] (t1);
\draw[braid] (in6) to[out=270,in=45] (t1);
\draw[braid] (t3) to[out=225,in=0] (m);
\path[braid, name path=mf1l] (m) to[out=270,in=90] (x) to[out=270,in=135] (f1l); 
\path[braid, name path=t3f1r] (t3) to[out=315,in=90] (y) to[out=270,in=135] (f1r); 
\draw[braid] (t1) to[out=315,in=45] (w1u);
\draw[braid, name path=t1w1d] (t1) to[out=225,in=45] (w1d);
\fill[white,name intersections={of=bin1 and t1w1d}] (intersection-1) circle(0.1);
\draw[braid, name path=w1df1l] (w1d) to[out=315,in=45] (f1l);
\draw[braid, name path=bout1] (w1d) to[out=225,in=90] (out1);
\draw[braid] (in1) to[out=270, in=135] (w1u);
\fill[white,name intersections={of=bin1 and mf1l}] (intersection-1) circle(0.1);
\fill[white,name intersections={of=bout1 and mf1l}] (intersection-1) circle(0.1);
\fill[white,name intersections={of=bout1 and t3f1r}] (intersection-1) circle(0.1);
\fill[white,name intersections={of=w1df1l and t3f1r}] (intersection-1) circle(0.1);
\draw[braid] (m) to[out=270,in=90] (x) to[out=270,in=135] (f1l); 
\fill[white,name intersections={of=bin1 and t3f1r}] (intersection-1) circle(0.1);
\draw[braid] (t3) to[out=315,in=90] (y) to[out=270,in=135] (f1r); 
\draw[braid] (w1u) to[out=315,in=45] (f1r);
\path[braid, name path=w1uw1d] (w1u) to[out=225,in=135] (w1d);
\fill[white,name intersections={of=w1uw1d and t1w1d}] (intersection-1) circle(0.1);
\draw[braid] (w1u) to[out=225,in=135] (w1d);
\draw[braid] (f1l) to (out2);
\draw[braid] (f1r) to (out3);
\draw (3.5,2.5) node[empty] {$\stackrel{~\eqref{eq:w1-nd}}=$};
%\path (3.5,2.5) node {$=$};
\end{tikzpicture}
\begin{tikzpicture}[scale=.9] %pic2
\path 
(0,5) node[empty, name=in1] {}  
(0.5,5) node[empty, name=in2] {}  
(1,5) node[empty, name=in3] {}  
(1.5,5) node[empty, name=in4] {}  
(2,5) node[empty, name=in5] {}  
(2.5,5) node[empty, name=in6] {}  
(0.0,0) node[empty, name=out1] {}  
(1.0,0) node[empty, name=out2] {}  
(2.0,0) node[empty, name=out3] {} 
(1.25,4.5) node[arr, name=t3] {$t_3$}
(2.25,4.5) node[arr, name=t1] {$t_1$}
(0.75,4.1) node[empty, name=m] {}
(0.5,2.5) node[empty, name=x] {}
(1.0,2.25) node[empty, name=y] {}
(1.15,2.5) node[arr, name=v3u] {$v_3$}
(1.5,1.5) node[arr, name=w1d] {$w_1$}
(1.0,0.5) node[arr, name=f1l] {$f_1$}
(2.5,1.5) node[arr, name=f1r] {$f_1$};
\draw[braid, name path=bin1] (in1) to[out=270, in=135] (v3u);
\draw[braid] (in2) to[out=270,in=180] (m);
\draw[braid] (in3) to[out=270,in=135] (t3);
\draw[braid] (in4) to[out=270,in=45] (t3);
\draw[braid] (in5) to[out=270,in=135] (t1);
\draw[braid] (in6) to[out=270,in=45] (t1);
\draw[braid] (t3) to[out=225,in=0] (m);
\path[braid, name path=mf1l] (m) to[out=270,in=90] (x) to[out=270,in=135] (f1l); 
\draw[braid, name path=t3v3u] (t3) to[out=315,in=45] (v3u); 
\draw[braid] (t1) to[out=315,in=45] (f1r);
\draw[braid, name path=t1w1d] (t1) to[out=225,in=45] (w1d);
\fill[white,name intersections={of=bin1 and mf1l}] (intersection-1) circle(0.1);
\draw[braid, name path=w1df1l] (w1d) to[out=315,in=45] (f1l);
\draw[braid, name path=bout1] (w1d) to[out=225,in=90] (out1);
\fill[white,name intersections={of=bin1 and mf1l}] (intersection-1) circle(0.1);
\fill[white,name intersections={of=bout1 and mf1l}] (intersection-1) circle(0.1);
\path[braid, name path=v3uf1r] (v3u) to[out=315,in=135] (f1r);
\fill[white,name intersections={of=t1w1d and v3uf1r}] (intersection-1) circle(0.1);
\draw[braid] (m) to[out=270,in=90] (x) to[out=270,in=135] (f1l); 
\draw[braid] (v3u) to[out=315,in=135] (f1r);
\path[braid, name path=v3uw1d] (v3u) to[out=225,in=135] (w1d);
%%%\fill[white,name intersections={of=v3uw1d and t1w1d}] (intersection-1) circle(0.1);
\draw[braid] (v3u) to[out=225,in=135] (w1d);
\draw[braid] (f1l) to (out2);
\draw[braid] (f1r) to[out=270,in=90]  (out3);
\draw (3.5,2.5) node[empty] {$\stackrel{~\eqref{eq:w1-nd}}=$};
%\path (3.5,2.5) node {$=$};
\end{tikzpicture}
\begin{tikzpicture}[scale=.9] %pic3
\path 
(0,5) node[empty, name=in1] {}  
(0.5,5) node[empty, name=in2] {}  
(1,5) node[empty, name=in3] {}  
(1.5,5) node[empty, name=in4] {}  
(2,5) node[empty, name=in5] {}  
(2.5,5) node[empty, name=in6] {}  
(0.0,0) node[empty, name=out1] {}  
(1.5,0) node[empty, name=out2] {}  
(2.5,0) node[empty, name=out3] {} 
(1.25,4.5) node[arr, name=t3] {$t_3$}
(2.25,4.5) node[arr, name=t1] {$t_1$}
(0.75,4.1) node[empty, name=m] {}
(0.5,2.5) node[empty, name=x] {}
(1.0,2.25) node[empty, name=y] {}
(1.15,2.5) node[arr, name=v3u] {$v_3$}
(0.5,1.5) node[arr, name=v3d] {$v_3$}
(1.5,1.0) node[arr, name=f1l] {$f_1$}
(2.5,1.5) node[arr, name=f1r] {$f_1$};
\draw[braid, name path=bin1] (in1) to[out=270, in=135] (v3u);
\draw[braid] (in2) to[out=270,in=180] (m);
\draw[braid] (in3) to[out=270,in=135] (t3);
\draw[braid] (in4) to[out=270,in=45] (t3);
\draw[braid] (in5) to[out=270,in=135] (t1);
\draw[braid] (in6) to[out=270,in=45] (t1);
\draw[braid] (t3) to[out=225,in=0] (m);
\path[braid, name path=mv3d] (m) to[out=270,in=90] (x) to[out=270,in=45] (v3d); 
\draw[braid, name path=t3v3u] (t3) to[out=315,in=45] (v3u); 
\draw[braid] (t1) to[out=315,in=45] (f1r);
\draw[braid, name path=t1f1l] (t1) to[out=225,in=45] (f1l);
\fill[white,name intersections={of=bin1 and mv3d}] (intersection-1) circle(0.1);
\draw[braid, name path=v3df1l] (v3d) to[out=315,in=135] (f1l);
\draw[braid, name path=bout1] (v3d) to[out=225,in=90] (out1);
\fill[white,name intersections={of=bin1 and mv3d}] (intersection-1) circle(0.1);
\fill[white,name intersections={of=bout1 and mv3d}] (intersection-1) circle(0.1);
\path[braid, name path=v3uf1r] (v3u) to[out=315,in=135] (f1r);
\fill[white,name intersections={of=t1f1l and v3uf1r}] (intersection-1) circle(0.1);
\draw[braid, name path=v3uv3d] (v3u) to[out=225,in=135] (v3d);
\draw[braid] (v3u) to[out=315,in=135] (f1r);
%\path[braid, name path=v3uv3d] (v3u) to[out=225,in=135] (v3d);
\fill[white,name intersections={of=v3uv3d and mv3d}] (intersection-1) circle(0.1);
\draw[braid] (m) to[out=270,in=90] (x) to[out=270,in=45] (v3d); 
\draw[braid] (f1l) to[out=270,in=90]  (out2);
\draw[braid] (f1r) to[out=270,in=90]  (out3);
\draw (3.5,2.5) node[empty] {$\stackrel{\eqref{eq:v^1-3_module_maps}}=$};
%\path (3.5,2.5) node {$=$};
\end{tikzpicture}
\begin{tikzpicture}[scale=.9] %pic4
\path 
(0,5) node[empty, name=in1] {}  
(0.5,5) node[empty, name=in2] {}  
(1,5) node[empty, name=in3] {}  
(1.5,5) node[empty, name=in4] {}  
(2,5) node[empty, name=in5] {}  
(2.5,5) node[empty, name=in6] {}  
(0.0,0) node[empty, name=out1] {}  
(1.0,0) node[empty, name=out2] {}  
(2.25,0) node[empty, name=out3] {} 
(1.25,4.5) node[arr, name=t3] {$t_3$}
(2.25,4.5) node[arr, name=t1] {$t_1$}
(0.75,1.5) node[empty, name=m] {}
% (0.5,2.5) node[empty, name=x] {}
% (1.0,2.25) node[empty, name=y] {}
(1.25,3.5) node[arr, name=v3u] {$v_3$}
(0.5,2.5) node[arr, name=v3d] {$v_3$}
(1.0,1.0) node[arr, name=f1l] {$f_1$}
(2.25,1.5) node[arr, name=f1r] {$f_1$};
\draw[braid, name path=bin1] (in1) to[out=270, in=135] (v3u);
\path[braid, name path=bin2] (in2) to[out=270,in=180] (m);
\draw[braid] (in3) to[out=270,in=135] (t3);
\draw[braid] (in4) to[out=270,in=45] (t3);
\draw[braid] (in5) to[out=270,in=135] (t1);
\draw[braid] (in6) to[out=270,in=45] (t1);
\path[braid, name path=t3uv3d] (t3) to[out=225,in=45] (v3d);
\path[braid, name path=mv3d] (m) to[out=270,in=90] (x) to[out=270,in=45] (v3d); 
\draw[braid, name path=t3v3u] (t3) to[out=315,in=45] (v3u); 
\draw[braid] (t1) to[out=315,in=45] (f1r);
\draw[braid, name path=t1f1l] (t1) to[out=225,in=45] (f1l);
\draw[braid, name path=v3uv3d] (v3u) to[out=225,in=135] (v3d);
\fill[white,name intersections={of=bin1 and t3uv3d}] (intersection-1) circle(0.1);
\fill[white,name intersections={of=v3uv3d and t3uv3d}] (intersection-1) circle(0.1);
\draw[braid] (t3) to[out=225,in=45] (v3d);
\path[braid, name path=v3uf1r] (v3u) to[out=315,in=135] (f1r);
\fill[white,name intersections={of=bin1 and bin2}] (intersection-1) circle(0.1);
\draw[braid, name path=v3dm] (v3d) to[out=315,in=0] (m);
\draw[braid, name path=bout1] (v3d) to[out=225,in=90] (out1);
\fill[white,name intersections={of=bin2 and bout1}] (intersection-1) circle(0.1);
\draw[braid] (in2) to[out=270,in=180] (m);
\fill[white,name intersections={of=t1f1l and v3uf1r}] (intersection-1) circle(0.2);
\draw[braid] (v3u) to[out=315,in=135] (f1r);
\draw[braid] (m) to[out=270,in=135] (f1l); 
\draw[braid] (f1l) to[out=270,in=90]  (out2);
\draw[braid] (f1r) to[out=270,in=90]  (out3);
\draw (3.5,2.5) node[empty] {$\stackrel{\eqref{eq:t_3_comodule}}=$};
%\path (3.5,2.5) node {$=$};
\end{tikzpicture}
$$

$$
\begin{tikzpicture} %pic5
\path 
(0,5) node[empty, name=in1] {}  
(0.5,5) node[empty, name=in2] {}  
(1,5) node[empty, name=in3] {}  
(1.5,5) node[empty, name=in4] {}  
(2,5) node[empty, name=in5] {}  
(2.5,5) node[empty, name=in6] {}  
(0.0,0) node[empty, name=out1] {}  
(1.0,0) node[empty, name=out2] {}  
(2.25,0) node[empty, name=out3] {} 
(1.25,3.5) node[arr, name=t3] {$t_3$}
(2.25,4.5) node[arr, name=t1] {$t_1$}
(0.75,2.5) node[empty, name=m] {}
(0.05,4.0) node[empty, name=x] {}
% (1.0,2.25) node[empty, name=y] {}
(0.75,4.5) node[arr, name=v3u] {$v_3$}
(1.0,1.0) node[arr, name=f1l] {$f_1$}
(2.25,1.5) node[arr, name=f1r] {$f_1$};
\draw[braid, name path=bin1] (in1) to[out=270, in=135] (v3u);
\path[braid, name path=bin2] (in2) to[out=270,in=90] (x) to[out=270,in=180] (m);
\draw[braid] (in3) to[out=270,in=45] (v3u);
\draw[braid] (in4) to[out=270,in=45] (t3);
\draw[braid] (in5) to[out=270,in=135] (t1);
\draw[braid] (in6) to[out=270,in=45] (t1);
%\draw[braid, name path=t3v3u] (t3) to[out=315,in=45] (v3u); 
\draw[braid] (t1) to[out=315,in=45] (f1r);
\draw[braid, name path=t1f1l] (t1) to[out=225,in=45] (f1l);
\draw[braid, name path=v3ut3] (v3u) to[out=315,in=135] (t3);
\fill[white,name intersections={of=bin1 and bin2}] (intersection-1) circle(0.1);
%\fill[white,name intersections={of=v3uv3d and t3uv3d}] (intersection-1) circle(0.1);
%\draw[braid] (t3) to[out=225,in=45] (v3d);
\path[braid, name path=t3f1r] (t3) to[out=315,in=135] (f1r);
\draw[braid] (t3) to[out=225,in=0] (m);
\draw[braid, name path=bout1] (v3u) to[out=225,in=90] (out1);
\fill[white,name intersections={of=bin2 and bout1}] (intersection-1) circle(0.1);
\draw[braid] (in2) to[out=270, in=90] (x) to[out=270,in=180] (m);
\fill[white,name intersections={of=t1f1l and t3f1r}] (intersection-1) circle(0.2);
\draw[braid] (t3) to[out=315,in=135] (f1r);
\draw[braid] (m) to[out=270,in=135] (f1l); 
\draw[braid] (f1l) to[out=270,in=90]  (out2);
\draw[braid] (f1r) to[out=270,in=90]  (out3);
\draw (3.5,2.5) node[empty] {$\stackrel{\eqref{eq:multiplicative}}=$};
%\path (3.5,2.5) node {$=$};
\end{tikzpicture}
\begin{tikzpicture} %pic6
\path 
(0,5) node[empty, name=in1] {}  
(0.5,5) node[empty, name=in2] {}  
(1,5) node[empty, name=in3] {}  
(1.5,5) node[empty, name=in4] {}  
(2,5) node[empty, name=in5] {}  
(2.5,5) node[empty, name=in6] {}  
(0.0,0) node[empty, name=out1] {}  
(.75,0) node[empty, name=out2] {}  
(2.25,0) node[empty, name=out3] {} 
(1.25,3.5) node[arr, name=t3] {$t_3$}
(2.25,4.5) node[arr, name=t1] {$t_1$}
(0.75,2.5) node[empty, name=m] {}
(0.5,2.0) node[empty, name=x] {}
% (1.0,2.25) node[empty, name=y] {}
(1,4.5) node[arr, name=v3u] {$v_3$}
(0.75,0.5) node[arr, name=f1d] {$f_1$}
(1.0,1.5) node[arr, name=f1l] {$f_1$}
(2.25,1.5) node[arr, name=f1r] {$f_1$};
\draw[braid, name path=bin1] (in1) to[out=270, in=135] (v3u);
\path[braid, name path=bin2] (in2) to[out=270, in=90] (x) to[out=270,in=135] (f1d);
\draw[braid] (in3) to[out=270,in=45] (v3u);
\draw[braid] (in4) to[out=270,in=45] (t3);
\draw[braid] (in5) to[out=270,in=135] (t1);
\draw[braid] (in6) to[out=270,in=45] (t1);
%\draw[braid, name path=t3v3u] (t3) to[out=315,in=45] (v3u); 
\draw[braid] (t1) to[out=315,in=45] (f1r);
\draw[braid, name path=t1f1l] (t1) to[out=225,in=45] (f1l);
\draw[braid, name path=v3ut3] (v3u) to[out=315,in=135] (t3);
\fill[white,name intersections={of=bin1 and bin2}] (intersection-1) circle(0.1);
%\fill[white,name intersections={of=v3uv3d and t3uv3d}] (intersection-1) circle(0.1);
%\draw[braid] (t3) to[out=225,in=45] (v3d);
\path[braid, name path=t3f1r] (t3) to[out=315,in=135] (f1r);
\draw[braid] (t3) to[out=225,in=135] (f1l);
\draw[braid, name path=bout1] (v3u) to[out=225,in=90] (out1);
\fill[white,name intersections={of=bin2 and bout1}] (intersection-1) circle(0.1);
\draw[braid] (in2) to[out=270, in=90] (x) to[out=270,in=135] (f1d);
\fill[white,name intersections={of=t1f1l and t3f1r}] (intersection-1) circle(0.2);
\draw[braid] (t3) to[out=315,in=135] (f1r);
\draw[braid] (f1l) to[out=270,in=45] (f1d); 
\draw[braid] (f1d) to[out=270,in=90]  (out2);
\draw[braid] (f1r) to[out=270,in=90]  (out3);
\draw (3.5,2.5) node[empty] {$\stackrel{\eqref{eq:multiplier-bimonoid-t3}}=$};
%\path (3.5,2.5) node {$=$};
\end{tikzpicture}
\begin{tikzpicture} %pic7
\path 
(0,5) node[empty, name=in1] {}  
(0.5,5) node[empty, name=in2] {}  
(1,5) node[empty, name=in3] {}  
(1.5,5) node[empty, name=in4] {}  
(2,5) node[empty, name=in5] {}  
(2.5,5) node[empty, name=in6] {}  
(0.0,0) node[empty, name=out1] {}  
(1.0,0) node[empty, name=out2] {}  
(2.0,0) node[empty, name=out3] {} 
(2.25,2.5) node[arr, name=t1] {$t_1$}
(0.5,2.0) node[empty, name=x] {}
(1,4.5) node[arr, name=v3u] {$v_3$}
(1,0.5) node[arr, name=f1d] {$f_1$}
(1.75,3.5) node[arr, name=f1l] {$f_1$}
(2.0,1.25) node[arr, name=f1r] {$f_1$};
\draw[braid, name path=bin1] (in1) to[out=270, in=135] (v3u);
\path[braid, name path=bin2] (in2) to[out=270, in=90] (x) to[out=270,in=135] (f1d);
\draw[braid] (in3) to[out=270,in=45] (v3u);
\path[braid, name path=bin4] (in4) to[out=270,in=135] (f1r);
\draw[braid] (in5) to[out=270,in=45] (f1l);
\draw[braid] (in6) to[out=270,in=45] (t1);
\draw[braid] (t1) to[out=315,in=45] (f1r);
\draw[braid, name path=t1f1d] (t1) to[out=225,in=45] (f1d);
\draw[braid, name path=v3uf1l] (v3u) to[out=315,in=135] (f1l);
\draw[braid] (f1l) to[out=270,in=135] (t1);
\fill[white,name intersections={of=bin1 and bin2}] (intersection-1) circle(0.1);
%\path[braid, name path=t3f1r] (t3) to[out=315,in=135] (f1r);
%\draw[braid] (t3) to[out=225,in=135] (f1l);
\draw[braid, name path=bout1] (v3u) to[out=225,in=90] (out1);
\fill[white,name intersections={of=bin2 and bout1}] (intersection-1) circle(0.1);
\draw[braid] (in2) to[out=270, in=90] (x) to[out=270,in=135] (f1d);
\fill[white,name intersections={of=t1f1d and bin4}] (intersection-1) circle(0.1);
\fill[white,name intersections={of=v3uf1l and bin4}] (intersection-1) circle(0.1);
\draw[braid] (in4) to[out=270,in=135] (f1r);
%\draw[braid] (t3) to[out=315,in=135] (f1r);
%\draw[braid] (f1l) to[out=270,in=45] (f1d); 
\draw[braid] (f1d) to[out=270,in=90]  (out2);
\draw[braid] (f1r) to[out=270,in=90]  (out3);
\draw (3.5,2.5) node[empty] {$\stackrel{~\eqref{eq:w1-nd}}=$};
%\path (3.5,2.5) node {$=$};
\end{tikzpicture}
$$

$$
\begin{tikzpicture} %pic8
\path 
(0,5) node[empty, name=in1] {}  
(0.5,5) node[empty, name=in2] {}  
(1,5) node[empty, name=in3] {}  
(1.5,5) node[empty, name=in4] {}  
(2,5) node[empty, name=in5] {}  
(2.5,5) node[empty, name=in6] {}  
(0.0,0) node[empty, name=out1] {}  
(1.0,0) node[empty, name=out2] {}  
(2.0,0) node[empty, name=out3] {} 
(2.25,2.25) node[arr, name=t1] {$t_1$}
(0.0,3.0) node[empty, name=x] {}
(1.0,3.0) node[empty, name=y] {}
(1.75,4.0) node[arr, name=w1] {$w_1$}
(1,0.5) node[arr, name=f1l] {$f_1$}
(1.75,3.0) node[arr, name=f1u] {$f_1$}
(2.0,1.25) node[arr, name=f1r] {$f_1$};
\draw[braid, name path=bin1] (in1) to[out=270, in=135] (w1);
\path[braid, name path=bin2] (in2) to[out=270, in=90] (x) to[out=270,in=135] (f1l);
\path[braid, name path=bin3] (in3) to[out=270,in=135] (f1u);
\path[braid, name path=bin4] (in4) to[out=270, in=90] (y) to[out=270, in=135] (f1r);
\draw[braid] (in5) to[out=270,in=45] (w1);
\draw[braid] (in6) to[out=270,in=45] (t1);
\draw[braid, name path=bout1] (w1) to[out=225,in=90] (out1);
\draw[braid] (t1) to[out=315,in=45] (f1r);
\draw[braid, name path=t1f1l] (t1) to[out=225,in=45] (f1l);
\draw[braid, name path=w1f1u] (w1) to[out=315,in=45] (f1u);
\draw[braid] (f1u) to[out=270,in=135] (t1);
\fill[white,name intersections={of=bin1 and bin2}] (intersection-1) circle(0.1);
\fill[white,name intersections={of=bin1 and bin3}] (intersection-1) circle(0.1);
\fill[white,name intersections={of=bin1 and bin4}] (intersection-1) circle(0.1);
\fill[white,name intersections={of=bin3 and bout1}] (intersection-1) circle(0.1);
\fill[white,name intersections={of=bin4 and bout1}] (intersection-1) circle(0.1);
\draw[braid] (in3) to[out=270,in=135] (f1u);
\fill[white,name intersections={of=bin3 and bin4}] (intersection-1) circle(0.1);
%\path[braid, name path=t3f1r] (t3) to[out=315,in=135] (f1r);
%\draw[braid] (t3) to[out=225,in=135] (f1l);
\fill[white,name intersections={of=bin2 and bout1}] (intersection-1) circle(0.1);
\draw[braid] (in2) to[out=270, in=90] (x) to[out=270,in=135] (f1l);
\fill[white,name intersections={of=t1f1l and bin4}] (intersection-1) circle(0.1);
%\fill[white,name intersections={of=v3uf1l and bin4}] (intersection-1) circle(0.1);
\draw[braid] (in4) to[out=270,in=90] (y) to[out=270, in=135] (f1r);
%\draw[braid] (t3) to[out=315,in=135] (f1r);
%\draw[braid] (f1l) to[out=270,in=45] (f1d); 
\draw[braid] (f1d) to[out=270,in=90]  (out2);
\draw[braid] (f1r) to[out=270,in=90]  (out3);
\draw (3.5,2.5) node[empty] {$\stackrel{\eqref{eq:multiplier-bimonoid-t3}}=$};
%\path (3.5,2.5) node {$=$};
\end{tikzpicture}
\begin{tikzpicture} %pic9
\path 
(0,5) node[empty, name=in1] {}  
(0.5,5) node[empty, name=in2] {}  
(1,5) node[empty, name=in3] {}  
(1.5,5) node[empty, name=in4] {}  
(2,5) node[empty, name=in5] {}  
(2.5,5) node[empty, name=in6] {}  
(0.0,0) node[empty, name=out1] {}  
(.5,0) node[empty, name=out2] {}  
(2.0,0) node[empty, name=out3] {} 
(0.9,4.0) node[arr, name=t3] {$t_3$}
(2.25,3.25) node[arr, name=t1] {$t_1$}
(0.0,3.0) node[empty, name=x] {}
(1.0,3.0) node[empty, name=y] {}
(1.75,4.25) node[arr, name=w1] {$w_1$}
(0.5,0.5) node[arr, name=f1d] {$f_1$}
(1.25,1.5) node[arr, name=f1l] {$f_1$}
(2.0,1.5) node[arr, name=f1r] {$f_1$};
\draw[braid, name path=bin1] (in1) to[out=270, in=135] (w1);
\path[braid, name path=bin2] (in2) to[out=270, in=90] (x) to[out=270,in=135] (f1d);
\path[braid, name path=bin3] (in3) to[out=270,in=135] (t3);
\path[braid, name path=bin4] (in4) to[out=270,  in=45] (t3);
\draw[braid] (in5) to[out=270,in=45] (w1);
\draw[braid] (in6) to[out=270,in=45] (t1);
\draw[braid, name path=bout1] (w1) to[out=225,in=90] (out1);
\draw[braid] (t1) to[out=315,in=45] (f1r);
\draw[braid, name path=t1f1l] (t1) to[out=225,in=45] (f1l);
\draw[braid, name path=w1t1] (w1) to[out=315,in=135] (t1);
%\draw[braid] (f1u) to[out=270,in=135] (t1);
\fill[white,name intersections={of=bin1 and bin2}] (intersection-1) circle(0.1);
\fill[white,name intersections={of=bin1 and bin3}] (intersection-1) circle(0.1);
\fill[white,name intersections={of=bin1 and bin4}] (intersection-1) circle(0.1);
\draw[braid] (in3) to[out=270,in=135] (t3);
\path[braid, name path=t3f1l] (t3) to[out=225,in=135] (f1l);
\path[braid, name path=t3f1r] (t3) to[out=315,in=135] (f1r);
\fill[white,name intersections={of=t3f1l and bout1}] (intersection-1) circle(0.1);
\fill[white,name intersections={of=t3f1r and bout1}] (intersection-1) circle(0.1);
\fill[white,name intersections={of=t3f1r and t1f1l}] (intersection-1) circle(0.1);
\draw[braid] (t3) to[out=225,in=135] (f1l);
\draw[braid] (t3) to[out=315,in=135] (f1r);
\draw[braid] (f1l) to[out=270,in=45] (f1d);
\fill[white,name intersections={of=bin2 and bout1}] (intersection-1) circle(0.1);
\draw[braid] (in2) to[out=270, in=90] (x) to[out=270,in=135] (f1d);
\draw[braid] (in4) to[out=270,in=45] (t3);
\draw[braid] (f1d) to[out=270,in=90]  (out2);
\draw[braid] (f1r) to[out=270,in=90]  (out3);
\draw (3.5,2.5) node[empty] {$\stackrel{\eqref{eq:multiplicative}}=$};
%\path (3.5,2.5) node {$=$};
\end{tikzpicture}
\begin{tikzpicture} %pic10
\path 
(0,5) node[empty, name=in1] {}  
(0.5,5) node[empty, name=in2] {}  
(1,5) node[empty, name=in3] {}  
(1.5,5) node[empty, name=in4] {}  
(2,5) node[empty, name=in5] {}  
(2.5,5) node[empty, name=in6] {}  
(0.0,0) node[empty, name=out1] {}  
(1.25,0) node[empty, name=out2] {}  
(2.0,0) node[empty, name=out3] {} 
(0.75,4) node[empty, name=m] {}
(1.25,4.5) node[arr, name=t3] {$t_3$}
(2.25,2.25) node[arr, name=t1] {$t_1$}
 (0.5,3.0) node[empty, name=x] {}
(1.0,3.0) node[empty, name=y] {}
(1.5,3.0) node[arr, name=w1] {$w_1$}
%(0.5,0.5) node[arr, name=f1d] {$f_1$}
(1.25,1.0) node[arr, name=f1l] {$f_1$}
(2.0,1.0) node[arr, name=f1r] {$f_1$};
\draw[braid, name path=bin1] (in1) to[out=270, in=135] (w1);
\draw[braid] (in2) to[out=270,in=180] (m);
\draw[braid] (in3) to[out=270,in=135] (t3);
\draw[braid] (in4) to[out=270,in=45] (t3);
\draw[braid] (in5) to[out=270,in=45] (w1);
\draw[braid] (in6) to[out=270,in=45] (t1);
\draw[braid] (t3) to[out=225,in=0] (m);
\path[braid, name path=mf1l] (m) to[out=270,in=90] (x) to[out=270, in=135] (f1l);
\path[braid, name path=t3f1r] (t3) to[out=315,in=90] (y) to[out=270, in=135] (f1r);
\draw[braid, name path=bout1] (w1) to[out=225,in=90] (out1);
\draw[braid] (t1) to[out=315,in=45] (f1r);
\draw[braid, name path=t1f1l] (t1) to[out=225,in=45] (f1l);
\draw[braid, name path=w1t1] (w1) to[out=315,in=135] (t1);
\fill[white,name intersections={of=bin1 and mf1l}] (intersection-1) circle(0.1);
\fill[white,name intersections={of=bin1 and t3f1r}] (intersection-1) circle(0.1);
\fill[white,name intersections={of=bout1 and mf1l}] (intersection-1) circle(0.1);
\fill[white,name intersections={of=bout1 and t3f1r}] (intersection-1) circle(0.1);
\fill[white,name intersections={of=t1f1l and t3f1r}] (intersection-1) circle(0.1);
\draw[braid] (m) to[out=270,in=90] (x) to[out=270, in=135] (f1l);
\draw[braid] (t3) to[out=315,in=90] (y) to[out=270, in=135] (f1r);
% \fill[white,name intersections={of=bin1 and bin4}] (intersection-1) circle(0.1);
% \draw[braid] (in3) to[out=270,in=135] (t3);
% \path[braid, name path=t3f1l] (t3) to[out=225,in=135] (f1l);
% \path[braid, name path=t3f1r] (t3) to[out=315,in=135] (f1r);
% \fill[white,name intersections={of=t3f1l and bout1}] (intersection-1) circle(0.1);
% \fill[white,name intersections={of=t3f1r and bout1}] (intersection-1) circle(0.1);
% \fill[white,name intersections={of=t3f1r and t1f1l}] (intersection-1) circle(0.1);
% \draw[braid] (t3) to[out=225,in=135] (f1l);
% \draw[braid] (t3) to[out=315,in=135] (f1r);
% \draw[braid] (f1l) to[out=270,in=45] (f1d);
% \fill[white,name intersections={of=bin2 and bout1}] (intersection-1) circle(0.1);
% \draw[braid] (in2) to[out=270, in=90] (x) to[out=270,in=135] (f1d);
% \draw[braid] (in4) to[out=270,in=45] (t3);
\draw[braid] (f1l) to[out=270,in=90]  (out2);
\draw[braid] (f1r) to[out=270,in=90]  (out3);
\end{tikzpicture}
$$
and now cancel $m1.1t_3$ and use Remark~\ref{rem:f_non-degenerate}.
 
The claim that a morphism of $A$-comodules is compatible also with the
$B$-coaction induced by $f$, as well as the composition law of the resulting
functors, are clear by the construction. The identity morphism induces the
identity functor in light of \eqref{eq:v^1-3_module_maps}.
\end{proof}

Proposition~\ref{prop:f_*}~(1) can be applied in particular to the comodule
$(A,t_1,t_3)$; then any dense and multiplicative \mM-morphism $f:A \nto B$
induces morphisms as in the right verticals of
$$
\xymatrix{
A^2B \ar[r]^{1f_1} \ar[d]_{t_11} & AB \ar[d]^{t_1^f} \\
A^2B \ar[r]_{1f_1} & AB } \quad
\xymatrix{
A^2B \ar[r]^{1c^{-1}} \ar[d]_{t_3 1} & ABA \ar[r]^{1f_2} & AB \ar[d]^{t_3^f} \\
A^2B \ar[r]_{1c^{-1}} & ABA \ar[r]_{1f_2} & AB. }
$$
If a pair $(v^1,v^3)$ obeys \eqref{eq:comodule_compatibility} for the
semigroup $A$ then $(v^3,v^1)$ obeys \eqref{eq:comodule_compatibility} for
$A\op$ in the braided monoidal category with inverse 
braiding. Thus by Proposition~\ref{prop:f_*} (1), any dense and multiplicative
\mM-morphism $g:A\op \nto B$ induces morphisms as in the right verticals of
\begin{equation} \label{eq:g_*}
\xymatrix{
VAB \ar[r]^-{1g_1} \ar[d]_-{v^31} &
VB \ar[d]^-{(g_*v)^1}
&&
VAB \ar[r]^-{1c} \ar[d]_-{v^11} &
VBA \ar[r]^-{1g_2} &
VB \ar[d]^-{(g_*v)^3} \\
VAB \ar[r]_-{1g_1} &
VB
&&
VAB \ar[r]_-{1c} &
VBA \ar[r]_-{1g_2} &
VB.}
\end{equation}
In particular, there are morphisms as in the right verticals of
$$
\xymatrix{
A^2B \ar[r]^-{1g_1} \ar[d]_-{t_31} &
AB \ar[d]^-{t_3^g}
&&
A^2B \ar[r]^-{1c} \ar[d]_-{t_11} &
ABA \ar[r]^-{1g_2} &
AB \ar[d]^-{t_1^g} \\
A^2B \ar[r]_-{1g_1} &
AB
&&
A^2B \ar[r]_-{1c} &
ABA \ar[r]_-{1g_2} &
AB.}
$$

If $A$ and $B$ are bimonoids, then $(f_*v)^1$ in Proposition~\ref{prop:f_*}
and $(g_*v)^1$ above are mutually inverse isomorphisms in \cc whenever $f$ and
$g$ are mutual inverses for the so-called convolution product. For multiplier
bimonoids, however, the convolution monoid is not available; instead, we have
the following generalization.  

\begin{lemma} \label{lem:conv_inv}
Let $(A,t_1,t_2,t_3,t_4,e)$ and $(B,t_1,t_2,t_3,t_4,e)$ be regular multiplier
bimonoids having the properties listed in Section \ref{sect:mbm}. 
Let $f\colon A\nto B$ and $g:A\op \nto B$ be dense multiplicative
\mM-morphisms. Using the construction and the notation of
Proposition~\ref{prop:f_*} and the previous paragraph, the following
assertions are equivalent to each other.
\begin{itemize}
\item[{(a)}] $(g_*v)^1.(f_*v)^1=1$, for any comodule $(v^1,v^3)$.   
\item[{(b)}] $g_1.t_1^f=e1$.
\end{itemize}
Symmetrically, also the following assertions are equivalent to each other.
\begin{itemize}
\item[{ (a')}] $(f_*v)^1.(g_*v)^1=1$, for any comodule $(v^1,v^3)$.   
\item[{ (b')}] $f_1.t_3^g=e1$.
\end{itemize}
\end{lemma}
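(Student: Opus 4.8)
The plan is to establish the equivalence (a)$\Leftrightarrow$(b) in detail; the second equivalence (a$'$)$\Leftrightarrow$(b$'$) then follows by the symmetry interchanging $\cc$ with $\overline\cc$, the roles of $f$ and $g$, and the two coaction slots $(v^1,v^3)\mapsto(v^3,v^1)$ (as in Remark~\ref{rmk:symmetry} and the proof of Lemma~\ref{lem:comodule_nd}), so I would only write the first equivalence out and invoke symmetry for the second.

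My first move is a reduction using density. Since $f$ is dense, $f_1\in\cq$, and as $\cq$ is closed under monoidal products with the relevant coequalizers preserved, $1f_1\colon VAB\to VB$ is a regular epimorphism. Hence (a), namely $(g_*v)^1.(f_*v)^1=1_{VB}$, is equivalent to the precomposed identity $(g_*v)^1.(f_*v)^1.(1f_1)=1f_1$. Applying the defining diagram \eqref{eq:induced-comodule} of $(f_*v)^1$ to rewrite $(f_*v)^1.(1f_1)$ as $(1f_1).(v^11)$, the statement becomes $(g_*v)^1.(1f_1).(v^11)=1f_1$. The key idea is then that both induced maps can be expressed through the \emph{single} coaction $v^3$: for $(f_*v)^1$ I would use the alternative characterization \eqref{eq:w1-nd} (which is phrased in terms of $v^3$), and for $(g_*v)^1$ the defining diagram \eqref{eq:g_*} (also in terms of $v^3$), combined with the module-map identities \eqref{eq:v^1-3_module_maps} and the compatibility \eqref{eq:comodule_compatibility}. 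After routine braid manipulations the dependence on the particular comodule $v$ factors entirely through the epimorphisms supplied by density and through the regular-comodule data, and at the crucial step one inserts the hypothesis (b), $g_1.t_1^f=e1$, which collapses the composite of the two twisted coactions to the identity. A concluding appeal to the non-degeneracy of $m$ and to Remark~\ref{rem:f_non-degenerate} upgrades the precomposed identity back to $(g_*v)^1.(f_*v)^1=1$.

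Because every step in this chain is either an identity or an equivalence resting only on epimorphisms and on non-degeneracy, the argument is reversible, and this yields the converse as well: it suffices to run the same computation on the \emph{universal} comodule $(A,t_1,t_3)$, which is a genuine $A$-comodule by Lemma~\ref{lem:comodule_nd}. Taking $V=A$, $v^1=t_1$, $v^3=t_3$, the constructions of Proposition~\ref{prop:f_*} and of the paragraph preceding the lemma identify the induced maps as $(f_*(A,t_1,t_3))^1=t_1^f$ and $(g_*(A,t_1,t_3))^1=t_3^g$; so (a) specialized to this comodule is $t_3^g.t_1^f=1$, and the reversible reduction shows this is in turn equivalent to $g_1.t_1^f=e1$. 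Since (a) for all comodules trivially implies (a) for $(A,t_1,t_3)$, and (b) implies (a) for all comodules by the forward computation, the two are equivalent.

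The step I expect to be the main obstacle is the bookkeeping in the central computation: setting up and applying the $v^3$-characterization \eqref{eq:w1-nd} correctly and establishing its analogue on the $g$-side (where one passes through $A\op$ and so must track the extra braidings $c,c^{-1}$), and then pinpointing the exact place at which the hypothesis $g_1.t_1^f=e1$ can be substituted so that the two coactions cancel. Equally delicate is ensuring the final non-degeneracy reduction applies, i.e.\ recognizing which component is the relevant monoidal-product-preserved epimorphism in Remark~\ref{rem:f_non-degenerate}; once these are in place the remainder is a reversible string-diagram calculation.
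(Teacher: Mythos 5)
Your sketch of (b)$\Rightarrow$(a) is, in outline, the paper's own argument: precompose with epimorphisms supplied by density, rewrite via the defining properties of $(f_*v)^1$ and $(g_*v)^1$, insert (b), and cancel. (Two details you will need and do not mention: the paper precomposes not only with $1f_1$ but also with $e111$ — the extra $e$-capped $A$-leg is what allows (b) to be used at all, in the form $e\otimes f_1=g_1.1f_1.t_11$, since this is the only way a $g_1$ ever enters the diagram — and it needs the $v^1$/$t_1$/$v^3$ exchange identity obtained from \eqref{eq:t_3_comodule} analogously to \eqref{eq:t1t3}. The conclusion is then by cancelling the epimorphisms $e111$ and $1f_1$; neither non-degeneracy of $m$ nor Remark~\ref{rem:f_non-degenerate} is the right tool here, though this is a minor point.)

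The genuine gap is in (a)$\Rightarrow$(b). Specializing (a) to the comodule $(A,t_1,t_3)$ to get $t_3^g.t_1^f=1$ is exactly what the paper does; but your claim that the forward reduction is ``reversible'', so that this identity is equivalent to $g_1.t_1^f=e1$, does not stand up. First, (b) is inserted at \emph{two} separate steps of the forward computation (in the paper's chain, at the second and the penultimate equalities); knowing only that the two endpoints of the chain agree does not recover either insertion, so the chain cannot simply be run backwards. Second, and more concretely, after your reduction the statement reads $t_3^g.1f_1.t_11=1f_1$, and the only handle you have on $t_3^g$ — its defining diagram via \eqref{eq:g_*} — is phrased by precomposition with $1g_1$, not $1f_1$; with no $g_1$ in sight the reduction stalls. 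What is missing is the unconditional identity $e1.t_3^g=g_1$, which the paper establishes first by a separate computation: postcompose the defining property $t_3^g.1g_1=1g_1.t_31$ with $e1$, use the regularity relation $e1.t_3=m.c^{-1}$ from \eqref{eq:reg_mbm} together with the multiplicativity of $g\colon A\op\nto B$ to identify the result with $g_1.1g_1$, and then cancel the epimorphism $1g_1$ (density of $g$). Once this identity is available, (a) applied to $(A,t_1,t_3)$ yields (b) immediately, by postcomposing $t_3^g.t_1^f=1$ with $e1$. Your proposal needs this lemma-within-the-lemma; the reversibility shortcut should be dropped.
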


\begin{proof}
We only prove the equivalence of  (a) and (b); the equivalence of
the primed assertions follows by symmetry. 

(a)$\Rightarrow$(b).  By the density of $g$, the computation 
$$
\begin{tikzpicture} %step1
\path[unit] (.4,.3) node [unit,name=e] {}
(.75,.8) node [arr,name=t3g] {${}_{t_3^g}$}
(1.25,1.5) node [arr,name=g1] {$g_1$};
\draw[braid] (.5,2) to[out=270,in=135] (t3g);
\draw[braid] (1,2) to[out=270,in=135] (g1);
\draw[braid] (1.5,2) to[out=270,in=45] (g1);
\draw[braid] (g1) to[out=270,in=45] (t3g);
\draw[braid] (t3g) to[out=225,in=45] (e);
\draw[braid] (t3g) to[out=315,in=90] (1,0);
\draw (2,1.2) node {$=$};
\end{tikzpicture}
\begin{tikzpicture} %step2
\path[unit] (.5,1) node [unit,name=e] {}
(.75,1.5) node [arr,name=t3] {$t_3$}
(1.25,.8) node [arr,name=g1] {$g_1$};
\draw[braid] (.5,2) to[out=270,in=135] (t3);
\draw[braid] (1,2) to[out=270,in=45] (t3);
\draw[braid] (1.5,2) to[out=270,in=45] (g1);
\draw[braid] (t3) to[out=315,in=135] (g1);
\draw[braid] (t3) to[out=225,in=45] (e);
\draw[braid] (g1) to[out=270,in=90] (1.25,0);
\draw (2,1.2) node {$=$};
\end{tikzpicture}
\begin{tikzpicture} %step3
\path (.75,1.2) node [empty,name=m] {}
(1,.5) node [arr,name=g1] {$g_1$};
\draw[braid,name path=d] (.5,2) to[out=270,in=0] (m);
\path[braid,name path=u] (1,2) to[out=270,in=180] (m);
\fill[white,name intersections={of=u and d}] (intersection-1) circle(0.1);
\draw[braid] (1,2) to[out=270,in=180] (m);
\draw[braid] (1.5,2) to[out=270,in=45] (g1);
\draw[braid] (m) to[out=270,in=135] (g1);
\draw[braid] (g1) to[out=270,in=90] (1,0);
\draw (2,1.2) node {$=$};
\end{tikzpicture}
\begin{tikzpicture} %step4
\path (.75,.8) node [arr,name=g1d] {$g_1$}
(1.25,1.5) node [arr,name=g1u] {$g_1$};
\draw[braid] (.5,2) to[out=270,in=135] (g1d);
\draw[braid] (1,2) to[out=270,in=135] (g1u);
\draw[braid] (1.5,2) to[out=270,in=45] (g1u);
\draw[braid] (g1u) to[out=270,in=45] (g1d);
\draw[braid] (g1d) to[out=270,in=90] (.75,0);
\end{tikzpicture}
$$
implies that the bottom region of
$$
\xymatrix{AB \ar[r]^-{t_1^f} \ar@{=}[d] &
AB \ar[d]^-{g_1} \ar[ld]_-{t_3^g} \\
AB \ar[r]_-{e1} &
B}
$$
commutes. The top region commutes by  (a) applied to $(t_1,t_3)$.

 (b)$\Rightarrow$(a). Using (b) in the first equality,
$$
\begin{tikzpicture} %step1
\path[unit] (.5,.5) node [unit,name=e] {}
(1.25,1.2) node [arr,name=f1] {$f_1$};
\draw[braid] (.5,2) to[out=270,in=90] (e);
\draw[braid] (1,2) to[out=270,in=135] (f1);
\draw[braid] (1.5,2) to[out=270,in=45] (f1);
\draw[braid] (f1) to[out=270,in=90] (1.25,0);
\draw (2,1) node {$=$};
\end{tikzpicture}
\begin{tikzpicture} %step2
\path (1.25,1.6) node [arr,name=f1] {$f_1$}
(.75,1) node [arr,name=t1f] {${}_{t_1^f}$}
(.75,.4) node [arr,name=g1] {$g_1$};
\draw[braid] (.5,2) to[out=270,in=135] (t1f);
\draw[braid] (1,2) to[out=270,in=135] (f1);
\draw[braid] (1.5,2) to[out=270,in=45] (f1);
\draw[braid] (f1) to[out=270,in=45] (t1f);
\draw[braid] (t1f) to[out=225,in=135] (g1);
\draw[braid] (t1f) to[out=315,in=45] (g1);
\draw[braid] (g1) to[out=270,in=90] (.75,0);
\draw (2,1) node {$=$};
\end{tikzpicture}
\begin{tikzpicture} %step3
\path (1.25,1) node [arr,name=f1] {$f_1$}
(.75,1.6) node [arr,name=t1] {$t_1$}
(1,.4) node [arr,name=g1] {$g_1$};
\draw[braid] (.5,2) to[out=270,in=135] (t1);
\draw[braid] (1,2) to[out=270,in=45] (t1);
\draw[braid] (1.5,2) to[out=270,in=45] (f1);
\draw[braid] (t1) to[out=315,in=135] (f1);
\draw[braid] (f1) to[out=270,in=45] (g1);
\draw[braid] (t1) to[out=225,in=135] (g1);
\draw[braid] (g1) to[out=270,in=90] (1,0);
\end{tikzpicture} .
$$
Using this in the second and the penultimate equalities, and writing $w$ for
$f_*v$ and $\widetilde w$ for $g_*v$ for brevity, we see that 
\begin{eqnarray*}
\begin{tikzpicture}[scale=.8] %step1
\path[unit] (.5,2.5) node [unit,name=e] {}
(1.75,3) node [arr,name=f1] {$f_1$}
(1.5,2) node [arr,name=w] {$w^1$}
(1.5,1) node [arr,name=wt] {$\widetilde w^1$};
\draw[braid] (.5,3.5) to[out=270,in=90] (e);
\draw[braid] (1,3.5) to[out=270,in=135] (w);
\draw[braid] (1.5,3.5) to[out=270,in=135] (f1);
\draw[braid] (2,3.5) to[out=270,in=45] (f1);
\draw[braid] (f1) to[out=270,in=45] (w);
\draw[braid] (w) to[out=225,in=135] (wt);
\draw[braid] (w) to[out=315,in=45] (wt);
\draw[braid] (wt) to[out=225,in=90] (1.25,0);
\draw[braid] (wt) to[out=315,in=90] (1.75,0);
\draw (2.5,2) node {$=$};
\end{tikzpicture}
%&& 
\begin{tikzpicture}[scale=.8] %step2
\path[unit] (1,1.5) node [unit,name=e] {}
(1.75,2) node [arr,name=f1] {$f_1$}
(1.25,3) node [arr,name=v1] {$v_1$}
(1,1) node [arr,name=wt] {$\widetilde w^1$};
\path[braid,name path=u] (.5,3.5) to[out=270,in=135] (e);
\draw[braid] (1,3.5) to[out=270,in=135] (v1);
\draw[braid] (1.5,3.5) to[out=270,in=45] (v1);
\draw[braid] (2,3.5) to[out=270,in=45] (f1);
\draw[braid] (v1) to[out=315,in=135] (f1);
\draw[braid] (f1) to[out=270,in=45] (wt);
\draw[braid,name path=d] (v1) to[out=225,in=135] (wt);
\draw[braid] (wt) to[out=225,in=90] (.75,0);
\draw[braid] (wt) to[out=315,in=90] (1.25,0);
\fill[white,name intersections={of=u and d}] (intersection-1) circle(0.1);
\draw[braid] (.5,3.5) to[out=270,in=135] (e);
\draw (2.5,2) node {$=$};
\end{tikzpicture} 
\begin{tikzpicture}[scale=.8]   %step3
\path (1.75,1.75) node [arr,name=f1] {$f_1$}
(1.25,3.1) node [arr,name=v1] {$v_1$}
(1.25,2.35) node [arr,name=t1] {$t_1$}
(1.45,1.05) node [arr,name=g1] {$g_1$}
(1,.4) node [arr,name=wt] {$\widetilde w^1$};
\path[braid,name path=u] (.5,3.5) to[out=270,in=135] (t1);
\draw[braid] (1,3.5) to[out=270,in=135] (v1);
\draw[braid] (1.5,3.5) to[out=270,in=45] (v1);
\draw[braid] (2,3.5) to[out=270,in=45] (f1);
\draw[braid] (v1) to[out=315,in=45] (t1);
\draw[braid,name path=d] (v1) to[out=225,in=135] (wt);
\draw[braid] (t1) to[out=315,in=135] (f1);
\draw[braid] (t1) to[out=225,in=135] (g1);
\draw[braid] (f1) to[out=270,in=45] (g1);
\draw[braid] (g1) to[out=270,in=45] (wt);
\draw[braid] (wt) to[out=225,in=90] (.75,0);
\draw[braid] (wt) to[out=315,in=90] (1.25,0);
\fill[white,name intersections={of=u and d}] (intersection-1) circle(0.1);
\draw[braid]  (.5,3.5) to[out=270,in=135] (t1);
\draw (2.5,2) node {$=$};
\end{tikzpicture} 
\begin{tikzpicture}[scale=.8] %step4
\path (1.75,1.3) node [arr,name=f1] {$f_1$}
(1.25,3) node [arr,name=v1] {$v_1$}
(1.25,2) node [arr,name=t1] {$t_1$}
(1.25,.5) node [arr,name=g1] {$g_1$}
(.75,1.3) node [arr,name=v3] {$v^3$};
\path[braid,name path=u] (.5,3.5) to[out=270,in=135] (t1);
\draw[braid] (1,3.5) to[out=270,in=135] (v1);
\draw[braid] (1.5,3.5) to[out=270,in=45] (v1);
\draw[braid] (2,3.5) to[out=270,in=45] (f1);
\draw[braid] (v1) to[out=315,in=45] (t1);
\draw[braid,name path=d] (v1) to[out=225,in=135] (v3);
\draw[braid] (t1) to[out=315,in=135] (f1);
\draw[braid] (t1) to[out=225,in=45] (v3);
\draw[braid] (f1) to[out=270,in=45] (g1);
\draw[braid] (v3) to[out=315,in=135] (g1);
\draw[braid] (g1) to[out=270,in=90] (1.25,0);
\draw[braid] (v3) to[out=225,in=90] (.5,0);
\fill[white,name intersections={of=u and d}] (intersection-1) circle(0.1);
\draw[braid]  (.5,3.5) to[out=270,in=135] (t1);
\draw (2.6,2) node {$=$};
\end{tikzpicture} 
%\\
%&&
\begin{tikzpicture}[scale=.8] %step5
\path (1.75,1.1) node [arr,name=f1] {$f_1$}
(1.25,1.6) node [arr,name=t1] {$t_1$}
(1.5,.5) node [arr,name=g1] {$g_1$}
(.75,2.5) node [arr,name=v3] {$v^3$};
\path[braid,name path=u] (.5,3.5) to[out=270,in=45] (v3);
\draw[braid,name path=d] (1,3.5) to[out=270,in=135] (v3);
\draw[braid] (1.5,3.5) to[out=270,in=45] (t1);
\draw[braid] (2,3.5) to[out=270,in=45] (f1);
\draw[braid] (v3) to[out=315,in=135] (t1);
\draw[braid] (t1) to[out=315,in=135] (f1);
\draw[braid] (t1) to[out=225,in=135] (g1);
\draw[braid] (f1) to[out=270,in=45] (g1);
\draw[braid] (g1) to[out=270,in=90] (1.5,0);
\draw[braid] (v3) to[out=225,in=90] (.5,0);
\fill[white,name intersections={of=u and d}] (intersection-1) circle(0.1);
\draw[braid]  (.5,3.5) to[out=270,in=45] (v3);
\draw (2.6,2) node {$=$};
\end{tikzpicture} 
\begin{tikzpicture}[scale=.8] %step6
\path[unit] (1.2,.8) node [unit,name=e] {}
(1.75,1.5) node [arr,name=f1] {$f_1$}
(.75,1.5) node [arr,name=v3] {$v^3$};
\path[braid,name path=u] (.5,3.5) to[out=270,in=45] (v3);
\draw[braid,name path=d] (1,3.5) to[out=270,in=135] (v3);
\draw[braid] (1.5,3.5) to[out=270,in=135] (f1);
\draw[braid] (2,3.5) to[out=270,in=45] (f1);
\draw[braid] (f1) to[out=270,in=90] (1.75,0);
\draw[braid] (v3) to[out=225,in=90] (.5,0);
\draw[braid] (v3) to[out=315,in=135] (e);
\fill[white,name intersections={of=u and d}] (intersection-1) circle(0.1);
\draw[braid] (.5,3.5) to[out=270,in=45] (v3);
\draw (2.65,2.2) node {$\stackrel{\eqref{eq:t_3_comodule}} = $};
\end{tikzpicture} 
\begin{tikzpicture}[scale=.8] %step7
\path[unit] (.5,1) node [unit,name=e] {}
(1.75,1.5) node [arr,name=f1] {$f_1$};
\draw[braid] (.5,3.5) to[out=270,in=90] (e);
\draw[braid] (1,3.5) to[out=270,in=90] (1,0);
\draw[braid] (1.5,3.5) to[out=270,in=135] (f1);
\draw[braid] (2,3.5) to[out=270,in=45] (f1);
\draw[braid] (f1) to[out=270,in=90] (1.75,0);
\end{tikzpicture} 
\end{eqnarray*}
where the fourth equality holds by the identity
$$
\begin{tikzpicture}
  % diag1
  \path 
(0.75,2) node[arr,name=t1u] {$v^1$} 
(0.75,1) node[arr,name=t1d]  {$t_1$} 
(0.25,0.5) node[arr,name=t3] {$v^3$}; 
\path 
(1,2.5) node[empty,name=in3] {} 
(0.5,2.5) node[empty,name=in2] {} 
(0,2.5) node[empty,name=in1] {} 
(0,0) node[empty,name=out1] {} 
(0.5,0) node[empty,name=out2] {} 
(1,0) node[empty,name=out3] {}; 
\draw[braid] (in3) to[out=270,in=45] (t1u);
\draw[braid] (in2) to[out=270,in=135] (t1u);
\draw[braid] (t1u) to[out=315,in=45] (t1d);
\draw[braid] (t1d) to[out=315,in=90] (out3);
\path[braid,name path=bin1] (in1) to[out=270,in=135] (t1d);
\draw[braid,name path=t1ut3] (t1u) to[out=225,in=135] (t3);
\fill[white, name intersections={of=bin1 and t1ut3}] (intersection-1) circle(0.1);
\draw[braid] (in1) to[out=270,in=135] (t1d);
\draw[braid] (t1d)  to[out=225,in=45] (t3);
\draw[braid] (t3) to[out=225,in=90] (out1);
\draw[braid] (t3) to[out=315,in=90] (out2);
\draw (2,1.35) node {$=$};
\end{tikzpicture} 
\begin{tikzpicture}
  % diag2
\path
(0.75, 0.5) node[arr,name=t1] {$t_1$}
(0.25,1.5) node[arr,name=t3] {$v^3$};
\path 
(1.0,2.5) node[empty,name=in3] {} 
(0.5,2.5) node[empty,name=in2] {} 
(0,2.5) node[empty,name=in1] {} 
(0,0) node[empty,name=out1] {} 
(0.5,0) node[empty,name=out2] {} 
(1,0) node[empty,name=out3] {}; 
\draw[braid] (in3) to[out=270,in=45] (t1);
\path[braid, name path=bin1] (in1) to[out=270,in=45](t3);
\draw[braid, name path=bin2] (in2) to[out=270,in=135](t3);
\fill[white, name intersections={of=bin1 and bin2}] (intersection-1) circle(0.1);
\draw[braid] (in1) to[out=270,in=45](t3);
\draw[braid] (t3) to[out=225,in=90](out1);
\draw[braid] (t3) to[out=315,in=135] (t1);
\draw[braid] (t1) to[out=225,in=90] (out2);
\draw[braid] (t1) to[out=315,in=90] (out3);
\end{tikzpicture}
$$
which is obtained analogously to \eqref{eq:t1t3} from \eqref{eq:t_3_comodule}. 
Cancelling the epimorphisms $e111$ and $1f_1$ we obtain  (a). 
\end{proof}

\begin{corollary} \label{cor:v-tilde}
Let $(A,t_1,t_2,t_3,t_4,e)$ be a regular multiplier bimonoid in $\cc$ such that 
$(A,t_1,t_2,e)$ is a multiplier Hopf monoid with non-degenerate
multiplication and dense counit. 
Then the antipode $s\colon A\op\nto A$ of Theorem~\ref{thm:s_mbm_morphism} and
the identity $i\colon A\nto A$ are dense multiplicative \mM-morphisms. 
% %Denote the antipode by $s$ as in Theorem \ref{thm:s_mbm_morphism}; 
% Then we have dense multiplicative \mM-morphisms: the identity
% \mM-morphism $i:A\nto A$ and the antipode $s:A\op\nto A$ in Theorem
% \ref{thm:s_mbm_morphism}. 
It follows by \eqref{eq:s} that $s_1.t_1=e1$,
so that by Lemma \ref{lem:conv_inv}, $(s_*v)^1.v^1=1$ for any comodule
$(v^1,v^3)$. On the other hand, we have the following series of equalities.   
\begin{gather*}
\begin{tikzpicture} 
\path (1.75,2.1) node [arr,name=s1] {$s_1$}
(1.25,1.5) node [arr,name=t3s] {${t_3^s}$}
(1.25,1) node [empty,name=mu] {}
(1,.5) node [empty,name=md] {};
\draw[braid] (.5,2.5) to[out=270,in=180] (md) to[out=0,in=270] (mu);
\draw[braid] (1,2.5) to[out=270,in=135] (t3s);
\draw[braid] (1.5,2.5) to[out=270,in=135] (s1);
\draw[braid] (2,2.5) to[out=270,in=45] (s1);
\draw[braid] (s1) to[out=270,in=45] (t3s);
\draw[braid] (t3s) to[out=225,in=180] (mu) to[out=0,in=315] (t3s);
\draw[braid] (md) to[out=270,in=90] (1,0);
\draw (2.5,1.2) node {$=$};
\end{tikzpicture}
\begin{tikzpicture}%[scale=.84] step2
\path (1.75,1.5) node [arr,name=s1] {$s_1$}
(1.25,2) node [arr,name=t3] {$t_3$}
(1.5,1) node [empty,name=mu] {}
(1,.5) node [empty,name=md] {};
\draw[braid] (.5,2.5) to[out=270,in=180] (md) to[out=0,in=270] (mu);
\draw[braid] (1,2.5) to[out=270,in=135] (t3);
\draw[braid] (1.5,2.5) to[out=270,in=45] (t3);
\draw[braid] (2,2.5) to[out=270,in=45] (s1);
\draw[braid] (t3) to[out=315,in=135] (s1);
\draw[braid] (t3) to[out=225,in=180] (mu) to[out=0,in=270] (s1);
\draw[braid] (md) to[out=270,in=90] (1,0);
\draw (2.5,1.4) node {$\stackrel{\mathrm{(ass)}}=$};
\end{tikzpicture}
\begin{tikzpicture}% [scale=.84] step3
\path (1.75,1) node [arr,name=s1] {$s_1$}
(1.25,2) node [arr,name=t3] {$t_3$}
(.75,1.5) node [empty,name=mu] {}
(1.5,.5) node [empty,name=md] {};
\draw[braid] (.5,2.5) to[out=270,in=180] (mu) to[out=0,in=225] (t3);
\draw[braid] (1,2.5) to[out=270,in=135] (t3);
\draw[braid] (1.5,2.5) to[out=270,in=45] (t3);
\draw[braid] (2,2.5) to[out=270,in=45] (s1);
\draw[braid] (t3) to[out=315,in=135] (s1);
\draw[braid] (mu) to[out=270,in=180] (md) to[out=0,in=270] (s1);
\draw[braid] (md) to[out=270,in=90] (1.5,0);
\draw (2.75,1.4) node {$\stackrel{\eqref{eq:t_2-3_compatibility}} =$};
\end{tikzpicture}
\begin{tikzpicture}% [scale=.84] step4
\path (1.6,1) node [arr,name=s1] {$s_1$}
(.75,2.1) node [arr,name=t2] {$t_2$}
(1.25,1.4) node [empty,name=mu] {}
(1,.5) node [empty,name=md] {};
\draw[braid] (.5,2.5) to[out=270,in=135] (t2);
\draw[braid] (1,2.5) to[out=270,in=45] (t2);
\path[braid,name path=u] (1.5,2.5) to[out=270,in=180] (mu);
\draw[braid] (2,2.5) to[out=270,in=45] (s1);
\draw[braid,name path=d] (mu) to[out=0,in=315] (t2);
\draw[braid] (mu) to[out=270,in=135] (s1);
\draw[braid] (t2) to[out=225,in=180] (md) to[out=0,in=270] (s1);
\draw[braid] (md) to[out=270,in=90] (1,0);
\fill[white, name intersections={of=u and d}] (intersection-1) circle(0.1);
\draw[braid] (1.5,2.5) to[out=270,in=180] (mu);
\draw (2.5,1.4) node {$\stackrel{\eqref{eq:multiplicative}}=$};
\end{tikzpicture}
\\
\begin{tikzpicture}% [scale=.84] step5
\path (1.75,1.6) node [arr,name=s1u] {$s_1$}
(1.25,1) node [arr,name=s1d] {$s_1$}
(.75,2) node [arr,name=t2] {$t_2$}
(1,.5) node [empty,name=m] {};
\draw[braid] (.5,2.5) to[out=270,in=135] (t2);
\draw[braid] (1,2.5) to[out=270,in=45] (t2);
\draw[braid] (1.5,2.5) to[out=270,in=135] (s1u);
\draw[braid] (2,2.5) to[out=270,in=45] (s1u);
\draw[braid] (t2) to[out=315,in=135] (s1d);
\draw[braid] (s1u) to[out=270,in=45] (s1d);
\draw[braid] (t2) to[out=225,in=180] (m) to[out=0,in=270] (s1d);
\draw[braid] (m) to[out=270,in=90] (1,0);
\draw (2.5,1.4) node {$\stackrel{\eqref{eq:mM}}=$};
\end{tikzpicture}
\begin{tikzpicture}%[scale=.84] step6
\path (1.75,1.6) node [arr,name=s1] {$s_1$}
(.75,1) node [arr,name=s2] {$s_2$}
(.75,2) node [arr,name=t2] {$t_2$}
(1.25,.5) node [empty,name=m] {};
\draw[braid] (.5,2.5) to[out=270,in=135] (t2);
\draw[braid] (1,2.5) to[out=270,in=45] (t2);
\draw[braid] (1.5,2.5) to[out=270,in=135] (s1);
\draw[braid] (2,2.5) to[out=270,in=45] (s1);
\draw[braid] (t2) to[out=225,in=135] (s2);
\draw[braid] (t2) to[out=315,in=45] (s2);
\draw[braid] (s2) to[out=270,in=180] (m) to[out=0,in=270] (s1);
\draw[braid] (m) to[out=270,in=90] (1.25,0);
\draw (2.65,1.4) node {$\stackrel{\eqref{eq:s}}=$};
\end{tikzpicture}
\begin{tikzpicture}% [scale=.84] step7
\path [unit] (1,1.5) node [unit,name=e] {}
(1.75,2) node [arr,name=s1] {$s_1$}
(1,1) node [empty,name=m] {};
\draw[braid] (.5,2.5) to[out=270,,in=180] (m) to[out=0,in=270] (s1);
\draw[braid] (1,2.5) to[out=270,in=90] (e);
\draw[braid] (1.5,2.5) to[out=270,in=135] (s1);
\draw[braid] (2,2.5) to[out=270,in=45] (s1);
\draw[braid] (m) to[out=270,in=90] (1,0);
\end{tikzpicture} .
\end{gather*}
Using the non-degeneracy of the multiplication and the density of $s$, we
conclude that $m.t_3^s=e1$, so that by Lemma \ref{lem:conv_inv}, also
$v^1.(s_*v)^1=1$. That is to say, $v^1$ is an isomorphism in \cc, for any
comodule $(v^1,v^3)$. 
\end{corollary}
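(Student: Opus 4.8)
The plan is to derive the invertibility of $v^1$ from the convolution-inverse criterion of Lemma~\ref{lem:conv_inv}, applied with $B=A$ to the pair consisting of the identity \mM-morphism $i\colon A\nto A$ (both of whose components are the multiplication $m$) and the antipode $s\colon A\op\nto A$ produced by Theorem~\ref{thm:Hopf} and Theorem~\ref{thm:s_mbm_morphism}. Both are dense and multiplicative: $i$ because $m\in\cq$ by Remark~\ref{rem:who_is_reg_epi}, and $s$ by Theorem~\ref{thm:Hopf}. Since the identity \mM-morphism induces the identity functor on comodules by Proposition~\ref{prop:f_*}, one has $(i_*v)^1=v^1$ and $t_1^i=t_1$ (the latter by \eqref{eq:t_1_mod_map}), so the two inverse laws of Lemma~\ref{lem:conv_inv} will assert exactly that $v^1$ and $(s_*v)^1$ are mutually inverse in \cc.

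The first, easy, half is condition (b) of Lemma~\ref{lem:conv_inv} for this pair, which reads $s_1.t_1=e1$; this is immediate from the formula $s_1=e1.t_1^{-1}$ of \eqref{eq:s}. Hence Lemma~\ref{lem:conv_inv}\,(b)$\Rightarrow$(a) yields $(s_*v)^1.v^1=1$ for every comodule $(v^1,v^3)$.

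The substance of the argument is the other inverse law, for which the primed condition (b') of Lemma~\ref{lem:conv_inv} must be established, namely $m.t_3^s=e1$ (here $f_1=m$). This is where regularity genuinely enters, through $t_3$ and the induced morphism $t_3^s$ of \eqref{eq:g_*}. I expect to verify it by a string-diagram computation that first unfolds $t_3^s$ into $t_3$ paired against the antipode component $s_1$, rearranges the bracketing by associativity, then replaces the $t_3$-block by a $t_2$-block using the regular compatibility \eqref{eq:t_2-3_compatibility}, and finally brings the two antipode legs together by the multiplicativity \eqref{eq:multiplicative} and the \mM-morphism relation \eqref{eq:mM} of $s$, at which point the defining formula \eqref{eq:s} (producing an $s_2$ against a $t_2$) together with the counit condition collapses the diagram to $e1$. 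Granting this, the non-degeneracy of $m$ and the density of $s$ let Lemma~\ref{lem:conv_inv}\,(b')$\Rightarrow$(a') deliver $v^1.(s_*v)^1=1$.

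Combining the two inverse laws shows that $v^1\colon VA\to VA$ is invertible, with inverse $(s_*v)^1$, which is the assertion. The principal obstacle is the identity $m.t_3^s=e1$: carefully unwinding the definition of $t_3^s$ and keeping track of the braidings through the diagrammatic reductions is the delicate point, and it is exactly here that the regular data $(t_3,t_4)$ must be used rather than $(t_1,t_2)$ alone.
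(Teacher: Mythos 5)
Your proposal is correct and follows essentially the same route as the paper: both halves of the inverse law are obtained from Lemma~\ref{lem:conv_inv} applied to the pair $(i,s)$, with $s_1.t_1=e1$ immediate from \eqref{eq:s} and the harder identity $m.t_3^s=e1$ established by exactly the chain of reductions the paper uses (unfolding $t_3^s$, associativity, \eqref{eq:t_2-3_compatibility}, \eqref{eq:multiplicative}, \eqref{eq:mM}, then \eqref{eq:s}), followed by non-degeneracy of $m$ and density of $s$.
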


\subsection{Duals in the base category}

Consider a braided monoidal category $\cc$ and an object $V$ which possesses a
left  dual $\overline V$. We denote the unit and the counit of the duality by 
$\eta\colon I\to V\overline V$ and $\epsilon\colon \overline V V\to I$; 
since $\cc$ is braided, $\overline V$ is also the left dual of $V$.

The duality induces a comonoid structure on the object $Q:= \overline{V}V$
with  comultiplication $\gamma\colon Q\to Q^2$ and counit $\zeta\colon Q\to I$
given by  
$$\xymatrix{
\overline{V}V \ar[r]^-{1\eta1}  & \overline{V}V\overline{V}V && 
\overline{V}V  \ar[r]^{\epsilon} & I. }$$
Using the duality, to give a morphism $v^1\colon VA\to VA$ is
equivalently to give a  morphism  $q_1\colon QA=\overline{V}VA\to
A$; explicitly, $q_1$ is given by the composite on the left below 
\begin{equation} \label{eq:v1q1}
\xymatrix{
q_1: \overline{V}VA \ar[r]^{1v^1} & \overline{V}VA  \ar[r]^{\epsilon1} & A } \quad
\xymatrix{
v^1 : VA \ar[r]^-{\eta11} & V\overline{V}VA \ar[r]^-{1q_1} & VA}
\end{equation}
while one recovers $v^1$ from $q_1$ as the composite on the right.
Similarly to give a  morphism $v^3\colon VA\to VA$ is equivalently
to give a morphism $q_2\colon AQ=A\overline{V}V\to A$, namely 
$$\xymatrix{
A\overline{V}V \ar[r]^{c1} & \overline{V}AV \ar[r]^{1c} & 
\overline{V}VA \ar[r]^{1v^3} & \overline{V}VA \ar[r]^{\epsilon 1} & A. }$$

\begin{proposition}\label{prop:comodules-via-Q}
Let $(A,t_1,t_2,t_3,t_4,e)$ be a regular multiplier bimonoid having the
properties in Section \ref{sect:mbm}. 
Morphisms $v^1,v^3 \colon VA\to VA$ satisfy the
compatibility condition \eqref{eq:comodule_compatibility} if and only if
$q_1$ and $q_2$ satisfy the first condition of \eqref{eq:mM} and so are the
components of an $\mM$-morphism $q\colon Q\nto A$. In this case $v^1$
and $v^3$  satisfy \eqref{eq:t_1_comodule}, and so make $V$ into a
comodule, if and only if $q$ satisfies $e\bullet q=\zeta^\#$ and  $qq\circ
\gamma=d\bullet q$.
\end{proposition}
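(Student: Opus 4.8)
The plan is to run everything through the duality bijection recorded just before the statement. By \eqref{eq:v1q1} and the triangle equations for the duality, giving $v^1\colon VA\to VA$ is the same as giving $q_1\colon \overline VVA\to A$, and giving $v^3\colon VA\to VA$ is the same as giving $q_2\colon A\overline VV\to A$; the two passages in each case are mutually inverse, the inverse being obtained by capping an $\overline V$-leg off with $\eta$ and straightening. The entire argument then consists in transporting each defining identity for a comodule across this bijection and recognising the transported identity as the matching condition on the \mM-morphism $q$. Because the passages are invertible, each transport yields an equivalence rather than a one-way implication, which is exactly what the ``if and only if'' statements require.

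For part (1) I would substitute the definitions $q_1=\epsilon1.1v^1$ and $q_2=\epsilon1.1v^3.1c.c1$ into the two legs of the first square of \eqref{eq:mM}, which for $q\colon Q\nto A$ reads $m.1q_1=m.q_21$ as morphisms $A\overline VVA\to A$. Pushing the counits $\epsilon$ past the braidings by naturality exhibits that square as the image of \eqref{eq:comodule_compatibility} obtained by bending the output $V$-leg down into an input $\overline V$-leg (contracted by $\epsilon$) and re-routing the two ambient $A$-strands through the braiding. Capping that $\overline V$-leg off again with $\eta$ and applying the triangle equations returns \eqref{eq:comodule_compatibility} unchanged. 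Hence \eqref{eq:comodule_compatibility} holds precisely when $(q_1,q_2)$ satisfy the first diagram of \eqref{eq:mM}, i.e.\ precisely when $q$ is an \mM-morphism $Q\nto A$.

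Granting part (1), so that $q\colon Q\nto A$ is an \mM-morphism, I would establish part (2) by matching the two coaction axioms to the two diagrams of \eqref{eq:t_1_comodule} separately; by Lemma~\ref{lem:comodule_nd} verifying \eqref{eq:t_1_comodule} suffices to make $V$ an $A$-comodule. The counit axiom $e\bullet q=\zeta^\#$ is, by its defining relation and the fact that $1e$ is an epimorphism, equivalent to $e.q_1=\zeta.1e$ as morphisms $\overline VVA\to I$; since $\zeta=\epsilon$, this is the bend of the second (counit) diagram of \eqref{eq:t_1_comodule}, namely $1e.v^1=1e$, and un-bending gives the equivalence. For coassociativity I would compare the first components of the \mM-morphisms $qq\circ\gamma$ and $d\bullet q$ from $Q$ to $A^2$. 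Here $(qq\circ\gamma)_1=q_1q_1.1c1.\gamma1$ by Section~\ref{sect:M-monoidal} together with the rule for $f\circ z$, while $(d\bullet q)_1$ is characterised through \eqref{eq:bullet-composition} by $(d\bullet q)_1.1d_1=d_1.q_11$, the map $1d_1$ being a regular epimorphism because $d$ is dense. Thus $qq\circ\gamma=d\bullet q$ amounts to the single identity $(q_1q_1.1c1.\gamma1).1d_1=d_1.q_11$ of morphisms $QAA^2\to A^2$. Expanding $d_1$ by \eqref{eq:d_1-2} and $\gamma=1\eta1$, and bending the lone $V$/$\overline V$ strand, both sides un-bend to the two legs of the first (fusion) diagram of \eqref{eq:t_1_comodule}; invertibility of the bend then gives the desired equivalence.

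I expect the coassociativity step to be the main obstacle. There the braidings proliferate: $d_1$ carries the factors $c1$ and $c^{-1}1$ from \eqref{eq:d_1-2}, the monoidal product $qq$ contributes a $1c1$, and $\gamma$ inserts an $\eta$ in the middle of $Q$. Keeping all of these in register while the single $V/\overline V$ strand is bent through the duality, and arranging that the triangle equations cancel exactly the intended zigzag, is the delicate bookkeeping; one must also be careful that the cancellation of the regular epimorphism $1d_1$ is what legitimises passing between the \mM-morphism equality and the first-component identity. The conceptual content throughout is simply that the duality transports identities, but the braided and $\eta/\epsilon$ combinatorics make this the step requiring real care.
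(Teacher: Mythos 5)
Your part (1) and the counit half of part (2) run essentially as in the paper: the duality transport of \eqref{eq:comodule_compatibility} (apply $\overline V(-)$, compose with $\epsilon1$ and a braiding) gives exactly the first diagram of \eqref{eq:mM}, and the counit axiom bends to $e.q_1=\zeta.1e$ on $QA$, which after cancelling the epimorphism $1e$ and using that the trivial semigroup $I$ is non-degenerate is the same as $e\bullet q=\zeta^\#$. Your reduction of $qq\circ\gamma=d\bullet q$ to the single first-component identity $(q_1q_1.1c1.\gamma 11).1d_1=d_1.q_111$ on $QA^3$, by cancelling the regular epimorphism $1d_1$, is also correct.

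The gap is in the final step of the coassociativity argument. You claim that, after expanding $d_1$ and $\gamma$, ``both sides un-bend to the two legs of the first (fusion) diagram of \eqref{eq:t_1_comodule}'', so that invertibility of the bend gives the equivalence. This cannot be right as stated: the fusion legs are morphisms $VA^2\to VA^2$, whose bent forms live on $QA^2$, whereas your identity lives on $QA^3$ and contains the multiplication $m$ hidden inside $d_1=m1.c^{-1}1.1t_1.c1$. What the duality bend actually yields (this is the paper's chain of string diagrams) is that the fusion equation is equivalent to the identity $t_1.q_11=q_1q_1.1c1.\gamma t_1$ on $QA^2$ --- the paper's diagram \eqref{eq:qtgamma}, which involves $t_1$, not $d_1$. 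To pass between \eqref{eq:qtgamma} and your $QA^3$ identity one needs two ingredients your proposal never invokes: (i) the right $A$-linearity of $q_1$ (equivalently, condition \eqref{eq:v^1-3_module_maps} for $v^1$), which is available precisely because the compatibility condition of part (1) is in force, and which is what lets one slide the $m$ sitting inside $d_1$ past the $q_1$'s to the outside; and (ii) the non-degeneracy of the multiplication, which is what permits cancelling that outer $m$ in the direction [$QA^3$ identity $\Rightarrow$ fusion]. Since neither $t_1$ nor $m$ is assumed invertible for a general regular multiplier bimonoid, no purely duality-theoretic ``un-bending'' can remove them; the epimorphism property of $1d_1$ only legitimises the passage between the \mM-morphism equality and the first-component identity, not the removal of $m$. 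Without (i) and (ii) the implication from your $QA^3$ identity back to the fusion equation is unjustified, so the ``if and only if'' is not established.
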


\begin{proof}
Condition \eqref{eq:comodule_compatibility} takes the form 
$$
\begin{tikzpicture}
\path (1.5,1.2) node[arr,name=q1] {$\ q_1\ $};
\path[braid,name path=i>d] (.5,2) to[out=270,in=180] (1.2,.5) to[out=0,in=270] (q1);
\draw[braid] (1.5,2) to[out=270,in=90] (q1);
\draw[braid] (2,2) to[out=270,in=45] (q1);
\draw[braid,name path=q1>o] (q1) to[out=135,in=0] (1,1.7) to[out=180,in=90] (.3,0);
\draw[braid] (1.2,.5) to[out=270,in=90] (1.2,0);
\fill[white, name intersections={of=i>d and q1>o}] (intersection-1) circle(0.1);
\draw[braid] (.5,2) to[out=270,in=180] (1.2,.5) to[out=0,in=270] (q1);
\draw (2.5,1) node {$=$};
\end{tikzpicture}
\begin{tikzpicture}
\path (1.5,1.2) node[arr,name=q2] {$\ q_2\ $};
\path[braid,name path=i>l] (1.5,2) to[out=270,in=135] (q2);
\draw[braid] (2,2) to[out=270,in=45] (q2);
\draw[braid,name path=q2>o] (q2) to[out=90,in=0] (1,1.9) to[out=180,in=90] (.3,0);
\draw[braid] (q2) to[out=270,in=180] (2,.5) to [out=0,in=270] (2.5,2);
\draw[braid] (2,.5) to[out=270,in=90] (2,0);
\fill[white, name intersections={of=i>l and q2>o}] (intersection-1) circle(0.1);
\draw[braid]  (1.5,2) to[out=270,in=135] (q2);
\end{tikzpicture} .
$$
Applying the functor $\overline V(-)$ to it, and composing the result on the
respective sides by  
$$
\xymatrix{
\overline VVA \ar[r]^-{\epsilon 1} &
A
}\quad \textrm{and}\quad
\xymatrix{
A\overline VV A\ar[r]^-{c11} &
\overline VAVA,} 
$$
we obtain the equivalent form in \eqref{eq:mM}.

The counit condition, appearing as the second half of \eqref{eq:t_1_comodule},
says that the diagram on the left 
\begin{equation} \label{eq:q-e}
\xymatrix{
QA \ar[r]^{1e} \ar[d]_{q_1} & Q \ar[d]^{\zeta} \\ A \ar[r]_e & I } \quad
\xymatrix{ 
AQ \ar[r]^{e1} \ar[d]_{q_2} & Q \ar[d]^{\zeta} \\ A \ar[r]_{e} & I }
\end{equation}
commutes. This says, in the language of $\mM$-morphisms, that the first
components of $e\bullet q$ and $\zeta^\#:A\nto I$ are equal. Since the trivial
semigroup (monoid, in fact) $I$ is non-degenerate, this is equivalent to the
equality of their second components --- which appears on the right --- and
also to $e\bullet q=\zeta^\#$. 
%Using the non-degeneracy of $A$ and the fact that $e\in \cq$ is an
%epimorphism preserved by taking the monoidal product with any object, this is
%equivalent to the corresponding statement involving second components which
%appears on the right, and together they say, in the language of
%$\mM$-morphisms, that $e\bullet q=\zeta^\#$.  

We have an $\mM$-morphism $q\colon Q\nto A$ and a dense multiplicative
\mM-morphism $d\colon A\nto A^2$, and so can form the composite $d\bullet
q\colon Q\nto A^2$ which we  shall call  $f$; it is
defined by the commutativity of  
$$
\xymatrix{
QA^3 \ar[r]^-{1d_1} \ar[d]_{q_1 11} & 
QA^2 \ar[d]^-{f_1} \\ 
A^3 \ar[r]_-{d_1} & 
A^2. }
$$
On the other hand we have the \cc-morphism $\gamma\colon Q\to Q^2$ and the
\mM-morphism $qq\colon Q^2\nto A^2$, and they compose to give an \mM-morphism
$qq\circ \gamma \colon Q\nto A^2$. This
has first component  
$$
\xymatrix{
QA^2 \ar[r]^-{\gamma11} & 
Q^2A^2 \ar[r]^-{1c1} & 
(QA)^2 \ar[r]^-{q_1q_1} & 
A^2. }
$$
The first half of \eqref{eq:t_1_comodule}  asserts the equality of two 
morphisms $VA^2\to VA^2$. Under the duality this can be expressed as
the equality of two morphisms $\overline{V}VA^2\to A^2$;
specifically, the second and third morphisms appearing below.   
$$
\begin{tikzpicture} % pic0
  \path
(2,0) node[empty] {}
(0,3.5) node[empty, name=in1] {}
(0.5,3.5) node[empty, name=in2] {}
(1.0,3.5) node[empty, name=in3] {}
(1.5,3.5) node[empty, name=in4] {}
(1,0) node[empty, name=out1] {}
(1.5,0) node[empty, name=out2] {}
(0.5,2.5) node[arr, name=q1] {$q_1$}
(1.25,1.5) node[arr, name=t1] {$t_1$};
\draw[braid, name path=t1v1d] (q1) to[out=315, in=135] (t1);
\draw[braid] (in3) to[out=270, in=45] (q1);
\draw[braid] (in2) to[out=270, in=90] (q1);
\draw[braid] (in4) to[out=270, in=45] (t1);
\draw[braid] (t1) to[out=315, in=90] (out2);
\draw[braid] (t1) to[out=225, in=90] (out1);
\draw[braid] (in1) to[out=270, in=135] (q1);
\draw (2.1,2.2) node[empty] {$\stackrel{\eqref{eq:v1q1}}=$};
\end{tikzpicture} 
\begin{tikzpicture} % pic1
  \path
(0,3.5) node[empty, name=in1] {}
(0.5,3.5) node[empty, name=in2] {}
(1.0,3.5) node[empty, name=in3] {}
(1.5,3.5) node[empty, name=in4] {}
(1,0) node[empty, name=out1] {}
(1.5,0) node[empty, name=out2] {}
(0,1.5) node[empty, name=u] {}
(0.15,0.5) node[empty, name=x] {}
% (1.5,1.25) node[empty, name=y] {}
% (1,2.5) node[empty, name=z] {}
(0.75,2.5) node[arr, name=v1] {$v_1$}
(1.25,1.5) node[arr, name=t1] {$t_1$};
\draw[braid, name path=t1v1d] (v1) to[out=315, in=135] (t1);
%\path[braid, name path=bin1] (in1) to[out=270, in=90] (u) to[out=270, in=0] (x);
\draw[braid] (in3) to[out=270, in=45] (v1);
\draw[braid] (in2) to[out=270, in=135] (v1);
\draw[braid] (in4) to[out=270, in=45] (t1);
\draw[braid] (t1) to[out=315, in=90] (out2);
\draw[braid] (t1) to[out=225, in=90] (out1);
%\draw[braid, name path=v1x] (v1) to[out=225, in=180] (x);
%\fill[white, name intersections={of=bin1 and v1x}] (intersection-1) circle(0.1);
%\draw[braid] (in1) to[out=270, in=90] (u) to[out=270, in=0] (x);
\draw[braid] (in1) to[out=270,in=90] (u) to[out=270,in=180] (x) to[out=0,in=225] (v1);
\draw (2.0,2.2) node[empty] {$\stackrel{~\eqref{eq:t_1_comodule}}=$};
\end{tikzpicture} \quad 
\begin{tikzpicture} % pic2
  \path
(0,3.5) node[empty, name=in1] {}
(1,3.5) node[empty, name=in2] {}
(1.5,3.5) node[empty, name=in3] {}
(2.0,3.5) node[empty, name=in4] {}
(1,0) node[empty, name=out1] {}
(2,0) node[empty, name=out2] {}
(0,1.5) node[empty, name=u] {}
(0.15,0.5) node[empty, name=x] {}
% (1.5,1.25) node[empty, name=y] {}
% (1,2.5) node[empty, name=z] {}
(0.75,1) node[arr, name=v1d] {$v_1$}
(1.75,2) node[arr,name=v1u] {$v_1$}
(1.75,3) node[arr, name=t1] {$t_1$};
\draw[braid, name path=t1v1d] (t1) to[out=225, in=45] (v1d);
%\path[braid, name path=bin1] (in1) to[out=270, in=90] (u) to[out=270, in=0] (x);
\path[braid, name path=bin2] (in2) to[out=270, in=135] (v1u);
\draw[braid] (in3) to[out=270, in=135] (t1);
\fill[white, name intersections={of=bin2 and t1v1d}] (intersection-1) circle(0.1);
\draw[braid] (in2) to[out=270, in=135] (v1u);
\draw[braid] (in4) to[out=270, in=45] (t1);
\draw[braid] (t1) to[out=315, in=45] (v1u);
\draw[braid] (v1u) to[out=315, in=90] (out2);
\path[braid, name path=v1uv1d] (v1u) to[out=225, in=135] (v1d);
\fill[white, name intersections={of=v1uv1d and t1v1d}] (intersection-1) circle(0.1);
\draw[braid] (v1u) to[out=225, in=135] (v1d);
%\draw[braid, name path=v1dx] (v1d) to[out=225, in=180] (x);
%\fill[white, name intersections={of=bin1 and v1dx}] (intersection-1) circle(0.1);
\draw[braid] (in1) to[out=270, in=90] (u) to[out=270, in=180] (x) to[out=0,in=225] (v1d);
\draw[braid] (v1d) to[out=315, in=90] (out1);
\draw (2.5,2) node[empty] {$=$};
\end{tikzpicture} \quad 
\begin{tikzpicture} % pic3
  \path
(0,3.5) node[empty, name=in1] {}
(1,3.5) node[empty, name=in2] {}
(1.5,3.5) node[empty, name=in3] {}
(2,3.5) node[empty, name=in4] {}
(1,0) node[empty, name=out1] {}
(2,0) node[empty, name=out2] {}
(0,1.5) node[empty, name=u] {}
(0.15,0.5) node[empty, name=x] {}
(1.5,1.25) node[empty, name=y] {}
(0.75,2.5) node[empty, name=z] {}
(0.75,1) node[arr, name=v1d] {$v_1$}
(1.75,2) node[arr,name=v1u] {$v_1$}
(1.75,3) node[arr, name=t1] {$t_1$};
\draw[braid, name path=t1v1d] (t1) to[out=225, in=45] (v1d);
\path[braid, name path=bin2] (in2) to[out=270, in=135] (v1u);
\draw[braid] (in3) to[out=270, in=135] (t1);
\fill[white, name intersections={of=bin2 and t1v1d}] (intersection-1) circle(0.1);
\draw[braid] (in2) to[out=270, in=135] (v1u);
\draw[braid] (in4) to[out=270, in=45] (t1);
\draw[braid] (t1) to[out=315, in=45] (v1u);
\draw[braid] (v1u) to[out=315, in=90] (out2);
\draw[braid,name path=v1uy] (v1u) to[out=225, in=0] (y);
\path[braid, name path=zy] (z) to[out=0, in=180] (y);
\fill[white, name intersections={of=zy and t1v1d}] (intersection-1) circle(0.1);
\draw[braid] (z) to[out=0, in=180] (y);
\draw[braid] (z) to[out=180, in=135] (v1d);
\draw[braid] (in1) to[out=270,in=90] (u) to[out=270,in=180] (x) to[out=0,in=225] (v1d);
\draw[braid] (v1d) to[out=315, in=90] (out1);
\draw (2.5,2.2) node[empty] {$\stackrel{~\eqref{eq:v1q1}}=$};
\end{tikzpicture} \quad 
\begin{tikzpicture} % pic4
  \path
(0,3.5) node[empty, name=in1] {}
(1,3.5) node[empty, name=in2] {}
(1.5,3.5) node[empty, name=in3] {}
(2,3.5) node[empty, name=in4] {}
(.5,0) node[empty, name=out1] {}
(1.5,0) node[empty, name=out2] {}
(0,1.5) node[empty, name=u] {}
(0.15,0.5) node[empty, name=x] {}
(1.5,1.25) node[empty, name=y] {}
(0.75,2.5) node[empty, name=z] {}
(0.5,1) node[arr, name=q1l] {$q_1$}
(1.5,1.5) node[arr,name=q1r] {$q_1$}
(1.75,3) node[arr, name=t1] {$t_1$};
\draw[braid, name path=t1q1l] (t1) to[out=225, in=45] (q1l);
\path[braid, name path=bin2] (in2) to[out=270, in=90] (q1r);
\draw[braid] (in3) to[out=270, in=135] (t1);
\fill[white, name intersections={of=bin2 and t1q1l}] (intersection-1) circle(0.1);
\draw[braid] (in2) to[out=270, in=90] (q1r);
\draw[braid] (in4) to[out=270, in=45] (t1);
\draw[braid] (t1) to[out=315, in=45] (q1r);
\draw[braid] (q1r) to[out=270, in=90] (out2);
\path[braid, name path=zq1r] (z) to[out=0, in=135] (q1r);
\fill[white, name intersections={of=zq1r and t1q1l}] (intersection-1) circle(0.1);
\draw[braid] (z) to[out=0, in=135] (q1r);
\draw[braid] (z) to[out=180, in=90] (q1l);
\draw[braid] (in1) to[out=270, in=135] (q1l);
\draw[braid] (q1l) to[out=270, in=90] (out1);
\end{tikzpicture}
$$
Since the remaining equalities do hold, the first half of
\eqref{eq:t_1_comodule} is equivalent to the equality of the first and last
expressions; writing in terms of $Q$ rather than $\overline{V}V$ this says
that the diagram on the left 
\begin{equation} \label{eq:qtgamma}
\xymatrix{
QA^2 \ar[r]^{\gamma t_1} \ar[d]_{q_1 1} & 
Q^2A^2 \ar[r]^{1c1} & 
(QA)^2 \ar[d]^{q_1q_1} \\
A^2 \ar[rr]_{t_1} && 
A^2 }\quad
\xymatrix{
A^2Q \ar[r]^{t_2\gamma} \ar[d]_{1q_2} & 
A^2Q^2 \ar[r]^{1c1} & 
(AQ)^2 \ar[d]^{q_2q_2} \\
A^2 \ar[rr]_{t_2} && 
A^2 }
\end{equation}
commutes; we shall also need its (equivalent by Lemma
\ref{lem:comodule_nd})  counterpart for second components, which is the
diagram on the right. The right $A$-linearity of $v^1$, in the sense of
\eqref{eq:v^1-3_module_maps}, is equivalent to the right $A$-linearity of
$q_1$. By this and the non-degeneracy of $A$, commutativity of the first
diagram in \eqref{eq:qtgamma} amounts exactly to fact that $f_1$ is equal to
$q_1q_1.1c1.\gamma 11$. Thus the fusion equation of
\eqref{eq:t_1_comodule} 
%coassociativity condition 
amounts to the identity $ qq\circ \gamma = d\bullet q.$  
\end{proof}

If $(V,v^1,v^3)$ and $(W,w^1,w^3)$ are comodules whose underlying
objects $V$ and $W$  have left duals $\overline V$ and 
$\overline W$  corresponding to \mM-morphisms $q\colon \overline VV\nto
A$ and $ q\colon \overline W W \nto A$, then a morphism $f\colon
V\to W$ is a morphism of comodules exactly when the diagram  
\begin{equation}\label{eq:Q-morphism} 
\xymatrix{
 \overline W  VA \ar[r]^{1f1} \ar[d]_{\overline f11} & 
 \overline W W  A \ar[d]^{ q_1} \\
\overline VVA \ar[r]_{q_1} & 
A }
\end{equation}
commutes; here $\overline f\colon  \overline W \to \overline V$
is the transpose of $f$.  

For the same two comodules, we may form their monoidal product
$V W$, as in \cite{BohmLack:braided_mba}; in terms of \mM-morphisms, 
the resulting comodule structure has first component   
\begin{equation}\label{eq:Q-tensor}
 \xymatrix@C=35pt{
\overline W\,\overline V VWA \ar[r]^-{c^{-1}111} & 
\overline V\,\overline W VWA \ar[r]^-{1c^{-1}11} & 
\overline VV\overline W WA \ar[r]^-{11q_1} & 
\overline VVA \ar[r]^-{q_1} & 
A.} 
\end{equation}

\begin{theorem} \label{thm:dual_comodule}
Consider a symmetric monoidal category \cc which satisfies our standing
assumptions of Section~\ref{sect:assumptions}, and let $(A,t_1,t_2,t_3,t_4,e)$
be a regular multiplier bimonoid in \cc for which $(A,t_1,t_2,e)$ is a
multiplier Hopf monoid whose multiplication is non-degenerate and whose counit
is dense. Let $V$ be an object which possesses a left dual $\overline{V}$ in
\cc. Then any comodule with underlying object $V$ possesses a left dual in the
monoidal category of comodules, with underlying object $\overline{V}$. 
\end{theorem}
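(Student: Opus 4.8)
The plan is to lift the duality data of $V$ along the forgetful functor from $A$-comodules to $\cc$. Since the monoidal structure on comodules lifts that of $\cc$ and the forgetful functor is the identity on morphisms, it is strong symmetric monoidal and faithful; consequently the triangle equations of a duality, which already hold in $\cc$, hold automatically in the category of comodules. The whole problem therefore reduces to two tasks: equipping $\overline V$ with an $A$-comodule structure, and verifying that the unit $\eta\colon I\to V\overline V$ and counit $\epsilon\colon\overline VV\to I$ of the duality in $\cc$ are morphisms of comodules. Throughout I would use the reformulation of Proposition~\ref{prop:comodules-via-Q}, recording the given comodule $(V,v^1,v^3)$ as a multiplicative $\mM$-morphism $q\colon Q\nto A$ on the comonoid $Q=\overline VV$, and seeking the structure on $\overline V$ as an $\mM$-morphism $\overline q\colon\overline Q\nto A$ on $\overline Q=V\overline V$, using the symmetry to regard $V$ as the dual of $\overline V$.

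The creative step is the construction of the comodule structure on $\overline V$, and here the Hopf hypothesis enters decisively. By Corollary~\ref{cor:v-tilde} the operator $v^1$ is invertible, so the contragredient construction is available. Conceptually, transposing $v^1$ across the duality produces an operator $\overline VA\to\overline VA$ which is a comodule operator not for $A$ itself but for the twisted multiplier bimonoid $(A,c.t_2.c^{-1},c.t_1.c^{-1},e)$ of Paragraph~\ref{claim:twist}, since transposition reverses both the order of the tensor factors and the variance of the coaction; the antipode then repairs this. Indeed, by Theorem~\ref{thm:s_mbm_morphism} the antipode $s$ is a morphism of multiplier bimonoids out of this twist, so Proposition~\ref{prop:f_*}(2) turns the twisted comodule on $\overline V$ into a genuine $A$-comodule $(\overline V,\overline v^1,\overline v^3)$. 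Concretely this means defining $\overline q$ by composing the transpose of $q$ with $s$ and a symmetry, and then verifying, via Proposition~\ref{prop:comodules-via-Q} and Lemma~\ref{lem:comodule_nd} (so that only the $t_1$-part need be treated), that $\overline q$ is a well-defined multiplicative $\mM$-morphism with $e\bullet\overline q=\zeta^\#$ and $\overline q\,\overline q\circ\gamma=d\bullet\overline q$; invertibility of $v^1$ is exactly what makes this well defined.

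With both $V$ and $\overline V$ made into comodules, their monoidal products $\overline VV$ and $V\overline V$ acquire comodule structures whose first components are given by the formula~\eqref{eq:Q-tensor}. That $\epsilon\colon\overline VV\to I$ is a comodule morphism then reduces, by Lemma~\ref{lem:comodule_morphism_nd} so that only first components are tested, to an identity between the composite coaction on $\overline VV$ and the trivial coaction on $I$; after cancelling the duality this is exactly the antipode relation $s_1.t_1=e1$ of~\eqref{eq:s}, expressing that the coactions on the two factors annihilate one another under the pairing. Symmetrically, that $\eta\colon I\to V\overline V$ is a comodule morphism unwinds to the companion relation $m.t_3^s=e1$ established in Corollary~\ref{cor:v-tilde}. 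In both cases the non-degeneracy of $m$, the density of the relevant components, and Remark~\ref{rem:f_non-degenerate} allow the identities to be tested after composing with an epimorphism.

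The main obstacle is the middle step: producing the contragredient comodule structure on $\overline V$ and proving it to be a bona fide $A$-comodule. Everything downstream is comparatively formal, the two duality maps being comodule morphisms precisely by the antipode identities above, and the triangle equations being inherited from $\cc$ by faithfulness. It is the invertibility of $v^1$ together with the fact that the antipode is a bimonoid morphism out of the twist, that is, the full strength of the Hopf hypothesis, that makes the construction go through; the symmetry assumption is what lets $V$ and $\overline V$ be treated as mutual duals and permits the free manipulation of the braidings occurring in~\eqref{eq:Q-tensor}.
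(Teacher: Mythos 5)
Your proposal is correct and is essentially the paper's own argument: the paper likewise builds the dual structure by transposing $q$ to the $\mM$-morphism $q\op\circ c^{-1}$ on $Q'=V\overline V$ (precisely your ``comodule over the twisted bimonoid'') and then applying the antipode --- its definition $q'=s\bullet q\op\circ c^{-1}$ is exactly your change of base $s_*$ of Proposition~\ref{prop:f_*}(2) read through the dictionary of Proposition~\ref{prop:comodules-via-Q} --- and it then verifies that $\eta$ and $\epsilon$ are comodule morphisms using the antipode identities \eqref{eq:s} together with \eqref{eq:qtgamma}, \eqref{eq:q-e} and non-degeneracy, just as you describe. One small correction: well-definedness of the construction does not rest on the invertibility of $v^1$ (Corollary~\ref{cor:v-tilde} is never invoked in the paper's proof of this theorem); what is actually needed is the existence of the antipode as a morphism of multiplier bimonoids out of the twist, i.e.\ Theorem~\ref{thm:s_mbm_morphism}, which you also cite.
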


\begin{proof}
By Remark~\ref{rem:who_is_reg_epi}, the comultiplication will also be dense,
and so we may apply Proposition~\ref{prop:comodules-via-Q}; thus in order to
make $\overline{V}$ into a comodule, it will suffice to define a suitable
\mM-morphism $q'\colon Q'\nto A$, where $Q'=V\overline{V}$, which is a
comonoid in \cc with comultiplication $\gamma'$ and counit $\zeta'$ given by
the following composites. 
$$\xymatrix{
V\overline V \ar[r]^-{1\eta1} & 
VV\overline V\,\overline V \ar[r]^{1c1} & 
V\overline V V\overline V &&
V\overline V \ar[r]^{c^{-1}} & 
\overline V V \ar[r]^{\epsilon} & 
I }$$

The inverse braiding $c^{-1}\colon V\overline{V}\to \overline{V}V$ defines a
morphism $Q'\to Q$; in fact it is a comonoid morphism from the comonoid $Q'$
to the opposite comonoid $Q\cop$ of $Q$. The desired \mM-morphism $q'$ is now
given by $s\bullet q\op \circ c^{-1}$; we just need to check that this 
satisfies the conditions in Proposition~\ref{prop:comodules-via-Q}.
%is a comonoid morphism. Preservation of the
Compatibility with the comultiplication follows by the calculation  

\begin{align}
d\bullet s \bullet q\op \circ c^{-1}  & =  
ss\bullet c^\#\bullet d\op\bullet q\op \circ c^{-1} 
\tag{Theorem~\ref{thm:s_mbm_morphism}} \\
&=     ss\bullet c^\#\bullet (d\bullet q)\op \circ c^{-1} 
\tag{Proposition~\ref{prop:op}} \\
&=     ss\bullet c^\#\bullet (qq\circ \gamma)\op \circ c^{-1} 
\tag{Proposition~\ref{prop:comodules-via-Q}} \\
&=    ss\bullet c^\#     \bullet       (qq)\op\circ \gamma \circ c^{-1} 
 \tag{eq. \eqref{eq:op-mon}} \\
&=  ss\bullet q\op q\op \circ c \circ \gamma \circ c^{-1} 
\tag{eq. \eqref{eq:op-mon}}  \\
&=     ss\bullet q\op q\op\circ (c.\gamma.c^{-1}) 
\tag{Section \ref{sect:mM}} \\
&=     ss\bullet q\op q\op \circ (c^{-1}c^{-1}.\gamma') 
\notag\\
&=     ss\bullet q\op q\op \circ c^{-1}c^{-1} \circ \gamma' 
\tag{Section \ref{sect:mM}} \\
&=  (s\bullet q\op\circ c^{-1}) (s\bullet q\op\circ c^{-1}) \circ \gamma'
 \tag{Section \ref{sect:M-monoidal}}
\end{align}
while 
%preservation of 
compatibility with  the counit is similar but easier: 
\begin{align}
e\bullet s \bullet q\op \circ c^{-1} &=
e\op \bullet q\op \circ c^{-1} \tag{Theorem~\ref{thm:s_mbm_morphism}} \\
&= (e\bullet q)\op \circ c^{-1} \tag{Proposition \ref{prop:op}} \\
&= \zeta^{\#\mathsf{op}}\circ c^{-1} 
\tag{Proposition~\ref{prop:comodules-via-Q}} \\
&= \zeta^\#\circ c^{-1} \notag\\
&= (\zeta.c^{-1})^\# \tag{Section \ref{sect:mM}} \\
&= \zeta^{\prime \#}. \notag
\end{align}
It remains to show that the unit and counit of the duality are
comodule morphisms. The first  component of the structure morphism
$(V\overline V)^2 \nto A$  of the monoidal product comodule $V\overline
V$ is given by \eqref{eq:Q-tensor}.  Now
$$
\begin{tikzpicture} % pic1
\path
(0,4) node[empty,name=in1] {}
(0.5,4) node[empty,name=in2] {}
(1.0,4) node[empty,name=in3] {}
(1.5,4) node[empty,name=in4] {}
(2,4) node[empty,name=in5] {}
(0.5,0) node[empty, name=out] {}
(0.5,0.5) node[empty, name=m] {}
(1,2) node[empty, name=y] {}
(1.25,3) node[empty, name=x] {};
\path
(1,1) node[arr, name=q1] {$q_1$}
(1.5,2) node[arr, name=qq1] {$q'_1$}
(1.75,3) node[arr, name=s1] {$s_1$};
\draw[braid] (in1) to[out=270,in=180] (m);
\draw[braid, name path=bin2] (in2) to[out=270,in=135] (qq1);
\path[braid, name path=bin3] (in3) to[out=270,in=135] (q1);
\fill[white,name intersections={of=bin3 and bin2}] (intersection-1) circle(0.1);
\draw[braid] (in3) to[out=270,in=135] (q1);
\draw[braid] (in4) to[out=270,in=135] (s1);
\draw[braid] (in5) to[out=270,in=45] (s1);
\draw[braid, name path=xqq1] (x) to[out=0, in=90] (qq1);
\path[braid, name path=xq1] (x) to[out=180, in=90] (y) to (q1);
%\fill[white,name intersections={of=xq1 and xqq1}] (intersection-1) circle(0.1);
\fill[white,name intersections={of=xq1 and bin2}] (intersection-1) circle(0.1);
\draw[braid] (x) to[out=180, in=90] (y) to (q1);
\draw[braid] (s1) to[out=270,in=45] (qq1);
\draw[braid] (qq1) to[out=270,in=45] (q1);
\draw[braid] (q1) to[out=270,in=0] (m);
\draw[braid] (m) to (out); 
\draw (2.5,2.2) node[empty]  {$\stackrel{~\eqref{eq:bullet-composition}}=$};
\end{tikzpicture}
\begin{tikzpicture} % pic2
\path
(0,4) node[empty,name=in1] {}
(0.5,4) node[empty,name=in2] {}
(1.0,4) node[empty,name=in3] {}
(1.5,4) node[empty,name=in4] {}
(2,4) node[empty,name=in5] {}
 (0.5,0) node[empty, name=out] {}
(0.5,0.5) node[empty, name=m] {}
(0.8,2) node[empty, name=y] {}
(1.75,3.5) node[empty, name=z] {}
%(0.5,3) node[empty, name=z] {}
(1.5, 2.75) node[empty, name=v] {}
(1.5,3.25) node[empty, name=w] {}
(1.25,3.0) node[empty, name=x] {};
\path
(1,1) node[arr, name=q1] {$q_1$}
(1.25,2) node[arr, name=q2] {$q_2$}
(1.5,1.5) node[arr, name=s1] {$s_1$};
\draw[braid] (in1) to[out=270,in=180] (m);
\draw[braid, name path=bin2] (in2) to[out=270, in=90] (z) to[out=270, in=45] (q2);
\path[braid, name path=bin3] (in3) to[out=270,in=135] (q1);
\fill[white,name intersections={of=bin3 and bin2}] (intersection-1) circle(0.1);
\draw[braid] (in3) to[out=270,in=135] (q1);
\path[braid, name path=bin4] (in4) to[out=270, in=90] (w) to[out=270,
in=135] (q2);
\draw[braid] (in5) to[out=270,in=45] (s1);
\draw[braid, name path=xq2] (x) to[out=0,in=90] (v) to[out=270,in=90] (q2);
\draw[braid, name path=xq1] (x) to[out=180, in=80] (y) to[out=260,in=100] (q1);
\fill[white,name intersections={of=bin4 and bin2}] (intersection-1) circle(0.1);
\fill[white,name intersections={of=xq2 and bin4}] (intersection-1) circle(0.1);
\draw[braid] (in4) to[out=270, in=90] (w) to[out=270,in=135] (q2);
\draw[braid] (q2) to[out=270,in=135] (s1);
\draw[braid] (s1) to[out=270,in=45] (q1);
\draw[braid] (q1) to[out=270,in=0] (m);
\draw[braid] (m) to (out); 
\draw (2.5,2.2) node[empty] {$\stackrel{~\eqref{eq:mM}}=$};
\end{tikzpicture}
\begin{tikzpicture} % pic3
\path
(0,4) node[empty,name=in1] {}
(0.5,4) node[empty,name=in2] {}
(1.0,4) node[empty,name=in3] {}
(1.5,4) node[empty,name=in4] {}
(2,4) node[empty,name=in5] {}
 (0.5,0) node[empty, name=out] {}
(0.5,0.5) node[empty, name=m] {}
(1,3) node[empty, name=y] {}
(1.75,3.5) node[empty, name=z] {}
(1.5, 2.75) node[empty, name=v] {}
(1.5,3.25) node[empty, name=w] {}
(0.75,2.75) node[empty, name=x] {};
\path
(0.25,2) node[arr, name=q2l] {$q_2$}
(1.25,2) node[arr, name=q2] {$q_2$}
(1.5,1.5) node[arr, name=s1] {$s_1$};
\draw[braid] (in1) to[out=270,in=135] (q2l);
\draw[braid, name path=bin2] (in2) to[out=270, in=90] (z) to[out=270, in=45] (q2);
\path[braid, name path=bin3] (in3) to[out=270,in=90] (q2l);
\fill[white,name intersections={of=bin3 and bin2}] (intersection-1) circle(0.1);
\draw[braid] (in3) to[out=270,in=90] (q2l);
\path[braid, name path=bin4] (in4) to[out=270, in=90] (w) to[out=270,in=135] (q2);
\draw[braid] (in5) to[out=270,in=45] (s1);
\path[braid, name path=xq2] (x) to[out=0,in=90] (q2);
\draw[braid] (x) to[out=0,in=90] (q2);
\path[braid, name path=xq2l] (x) to[out=180, in=45] (q2l);
%\fill[white,name intersections={of=xq2l and xq2}] (intersection-1) circle(0.1);
\fill[white,name intersections={of=bin4 and bin2}] (intersection-1) circle(0.1);
\fill[white,name intersections={of=xq2 and bin4}] (intersection-1) circle(0.1);
\draw[braid] (x) to[out=180, in=45] (q2l);
\draw[braid] (in4) to[out=270, in=90] (w) to[out=270,in=135] (q2);
\draw[braid] (q2) to[out=270,in=135] (s1);
\draw[braid] (q2l) to[out=270,in=180] (m);
\draw[braid] (s1) to[out=270,in=0] (m);
\draw[braid] (m) to (out); 
\draw (2.5,2.2) node[empty]  {$\stackrel{~\eqref{eq:mM}}=$};
\end{tikzpicture}
\begin{tikzpicture} % pic4
\path
(0,4) node[empty,name=in1] {}
(0.5,4) node[empty,name=in2] {}
(1.0,4) node[empty,name=in3] {}
(1.5,4) node[empty,name=in4] {}
(2,4) node[empty,name=in5] {}
 (1.0,0) node[empty, name=out] {}
(1.0,0.5) node[empty, name=m] {}
(1,3) node[empty, name=y] {}
(1.75,3.5) node[empty, name=z] {}
(1.5, 2.75) node[empty, name=v] {}
(1.5,3.25) node[empty, name=w] {}
(0.75,2.75) node[empty, name=x] {};
\path
(0.25,2) node[arr, name=q2l] {$q_2$}
(1.25,2) node[arr, name=q2] {$q_2$}
(0.75,1.0) node[arr, name=s2] {$s_2$};
\draw[braid] (in1) to[out=270,in=135] (q2l);
\draw[braid, name path=bin2] (in2) to[out=270, in=90] (z) to[out=270, in=45] (q2);
\path[braid, name path=bin3] (in3) to[out=270,in=90] (q2l);
\fill[white,name intersections={of=bin3 and bin2}] (intersection-1) circle(0.1);
\draw[braid] (in3) to[out=270,in=90] (q2l);
\path[braid, name path=bin4] (in4) to[out=270, in=90] (w) to[out=270,in=135] (q2);
\draw[braid] (in5) to[out=270,in=0] (m);
\path[braid, name path=xq2] (x) to[out=0,in=90] (q2);
\draw[braid] (x) to[out=0,in=90] (q2);
\path[braid, name path=xq2l] (x) to[out=180, in=45] (q2l);
%\fill[white,name intersections={of=xq2l and xq2}] (intersection-1) circle(0.1);
\fill[white,name intersections={of=bin4 and bin2}] (intersection-1) circle(0.1);
\fill[white,name intersections={of=xq2 and bin4}] (intersection-1) circle(0.1);
\draw[braid] (x) to[out=180, in=45] (q2l);
\draw[braid] (in4) to[out=270, in=90] (w) to[out=270,in=135] (q2);
\draw[braid] (q2) to[out=270,in=45] (s2);
\draw[braid] (q2l) to[out=270,in=135] (s2);
\draw[braid] (s2) to[out=270,in=180] (m);
\draw[braid] (m) to (out); 
\draw (2.5,2.2) node[empty]  {$\stackrel{~\eqref{eq:s}}=$};
\end{tikzpicture}
$$

$$
\begin{tikzpicture} % pic5
\path
(0,4) node[empty,name=in1] {}
(0.5,4) node[empty,name=in2] {}
(1.0,4) node[empty,name=in3] {}
(1.5,4) node[empty,name=in4] {}
(2,4) node[empty,name=in5] {}
 (1.0,0) node[empty, name=out] {}
(1.0,0.5) node[empty, name=m] {}
(1,3) node[empty, name=y] {}
(1.75,3.5) node[empty, name=z] {}
(1.5, 2.75) node[empty, name=v] {}
(1.5,3.25) node[empty, name=w] {}
(0.75,2.5) node[empty, name=x] {};
\path
(0.25,2) node[arr, name=q2l] {$\,q_2\,$}
(1.25,2) node[arr, name=q2] {$\,q_2\,$}
(0.75,1.25) node[arr, name=t2inv] {${t^{\scriptscriptstyle-1}_2}$}
(1.25,0.75) node[unit, name=e] {};
\draw[braid] (in1) to[out=270,in=135] (q2l);
\draw[braid, name path=bin2] (in2) to[out=270, in=90] (z) to[out=270, in=45] (q2);
\path[braid, name path=bin3] (in3) to[out=270,in=90] (q2l);
\fill[white,name intersections={of=bin3 and bin2}] (intersection-1) circle(0.1);
\draw[braid] (in3) to[out=270,in=90] (q2l);
\path[braid, name path=bin4] (in4) to[out=270, in=90] (w) to[out=270,in=135] (q2);
\draw[braid] (in5) to[out=270,in=0] (m);
\path[braid, name path=xq2] (x) to[out=0,in=90] (q2);
\draw[braid] (x) to[out=0,in=90] (q2);
\path[braid, name path=xq2l] (x) to[out=180, in=45] (q2l);
\fill[white,name intersections={of=bin4 and bin2}] (intersection-1) circle(0.1);
\fill[white,name intersections={of=xq2 and bin4}] (intersection-1) circle(0.1);
\draw[braid] (x) to[out=180, in=45] (q2l);
\draw[braid] (in4) to[out=270, in=90] (w) to[out=270,in=135] (q2);
\draw[braid] (q2) to[out=270,in=45] (t2inv);
\draw[braid] (q2l) to[out=270,in=135] (t2inv);
\draw[braid] (t2inv) to[out=225,in=180] (m);
\draw[braid] (t2inv) to (e);
\draw[braid] (m) to (out); 
\draw (2.5,2.2) node[empty]  {$\stackrel{~\eqref{eq:qtgamma}}=$};
\end{tikzpicture}
\begin{tikzpicture} % pic6
\path
(0,4) node[empty,name=in1] {}
(0.5,4) node[empty,name=in2] {}
(1.0,4) node[empty,name=in3] {}
(1.5,4) node[empty,name=in4] {}
(2,4) node[empty,name=in5] {}
 (1.0,0) node[empty, name=out] {}
(1.0,0.5) node[empty, name=m] {}
(1,3) node[empty, name=y] {}
(1.75,3.5) node[empty, name=z] {}
(1.5, 2.75) node[empty, name=v] {}
(1.5,3.25) node[empty, name=w] {}
(0.75,2.75) node[empty, name=x] {};
\path
(1.0,2) node[arr, name=q2] {$\,q_2\,$}
(0.25,3) node[arr, name=t2inv] {${t^{\scriptscriptstyle-1}_2}$}
(1,1.5) node[unit, name=e] {};
\draw[braid] (in1) to[out=270,in=135] (t2inv);
\draw[braid, name path=bin2] (in2) to[out=270, in=45] (q2);
\path[braid, name path=bin3] (in3) to[out=270,in=90] (q2);
\path[braid, name path=bin4] (in4) to[out=270,in=45] (t2inv);
\fill[white,name intersections={of=bin3 and bin2}] (intersection-1) circle(0.1);
\draw[braid] (in3) to[out=270,in=90] (q2);
\draw[braid] (in5) to[out=270,in=0] (m);
\fill[white,name intersections={of=bin4 and bin2}] (intersection-1) circle(0.1);
\fill[white,name intersections={of=bin4 and bin3}] (intersection-1) circle(0.1);
\draw[braid] (in4) to[out=270,in=45] (t2inv);
\draw[braid] (q2) to[out=135,in=315] (t2inv);
\draw[braid] (t2inv) to[out=225,in=180] (m);
\draw[braid] (q2) to (e);
\draw[braid] (m) to (out); 
\draw (2.5,2.2) node[empty]  {$\stackrel{~\eqref{eq:q-e}}=$};
\end{tikzpicture}
\begin{tikzpicture} % pic7
\path
(0,4) node[empty,name=in1] {}
(0.3,4) node[empty,name=in2] {}
(0.6,4) node[empty,name=in3] {}
(0.9,4) node[empty,name=in4] {}
(1.2,4) node[empty,name=in5] {}
 (0.6,0) node[empty, name=out] {}
(0.6,0.5) node[empty, name=m] {}
(0.45,3.5) node[empty, name=x] {};
\path
(0.25,2) node[arr, name=t2inv] {${t^{\scriptscriptstyle-1}_2}$}
(1,1.5) node[unit, name=e] {};
\draw[braid] (in1) to[out=270,in=135] (t2inv);
\draw[braid, name path=bin2] (in2) to[out=270, in=0] (x);
\path[braid, name path=bin3] (x) to[out=180,in=270] (in3);
\fill[white,name intersections={of=bin3 and bin2}] (intersection-1) circle(0.1);
\draw[braid] (x) to[out=180,in=270] (in3);
\draw[braid] (in4) to[out=270,in=45] (t2inv);
\draw[braid] (in5) to[out=270,in=0] (m);
\draw[braid] (t2inv) to[out=225,in=180] (m);
\draw[braid] (t2inv) to[out=315] (e);
\draw[braid] (m) to (out); 
\draw (1.8,2.2) node[empty]  {$\stackrel{~\eqref{eq:s}}=$};
\end{tikzpicture}
\begin{tikzpicture} % pic8
\path
(0,4) node[empty,name=in1] {}
(0.3,4) node[empty,name=in2] {}
(0.6,4) node[empty,name=in3] {}
(0.9,4) node[empty,name=in4] {}
(1.2,4) node[empty,name=in5] {}
 (0.6,0) node[empty, name=out] {}
(0.6,0.5) node[empty, name=m] {}
(0.45,3.5) node[empty, name=x] {};
\path
(0.25,2) node[arr, name=s2] {$\,s_2\,$};
\draw[braid] (in1) to[out=270,in=135] (s2);
\draw[braid, name path=bin2] (in2) to[out=270, in=0] (x);
\path[braid, name path=bin3] (x) to[out=180,in=270] (in3);
\fill[white,name intersections={of=bin3 and bin2}] (intersection-1) circle(0.1);
\draw[braid] (x) to[out=180,in=270] (in3);
%\draw[braid] (in2) to[out=270, in=180] (x) to[out=0,in=270] (in3);
\draw[braid] (in4) to[out=270,in=45] (s2);
\draw[braid] (in5) to[out=270,in=0] (m);
\draw[braid] (s2) to[out=270,in=180] (m);
\draw[braid] (m) to (out); 
\draw (1.8,2.2) node[empty]  {$\stackrel{~\eqref{eq:mM}}=$};
\end{tikzpicture}
\begin{tikzpicture} % pic9
\path
(0,4) node[empty,name=in1] {}
(0.3,4) node[empty,name=in2] {}
(0.6,4) node[empty,name=in3] {}
(0.9,4) node[empty,name=in4] {}
(1.2,4) node[empty,name=in5] {}
 (0.6,0) node[empty, name=out] {}
(0.6,0.5) node[empty, name=m] {}
(0.45,3.5) node[empty, name=x] {};
\path
(1.05,2) node[arr, name=s1] {$\,s_1\,$};
\draw[braid] (in1) to[out=270,in=180] (m);
\draw[braid, name path=bin2] (in2) to[out=270, in=0] (x);
\path[braid, name path=bin3] (x) to[out=180,in=270] (in3);
\fill[white,name intersections={of=bin3 and bin2}] (intersection-1) circle(0.1);
\draw[braid] (x) to[out=180,in=270] (in3);
\draw[braid] (in4) to[out=270,in=135] (s1);
\draw[braid] (in5) to[out=270,in=45] (s1);
\draw[braid] (s1) to[out=270,in=0] (m);
\draw[braid] (m) to (out); 
\end{tikzpicture}
$$
and now we may cancel $s_1$ and use non-degeneracy to deduce that the composite
$$\xymatrix{
V\overline{V}A \ar[r]^{c^{-1}1} & 
\overline{V}VA \ar[r]^-{11\eta1} & 
\overline{V}V^2\overline{V}A \ar[r]^{1c^{-1}11} & 
\overline{V}V^2\overline{V}A \ar[r]^{11q'_1} & 
\overline{V}VA \ar[r]^{q_1} & A }$$
is equal to $\epsilon1.c^{-1}1\colon V\overline{V}A\to A$; but 
$\epsilon.c^{-1}$  is the transpose $\overline\eta$ of $\eta$, and so
$\eta$ satisfies the condition \eqref{eq:Q-morphism} to be a morphism of
comodules.

Similarly the counit $\epsilon$ is a morphism of comodules by essentially the
same calculation, with $V$ and $\overline{V}$ interchanged. 
\end{proof}

%%%%%%%%%%%%%%%%%%%%%%%%%%%%%%%    SEC  4     %%%%%%%%%%%%%%%%%%%%%%%%%%%%%%

\section{Modules over multiplier Hopf monoids}
 
Modules over a regular multiplier bimonoid in a braided monoidal category were
studied in \cite{BohmLack:braided_mba}. In this section we look at what
happens to this theory when the underlying multiplier bimonoid is Hopf. 

\subsection{Modules over non-degenerate semigroups} \label{claim:module}

If $(A,m)$ is a  semigroup, we define a (right) {\em $A$-module} to be an
object $V$ equipped with an associative action $v\colon VA\to V$ which also
lies in \cq. We shall sometimes refer to the property that $v\in\cq$ by saying
that the action is {\em surjective}.

\begin{remark} \label{rmk:action-epi}
This surjectivity condition is supposed to capture the idea of
unitality of an action in the absence of a unit for the multiplication;
that is, in the context of a semigroup. 
If the multiplication $m\colon A^2 \to A$ possesses a unit $u\colon I \to A$,
then the following properties of any associative action $v\colon VA \to V$
turn out to be equivalent: 
\begin{itemize}
\item[{(a)}] $v$ is unital; that is, $v.1u=1$. 
\item[{(b)}] $v$ is a split epimorphism.
\item[{(c)}] $v$ is a regular epimorphism.
\item[{(d)}] $v$ is an epimorphism.
\end{itemize}
Indeed, either (b) or (c) trivially implies (d), and (d) implies (a) by the
commutativity of
$$
\xymatrix{
&&
% 1.3
VA \ar[rrdd]^-v \\
&&
% 2.3
VA^2 \ar[u]^-{v1} \ar[rd]^-{1m} \\
% 3.1
V \ar[uurr]^-{1u} &
% 3.2
VA \ar[l]_-v \ar[ru]^-{11u} \ar@{=}[rr] &&
%3.4
VA \ar[r]^-v &
% 3.5
V\ .}
$$
The implication (a)$\Rightarrow$(b) is obvious and (a) implies that
$$
\xymatrix@C=40pt{
VA^2 \ar@<2pt>[r]^-{v1} \ar@<-2pt>[r]_-{1m} &
VA \ar[r]^-v \ar@{-->}@/^1.3pc/[l]^-{11u} &
V \ar@{-->}@/^1.3pc/[l]^-{1u}
}
$$
is a split coequalizer; hence (c).
\end{remark}

We say that $(V,v)$ is {\em non-degenerate} if for any objects 
$X,Y$ of $\cc$, the following map is injective.
$$
\cc(X,YV) \to \cc(XA,YV),\qquad
f\mapsto 
\xymatrix{
XA \ar[r]^-{f1} &
YVA \ar[r]^-{1v} &
YV}
$$
Note that, here again as in Section \ref{sect:nondeg}, we made the 
strong assumption that this injectivity condition holds for {\em all} 
objects $X$ and $Y$. As discussed in \cite{BohmGom-TorLack:wmbm}, for 
many purposes it is enough, in fact, that it hold for 
all objects $X$, and all $Y$ in a distinguished class as in Section 
\ref{sect:ex_non-deg}. 

\begin{proposition}\label{prop:f*}
Let $f\colon A\nto B$ be a dense multiplicative \mM-morphism between
non-degenerate semigroups. There is an induced functor $f^*$ sending
non-degenerate  $B$-modules to non-degenerate $A$-modules. 
For multiplicative and dense \mM-morphisms $A\stackrel f \nto B
\stackrel g \nto C$, the composite functor $g^*f^*$ is naturally isomorphic to
$(g\bullet f)^*$ and the identity $\mM$-morphism $A\nto A$ induces the
identity functor. If we regard \cm as a bicategory with only identity
2-cells, then this defines a pseudofunctor from \cm to the 2-category of
categories, functors and natural transformations.
\end{proposition}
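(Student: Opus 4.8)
The plan is to construct the functor $f^*$ explicitly and then verify the pseudofunctoriality by reducing everything to statements about the underlying $\mM$-morphisms, where the analogous composition laws are already available from Section~\ref{sect:mM}. First I would define $f^*$ on objects: given a non-degenerate $B$-module $(W,w)$ with $w\colon WB\to W$ in $\cq$, the $A$-action on the same object $W$ should be the composite
$$
\xymatrix{
WA \ar[r]^-{\eta_W} & WB \ar[r]^-{w} & W}
$$
where $\eta_W$ is the map induced by $f_1$ via the density of $f$; concretely, since $w\in\cq$ and $f_1$ is a (regular) epimorphism preserved by monoidal products, one uses the coequalizer property to produce a unique $A$-action $v\colon WA\to W$ rendering commutative a square of the form $w.1f_1 = v.1m_{?}$, mirroring the construction of $w^1$ in Proposition~\ref{prop:f_*}. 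The surjectivity of $v$ (that $v\in\cq$) follows from the right-cancellativity assumption on $\cq$ in Section~\ref{sect:assumptions}, since $v$ precomposed with an element of $\cq$ lands in $\cq$. On morphisms $f^*$ acts as the identity underlying map, so functoriality on morphisms is immediate once one checks that a $B$-module morphism is automatically an $A$-module morphism for the induced actions --- and this is forced by the defining coequalizer property together with the uniqueness it guarantees.

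Next I would establish the two coherence isomorphisms. For the composite, given $A\stackrel{f}\nto B\stackrel{g}\nto C$, both $g^*f^*$ and $(g\bullet f)^*$ act as the identity on underlying objects and morphisms, so the required natural isomorphism is in fact the identity; the content is to check that the two induced $A$-actions on a given non-degenerate $C$-module coincide. This reduces to the associativity relation $(g\bullet f)$ from \eqref{eq:bullet-composition} together with the non-degeneracy of the target module, which lets one cancel the relevant epimorphism and conclude equality of actions. Similarly, the identity $\mM$-morphism $i\colon A\nto A$ with both components equal to $m$ induces the identity action, because the defining relation becomes $v.1m = v'.1m$ and non-degeneracy of $(V,v)$ gives $v=v'$. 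Since both coherence comparisons are literally identities, the pentagon and triangle axioms for a pseudofunctor hold trivially, and the final sentence --- that this data assembles into a pseudofunctor from $\cm$ (viewed as a bicategory with only identity $2$-cells) to $\mathbf{CAT}$ --- follows formally.

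The main obstacle will be the very first step: producing the induced action $v$ and verifying it is well-defined, associative, and lies in $\cq$. The existence of $v$ rests on the assumption from Section~\ref{sect:assumptions} that each $q\in\cq$ is a coequalizer preserved by monoidal products, applied to $f_1$, exactly as in the existence proof for $w^1$ in Proposition~\ref{prop:f_*}; one must check that $w$ coequalizes the monoidal product (with $W$) of the defining pair for $f_1$, which uses the multiplicativity of $f$ encoded in \eqref{eq:multiplicative}. Associativity of $v$ is then a diagram chase combining the associativity of $w$, the defining relation for $v$, and multiplicativity of $f$, and can be discharged either by string diagrams or by an argument invoking Remark~\ref{rem:f_non-degenerate} to cancel the epimorphism $1f_1$. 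Once $v$ is in hand, everything downstream is routine because the comparison functors are identities on the nose; the only genuine work is concentrated in this construction and in confirming that non-degeneracy of the $B$-module is inherited by the induced $A$-module (which again follows by transporting the injectivity condition along $f_1$).
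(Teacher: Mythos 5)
Your construction of the induced action is where the argument breaks down. You propose to define the $A$-action on a $B$-module $(W,w)$ by descending along (monoidal products of) the coequalizer $f_1$, ``mirroring the construction of $w^1$ in Proposition~\ref{prop:f_*}''. But that mirroring fails for modules: in the comodule case the structure map to be constructed has the form $VB\to VB$, so its domain contains a copy of $B$ and is the codomain of the epimorphism $1f_1\colon VAB\to VB$; here the map to be constructed is an action $WA\to W$, whose domain $WA$ is not the codomain of any epimorphism manufactured from the components of $f$ --- both $1f_1\colon WAB\to WB$ and $1f_2\colon WBA\to WB$ land in $WB$, not $WA$. This is visible in your own defining square ``$w.1f_1=v.1m_{?}$'': there is no morphism $WAB\to WA$ to put in the right-hand vertical, which is why the subscript is a question mark; likewise the composite $WA\to WB\to W$ you start from requires a morphism $WA\to WB$ ``induced by $f_1$'', and no such morphism exists (producing one is tantamount to already having the action). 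So the existence step, which you yourself identify as the main obstacle, is not discharged.

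The paper's construction inverts the roles: it uses the coequalizer presentation of the action $w$ itself, not of $f_1$. Since $w\in\cq$, it is a regular epimorphism, the coequalizer of some pair $x_1,x_2\colon X\to WB$ preserved by monoidal products; hence $w1\colon WBA\to WA$ is the coequalizer of $x_11,x_21$, and one defines $f^*w\colon WA\to W$ by descending $w.1f_2\colon WBA\to W$ along $w1$. Thus the component of $f$ entering the definition is $f_2$, not $f_1$; the component $f_1$ only appears in the derived second square of \eqref{eq:f*v} and in the argument that $f^*w$ is again non-degenerate. Verifying that $w.1f_2$ coequalizes $x_11$ and $x_21$ uses the \mM-morphism compatibility \eqref{eq:mM}, associativity of $w$, and the non-degeneracy of $w$ --- not the multiplicativity condition \eqref{eq:multiplicative} you cite. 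Your remaining steps (surjectivity of the induced action via the right-cancellativity of \cq, the comparison functors being identities on underlying data, and the resulting trivial coherence for the pseudofunctor) are essentially sound in spirit, but they all presuppose a well-defined action, so the proof as proposed does not go through without replacing its first step by the descent along $w1$.
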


\begin{proof}
Let $v\colon VB\to V$ be a surjective non-degenerate associative 
action of $B$. Then there is a coequalizer   
$$\xymatrix@C=30pt{
X \ar@<0.2ex>[r]^-{x_1} \ar@<-0.5ex>[r]_-{x_2} & VB \ar[r]^{v} & V }$$
which is preserved by monoidal product. In the diagram
$$\xymatrix{
&& VB^2 \ar[r]^{v1} \ar[dr]^{1m} & 
VB \ar[dr]^{v} \\
XAB \ar[r]^{x_i11} \ar[dr]_{1f_1} & 
VBAB \ar[ur]^{1f_21} \ar[dr]_{11f_1} \ar@{}[rr]|-{\eqref{eq:mM}}&& 
VB \ar[r]^{v} & V \\
& XB \ar[r]_{x_i1} & 
VB^2 \ar[ur]^{1m} \ar[r]_{v1} & 
VB \ar[ur]_{v} }$$
the lower path depends only on $v.x_i$  not on $i$, thus the same
is true of the upper path. By non-degeneracy of $v$, it follows that
$v.1f_2.x_i1$ depends only on $v.x_i$, and so that there is a
unique morphism $f^*v$ making the first diagram of 
\begin{equation}\label{eq:f*v}
\xymatrix{
VBA \ar[r]^{v1} \ar[d]_{1f_2} & VA \ar[d]^{f^*v} \\
VB \ar[r]_{v} & V }
\xymatrix{
VAB \ar[r]^{1f_1} \ar[d]_{(f^*v)1} & VB \ar[d]^{v} \\
VB \ar[r]_{v} & V}
\end{equation}
commute. This $f^*v$ also makes the the second diagram of \eqref{eq:f*v} 
commute, by surjectivity of $v$ and commutativity of the
following diagrams.  
$$\xymatrix{
& VAB \ar[r]^{(f^*v)1} \ar@{}[d]|-{\eqref{eq:f*v}}& 
VB \ar[dr]^{v} \\
VBAB \ar[ur]^{v11} \ar[r]^{1f_21} \ar[dr]_{11f_1} & 
VB^2 \ar[ur]^{v1} \ar[dr]_{1m} \ar@{}[d]|-{\eqref{eq:mM}} && 
V \\
& VB^2 \ar[r]_{1m} & 
VB \ar[ur]_{v} }
\quad
\xymatrix{
& VAB \ar[dr]^{1f_1} \\
VBAB \ar[ur]^{v11} \ar[dr]_{11f_1} && 
VB \ar[r]^{v} & 
V \\
& VB^2 \ar[r]_{1m} \ar[ur]^{v1} & 
VB \ar[ur]_{v} }$$

Associativity and  surjectivity of $f^*v$ follow easily from the
corresponding facts about $v$, but non-degeneracy requires a little more work.  
We must show that a morphism $g\colon X\to YV$ may be recovered from 
$$\xymatrix@C=35pt{
XA \ar[r]^-{g1} & 
YVA \ar[r]^-{1(f^*v)} & 
YV.}$$
But from this last composite we may construct the common composite of the
diagram 
$$\xymatrix@C=35pt{
XAB \ar[r]^{g11} \ar[d]_{1f_1} & 
YVAB \ar[r]^{1(f^*v)1} \ar[d]_{11f_1} \ar@{}[rd]|-{\eqref{eq:f*v}
%\eqref{eq:f*v-nd}
}& 
YVB \ar[d]^{1v} \\
XB \ar[r]_{g1} & 
YVB \ar[r]_{1v} & 
YV , }$$
and now by density of $f$ we may recover $1v.g1$, and so finally by
non-degeneracy of $v$ we may recover $g$.    
The claim that a morphism of $B$-modules is also compatible with the 
$A$-action induced by $f$, as well as the composition law of the resulting
functors, are clear by the construction. The identity morphism $B \nto B$
induces the identity functor in light of the associativity of the actions
$v:VB\to V$. 
\end{proof}

\subsection{Modules over counital fusion morphisms}

Let $t_2\colon A^2\to A^2$ and $e\colon A\to I$ satisfy equations
\eqref{eq:mbm_ax_2}. 
Typically, we shall be interested in the case where this structure is part of
a multiplier bimonoid $(A,t_1,t_2,e)$.   

An {\em action} of the fusion morphism $(A,t_2\colon A^2\to A^2)$ on an object
$V$ --- a $t_2$-action, for short --- is a morphism $v_2\colon VA\to VA$ which
renders commutative the diagram    
\begin{equation}\label{eq:t_2_module}
\xymatrix{
VA^2 \ar[r]^-{v_21} \ar[d]_-{1t_2} &
VA^2 \ar[r]^-{1c} &
VA^2 \ar[r]^-{v_21} &
VA^2 \ar[r]^-{1c^{-1}} &
VA^2 \ar[d]^-{1t_2} \\
VA^2 \ar[rrrr]_-{v_21} &&&&
VA^2 .}
\end{equation}
It follows that the morphism $v:= 1e.v_2\colon VA\to V$ is an associative
action of the semigroup $(A,m)$; if moreover it is an element of \cq, we say
that $(V,v_2)$ is a {\em module} over the counital fusion morphism
$(A,t_2,e)$. 

\begin{remark}\label{rem:action_epi2}
This generalizes the definition of module given in
\cite{BohmLack:braided_mba}, where $v$ was required to be a {\em split}
epimorphism, while here we only ask that it lie in the class \cq. Of course
this is no difference at all if \cq consists of the split epimorphisms. See
also Remark~\ref{rmk:action-epi} above.   
\end{remark}

Thus every module  over the counital fusion morphism $(A,t_2,e)$ has an
underlying module over the semigroup $(A,1e.t_2)$;  in the
non-degenerate case, these two notions turn out to be equivalent.    

\begin{proposition}\label{prop:action-module}
Let $(A,t_2,e)$ be a counital fusion morphism with non-degenerate
multiplication $m: =  1e.t_2$. Then the
forgetful functor from the category of $(A,t_2,e)$-modules to the category of
$(A,m)$-modules is an isomorphism.
\end{proposition}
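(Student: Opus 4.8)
The claim is that the forgetful functor sending a module $(V,v_2)$ over the counital fusion morphism $(A,t_2,e)$ to the underlying module $(V,v)$ over the semigroup $(A,m)$, where $v:=1e.v_2$, is an isomorphism of categories. Since this functor is by construction the identity on morphisms (a morphism of either kind is just a $\cc$-morphism $f\colon V\to W$ commuting with the relevant action, and I will check these conditions coincide), the substance is to show that the object assignment is a bijection. Concretely, I must show: (i) every surjective associative action $v\colon VA\to V$ of the semigroup $(A,m)$ arises from a \emph{unique} $t_2$-action $v_2\colon VA\to VA$ via $v=1e.v_2$; and (ii) under this correspondence the two notions of module morphism agree.

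\textbf{Recovering $v_2$ from $v$.} The first step, and the crux, is to produce $v_2$ from $v$. The natural guess, mirroring the formulas \eqref{eq:s} and the way $t_2$ itself is built from the multiplier structure, is to set
$$
v_2:=\xymatrix{VA \ar[r]^-{v1} & VA^2 \ar[r]^-{1t_2} & VA^2}
\quad\text{composed suitably},
$$
but more precisely I expect $v_2$ to be characterized by a fusion-type relation between $v$ and $t_2$ analogous to how a $t_2$-action interacts with the multiplication. The clean way to pin down $v_2$ is to exploit non-degeneracy of $m$: I would show that the equation \eqref{eq:t_2_module} forces $v_2$ to satisfy an identity expressing $v_21$ in terms of $v$ and $t_2$ after composing with $1t_2$ (or after feeding in a second copy of $A$), and that this identity has at most one solution because $m=1e.t_2$ is non-degenerate. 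Existence then comes from defining $v_2$ by that formula and verifying it is indeed a $t_2$-action; uniqueness comes from non-degeneracy. This is where the real work lies, and I expect to lean on the compatibility axioms \eqref{eq:mbm_ax_2} relating $t_2$, $c$, and $e$, together with the fact that $m=1e.t_2$ is associative and non-degenerate.

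\textbf{The main obstacle.} The hard part will be verifying that the $v_2$ reconstructed from a semigroup action $v$ actually satisfies the full fusion equation \eqref{eq:t_2_module}, rather than just its $1e$-image (which only recovers associativity of $v$). The counit condition only gives back $v$, so the ``extra'' fusion content of \eqref{eq:t_2_module} must be derived, and this is precisely the step where non-degeneracy of $m$ is indispensable: I would postcompose the two sides of \eqref{eq:t_2_module} with a further $1t_2$ and then $11e$, reduce everything to a statement about $v$ and $m$ using the fusion equation for $t_2$ from \eqref{eq:mbm_ax_2}, check that statement holds by associativity of $v$, and finally cancel using the injectivity built into non-degeneracy of $m$ (in the style of Remark~\ref{rem:f_non-degenerate}).

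\textbf{Finishing.} Once the object-level bijection is established, I would confirm that $v\in\cq$ if and only if $(V,v_2)$ qualifies as a module over $(A,t_2,e)$ --- but this is immediate, since the defining surjectivity condition for $(A,t_2,e)$-modules is exactly that $v=1e.v_2\in\cq$, matching the requirement that $(V,v)$ be an $(A,m)$-module. Finally, since a morphism $f$ commutes with $v_2$ if and only if it commutes with $v=1e.v_2$ (the forward direction is trivial by postcomposing with $1e$, and the reverse again uses non-degeneracy of $m$ to promote compatibility with $v$ up to compatibility with $v_2$), the two morphism notions coincide and the forgetful functor is fully faithful and bijective on objects, hence an isomorphism of categories.
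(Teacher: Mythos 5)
Your proposal has the right skeleton (an object-level bijection $v\leftrightarrow v_2$ plus agreement of the two notions of morphism), but it is missing the one idea that actually makes the proposition work: a construction of $v_2$ from $v$. There is no explicit formula for $v_2$ --- your candidate composite does not even typecheck, since $v1$ is a morphism $VA^2\to VA$, not $VA\to VA^2$ --- and the identity that characterizes $v_2$, namely $v_2.v1=v1.1t_2$ of \eqref{eq:v-v2} (obtained from \eqref{eq:t_2_module} by composing with $1e1$ and using the counit condition in \eqref{eq:mbm_ax_2}), is an \emph{implicit} one: it determines $v_2$ uniquely because $v1$ is an epimorphism, but it does not produce $v_2$. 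The paper's existence argument is precisely the step you skip: since the action $v$ lies in \cq, it is a regular epimorphism, hence the coequalizer of some pair $x_1,x_2\colon X\to VA$, and this coequalizer is preserved by $(-)A$; one then shows that $v1.1t_2$ coequalizes $x_11$ and $x_21$ --- this is where \eqref{eq:short_compatibility} (which brings in $t_1$), associativity of $v$, and non-degeneracy of $m$ in its precise form (injectivity of $g\mapsto 1m.g1$) enter --- so that $v1.1t_2$ factors through the coequalizer $v1$, yielding $v_2$. Writing ``existence then comes from defining $v_2$ by that formula'' presupposes exactly what has to be proved.

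Two further mechanisms in your plan are mismatched, though fixable. First, uniqueness of $v_2$ follows from $v_2.v1=v1.1t_2$ together with $v1$ being an epimorphism (this is where surjectivity of the action is used), not from non-degeneracy of $m$: non-degeneracy only allows cancellation of the specific patterns $1m.(-)1$ and $m1.1(-)$, i.e.\ after tensoring with an extra copy of $A$, and in particular it does not let you cancel a postcomposed $11e$ or $1m$, as your ``main obstacle'' paragraph suggests; the identity that non-degeneracy could exploit would need $t_1$, not just $t_2$. Second, you locate the difficulty in the wrong place: once $v_2$ has been constructed via the coequalizer, verifying the fusion equation \eqref{eq:t_2_module} for it is the easy part --- one precomposes both sides with the epimorphism $v11\colon VA^3\to VA^2$, pushes $v$ through using $v_2.v1=v1.1t_2$, and invokes the fusion equation \eqref{eq:mbm_ax_2} for $t_2$ itself; no cancellation by non-degeneracy is needed there.
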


\begin{proof}
 For any $(A,t_2,e)$-module $v_2:VA \to VA$,  take the diagram
\eqref{eq:t_2_module} and compose with $1e1\colon VA^2\to VA$, and use the
second half of \eqref{eq:mbm_ax_2} to deduce the commutativity of 
\begin{equation}\label{eq:v-v2}
\xymatrix{
VA^2 \ar[r]^{v1} \ar[d]_{1t_2} & 
VA \ar[d]^{v_2} \\
VA^2 \ar[r]_{v1} & 
VA. }
\end{equation}
Since $v1$ is a (regular) epimorphism we see that there can be at most one $t_2$-action 
$v_2$ for a given surjective semigroup 
action $v$. For the converse, we have a coequalizer
$$\xymatrix@C=30pt{
X \ar@<0.2ex>[r]^-{x_1} \ar@<-0.5ex>[r]_-{x_2} & VA \ar[r]^{v} & V }$$
which is preserved  by the monoidal product. In the commutative
diagram  
$$\xymatrix{
&& VA^3 \ar[r]^{v11} \ar[dr]^{11m} & VA^2 \ar[dr]^{1m} \\
XA^2 \ar[r]^{x_i11} \ar[dr]_{1t_1} & 
VA^3 \ar[ur]^{1t_21} \ar[dr]_{11t_1} 
\ar@{}[rr]|-{\eqref{eq:short_compatibility}}&& 
VA^2 \ar[r]^{v1} & VA \\
& XA^2 \ar[r]_{x_i11} & VA^3 \ar[ur]_{1m1} \ar[r]_{v11} & VA^2 \ar[ur]_{v1} }$$
the lower composite depends only on $v.x_i$, not on $i$, thus the
same is true of the upper composite. It follows by non-degeneracy of $m$
that $v1.1t_2.x_i1$ depends only on $v.x_i$ and so finally that
there is a (unique) $v_2$ making \eqref{eq:v-v2} commute. 
We re-obtain $v$ from this $v_2$ as $1e.v_2$ since the morphisms of its
top row are epimorphisms, and the following diagram commutes.
$$
\xymatrix{
VA \ar@/_1.2pc/[dd]_-{v_2} &
VA^2 \ar[l]_-{v1} \ar[ld]_-{1t_2} \ar[r]^-{v1} \ar[rd]^-{1m} &
VA \ar@/^1.2pc/[dd]^-{v} \\
VA^2\ar[rr]^-{11e} \ar[d]^-{v1} &&
VA \ar[d]_-v \\
VA \ar[rr]_-{1e} &&
V}
$$
The fact that this
$v_2$ makes \eqref{eq:t_2_module} commute follows immediately from the fusion
equation \eqref{eq:mbm_ax_2} and the fact that $v11\colon VA^3\to VA^2$ is an
epimorphism. 

This now proves that we have a bijection on objects. It is clear that any
morphism of $(A,t_2,e)$-modules respects the induced $(A,m)$-actions;
 the converse follows by \eqref{eq:v-v2} and surjectivity of the
$(A,m)$-action.    
\end{proof}

If $(V,v)$ and $(W,w)$ are modules over a non-degenerate semigroup
$(A,1e.t_2)$ associated to a counital fusion morphism $(A,t_2,e)$,
there is an  $(A,1e.t_2)$-action $\psi$ on the monoidal product
$VW$ given by the composite   
\begin{equation}\label{eq:psi-formula}
\xymatrix{
VWA \ar[r]^{1w_2} & VWA \ar[r]^{1c^{-1}} & VAW \ar[r]^{v1} & VW }
\end{equation}
where $w_2\colon WA\to WA$ is defined using $w$ as in
Proposition~\ref{prop:action-module}. By the defining property \eqref{eq:v-v2}
of $w_2$, it follows that $\psi$ makes the diagram  
\begin{equation} \label{eq:psi-w}
\xymatrix{
VWA^2 \ar[r]^{11t_2} \ar[d]_{1w1} & 
VWA^2 \ar[r]^{1w1} & 
VWA \ar[r]^{1c^{-1}} & 
VAW \ar[d]^{v1} \\
VWA \ar[rrr]_{\psi} &&& 
VW }
\end{equation}
commute. In general there is no reason why $\psi$ should lie in \cq, but it
clearly will do so if $t_2$ is in \cq. 
 In fact, if the multiplication $1e.t_2$ belongs to $\cq$ --- so that
it renders $A$ itself a module --- then $\psi$ lies in \cq for any modules $V$
and $W$ if and only if  the induced morphism $d_2$  of
\eqref{eq:d_1-2}  does so: see
\cite[Corollary~2.23]{BohmLack:braided_mba}. 
Similarly, the counit $e$ induces an  $(A,1e.t_2)$-action on the
monoidal unit $I$, and this will be a  module provided
that $e\in\cq$. When both $d_2$ and $e$ lie in \cq, then the category of modules
will be monoidal, as in \cite[Corollary~2.23]{BohmLack:braided_mba} once
again.   

\subsection{Modules over multiplier bimonoids}

A module over a multiplier bimonoid $(A,t_1,t_2,e)$ will just mean a module
over the underlying counital fusion morphism $(A,t_2,e)$. In this case,
however, there is an alternative characterization of the action $\psi\colon
VWA\to VW$ of \eqref{eq:psi-formula}: by the calculation
$$
\begin{tikzpicture} % pic1
 \path
(0,4) node[empty, name=in1] {}
(0.5,4) node[empty, name=in2] {}
(1,4) node[empty, name=in3] {}
(1.5,4) node[empty, name=in4] {}
(2,4) node[empty, name=in5] {}
(0.75,0) node[empty, name=out1] {}
(1.5,0) node[empty, name=out2] {}
(0.75,3) node[arr, name=w] {$\ w\ $}
(0.75,2) node[arr, name=psi] {$\psi$}
(0.75,.8) node[arr, name=v] {$\ v\ $};
\draw[braid] (in1) to[out=270,in=135] (psi);
\draw[braid] (in2) to[out=270,in=135] (w);
\draw[braid] (in3) to[out=270,in=45] (w);
\draw[braid] (in4) to[out=270,in=45] (psi);
\path[braid, name path=bin5] (in5) to[out=270,in=45] (v);
\draw[braid] (w) to[out=270,in=90] (psi);
\draw[braid, name path=bout2] (psi) to[out=315, in=90] (out2);
\fill[white,name intersections={of=bout2 and bin5}] (intersection-1) circle(0.1);
\draw[braid] (in5) to[out=270,in=45] (v);
\draw[braid] (psi) to[out=225,in=135] (v);
\draw[braid] (v) to (out1); 
\draw (2.5,2.7) node[empty] {$\stackrel{~\eqref{eq:psi-w}}=$};
\end{tikzpicture}
\begin{tikzpicture} % pic2
 \path
(0,4) node[empty, name=in1] {}
(0.5,4) node[empty, name=in2] {}
(1,4) node[empty, name=in3] {}
(1.5,4) node[empty, name=in4] {}
(2,4) node[empty, name=in5] {}
(0.75,0) node[empty, name=out1] {}
(1.5,0) node[empty, name=out2] {}
(0.75,2.5) node[arr, name=w] {$\ w\ $}
(1.25,3.5) node[arr, name=t2] {$t_2$}
(0.5,1.5) node[arr, name=vu] {$\ v\ $}
(0.75,.8) node[arr, name=v] {$\ v\ $};
\draw[braid] (in1) to[out=270,in=135] (vu);
\draw[braid] (in2) to[out=270,in=135] (w);
\draw[braid] (in3) to[out=270,in=135] (t2);
\draw[braid] (in4) to[out=270,in=45] (t2);
\path[braid, name path=bin5] (in5) to[out=270,in=45] (v);
\draw[braid] (t2) to[out=225,in=45] (w);
\draw[braid, name path=bout2] (w) to[out=315, in=90] (out2);
\path[braid, name path=t2vu] (t2) to[out=315,in=45] (vu);
\fill[white,name intersections={of=bout2 and t2vu}] (intersection-1) circle(0.1);
\draw[braid] (t2) to[out=315,in=45] (vu);
\fill[white,name intersections={of=bout2 and bin5}] (intersection-1) circle(0.1);
\draw[braid] (in5) to[out=270,in=45] (v);
\draw[braid] (vu) to[out=270,in=135] (v);
\draw[braid] (v) to (out1); 
\draw (2.5,2.63) node[empty] {$\stackrel{\mathrm{(ass)}}=$};
\end{tikzpicture}
\begin{tikzpicture} % pic3
 \path
(0,4) node[empty, name=in1] {}
(0.5,4) node[empty, name=in2] {}
(1,4) node[empty, name=in3] {}
(1.5,4) node[empty, name=in4] {}
(2,4) node[empty, name=in5] {}
(0.75,0) node[empty, name=out1] {}
(1.5,0) node[empty, name=out2] {}
(0.75,2.5) node[arr, name=w] {$\ w\ $}
(1.25,3.5) node[arr, name=t2] {$t_2$}
(1.75,3) node[empty, name=m] {}
(0.75,.8) node[arr, name=v] {$\ v\ $};
\draw[braid] (in1) to[out=270,in=135] (v);
\draw[braid] (in2) to[out=270,in=135] (w);
\draw[braid] (in3) to[out=270,in=135] (t2);
\draw[braid] (in4) to[out=270,in=45] (t2);
\draw[braid] (t2) to[out=225,in=45] (w);
\draw[braid, name path=bout2] (w) to[out=315, in=90] (out2);
\draw[braid] (t2) to[out=315,in=180] (m);
\path[braid, name path=mv] (m) to[out=270,in=45] (v);
\fill[white,name intersections={of=bout2 and mv}] (intersection-1) circle(0.1);
\draw[braid] (in5) to[out=270,in=0] (m);
\draw[braid] (m) to[out=270,in=45] (v);
\draw[braid] (v) to (out1); 
\draw (2.5,2.7) node[empty] {$\stackrel{~\eqref{eq:short_compatibility}}=$};
\end{tikzpicture}
\begin{tikzpicture} % pic4
 \path
(0,4) node[empty, name=in1] {}
(0.5,4) node[empty, name=in2] {}
(1,4) node[empty, name=in3] {}
(1.5,4) node[empty, name=in4] {}
(2,4) node[empty, name=in5] {}
(0.75,0) node[empty, name=out1] {}
(1.5,0) node[empty, name=out2] {}
(0.75,2.5) node[arr, name=w] {$\ w\ $}
(1.75,3.5) node[arr, name=t1] {$t_1$}
(1.25,3) node[empty, name=m] {}
(0.75,.8) node[arr, name=v] {$\ v\ $};
\draw[braid] (in1) to[out=270,in=135] (v);
\draw[braid] (in2) to[out=270,in=135] (w);
\draw[braid] (in3) to[out=270,in=180] (m);
\draw[braid] (in4) to[out=270,in=135] (t1);
\draw[braid] (m) to[out=270,in=45] (w);
\draw[braid, name path=bout2] (w) to[out=315, in=90] (out2);
\path[braid, name path=mv] (m) to[out=270,in=45] (v);
\path[braid, name path=t1v] (t1) to[out=315,in=45] (v);
\fill[white,name intersections={of=bout2 and t1v}] (intersection-1) circle(0.1);
\draw[braid] (t1) to[out=315,in=45] (v);
\draw[braid] (in5) to[out=270,in=45] (t1);
\draw[braid] (t1) to[out=225,in=0] (m);
%\draw[braid] (m) to[out=270,in=45] (v);
\draw[braid] (v) to (out1); 
\draw (2.5,2.63) node[empty] {$\stackrel{\mathrm{(ass)}}=$};
\end{tikzpicture}
\begin{tikzpicture} % pic5
 \path
(0,4) node[empty, name=in1] {}
(0.5,4) node[empty, name=in2] {}
(1,4) node[empty, name=in3] {}
(1.5,4) node[empty, name=in4] {}
(2,4) node[empty, name=in5] {}
(0.75,0) node[empty, name=out1] {}
(1.5,0) node[empty, name=out2] {}
(1,2.0) node[arr, name=w] {$\ w\ $}
(0.75,3.0) node[arr, name=wu] {$\ w\ $}
(1.75,3.5) node[arr, name=t1] {$t_1$}
(1.25,3) node[empty, name=m] {}
(0.75,.8) node[arr, name=v] {$\ v\ $};
\draw[braid] (in1) to[out=270,in=135] (v);
\draw[braid] (in2) to[out=270,in=135] (wu);
\draw[braid] (in3) to[out=270,in=45] (wu);
\draw[braid] (wu) to[out=270,in=135] (w);
\draw[braid] (in4) to[out=270,in=135] (t1);
\draw[braid, name path=bout2] (w) to[out=315, in=90] (out2);
\path[braid, name path=mv] (m) to[out=270,in=45] (v);
\path[braid, name path=t1v] (t1) to[out=315,in=45] (v);
\fill[white,name intersections={of=bout2 and t1v}] (intersection-1) circle(0.1);
\draw[braid] (t1) to[out=315,in=45] (v);
\draw[braid] (in5) to[out=270,in=45] (t1);
\draw[braid] (t1) to[out=225,in=45] (w);
%\draw[braid] (m) to[out=270,in=45] (v);
\draw[braid] (v) to (out1); 
%\draw (2.5,2.5) node[empty] {$\stackrel{\mathrm{(ass)}}=$};
\end{tikzpicture}
$$
and surjectivity of $w$, we may deduce that the diagram 
\begin{equation}\label{eq:psi-v}
\xymatrix{
VWA^2 \ar[r]^{11t_1} \ar[d]_{\psi1} & 
VWA^2 \ar[r]^{1w1} & 
VWA \ar[r]^{1c^{-1}} & 
VAW \ar[d]^{v1} \\
VWA \ar[r]_{1c^{-1}} & 
VAW \ar[rr]_{v1} && 
VW }
\end{equation}
commutes, and by non-degeneracy of $v$ it follows that this characterizes
$\psi$.  

\subsection{Modules over regular multiplier bimonoids} \label{sect:mod_for_reg}

Now consider a regular multiplier bimonoid  $(A,t_1,t_2,t_3,t_4,e)$. A module
\cite{BohmLack:braided_mba} over this multiplier bimonoid is a tuple
$(V,v_2,v_3)$ where $v_2:VA \to VA$ makes $V$ into  a module over $(A,t_2,e)$
and $v_3\colon AV\to AV$ renders commutative the analogue  
\begin{equation}\label{eq:t_3_module}
\xymatrix{
A^2V \ar[r]^-{1v_3} \ar[d]_-{t_31} &
A^2V \ar[r]^-{c^{-1}1} &
A^2V \ar[r]^-{1v_3} &
A^2V \ar[r]^-{c1} &
A^2V \ar[d]^-{t_31} \\
A^2V \ar[rrrr]_-{1v_3} &&&&
A^2V \ }
\end{equation}
of \eqref{eq:t_2_module},
subject to the compatibility conditions expressed by the commutativity of
\begin{equation}\label{eq:module_compatibility}
\xymatrix@R=10pt{
VA^2 \ar[r]^-{v_21} \ar[dd]_-{1t_3} &
VA^2 \ar[dd]^-{1t_3}
&
A^2V \ar[r]^-{1v_3} \ar[dd]_-{t_21} &
A^2V \ar[dd]^-{t_21}
&
VA \ar[r]^-{v_2} \ar[d]_-{c} 
%\ar@{-->}[rdd]|-{v}
&
VA \ar[dd]^-{1e} \\
&
&
&
& 
AV \ar[d]_-{v_3} \\
VA^2 \ar[r]_-{v_21} &
VA^2
&
A^2V \ar[r]_-{1v_3} &
A^2V
&
AV \ar[r]_-{e1} &
V .}
\end{equation}
(There is no need to impose separately a surjectivity condition
on $ e1.v_3$, since this follows from surjectivity for $1e.v_2$
 and the last compatibility condition.)  

A morphism of modules is just a morphism in the underlying category,
satisfying the evident compatibility conditions with $v_2$ and $v_3$.

We now have the following extension of Proposition~\ref{prop:action-module}.

\begin{proposition}\label{prop:action-module2}
The category of modules of a regular multiplier bimonoid
$(A,t_1,t_2,$ $t_3,t_4,e)$ with non-degenerate multiplication 
$m:=e1.t_1$  is isomorphic to the category of modules  of the underlying
semigroup $(A, m)$.   
\end{proposition}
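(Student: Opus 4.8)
The plan is to reduce this to the already-proven Proposition~\ref{prop:action-module}, which handles the $t_2$ versus semigroup comparison, and then to show that the extra datum $v_3$ is forced. The forgetful functor sends a regular module $(V,v_2,v_3)$ to the $(A,m)$-module $(V,1e.v_2)$. By Proposition~\ref{prop:action-module} the assignment $v_2 \leftrightarrow v$ (where $v=1e.v_2$) is already a bijection between $t_2$-actions and surjective associative $(A,m)$-actions. So the entire content of the present proposition is that, given such a $v_2$, there is a \emph{unique} $v_3:AV\to AV$ satisfying \eqref{eq:t_3_module} and the compatibility conditions \eqref{eq:module_compatibility}, and that this passage respects morphisms.

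First I would establish uniqueness and the defining formula for $v_3$. The natural candidate is to exploit invertibility of $t_3$ (which holds since $(A,t_3,t_4,e)$ is a multiplier Hopf monoid in $\overline\cc$, so $t_3$ is an isomorphism) together with the antipode. Concretely, the first compatibility square of \eqref{eq:module_compatibility} reads $1t_3.v_21 = v_21.1t_3$, and the last square relates $v_3$ to $v_2$ via the braiding, $e1.v_3.c = 1e.v_2$. Because $e1\in\cq$ in the regular Hopf setting (Remark~\ref{rem:who_is_reg_epi}), $e1$ is an epimorphism, so any $v_3$ is determined on postcomposition with $e1$; combined with non-degeneracy of $m$ this should pin $v_3$ down uniquely. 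I would make this precise by mimicking the coequalizer argument in the proof of Proposition~\ref{prop:action-module}: writing $v$ as the coequalizer of its defining pair (preserved by monoidal product), I construct a morphism $AV\to AV$ by applying $t_3$ and the $v_2$-action in the appropriate order, and check using \eqref{eq:t_2-3_compatibility} and \eqref{eq:short_compatibility} that the result descends along the coequalizer, exactly as $v_2$ was built there.

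The main obstacle will be verifying that the $v_3$ so constructed genuinely satisfies the fusion equation \eqref{eq:t_3_module} and \emph{all three} compatibility conditions \eqref{eq:module_compatibility}, not merely the one used to define it. This is where the regularity axioms \eqref{eq:reg_mbm} and their consequences \eqref{eq:t_2-3_compatibility}, \eqref{eq:t_3_mod_map} must be deployed: the equation \eqref{eq:t_3_module} for $v_3$ should follow from the fusion equation \eqref{eq:mbm_ax_2} for $t_2$ transported across the braiding, by an epimorphism-cancellation argument after postcomposing with $e$ in the appropriate slot, since $v11$ (or $1v1$) is an epimorphism. The middle compatibility square of \eqref{eq:module_compatibility} will require the interplay of $t_2$ and $t_3$ encoded in \eqref{eq:reg_mbm}. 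I would organize these as a short sequence of string-diagram identities in the spirit of the proof of Lemma~\ref{lem:comodule_nd}, which treats the dual (comodule) situation and can be transcribed almost verbatim with $t_i\leftrightarrow t_i$ and coactions replaced by actions.

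Finally I would dispatch the functoriality claims. A morphism $f:V\to W$ of the underlying $(A,m)$-modules is automatically compatible with $v_2$ and $w_2$ by the morphism part of Proposition~\ref{prop:action-module}; compatibility with $v_3$ and $w_3$ then follows from the uniqueness established above, since $f$ postcomposed with the defining data of $v_3$ yields the defining data of $w_3$ and non-degeneracy forces the square to commute. Thus the forgetful functor is bijective on objects and fully faithful, hence an isomorphism of categories, which is the assertion.
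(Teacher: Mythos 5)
Your overall skeleton does match the paper's proof: reduce the $v_2$ half to Proposition~\ref{prop:action-module}, build $v_3$ by the mirror-image coequalizer argument, check the compatibilities from the regularity axioms \eqref{eq:reg_mbm}, and handle morphisms via the defining diagrams and surjectivity of the action. (The paper compresses the $v_3$ step into the word ``dually'': $v_3$ is defined as the unique morphism making \eqref{eq:v-v3} commute, obtained by descending along the coequalizer presentation of $v$.) However, two of the mechanisms you invoke are genuinely wrong. First, you import Hopf hypotheses that are not available: the proposition is about a regular multiplier \emph{bimonoid}, so $t_3$ need not be invertible, there is no antipode, and Remark~\ref{rem:who_is_reg_epi} cannot be cited --- it presupposes that $(A,t_1,t_2,e)$ is Hopf, and even then its conclusion $e1\in\cq$ is deduced from density of the counit, which is also not assumed here (the only standing hypothesis is non-degeneracy of $m$). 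None of this is needed, and a correct proof must avoid it.

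Second, your uniqueness argument runs in the wrong direction. From the third square of \eqref{eq:module_compatibility} you know $e1.v_3.c = 1e.v_2 = v$; but even if $e1$ were an epimorphism, that only permits cancelling $e1$ from equalities $x.e1=y.e1$ in which $e1$ is applied \emph{first} --- it does not let you recover $v_3$ from the composite $e1.v_3$, for which $e1$ would have to be a \emph{monomorphism} (it is not). The correct pinning-down is the dual of the first paragraph of the proof of Proposition~\ref{prop:action-module}: postcompose the equal paths of \eqref{eq:t_3_module} with $1e1$, use the counit law $1e.t_3=1e$ (an axiom of $(A,t_3,t_4,e)$ being a multiplier bimonoid in $\overline\cc$) and the third compatibility square to arrive at \eqref{eq:v-v3}; uniqueness then follows because $1v.1c^{-1}$ is an epimorphism, $v$ lying in \cq and \cq being closed under monoidal product. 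Relatedly, your claim that \eqref{eq:t_3_module} follows from ``the fusion equation \eqref{eq:mbm_ax_2} for $t_2$ transported across the braiding'' is a misconception: $t_3$ is not $c.t_2.c^{-1}$ (that conjugate is the \emph{twisted} structure of Section~\ref{claim:twist}, a multiplier bimonoid in $\cc$, not $\overline\cc$, and the two differ already for ordinary non-cocommutative Hopf algebras); $t_3$ satisfies its own fusion equation as part of the regular structure, related to $t_2$ only through \eqref{eq:t_2-3_compatibility}, and it is that equation --- used exactly as the $t_2$ fusion equation was used for $v_2$ --- which makes the constructed $v_3$ satisfy \eqref{eq:t_3_module}. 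With these repairs your outline becomes the paper's proof.
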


\begin{proof}
We have already seen that there is a unique $t_2$-action $v_2$
corresponding to any surjective associative action $v:VA\to V$. Dually, there
is a unique $v_3$ making the diagram  
\begin{equation}\label{eq:v-v3}
\xymatrix{
A^2V \ar[r]^{1c^{-1}} \ar[d]_{t_31} & AVA \ar[r]^{1v} & AV \ar[d]^{v_3} \\
A^2V \ar[r]_{1c^{-1}} & AVA \ar[r]_{1v} & AV }
\end{equation}
commute, and furthermore this now implies that \eqref{eq:t_3_module} commutes. 

As for \eqref{eq:module_compatibility}, the first two diagrams commute by 
commutativity of the last diagram in \eqref{eq:reg_mbm} and surjectivity
of the semigroup action $v$, while the third commutes since both paths
yield $v$ by the construction of $v_2$ and $v_3$.   

This now proves that we have a bijection on objects. It is clear that any
morphism of modules over the regular multiplier bimonoid respects
the induced  semigroup actions; the converse follows by
\eqref{eq:v-v2}, \eqref{eq:v-v3}, and  surjectivity of the semigroup
action.
\end{proof}

\subsection{Duals in the base category}

\begin{proposition}\label{prop:rev-action-on-dual}
Let $A$ be a semigroup  in a monoidal category. Let $V$ be an object with
a left dual $\overline{V}$, and let $w\colon AV\to V$ be a surjective
 associative (left) action. Then the action $v\colon \overline{V}A\to
\overline{V}$ given by   
$$\xymatrix{
\overline{V}A \ar[r]^-{11\eta} & \overline{V}AV\overline{V} \ar[r]^{1w1} & 
\overline{V}V\overline{V} \ar[r]^-{\epsilon1} & \overline{V} }$$
is associative and non-degenerate. If moreover $w$ is non-degenerate, then $v$
is an epimorphism, preserved by the monoidal product.  
\end{proposition}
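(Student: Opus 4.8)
The plan is to treat $v$ as the transpose (mate) of $w$ under the duality $\eta,\epsilon$, and to derive each clause by moving it across the adjunction $(-)V\dashv(-)\overline V$ until it becomes an assertion about $w$. The engine of the whole argument is a pair of mate identities. Composing the definition of $v$ on the right with $\epsilon$, and straightening against one another the $\overline V$-strand created by $\eta$ and the one annihilated by $\epsilon$ by means of the triangle equations of Section~\ref{sect:strings}, one obtains
\[
\epsilon.v1 = \epsilon.1w\colon \overline VAV\to I,
\]
while straightening the other internal strand gives
\[
1v.\eta1 = w1.1\eta\colon A\to V\overline V.
\]
Each is a one-step diagram manipulation, and I would establish both at the outset, since everything else is formal.

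\emph{Associativity and non-degeneracy of $v$.} Because $h\mapsto\epsilon.h1$ is a bijection $\cc(\overline VA^2,\overline V)\to\cc(\overline VA^2V,I)$, associativity $v.v1=v.1m$ reduces to checking $\epsilon.(v.v1)1=\epsilon.(v.1m)1$; applying the first mate identity twice, interspersed with the interchange law, collapses the left side to $\epsilon.1(w.1w)$ and the right side to $\epsilon.1(w.m1)$, which coincide by associativity of $w$. For non-degeneracy I would transport the very map whose injectivity is at stake: under the duality bijections $\cc(X,Y\overline V)\cong\cc(XV,Y)$ and $\cc(XA,Y\overline V)\cong\cc(XAV,Y)$, the assignment $f\mapsto 1v.f1$ turns—again by the first mate identity and interchange—into precomposition $\overline f\mapsto\overline f.(1_Xw)$. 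Since $\cq$ contains the isomorphisms and is closed under monoidal product, $1_Xw\in\cq$, so in particular it is an epimorphism, and precomposition with an epimorphism is injective. Thus $v$ is non-degenerate using only the surjectivity of $w$, as claimed.

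\emph{The epimorphism clause.} Now assume $w$ non-degenerate, and show that $v\otimes 1_Z$ is an epimorphism (the case $Z=I$ yielding $v$ itself). Take $p,p'\colon\overline VZ\to W$ with $p.(v1)=p'.(v1)$. Tensoring on the left with $V$ and precomposing with $\eta$ in the slot adjacent to $\overline V$ turns this, via the \emph{second} mate identity $1v.\eta1=w1.1\eta$ and interchange, into the equation $w1.1k=w1.1k'$, where $k,k'\colon Z\to VW$ are the transposes of $p,p'$ under the (snake-verified) bijection $\cc(\overline VZ,W)\cong\cc(Z,VW)$. Non-degeneracy of $w$, applied with test objects $X=Z$ and $Y=W$, then forces $k=k'$, hence $p=p'$. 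Since the non-degeneracy hypothesis on $w$ is imposed for \emph{every} object, the argument is uniform in $Z$, which is exactly preservation under the monoidal product.

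The conceptual content is light—it is all mate and adjunction bookkeeping—so the main obstacle is orientational: setting up the two mate identities with the correct handedness, and above all invoking the non-degeneracy of the \emph{left} action $w$ in its correct left-handed form in the final clause. There one must track carefully which tensor slot receives $\eta$ and which adjacent pair $w$ acts on after the interchanges, the absence of a braiding meaning that these factors cannot be permuted freely; getting a slot wrong is the one place where the proof could silently break.
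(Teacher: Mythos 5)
Your proposal is correct and follows essentially the same route as the paper's proof: the two "mate identities" you isolate are exactly the snake-identity reductions the paper uses implicitly in its diagrams (the first in the non-degeneracy argument, where the paper recovers $1\epsilon.g1$ from $1v.g1$ using that $1w$ is epi; the second in the epimorphism argument, where the paper passes from $x_i.v1$ to $w1.1(1x_i.\eta1)$ and invokes non-degeneracy of $w$ before transposing back). Your packaging via adjunction bijections rather than explicit diagram chases is only a presentational difference.
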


\begin{proof}
Associativity follows easily from associativity of $w$; this is probably
easiest done using string diagrams. As for non-degeneracy, we must show
that, for any objects $X$ and $Y$, the morphism $g\colon X\to Y\overline{V}$
can be recovered from
$$\xymatrix{
XA \ar[r]^{g1} & Y\overline{V}A \ar[r]^{1v} & Y\overline{V}.}$$
From $1v.g1$ we may form the common composite of
$$\xymatrix{
XAV \ar[r]^{g11} \ar[d]_{1w} & Y\overline{V}AV \ar[r]^{1v1} \ar[d]_{11w} & 
Y\overline{V}V \ar[d]^{1\epsilon} \\
XV \ar[r]_{g1} & Y\overline{V}V \ar[r]_{1\epsilon} & Y}$$
and since $1w$ is epimorphic this allows us to recover the composite
$1\epsilon.g1$ along the bottom, and this in turn allows us to recover $g$
using the duality.  
  
Now suppose that $w$ is non-degenerate, and let $x_1,x_2\colon
\overline{V}Y\to Z$ satisfy $x_1.v1=x_2.v1$. By commutativity of the
diagram  
$$
\xymatrix{
\overline V AY \ar[r]^-{11\eta 1} \ar[d]_-{v1} &
\overline V AV\overline V Y \ar[r]^-{111x_i} &
\overline V AVZ \ar[r]^-{1w1} \ar[d]^-{v11} &
\overline V VZ \ar[dd]^-{\epsilon 1} \\
\overline V Y \ar[r]^-{1\eta 1} \ar@{=}[d] &
\overline V V \overline V Y \ar[r]^-{11x_i} \ar[ld]_-{\epsilon 11} &
\overline V VZ \ar[rd]^-{\epsilon 1} \\
\overline V Y \ar[rrr]_-{x_i} &&&
Z}
$$
we see that the upper composite is independent of $i$; using the duality once
it follows that $w1.11x_i.1\eta1$ is independent of $i$, then by
non-degeneracy of $w$ it follows that $1x_i.\eta1$ is independent of $i$, and
finally using duality again it follows that $x_i$ is independent of $i$; in
other words, that $x_1=x_2$.  
\end{proof}

Take now a multiplier bimonoid $(A,t_1,t_2,e)$ with non-degenerate
multiplication and dense counit. Assume that it is Hopf, with antipode
$s\colon A\op\nto A$. Let $V$ be an object possessing a left dual 
$\overline{V}$, and a surjective associative non-degenerate
action $v\colon VA\to V$. Then the action $s^*v\colon VA\op\to V$ of
Proposition~\ref{prop:f*} may be regarded as a left action 
$\xymatrix@C=15pt{
AV \ar[r]^-{c^{-1}} &
VA \ar[r]^-{s^*v} &
V}$, 
and so by Proposition~\ref{prop:rev-action-on-dual}
we obtain  an action $\overline{v}\colon \overline{V}A\to \overline{V}$;
explicitly, this is given by  
\begin{equation}\label{eq:overline-v}
\xymatrix@C=30pt{
  \overline{V}A \ar[r]^-{11\eta} & \overline{V}AV\overline{V}
  \ar[r]^{1c^{-1}1} & \overline{V}VA\overline{V} \ar[r]^{1(s^*v)1} &
  \overline{V}V\overline{V}  \ar[r]^{\epsilon 1} & \overline{V}. } 
\end{equation}

\begin{theorem}\label{thm:dual_module}
Let \cc be a {\em symmetric} monoidal category satisfying our standing
assumptions of Section~\ref{sect:assumptions}. Let $(A,t_1,t_2,e)$ be a
multiplier Hopf monoid whose multiplication $m:=e1.t_1$ is non-degenerate and
whose counit is dense. Finally, let  $V$ be a  module over
the multiplier bimonoid $(A,t_1,t_2,e)$ whose underlying $(A,m)$-action
$v\colon VA\to V$ is non-degenerate. If $V$ has a left
dual $\overline{V}$ in \cc and the morphism $\overline{v}$ in
\eqref{eq:overline-v} is in \cq, then it makes $\overline{V}$ into an
$(A,t_1,t_2,e)$-module which is left dual to $V$.   
\end{theorem}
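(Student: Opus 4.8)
The plan is to verify, in turn, the three things that Theorem~\ref{thm:dual_module} asserts: first that $\overline v$ in \eqref{eq:overline-v} is a $t_2$-action (hence, by Proposition~\ref{prop:action-module}, a module over $(A,t_2,e)$ once we know $\overline v\in\cq$, which is assumed); second that there is a compatible $v_3$-action making $\overline V$ into a module over the \emph{regular} multiplier bimonoid in the sense of Section~\ref{sect:mod_for_reg}; and third, and most importantly, that the unit $\eta\colon I\to V\overline V$ and counit $\epsilon\colon\overline V V\to I$ of the duality in $\cc$ are morphisms of $A$-modules, so that the duality lifts to the monoidal category of modules. Since we are in a \emph{symmetric} category, I can freely use Proposition~\ref{prop:monoidal-op} and treat $\overline V$ as a right dual of $V$ as well.

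First I would set up the action $\overline v$ cleanly. The recipe is: restrict $v$ along the antipode to get $s^*v\colon VA\op\to V$ (Proposition~\ref{prop:f*}), read this as a left $A\op$-action, and then transport it to the dual via Proposition~\ref{prop:rev-action-on-dual}; the explicit formula is \eqref{eq:overline-v}. Proposition~\ref{prop:rev-action-on-dual} already tells me $\overline v$ is associative and non-degenerate, and that it is an epimorphism preserved by the monoidal product once $v$ is non-degenerate; combined with the hypothesis $\overline v\in\cq$, this gives a surjective non-degenerate associative $(A,m)$-action, so Proposition~\ref{prop:action-module2} produces the full module structure $(\overline V,\overline v_2,\overline v_3)$ over the regular multiplier bimonoid automatically. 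Thus the only genuine work is the compatibility of $\eta$ and $\epsilon$ with the module structures.

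The heart of the proof is therefore the verification that $\eta$ and $\epsilon$ are module morphisms, and this is where I expect the main obstacle to lie. The natural strategy is to express the module action $\psi$ on a tensor product via the characterization \eqref{eq:psi-v} (or \eqref{eq:psi-w}), and to check the defining diagrams using string diagrams as throughout Section~3. For $\epsilon\colon\overline V V\to I$ to be a module morphism one must show that the composite action of $A$ on $\overline V V$ collapses, after applying $\epsilon$, to $e$ acting trivially; the key input is precisely that $\overline v$ was built from $s^*v$, so that the $\overline V$-action and the $V$-action are \emph{antipode-related} and cancel under the counit $\epsilon$ of the duality. Concretely I would feed in the defining relation $s_1.t_1=e1$ from \eqref{eq:s} (equivalently $s\bullet i$ behaves as a counit-type morphism), exactly as in Corollary~\ref{cor:v-tilde} and Lemma~\ref{lem:conv_inv}, where the analogous statement $(s_*v)^1.v^1=1$ was proved for comodules. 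The dual statement here is that $v$ and $\overline v$ are convolution-inverse actions on the tensor factors, so their composite is killed by $\epsilon$; the argument for $\eta$ is the mirror image, using $s_2.t_2=1e$ and the triangle equations to straighten the cup created by $\eta$.

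The anticipated difficulty is bookkeeping rather than conceptual: the action $\overline v$ of \eqref{eq:overline-v} involves an $\eta$, two braidings, an $s^*v$, and an $\epsilon$, so the string diagrams for the module morphism conditions will be large, and one must repeatedly invoke the triangle ``straightening'' identities for the duality together with the antipode relation \eqref{eq:s} and the fusion/compatibility equations \eqref{eq:mbm_ax_2}, \eqref{eq:short_compatibility} to push the $s$ past the duality caps and cups. The symmetry hypothesis is what keeps this manageable, since it lets me identify $A\op B\op$ with $(AB)\op$ (Proposition~\ref{prop:monoidal-op}) and avoid tracking the distinction between the two possible tensor-product orientations. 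Once $\epsilon$ and $\eta$ are shown to be module morphisms, the triangle identities for $\eta,\epsilon$ in $\cc$ lift verbatim to the module category — being equations between morphisms in $\cc$ that are now known to be module morphisms — so $\overline V$ is a genuine left dual of $V$ in the monoidal category of modules, completing the proof.
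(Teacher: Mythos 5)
Your proposal takes essentially the same route as the paper: Proposition~\ref{prop:rev-action-on-dual} combined with Proposition~\ref{prop:action-module} (using the hypothesis $\overline v\in\cq$) gives the $(A,t_1,t_2,e)$-module structure on $\overline V$, and the substance of the proof is the string-diagram verification that $\eta$ and $\epsilon$ are module morphisms, carried out exactly as you outline via the product-action characterizations \eqref{eq:psi-v} and \eqref{eq:psi-w}, the defining relation \eqref{eq:f*v} of $s^*v$, the antipode identities \eqref{eq:s}, and the symmetry of the braiding. Two minor corrections: drop the appeal to Proposition~\ref{prop:action-module2} and the claimed $v_3$-action (the theorem assumes no regularity, and it only asserts an $(A,t_1,t_2,e)$-module structure, which Proposition~\ref{prop:action-module} already supplies); and in the paper's computations the identity $s_1.t_1=e1$ is the one used in the $\eta$ case (via \eqref{eq:psi-v}) while $s_2.t_2=1e$ is used in the $\epsilon$ case (via \eqref{eq:psi-w}), the reverse of your pairing, though this is bookkeeping that resolves itself once the diagrams are drawn.
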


\begin{proof}
By Proposition~\ref{prop:rev-action-on-dual} and the assumption that
$\overline{v}$ is in \cq,  $\overline{v}$ makes $\overline{V}$ a module
over the semigroup $(A,m)$ and so by Proposition~\ref{prop:action-module} a
module over the multiplier bimonoid $(A,t_1,t_2,e)$. We need only show that
the unit and counit of the duality are morphisms of $(A,m)$-modules.  

In the case of the unit, the key  is the following calculation linking $v$ and
$\overline v$. 
$$
\begin{tikzpicture} %pic1
\path 
(0.5,3) node [empty, name=in1] {}
(1.0,3) node[empty, name=in2] {}
(0,0) node[empty, name=out1] {}
(0.5,0) node[empty, name=out2] {}
(0,0.5) node[arr, name=v] {$\ v\ $}
(0.35,2) node[arr, inner sep=1pt, name=v'] {$\, \overline v\, $}
(0,2.5) node[empty, name=x] {};
\draw[braid] (in1) to[out=270,in=45] (v');
\draw[braid, name path=bin2] (in2) to[out=270,in=45] (v);
\draw[braid] (v') to[out=135,in=0] (x) to[out=180,in=135] (v);
\path[braid, name path=bout2] (v') to[out=270,in=90] (out2);
\fill[white,name intersections={of=bin2 and bout2}] (intersection-1) circle(0.1);
\draw[braid] (v') to[out=270,in=90] (out2);
\draw[braid] (v) to (out1);
\draw (1.5,1.7) node[empty] {$\stackrel{~\eqref{eq:overline-v}}=$};
\end{tikzpicture}
\begin{tikzpicture} %pic2
\path 
(0.5,3) node [empty, name=in1] {}
(1.0,3) node[empty, name=in2] {}
(0,0) node[empty, name=out1] {}
(0.5,0) node[empty, name=out2] {}
(0,0.5) node[arr, name=v] {$\ v\ $}
(0.5,2) node[arr,  name=sv] {$s^*v$}
(0,2.5) node[empty, name=x] {}
(0.25,1.5) node[empty, name=y] {}
(0.75,2.8) node[empty, name=z] {};
\draw[braid, name path=bin1] (in1) to[out=270,in=45] (sv);
\draw[braid, name path=bin2] (in2) to[out=270,in=45] (v);
\path[braid, name path=bout2] (sv) to[out=135,in=180] (z) to[out=0,in=90] (out2);
\fill[white,name intersections={of=bin2 and bout2}] (intersection-1) circle(0.1);
\fill[white,name intersections={of=bin1 and bout2}] (intersection-1) circle(0.1);
\draw[braid] (sv) to[out=135,in=180] (z) to[out=0,in=90] (out2);
\draw[braid] (sv) to[out=270,in=0] (y) to[out=180,in=0] (x) to[out=180,in=135] (v);
\draw[braid] (v) to (out1);
\draw (1.5,1.5) node[empty] {$=$};
\end{tikzpicture}
\begin{tikzpicture} %pic3
\path 
(0.5,3) node [empty, name=in1] {}
(1.0,3) node[empty, name=in2] {}
(0,0) node[empty, name=out1] {}
(0.5,0) node[empty, name=out2] {}
(0,0.5) node[arr, name=v] {$\ v\ $}
(0.25,2) node[arr, name=sv] {$s^*v$}
(0,2.5) node[empty, name=x] {}
(0.25,1.5) node[empty, name=y] {}
(0.75,2.8) node[empty, name=z] {};
\draw[braid, name path=bin1] (in1) to[out=270,in=45] (sv);
\draw[braid, name path=bin2] (in2) to[out=270,in=45] (v);
\path[braid, name path=bout2] (sv) to[out=135,in=180] (z) to[out=0,in=90] (out2);
\fill[white,name intersections={of=bin2 and bout2}] (intersection-1) circle(0.1);
\fill[white,name intersections={of=bin1 and bout2}] (intersection-1) circle(0.1);
\draw[braid] (sv) to[out=135,in=180] (z) to[out=0,in=90] (out2);
\draw[braid] (sv) to[out=270,in=135] (v);
\draw[braid] (v) to (out1);
\draw (1.5,1.7) node[empty] {$\stackrel{\eqref{eq:f*v}}=$};
\end{tikzpicture}
\begin{tikzpicture} %pic4
\path 
(0.2,3) node [empty, name=in1] {}
(0.75,3) node[empty, name=in2] {}
(0,0) node[empty, name=out1] {}
(0.5,0) node[empty, name=out2] {}
(0,0.5) node[arr, name=v] {$\ v\ $}
(0.5,2) node[arr, name=s1] {$s_1$}
(0.75,2.5) node[empty, name=x] {};
\draw[braid, name path=bin1] (in1) to[out=270,in=135] (s1);
\draw[braid, name path=bin2] (in2) to[out=270,in=45] (s1);
\draw[braid] (s1) to[out=270, in=45] (v);
\path[braid, name path=bout2] (v) to[out=135,in=180] (x) to[out=0, in=90] (out2);
\fill[white,name intersections={of=bin2 and bout2}] (intersection-1) circle(0.1);
\fill[white,name intersections={of=bin1 and bout2}] (intersection-1) circle(0.1);
\draw[braid] (v) to[out=135,in=180] (x) to[out=0, in=90] (out2);
\draw[braid] (v) to (out1);
\draw (1.5,1.5) node[empty] {$=$};
\end{tikzpicture}
\begin{tikzpicture} %pic5
\path 
(0.2,3) node [empty, name=in1] {}
(0.75,3) node[empty, name=in2] {}
(0,0) node[empty, name=out1] {}
(0.5,0) node[empty, name=out2] {}
(0,0.5) node[arr, name=v] {$\ v\ $}
(0.5,2) node[arr, name=s1] {$s_1$}
(0.5,1.5) node[empty, name=x] {};
\draw[braid, name path=bin1] (in1) to[out=270,in=135] (s1);
\draw[braid, name path=bin2] (in2) to[out=270,in=45] (s1);
\draw[braid, name path=s1v] (s1) to[out=270, in=45] (v);
\path[braid, name path=bout2] (v) to[out=135,in=180] (x) to[out=0, in=90] (out2);
\fill[white,name intersections={of=s1v and bout2}] (intersection-1) circle(0.1);
\draw[braid] (v) to[out=135,in=180] (x) to[out=0, in=90] (out2);
\draw[braid] (v) to (out1);
\end{tikzpicture}
$$
If  $\psi$ is the action for the monoidal product $V\overline V$, then, using
this last calculation and the fact that the braiding is a symmetry for the
second step of the next calculation, we have    
$$
\begin{tikzpicture}  %pic1
\path
(1,3) node[empty, name=in1] {}
(1.5,3) node[empty, name=in2] {}
(0.5,0) node[empty, name=out1] {}
(1,0) node[empty, name=out2] {}
(0.5,0.5) node[arr, name=v] {$\ v\ $}
(0.5,1.5) node[arr, name=psi] {$\psi$}
(0.25,2.5) node[empty, name=x] {};
\draw[braid] (in1) to[out=270,in=45] (psi);
\draw[braid] (psi) to[out=135,in=180] (x) to[out=0,in=90] (psi);
\draw[braid] (psi) to[out=225,in=135] (v);
\draw[braid] (v) to (out1);
\path[braid, name path=bin2] (in2) to[out=270, in=45] (v);
\draw[braid, name path=bout2] (psi) to[out=315,in=90] (out2);
\fill[white,name intersections={of=bin2 and bout2}] (intersection-1) circle(0.1);
\draw[braid] (in2) to[out=270, in=45] (v);
\draw (1.8,1.7) node[empty] {$\stackrel{~\eqref{eq:psi-v}}=$};
\end{tikzpicture}
\begin{tikzpicture}  %pic2
\path
(1,3) node[empty, name=in1] {}
(1.5,3) node[empty, name=in2] {}
(0.5,0) node[empty, name=out1] {}
(1,0) node[empty, name=out2] {}
(0.5,0.5) node[arr, name=v] {$\ v\ $}
(1.25,2.5) node[arr, name=t1] {$t_1$}
(0.75,1.5) node[arr, inner sep=1pt, name=v'] {$\, \overline v\, $}
(0.25,2.5) node[empty, name=x] {};
\draw[braid] (in1) to[out=270,in=135] (t1);
\draw[braid, name path=bin2] (in2) to[out=270, in=45] (t1);
\draw[braid] (v) to[out=135,in=180] (x) to[out=0,in=135] (v');
%\draw[braid] (psi) to[out=225,in=135] (v);
\draw[braid] (v) to (out1);
\draw[braid] (t1) to[out=225, in=45] (v');
\draw[braid, name path=bout2] (v') to[out=270,in=90] (out2);
\path[braid, name path=t1v] (t1) to[out=315, in=45] (v);
\fill[white,name intersections={of=t1v and bout2}] (intersection-1) circle(0.1);
\draw[braid] (t1) to[out=315, in=45] (v);
\draw (2,1.5) node[empty] {$=$};
\end{tikzpicture}
\begin{tikzpicture}  %pic3
\path
(1,3) node[empty, name=in1] {}
(1.5,3) node[empty, name=in2] {}
(0.5,0) node[empty, name=out1] {}
(1,0) node[empty, name=out2] {}
(0.5,0.5) node[arr, name=v] {$\ v\ $}
(1.25,2.5) node[arr, name=t1] {$t_1$}
(1.25,1.75) node[arr, name=s1] {$s_1$}
(0.5,1.5) node[empty, name=x] {};
\draw[braid] (in1) to[out=270,in=135] (t1);
\draw[braid, name path=bin2] (in2) to[out=270, in=45] (t1);
\draw[braid] (t1) to[out=225,in=135] (s1);
\draw[braid] (t1) to[out=315,in=45] (s1);
\path[braid, name path=s1v] (s1) to[out=270, in=45] (v);
\draw[braid, name path=bout2] (v) to[out=135,in=180] (x) to[out=0,in=90] (out2);
\fill[white,name intersections={of=s1v and bout2}] (intersection-1) circle(0.1);
\draw[braid] (s1) to[out=270, in=45] (v);
\draw[braid] (v) to (out1);
\draw (2,1.7) node[empty] {$\stackrel{~\eqref{eq:s}}=$};
\end{tikzpicture}
\begin{tikzpicture}  %pic4
\path
(1,3) node[empty, name=in1] {}
(1.5,3) node[empty, name=in2] {}
(0.5,0) node[empty, name=out1] {}
(1,0) node[empty, name=out2] {}
(0.5,0.5) node[arr, name=v] {$\ v\ $}
(1,2) node[unit, name=e] {}
(0.5,1.5) node[empty, name=x] {};
\draw[braid] (in1) to[out=270,in=90] (e);
\path[braid, name path=bin2] (in2) to[out=270, in=45] (v);
\draw[braid, name path=bout2] (v) to[out=135,in=180] (x) to[out=0,in=90] (out2);
\fill[white,name intersections={of=bin2 and bout2}] (intersection-1) circle(0.1);
\draw[braid] (in2) to[out=270, in=45] (v);
\draw[braid] (v) to (out1);
\end{tikzpicture}
$$
and so by, non-degeneracy of $v$, the diagram
$$\xymatrix{
A \ar[r]^-{\eta1} \ar[d]_{e} & V\overline VA \ar[d]^{\psi} \\
I \ar[r]_-{\eta} & V\overline V}$$
commutes, and so $\eta$ is a morphism of modules. 

In the case of the counit, once again we start with a calculation linking $v$
and $\overline v$. 
$$
\begin{tikzpicture} % pic1
\path
(0,0) node[empty] {}
(0,3) node[empty, name=in1] {}
(0.5,3) node[empty, name=in2] {}
(1,3) node[empty, name=in3] {}
(1.5,3) node[empty, name=in4] {}
(0.25,2) node[arr, inner sep=1pt, name=v'] {$\, \overline v\, $}
(1.25,2) node[arr, name=v] {$\ v\ $};
\draw[braid] (in1) to[out=270,in=135] (v');
\draw[braid] (in2) to[out=270,in=45] (v');
\draw[braid] (in3) to[out=270,in=135] (v);
\draw[braid] (in4) to[out=270,in=45] (v);
\draw[braid] (v') to[out=270,in=270] (v);
\draw (1.8,1.7) node[empty] {$\stackrel{~\eqref{eq:overline-v}}=$};
\end{tikzpicture}
\begin{tikzpicture} % pic2
\path
(0,0) node[empty] {}
(0,3) node[empty, name=in1] {}
(0.5,3) node[empty, name=in2] {}
(1,3) node[empty, name=in3] {}
(1.5,3) node[empty, name=in4] {}
(0.5,2.75) node[empty, name=x] {}
(0,1.5) node[empty, name=y] {}
(1,1) node[empty, name=z] {}
(0.25,2) node[arr, name=sv] {$s^*v$}
(1.25,2) node[arr, name=v] {$\ v\ $};
\draw[braid] (in1) to[out=270, in=180] (y) to[out=0,in=270] (sv);
\draw[braid, name path=bin2] (in2) to[out=270,in=45] (sv);
\draw[braid] (in3) to[out=270,in=135] (v);
\draw[braid] (in4) to[out=270,in=45] (v);
\path[braid, name path=svv] (sv) to[out=135,in=180] (x) to[out=0,in=180] (z) to[out=0,in=270] (v);
\fill[white,name intersections={of=bin2 and svv}] (intersection-1) circle(0.1);
\draw[braid] (sv) to[out=135,in=180] (x) to[out=0,in=180] (z) to[out=0,in=270] (v);
\draw (1.8,1.5) node[empty] {$=$};
\end{tikzpicture}
\begin{tikzpicture} % pic3
\path
(0,0) node[empty] {}
(0,3) node[empty, name=in1] {}
(0.5,3) node[empty, name=in2] {}
(1,3) node[empty, name=in3] {}
(1.5,3) node[empty, name=in4] {}
%(0.5,2.75) node[empty, name=x] {}
(0.5,0.5) node[empty, name=y] {}
%(1,1) node[empty, name=z] {}
(1.0,1) node[arr, name=sv] {$s^*v$}
(0.5,2) node[arr, name=v] {$\ v\ $};
\draw[braid] (in1) to[out=270, in=180] (y) to[out=0,in=270] (sv);
\draw[braid, name path=bin2] (in2) to[out=270,in=45] (sv);
\path[braid, name path=bin3] (in3) to[out=270,in=135] (v);
\path[braid, name path=bin4] (in4) to[out=270,in=45] (v);
\fill[white,name intersections={of=bin2 and bin3}] (intersection-1) circle(0.1);
\fill[white,name intersections={of=bin2 and bin4}] (intersection-1) circle(0.1);
\draw[braid] (in3) to[out=270,in=135] (v);
\draw[braid] (in4) to[out=270,in=45] (v);
\draw[braid] (v) to[out=270,in=135] (sv);
\draw (1.8,1.7) node[empty] {$\stackrel{~\eqref{eq:f*v}}=$};
\end{tikzpicture}
\begin{tikzpicture} % pic4
\path
(0,0) node[empty] {}
(0,3) node[empty, name=in1] {}
(0.5,3) node[empty, name=in2] {}
(1,3) node[empty, name=in3] {}
(1.5,3) node[empty, name=in4] {}
(0.25,2) node[empty, name=x] {}
(0.25,0.5) node[empty, name=y] {}
%(1,1) node[empty, name=z] {}
(1.0,2) node[arr, name=s2] {$s_2$}
(0.5,1) node[arr, name=v] {$\ v\ $};
\draw[braid] (in1) to[out=270, in=180] (y) to[out=0,in=270] (v);
\draw[braid, name path=bin2] (in2) to[out=270,in=45] (s2);
\path[braid, name path=bin3] (in3) to[out=270,in=85] (x) to[out=275,in=135] (v);
\path[braid, name path=bin4] (in4) to[out=270,in=135] (s2);
\fill[white,name intersections={of=bin2 and bin3}] (intersection-1) circle(0.1);
\fill[white,name intersections={of=bin2 and bin4}] (intersection-1) circle(0.1);
\draw[braid] (in3) to[out=270,in=85] (x) to[out=275, in=135] (v);
\draw[braid] (in4) to[out=270,in=135] (s2);
\draw[braid] (s2) to[out=270,in=45] (v);
\draw (1.8,1.5) node[empty] {$=$};
\end{tikzpicture}
\begin{tikzpicture} % pic5
\path
(0,0) node[empty] {}
(0,3) node[empty, name=in1] {}
(0.5,3) node[empty, name=in2] {}
(1,3) node[empty, name=in3] {}
(1.5,3) node[empty, name=in4] {}
(0.25,2) node[empty, name=x] {}
(0.25,0.5) node[empty, name=y] {}
%(1,1) node[empty, name=z] {}
(1.0,2) node[arr, name=s2] {$s_2$}
(0.5,1) node[arr, name=v] {$\ v\ $};
\draw[braid] (in1) to[out=270, in=180] (y) to[out=0,in=270] (v);
\path[braid, name path=bin2] (in2) to[out=270,in=45] (s2);
\draw[braid, name path=bin3] (in3) to[out=270,in=85] (x) to[out=275, in=135] (v);
\draw[braid, name path=bin4] (in4) to[out=270,in=135] (s2);
\fill[white,name intersections={of=bin2 and bin3}] (intersection-1) circle(0.1);
\fill[white,name intersections={of=bin2 and bin4}] (intersection-1) circle(0.1);
\draw[braid] (in2) to[out=270,in=45] (s2);
\draw[braid] (s2) to[out=270,in=45] (v);
\end{tikzpicture}
$$
where the last step uses the fact that the braiding is a symmetry.

If $\phi$ is the action for the monoidal product 
$\overline VV$, then, using this last calculation for the third step, we have 
$$
\begin{tikzpicture} % pic1
\path
(0,3) node[empty, name=in1] {}
(0.5,3) node[empty, name=in2] {}
(1.0,3) node[empty, name=in3] {}
(1.5,3) node[empty, name=in4] {}
(0.75,0.5) node[empty, name=x] {}
(0.75,2) node[arr, name=v] {$\ v\ $}
(0.75,1) node[arr, name=phi] {$\, \phi\, $};
\draw[braid] (in1) to[out=270,in=135] (phi);
\draw[braid] (in2) to[out=270,in=135] (v);
\draw[braid] (in3) to[out=270,in=45] (v);
\draw[braid] (in4) to[out=270,in=45] (phi);
\draw[braid] (v) to (phi);
\draw[braid] (phi) to[out=225,in=180] (x) to[out=0,in=315] (phi);
\draw (1.8,1.7) node[empty] {$\stackrel{~\eqref{eq:psi-w}}=$};
\end{tikzpicture}
\begin{tikzpicture} % pic2
\path
(0,3) node[empty, name=in1] {}
(0.5,3) node[empty, name=in2] {}
(1.0,3) node[empty, name=in3] {}
(1.5,3) node[empty, name=in4] {}
(0.75,0.5) node[empty, name=x] {}
(0.75,2) node[arr, name=v] {$\ v\ $}
(1.25,2.5) node[arr, name=t2] {$t_2$}
(0.5,1) node[arr, inner sep=1pt, name=v'] {$\, \overline v\, $};
\draw[braid] (in1) to[out=270,in=135] (v');
\draw[braid] (in2) to[out=270,in=135] (v);
\draw[braid] (in3) to[out=270,in=135] (t2);
\draw[braid] (in4) to[out=270,in=45] (t2);
\draw[braid] (t2) to (v);
\draw[braid, name path=vv'] (v') to[out=270,in=180] (x) to[out=0,in=270] (v);
\path[braid, name path=t2v'] (t2) to[out=315,in=45] (v');
\fill[white,name intersections={of=t2v' and vv'}] (intersection-1) circle(0.1);
\draw[braid] (t2) to[out=315,in=45] (v');
\draw (1.8,1.5) node[empty] {$=$};
\end{tikzpicture}
\begin{tikzpicture} % pic3
\path
(0,3) node[empty, name=in1] {}
(0.5,3) node[empty, name=in2] {}
(1.0,3) node[empty, name=in3] {}
(1.5,3) node[empty, name=in4] {}
(0.75,0.5) node[empty, name=x] {}
(1.25,1) node[arr, name=v] {$\ v\ $}
(1.25,2.5) node[arr, name=t2] {$t_2$}
(0.5,1) node[arr, inner sep=1pt, name=v'] {$\, \overline v\, $};
\draw[braid] (in1) to[out=270,in=135] (v');
\draw[braid, name path=bin2] (in2) to[out=270,in=135] (v);
\draw[braid] (in3) to[out=270,in=135] (t2);
\draw[braid] (in4) to[out=270,in=45] (t2);
\draw[braid, name path=t2v] (t2) to[out=225,in=45]  (v);
\draw[braid, name path=vv'] (v') to[out=270,in=180] (x) to[out=0,in=270] (v);
\path[braid, name path=t2v'] (t2) to[out=315,in=45] (v');
\fill[white,name intersections={of=t2v' and bin2}] (intersection-1) circle(0.1);
\fill[white,name intersections={of=t2v' and t2v}] (intersection-1) circle(0.1);
\draw[braid] (t2) to[out=315,in=45] (v');
\draw (1.8,1.5) node[empty] {$=$};
\end{tikzpicture}
\begin{tikzpicture} % pic4
\path
(0,3) node[empty, name=in1] {}
(0.5,3) node[empty, name=in2] {}
(1.0,3) node[empty, name=in3] {}
(1.5,3) node[empty, name=in4] {}
(0.75,0.5) node[empty, name=x] {}
(1.0,1) node[arr, name=v] {$\ v\ $}
(1.25,2.5) node[arr, name=t2] {$t_2$}
(1.25,1.5) node[arr, name=s2] {$s_2$};
\draw[braid] (in1) to[out=270,in=180] (x) to[out=0, in=270] (v);
\draw[braid, name path=bin2] (in2) to[out=270,in=135] (v);
\draw[braid] (in3) to[out=270,in=135] (t2);
\draw[braid] (in4) to[out=270,in=45] (t2);
\draw[braid] (t2) to[out=225,in=135] (s2);
\draw[braid] (t2) to[out=315,in=45] (s2);
\draw[braid] (s2) to[out=270,in=45] (v);
\draw (2.5,1.7) node[empty] {$\stackrel{~\eqref{eq:s}}=$};
\end{tikzpicture}
\begin{tikzpicture} % pic5
\path
(0,3) node[empty, name=in1] {}
(0.5,3) node[empty, name=in2] {}
(1.0,3) node[empty, name=in3] {}
(1.5,3) node[empty, name=in4] {}
(0.75,0.5) node[empty, name=x] {}
(1.0,1) node[arr, name=v] {$\ v\ $}
(1.5,2) node[unit, name=e] {};
\draw[braid] (in1) to[out=270,in=180] (x) to[out=0, in=270] (v);
\draw[braid, name path=bin2] (in2) to[out=270,in=135] (v);
\draw[braid] (in3) to[out=270,in=45] (v);
\draw[braid] (in4) to[out=270,in=90] (e);
\end{tikzpicture}
$$
and so by the surjectivity of $v$ the diagram 
$$\xymatrix{
\overline VVA \ar[d]_{\phi} \ar[r]^-{\epsilon 1} & A \ar[d]^{e} \\
\overline VV \ar[r]_-{\epsilon} & I }$$
commutes, and so $\epsilon$ is a morphism of modules. 
\end{proof}

\begin{remark}
In Theorem~\ref{thm:dual_module} we have had to assume that the
action $\overline{v}$ lies in \cq; otherwise, the hypotheses only guarantee
that it is an epimorphism. If the base category is abelian, and so in
particular if it is the category of modules over a (commutative) ring, then
every epimorphism is a regular epimorphism; thus if moreover \cq consists of
all regular epimorphisms, then the action $\overline{v}$ will always be
surjective.  
\end{remark}

%%%%%%%%%%%%%%%%%%%%%%%%%%%%%%%    SEC  5     %%%%%%%%%%%%%%%%%%%%%%%%%%%%%%

\section{The fundamental theorem of Hopf modules}

For a regular multiplier bimonoid $A$ in a braided monoidal category
$\cc$ having the properties in Section \ref{sect:mbm}, a Hopf module 
is defined as an object $V$ equipped with $A$-module and $A$-comodule
structures, subject to a compatibility condition requiring that the action $VA
\to V$ is a morphism of comodules. Taking the monoidal product with $A$
induces a functor from $\cc$ to the category of Hopf modules. When it is an
equivalence, $A$ is said to satisfy the {\em fundamental theorem of Hopf
modules}. In this section we investigate when this happens.  

We shall say that a monomorphism is {\em $A$-pure} if it is preserved by 
taking the monoidal product with $A$. 

\begin{definition} \label{def:Hopf_module}
For a regular multiplier bimonoid $(A,t_1,t_2,t_3,t_4,e)$ in a braided
monoidal category $\cc$ having the properties in Section \ref{sect:mbm},
a {\em Hopf module} is a tuple $(V,v^1,v^3,v_2,v_3)$ where
$(V,v^1,v^3)$ is a comodule as in Section~\ref{claim:comodule}, $(V,v_2,v_3)$
is a module as in Section~\ref{sect:mod_for_reg}, and either (hence by
Lemma \ref{lem:comodule_morphism_nd} both) of the so-called Hopf
compatibility conditions -- saying that $v :=1e.v_2 \colon VA\to V$ is a
morphism of comodules -- hold: 
\begin{equation}\label{eq:Hopf_module}
\xymatrix{
VA^2 \ar[r]^-{1t_1} \ar[d]_-{v1} &
VA^2 \ar[r]^-{c1} &
AVA \ar[r]^-{1v^1} &
AVA \ar[r]^-{c^{-1}1} &
VA^2 \ar[d]^-{v1}\\
VA \ar[rrrr]_-{v^1} &&&&
VA\\
VA^2 \ar[r]^-{1c^{-1}} \ar[d]_-{v1} &
VA^2 \ar[r]^-{v^31} &
VA^2 \ar[r]^-{1c} &
VA^2 \ar[r]^-{1t_3} &
VA^2 \ar[d]^-{v1}\\
VA \ar[rrrr]_-{v^3} &&&&
VA\ .}
\end{equation}
A {\em morphism of Hopf modules} $(V,v^1,v^3,v_2,v_3)\to (W,w^1,w^3,w_2,w_3)$
is a morphism $V \to W$ in $\cc$ which is both a morphism of modules and a
morphism of comodules.  
\end{definition}

\begin{proposition}\label{prop:comparison_functor}
Let $\cc$ be a braided monoidal category satisfying our standing
assumptions of Section~\ref{sect:assumptions}, and
$(A,t_1,t_2,t_3,t_4,e)$ be a regular multiplier bimonoid in $\cc$ having the
properties in Section \ref{sect:mbm}. For any object $X$ of $\cc$, there is a
Hopf module $XA$ with comodule structure   
$$
\xymatrix{(XA^2 \ar[r]^-{1t_1}&XA^2,\ 
X A^2 \ar[r]^-{1t_3}&XA^2)}
$$
and module structure
$$
\xymatrix{(XA^2 \ar[r]^-{1t_2}&XA^2,\ 
AXA \ar[r]^-{c^{-1}1} &
XA^2 \ar[r]^-{1t_3} &
XA^2 \ar[r]^-{c1} &
AXA)\ .}
$$
This is the object part of a functor $(-)A$ from $\cc$ to the category of
$A$-Hopf modules. 
\end{proposition}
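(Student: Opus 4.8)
The plan is to verify that the structure maps defined in the statement genuinely constitute a Hopf module, and then that the assignment is functorial. The comodule and module structures on $XA$ are simply the structure maps of $A$ itself (namely $t_1,t_3,t_2$, and the $t_3$-conjugate for the left action), acting on the right-hand tensor factor while the object $X$ rides along passively. So the verification reduces almost entirely to facts about $A$ that are already encoded in the multiplier bimonoid and regular multiplier bimonoid axioms, together with the compatibility conditions of Section~\ref{sect:mbm}.

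First I would check that $(XA,1t_1,1t_3)$ is a comodule in the sense of Section~\ref{claim:comodule}. The $t_1$-comodule axioms \eqref{eq:t_1_comodule} for $1t_1$ are exactly the fusion equation and counit condition of \eqref{eq:mbm_ax_1} tensored on the left by $X$; similarly the $t_3$-comodule axioms \eqref{eq:t_3_comodule} for $1t_3$ are \eqref{eq:mbm_ax_2} for the multiplier bimonoid $(A,t_3,t_4,e)$ in $\overline\cc$, again tensored with $X$. The compatibility \eqref{eq:comodule_compatibility} becomes, after stripping the passive $X$, precisely the first diagram of \eqref{eq:reg_mbm} read as $1t_1$ versus $t_31$ composed into $m$; this is built into the definition of regular multiplier bimonoid. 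By Lemma~\ref{lem:comodule_nd} it would in fact suffice to check one of the two comodule conditions once \eqref{eq:comodule_compatibility} holds, which streamlines the bookkeeping.

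Next I would check that $(XA,1t_2, c1.1t_3.c^{-1}1)$ is a module as in Section~\ref{sect:mod_for_reg}. The $t_2$-action axiom \eqref{eq:t_2_module} for $1t_2$ is the fusion equation of \eqref{eq:mbm_ax_2} with $X$ adjoined; the $t_3$-module axiom \eqref{eq:t_3_module} and the module compatibility conditions \eqref{eq:module_compatibility} follow from the corresponding regularity diagrams in \eqref{eq:reg_mbm} and \eqref{eq:t_2-3_compatibility}, again with $X$ passive. One must confirm that the induced semigroup action $1e.1t_2 = 1m$ lies in \cq, which holds because $m\in\cq$ by assumption and \cq is closed under monoidal product. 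Then comes the Hopf compatibility condition \eqref{eq:Hopf_module}: that $1m\colon XA^2\to XA$ is a morphism of comodules. Substituting the explicit structure maps, the first diagram of \eqref{eq:Hopf_module} becomes a diagram built entirely from $t_1$, $m$, and braidings on the tensorand $A^2$; I expect this to be exactly \eqref{eq:t_1_mod_map} (the statement that $t_1$ is a right module map for the multiplication), suitably tensored, together with a short-fusion manipulation.

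The functoriality claim is then routine: a \cc-morphism $f\colon X\to Y$ induces $f1\colon XA\to YA$, and since all the structure maps act trivially on the $X$/$Y$ factor, $f1$ automatically commutes with them, giving a morphism of Hopf modules; identities and composites are preserved on the nose. The \emph{main obstacle} will be the Hopf compatibility condition \eqref{eq:Hopf_module}, since the two comodule structures ($v^1$ via $t_1$ and $v^3$ via $t_3$) and the module structure ($v_2$ via $t_2$) interact there, and one must invoke the mixed compatibility relations \eqref{eq:short_compatibility}, \eqref{eq:t_1_mod_map}, and the regularity diagrams of \eqref{eq:reg_mbm}; I would carry this out with string diagrams as elsewhere in the paper, reducing it to an already-established identity rather than a fresh calculation.
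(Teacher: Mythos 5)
Your proposal follows essentially the same route as the paper: each Hopf-module axiom for $XA$ is verified componentwise by recognizing it as a (regular) multiplier-bimonoid axiom tensored with a passive $X$, and functoriality is immediate. Two of your attributions deserve correction, though neither affects the viability of the argument: the comodule compatibility \eqref{eq:comodule_compatibility} is discharged by the \emph{second} diagram of \eqref{eq:reg_mbm} (whose content, $1t_1$ versus $t_31$ composed into $m$, you in fact describe correctly), not the first; and the Hopf compatibility \eqref{eq:Hopf_module} is not \eqref{eq:t_1_mod_map} plus a manipulation but is \emph{precisely} the short fusion equations \eqref{eq:short_fusion} for $t_1$ and $t_3$ tensored with $X$, so it is a one-line observation rather than the main obstacle you anticipated.
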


\begin{proof}
By the fusion and counit equations for $t_1$, the morphism $1t_1\colon XA^2\to
XA^2$ equips $XA$ with the structure of a comodule over $t_1$, in the sense
that the diagrams of \eqref{eq:t_1_comodule} commute. Similarly, $1t_3$ makes
$XA$ into a comodule over $t_3$. 
The compatibility condition \eqref{eq:comodule_compatibility} holds by the
second diagram of \eqref{eq:reg_mbm}. 

The morphism $1t_2\colon XA^2\to XA^2$ renders commutative the diagram
of \eqref{eq:t_2_module} by the fusion equation on $t_2$; and
$c1.1t_3.c^{-1}1\colon AXA \to AXA$ renders commutative \eqref{eq:t_3_module}
by the fusion equation on $t_3$. The first and the second diagrams of
\eqref{eq:module_compatibility} commute by the last diagram of
\eqref{eq:reg_mbm}. By the first diagram of \eqref{eq:reg_mbm}, both paths
around the last diagram of \eqref{eq:module_compatibility} are equal to
$1m:XA^2\to XA$ in \cq.  

The Hopf module compatibility conditions in \eqref{eq:Hopf_module} hold
by the short fusion equation \eqref{eq:short_fusion} on $t_1$ and $t_3$,
respectively. 
\end{proof}

Recall that the monoidal category \cc is said to be {\em left closed} if for
any object $X$, the endofunctor $X(-)$ possesses a right adjoint, to be
denoted by $[X,-]$. Whenever \cc is braided, then $[X,-]$ is also right
adjoint to the functor $(-)X$, meaning that \cc is also {\em right closed}; so
without any confusion we can just call it {\em closed}. The unit and the
counit of the adjunction $X(-)\dashv [X,-]$ will be denoted by $\eta$ and
$\epsilon$, respectively. 

Recall from Corollary~\ref{cor:v-tilde} that for a comodule $(V,v^1,v^3)$ over
a multiplier Hopf monoid with non-degenerate multiplication and dense counit,
the morphism $v^1$ is invertible. In particular, we may apply this to (the
underlying comodule of) a Hopf module. 

\begin{lemma}\label{lem:theta-tilde}
Let $\cc$ be a closed braided monoidal category satisfying our standing
assumptions of Section~\ref{sect:assumptions}. Let
$(A,t_1,t_2,t_3,t_4,e)$ be a regular multiplier bimonoid in $\cc$ with
non-degenerate multiplication, dense counit, and such that $(A,t_1,t_2,e)$ is
a multiplier Hopf monoid. 
Assume that for any Hopf module $(V,v^1,v^3,v_2,v_3)$, the morphism 
$$
\xymatrix@C=35pt{
w:=V \ar[r]^-\eta &
[A,AV] \ar[r]^-{[A,c]} &
[A,VA] \ar[r]^-{[A,(v^1)^{-1}]} &
[A,VA] \ar[r]^-{[A,v]} &
[A,V]}
$$
factorizes as $\xymatrix@C=15pt{V\ar[r]^-p &V^c \ar[r]^-i & [A,V]}$, where $p$
is an epimorphism and $i$ is an $A$-pure monomorphism. Then the following hold.
\begin{enumerate}
\item There is a unique morphism $\widetilde n:V \to V^cA$ rendering 
commutative  
$$
\xymatrix{
VA \ar[r]^-{v} \ar[d]_-{v^1} &
V \ar@{-->}[d]^-{\widetilde n} \\
VA \ar[r]_-{p1} &
V^cA\ .}
$$
\item The morphism $\widetilde n$ in part (1) is the inverse of
\begin{equation}\label{eq:theta}
\xymatrix{
n:= V^c A \ar[r]^-{i1} &
[A,V]A \ar[r]^-{c^{-1}} &
A[A,V] \ar[r]^-\epsilon &
V\ .}
\end{equation}
\end{enumerate}
\end{lemma}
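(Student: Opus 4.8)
The plan is to establish part (1) first, since the existence and uniqueness of $\widetilde n$ rest on recognizable facts already assembled in the excerpt, and then to verify the two inverse laws in part (2) by feeding everything through the defining relation for $\widetilde n$ together with the non-degeneracy of the action.

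For part (1), I would observe that $p1\colon VA\to V^cA$ coequalizes whatever pair of morphisms $v^1\colon VA\to VA$ does, so the only real content is that $p1.v^1.(\text{something})$ is constant on the fibres through which $\widetilde n$ must factor. More precisely: since $v^1$ is invertible (by Corollary~\ref{cor:v-tilde}, applied to the underlying comodule of the Hopf module), the composite $p1.v^1.(v^1)^{-1}=p1$ is well-defined, and the claim reduces to showing that $p1.v^1$ factorizes through $v\colon VA\to V$. The key is that $p$ was \emph{defined} so that the composite $i.p=w$ encodes exactly the coaction-twisted action; unwinding the definition of $w$ via the adjunction $A(-)\dashv[A,-]$, the transpose of $w$ is $v.(v^1)^{-1}.c.(\eta\text{-unit})$, so that $i1.w1$-type manipulations turn $p1.v^1$ into something built from $v$. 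Thus $p1.v^1$ factors through $v$ precisely because $v\in\cq$ is the coequalizer of a pair preserved by the monoidal product; uniqueness of $\widetilde n$ is then immediate from $v$ being a (regular) epimorphism.

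For part (2), I would check $n.\widetilde n=1$ and $\widetilde n.n=1$ separately. The law $n.\widetilde n=1$ should follow by precomposing with the epimorphism $v\colon VA\to V$: using the defining square for $\widetilde n$ one rewrites $n.\widetilde n.v=n.p1.v^1$, and then the definition \eqref{eq:theta} of $n$ together with the transpose description of $w$ (hence of $p$ via $i.p=w$) and the triangle identity $\epsilon.\eta 1=1$ for the closed structure should collapse this to $v.v^1.(v^1)^{-1}=v$, i.e.\ to $v$ itself; cancelling the epimorphism $v$ gives $n.\widetilde n=1$. The reverse law $\widetilde n.n=1$ is the more delicate one: here I would precompose $n\colon V^cA\to V$ with the $A$-pure monomorphism $i1\colon V^cA\to[A,V]A$ is not available directly, so instead I would postcompose the target by $p$ or exploit that $i$ is an $A$-pure \emph{mono}, reducing the identity $\widetilde n.n=1$ on $V^cA$ to an identity after applying $i1$, then use the defining square for $\widetilde n$ read backwards and again the triangle identities.

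The main obstacle I anticipate is the second inverse law $\widetilde n.n=1$: whereas $n.\widetilde n=1$ can be verified by cancelling the epimorphism $v$, there is no evident epimorphism out of $V^cA$ to cancel, so I must instead exploit that $i$ is an $A$-pure monomorphism. The plan there is to show $i1.\widetilde n.n=i1$ as morphisms $V^cA\to[A,V]A$, which by $A$-purity of $i$ yields $\widetilde n.n=1$; proving $i1.\widetilde n.n=i1$ amounts to transporting the defining relation for $\widetilde n$ across the monomorphism and matching it against the definition \eqref{eq:theta} of $n$ and the counit $\epsilon$ of the adjunction. The bookkeeping of braidings and of the adjunction unit/counit in the closed structure is where care is needed, but no genuinely new ingredient beyond the invertibility of $v^1$, the non-degeneracy of $v$, and the defining factorization $i.p=w$ should be required.
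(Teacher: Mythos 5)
Your treatment of part (1) has a genuine gap, and it sits exactly where the mathematical content of the lemma lies. You correctly reduce the problem to showing that $p1.v^1$ factors through $v$, and your identification of the adjunct of $w$ as $v.(v^1)^{-1}.c$ is correct; but your justification of the factorization --- ``$p1.v^1$ factors through $v$ precisely because $v\in\cq$ is the coequalizer of a pair preserved by the monoidal product'' --- is circular. The coequalizer property of $v$ only produces a factorization for morphisms already known to coequalize the presenting pair $f,g\colon X\to VA$, and proving that $p1.v^1$ (equivalently, since $i1$ is a monomorphism, that $w1.v^1$) coequalizes $f$ and $g$ is the whole point. This cannot follow from unwinding the definitions of $w$, $p$, $i$ alone, because nothing in those definitions ties the coaction $v^1$ to the action $v$: for a module structure and a comodule structure on $V$ that do not satisfy the Hopf compatibility \eqref{eq:Hopf_module}, the conclusion is false. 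The paper's argument necessarily routes through that axiom: from \eqref{eq:Hopf_module} and associativity of $v$ one derives \eqref{eq:theta-tilde_a} (i.e.\ $v.(v^1)^{-1}.1e1=v.(v^1)^{-1}.v1$), which, combined with \eqref{eq:v^1-3_module_maps} and naturality, yields the intertwining identity $(w1.v^1).v1=1m.((w1.v^1)1)$; since $v1.f1=v1.g1$, the non-degeneracy of $m$ then cancels $1m.(-)1$ and gives $w1.v^1.f=w1.v^1.g$. None of these three ingredients --- the Hopf axiom, the intertwining identity, the non-degeneracy of $m$ --- appears in your proposal, so part (1) remains unproved.

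Part (2) is in much better shape. Your plan for $n.\widetilde n=1$ is exactly the paper's: precompose with the epimorphism $v$, use $\widetilde n.v=p1.v^1$ and $i.p=w$, then naturality of $c^{-1}$ and of $\epsilon$ plus the triangle identity give $n.p1=v.(v^1)^{-1}$, whence $n.\widetilde n.v=v$. For $\widetilde n.n=1$ your sketch omits the one move that makes it work: since $\widetilde n$ is accessible only through its defining square, you must precompose $\widetilde n.n$ with an epimorphism along which $n$ becomes expressible through $v$; the natural choice is $p1$ (an epimorphism because $(-)A$ is a left adjoint, \cc being closed), giving $\widetilde n.n.p1=\widetilde n.v.(v^1)^{-1}=p1.v^1.(v^1)^{-1}=p1$, after which $p1$ cancels --- your detour through the monomorphism $i1$ is then superfluous, and your phrase about precomposing $n$ with $i1$ does not even typecheck. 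Interestingly, this completed version of your plan is simpler than the paper's own treatment of $\widetilde n.n=1$, which precomposes with $p1.1s_1$ and invokes the antipode via the identity $(v^1)^{-1}.1s_1=1s_1.v^31$ from \eqref{eq:g_*}; once $n.p1=v.(v^1)^{-1}$ is in hand the antipode is not needed. But this observation does not repair part (1), on which part (2) depends.
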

\noindent 

\begin{proof}
(1) By assumption, $v:VA\to V$ arises as the coequalizer of some morphisms $f$
  and $g:X \to VA$; thus it is enough to prove that $p1.v^1$ coequalizes $f$
  and $g$. Since $i1:V^cA \to [A,V]A$ is a monomorphism by assumption, this is
  equivalent to showing that $w1.v^1$ coequalizes $f$ and $g$. This requires
  some preparation.

 In the following stringy computation the undecorated bubble stands for
$(v^1)^{-1}$.
$$
\begin{tikzpicture} % step1
\path (1,.6) node [arr,name=v] {$\ v\ $}
(1,1.2) node [arr,name=v1i] {$\quad\  $}
(1,2) node [unit,name=e] {};
\draw[braid] (.5,2.5) to[out=270,in=135] (v1i);
\draw[braid] (1,2.5) to[out=270,in=90] (e);
\draw[braid] (1.5,2.5) to[out=270,in=45] (v1i);
\draw[braid] (v1i) to[out=225,in=135] (v);
\draw[braid] (v1i) to[out=315,in=45] (v);
\draw[braid] (v) to[out=270,in=90] (1,0);
\draw (2,1.5) node {$=$};
\end{tikzpicture}
\begin{tikzpicture} % step2
\path (1,.5) node [arr,name=v] {$\ v\ $}
(1.25,1.5) node [arr,name=v1i] {$\quad\  $}
(1.24,1) node [unit,name=e] {}
(.6,1.5) node [empty,name=z] {};
\path[braid,name path=i>v1i] (.5,2.5) to[out=270,in=135] (v1i);
\draw[braid,name path=i>e] (1,2.5) to[out=270,in=90] (z) to[out=270,in=135] (e);
\draw[braid] (1.5,2.5) to[out=270,in=45] (v1i);
\path[braid,name path=v1i>v] (v1i) to[out=225,in=135] (v);
\draw[braid] (v1i) to[out=315,in=45] (v);
\draw[braid] (v) to[out=270,in=90] (1,0);
\fill[white,name intersections={of=i>v1i and i>e}] (intersection-1) circle(0.1);
\fill[white,name intersections={of=v1i>v and i>e}] (intersection-1) circle(0.1);
\draw[braid] (.5,2.5) to[out=270,in=135] (v1i);
\draw[braid] (v1i) to[out=225,in=135] (v);
\draw (2,1.5) node {$=$};
\end{tikzpicture}
\begin{tikzpicture} % step3
\path (1,.5) node [arr,name=v] {$\ v\ $}
(1.25,2.1) node [arr,name=v1i] {$\quad\  $}
(1.25,1.4) node [arr,name=t1i] {${t_1^{\scriptscriptstyle-1}}$}
(1.25,.9) node [empty,name=m] {}
(.7,2.1) node [empty,name=z] {};
\path[braid,name path=i>v1i] (.5,2.5) to[out=270,in=135] (v1i);
\draw[braid,name path=i>t1i] (1,2.5) to[out=270,in=90] (z) to[out=270,in=135] (t1i);
\draw[braid] (1.5,2.5) to[out=270,in=45] (v1i);
\path[braid,name path=v1i>v] (v1i) to[out=225,in=135] (v);
\draw[braid] (v1i) to[out=315,in=45] (t1i);
\draw[braid] (t1i) to[out=225,in=180] (m) to[out=0,in=315] (t1i);
\draw[braid] (m) to[out=270,in=45] (v);
\draw[braid] (v) to[out=270,in=90] (1,0);
\fill[white,name intersections={of=i>v1i and i>t1i}] (intersection-1) circle(0.1);
\fill[white,name intersections={of=v1i>v and i>t1i}] (intersection-1) circle(0.1);
\draw[braid] (.5,2.5) to[out=270,in=135] (v1i);
\draw[braid] (v1i) to[out=225,in=135] (v);
\draw (2,1.63) node {$\stackrel{\mathrm{(ass)}}{=}$};
\end{tikzpicture}
\begin{tikzpicture} % step4
\path (.75,.9) node [arr,name=vu] {$\ v\ $}
(1,.4) node [arr,name=vd] {$\ v\ $}
(1.25,2.1) node [arr,name=v1i] {$\quad\  $}
(1.25,1.4) node [arr,name=t1i] {${t_1^{\scriptscriptstyle-1}}$}
(.7,2.1) node [empty,name=z] {};
\path[braid,name path=i>v1i] (.5,2.5) to[out=270,in=135] (v1i);
\draw[braid,name path=i>t1i] (1,2.5) to[out=270,in=90] (z) to[out=270,in=135] (t1i);
\draw[braid] (1.5,2.5) to[out=270,in=45] (v1i);
\path[braid,name path=v1i>v] (v1i) to[out=225,in=135] (vu);
\draw[braid] (v1i) to[out=315,in=45] (t1i);
\draw[braid] (t1i) to[out=225,in=45] (vu);
\draw[braid] (t1i) to[out=315,in=45] (vd);
\draw[braid] (vu) to[out=270,in=135] (vd);
\draw[braid] (vd) to[out=270,in=90] (1,0);
\fill[white,name intersections={of=i>v1i and i>t1i}] (intersection-1) circle(0.1);
\fill[white,name intersections={of=v1i>v and i>t1i}] (intersection-1) circle(0.1);
\draw[braid] (v1i) to[out=225,in=135] (vu);
\draw[braid] (.5,2.5) to[out=270,in=135] (v1i);
\draw (2,1.7) node {$\stackrel{\eqref{eq:Hopf_module}}{=}$};
\end{tikzpicture}
\begin{tikzpicture} % step5
\path (.75,2) node [arr,name=vu] {$\ v\ $}
(1.1,.5) node [arr,name=vd] {$\ v\ $}
(1.1,1.25) node [arr,name=v1i] {$\quad\  $};
\draw[braid] (.5,2.5) to[out=270,in=135] (vu);
\draw[braid] (1,2.5) to[out=270,in=45] (vu);
\draw[braid] (1.5,2.5) to[out=270,in=45] (v1i);
\draw[braid] (vu) to[out=270,in=135] (v1i);
\draw[braid] (v1i) to[out=315,in=45] (vd);
\draw[braid] (v1i) to[out=225,in=135] (vd);
\draw[braid] (vd) to[out=270,in=90] (1.1,0);
\end{tikzpicture}
$$
This proves that 
\begin{equation}\label{eq:theta-tilde_a}
\xymatrix{
VA^2 \ar[r]^-{1e1} \ar[d]_-{v1} &
VA \ar[r]^-{(v^1)^{-1}} &
VA \ar[d]^-v \\
VA \ar[r]_-{(v^1)^{-1}} &
VA \ar[r]_-v &
V}
\end{equation}
commutes, and now   the commutativity of the next diagram also follows. 
$$
\scalebox{.95}{
\xymatrix@C=15pt{
% 1.1
VA^2 \ar@{=}[r]
\ar[ddddddd]^-{v1} &
% 1.2
VA^2 \ar[d]^-{1t_1} \ar@{=}[r] &
% 1.3
VA^2 \ar[rr]^-{\raisebox{8pt}{${}_{v^11}$}} \ar[dd]^-{1m} 
\ar@{}[rrdd]|-{\eqref{eq:v^1-3_module_maps}} &&
% 1.5
VA^2 \ar[r]^-{\raisebox{8pt}{${}_{\eta 11}$}} \ar[dd]^-{1m} &
% 1.6
[A,AV]A^2 \ar[r]^-{\raisebox{8pt}{${}_{[A,c]11}$}} &
% 1.7
[A,VA]A^2 \ar[dd]_-{[A,(v^1)^{-1}]11} \\
&
% 2.2
VA^2 \ar[d]^-{c1}  \ar[rd]^-{1e1} &&&\\
&
% 3.2
AVA \ar[r]^-{e11} \ar[d]^-{1v^1} &
% 3.3
VA \ar[rr]^-{v^1} &&
% 3.5
VA \ar@{=}[d] &&
% 3.7
[A,VA]A^2 \ar[dd]_-{[A,v]11} \\
&
% 4.2
AVA \ar[rrr]^-{e11} \ar[d]^-{c^{-1}1} &&&
% 4.5
VA \ar@{=}[d] \\
&
% 5.2
VA^2 \ar[rrr]^-{1e1} \ar@{=}[d] &&&
% 5.5
VA \ar[d]^-{\eta 1} &&
% 5.7
[A,V]A^2 \ar[dd]_-{1m} \\
&
% 6.2
VA^2 \ar[r]^-{\raisebox{8pt}{${}_{\eta 1}$}} \ar[dd]^-{v1} &
% 6.3
[A,AVA]A \ar@{=}[r] \ar[dd]^-{[A,1v]1} &
% 6.4
[A,AVA]A \ar[r]^-{\raisebox{8pt}{${}_{[A,11e]1}$}} \ar[d]^(.3){[A,c_{A,VA}]1} &
% 6.5
[A,AV]A \ar[d]^(.3){[A,c]1} \\
&&&
% 7.4
[A,VA^2]A \ar[r]^-{\raisebox{8pt}{${}_{[A,1e1]1}$}} \ar[d]^-{[A,v1]1}
\ar@{}[rrrd]|-{\eqref{eq:theta-tilde_a}} &
% 7.5 
[A,VA]A \ar[r]^-{\raisebox{8pt}{${}_{[A,(v^1)^{-1}]1}$}} &
% 7.6
[A,VA]A \ar[r]^-{\raisebox{8pt}{${}_{[A,v]1}$}} &
% 7.7
[A,V]A \ar@{=}[d]\\
% 8.1
VA \ar[r]_-{v^1} \ar@{}[ruu]|-{\eqref{eq:Hopf_module}}&
% 8.2
VA \ar[r]_-{\eta 1} &
% 8.3
[A,AV]A \ar[r]_-{[A,c]1} &
% 8.4
[A,VA]A \ar[rr]_-{[A,(v^1)^{-1}]1} &&
% 8.6
[A,AV]A \ar[r]_-{[A,v]1} &
% 8.7
[A,V]A\ .}}
$$
Since the left-bottom path coequalizes $f1$ and $g1$, so does the top-right
path; from which we conclude using the non-degeneracy of $m=e1.t_1$.

(2) Since $v:VA \to V$ is an epimorphism by assumption, it follows by the
commutativity of 
$$
\xymatrix@C=40pt{
% 1.1
VA \ar[rrrr]^-v \ar[d]_-{v^1} &&&&
% 1.5
V \ar[d]^-{\widetilde n} \\
% 2.1
VA \ar[rrrr]^-{p1} \ar@{=}[d] &&&&
% 2.5
V^c A \ar[d]_-{i1} \ar@/^2.2pc/[ddd]^-n \\
% 3.1
VA \ar[r]^-{\eta 1} \ar[d]_-{c^{-1}} &
% 3.2
[A,AV]A \ar[r]^-{[A,c]1} &
% 3.3
[A,VA]A \ar[r]^-{[A,(v^1)^{-1}]1} &
% 3.4
[A,VA]A \ar[r]^-{[A,v]1} &
% 3.5
[A,V]A \ar[d]_-{c^{-1}} \\
% 4.1
AV \ar[r]^-{1\eta} \ar@/_1.5pc/@{=}[rd] &
% 4.2
A[A,AV] \ar[r]^-{1[A,c]} \ar[d]^-\epsilon &
% 4.3
A[A,VA] \ar[r]^-{1[A,(v^1)^{-1}]} &
% 4.4
A[A,VA] \ar[r]^-{1[A,v]} &
% 4.5
A[A,V] \ar[d]_-\epsilon \\
&
% 5.2
AV \ar[r]_-c &
% 5.3
VA \ar[r]_-{(v^1)^{-1}} &
% 5.4
VA \ar[r]_-v &
% 5.5
V}
$$
that $n.\widetilde n =1$.

In order to compute $\widetilde n.n$, consider the commutative diagram
\begin{equation}\label{eq:theta-tilde_d}
\xymatrix{
% 1.1
VA^2 \ar[rrrrr]^-{v^31} \ar[d]_-{1s_1} &&&&&
% 1.6
VA^2 \ar[dd]^-{1s_1} \\
% 2.1
VA \ar@{=}[r] \ar[dddd]_-{p1} &
% 2.2
VA \ar[r]^-{c^{-1}} \ar[d]^-{\eta 1} &
% 2.3
AV \ar[d]_-{1\eta} \ar@/^1.3pc/@{=}[rd] \\
&
% 3.2
[A,AV]A\ar[d]^-{[A,c]1} &
% 3.3
A[A,AV] \ar[r]^-\epsilon \ar[d]^-{1[A,c]} &
% 3.4
AV \ar[r]^-c &
% 3.5
VA \ar[r]^-{(v^1)^{-1}} &
% 3.6
VA \ar[ddd]^-v \\
&
% 4.2
[A,VA]A \ar[d]^-{[A,(v^1)^{-1}]1} &
% 4.3
A[A,VA] \ar[d]^-{1[A,(v^1)^{-1}]} \\
&
% 5.2
[A,VA]A \ar[d]^-{[A,v]1} &
% 5.3
A[A,VA] \ar[d]^-{1[A,v]} \\
% 6.1
V^cA \ar[r]^-{i1} \ar@/_1.3pc/[rrrrr]_-n &
% 6.2
[A,V]A \ar[r]^-{c^{-1}} &
% 6.3
A[A,V] \ar[rrr]^-\epsilon &&&
% 6.6
V}
\end{equation}
whose top region is seen to commute by applying \eqref{eq:g_*} to
$g=s$. 
With its help we see that 
$$
\xymatrix{
% 1.1
VA^2 \ar[rrr]^-{1s_1} \ar[dd]_-{1s_1} \ar[rd]^-{v^31}&&&
% 1.4
VA \ar[dd]^-{p1} \\
&
% 2.2
VA^2 \ar[r]^-{1s_1} \ar@{}[d]|-{\eqref{eq:theta-tilde_d}}&
% 2.3
VA \ar[ru]^-{v^1} \ar[d]^-v\\
% 3.1
VA \ar[r]_-{p1} &
% 3.2
V^cA \ar[r]_-n &
% 3.3
V \ar[r]_-{\widetilde n} &
% 3.4
V^cA}
$$
commutes; where commutativity of the region at the top follows postcomposing
by $v^1$ the defining identity $(v^1)^{-1}.1s_1=1s_1.v^31$ of
$(v^1)^{-1}=(s_*v)^1$ in \eqref{eq:g_*}. Since $1s_1$ and $p1$ are
epimorphisms, this proves $\widetilde n.n=1$.   
\end{proof}

\begin{theorem}[The fundamental theorem of Hopf modules] \label{thm:fthm}
Let $\cc$ be a closed braided monoidal category satisfying our standing
assumptions of Section~\ref{sect:assumptions}. Let
$(A,t_1,t_2,t_3,$ $t_4,e)$ be a regular multiplier bimonoid in $\cc$ with
non-degenerate multiplication, dense counit, and such that $(A,t_1,t_2,e)$ is
a multiplier Hopf monoid. Assume that, for any Hopf module
$(V,v^1,v^3,v_2,v_3)$, the morphism 
\begin{equation}\label{eq:w}
\xymatrix@C=35pt{
V \ar[r]^-\eta &
[A,AV] \ar[r]^-{[A,c]} &
[A,VA] \ar[r]^-{[A,(v^1)^{-1}]} &
[A,VA] \ar[r]^-{[A,v]} &
[A,V]}
\end{equation}
factorizes as $\xymatrix@C=15pt{V\ar[r]^-p &V^c \ar[r]^-i & [A,V]}$ through
some object $V^c$, where $p$ is a strong epimorphism and $i$ is an $A$-pure
monomorphism. 
Then the functor $(-)A$ in Proposition \ref{prop:comparison_functor}, from
$\cc$ to the category of Hopf modules, is an equivalence with inverse $(-)^c$.
\end{theorem}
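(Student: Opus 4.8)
The plan is to establish the equivalence by exhibiting $(-)^c$ as a functor which is inverse to $(-)A$, using the two mutually inverse morphisms $n$ and $\widetilde n$ from Lemma~\ref{lem:theta-tilde}. First I would verify that $(-)^c$ is genuinely a functor from Hopf modules to $\cc$: for a Hopf module $(V,v^1,v^3,v_2,v_3)$, the object $V^c$ is defined by the stated epi-mono factorization of the morphism $w$ in \eqref{eq:w}, and I must check that a morphism of Hopf modules $f\colon V\to W$ induces a morphism $f^c\colon V^c\to W^c$. This is a diagonal fill-in argument: since $p$ is a strong epimorphism and $i$ an $A$-pure (in particular ordinary) monomorphism, the naturality square relating $w_V$ and $w_W$ (which follows from $f$ being a morphism of both modules and comodules, hence commuting with $v$ and $(v^1)^{-1}$) factors uniquely through the two factorizations, yielding $f^c$ with $p_W.f=f^c.p_V$ and $i_W.f^c=[A,f].i_V$.

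Next I would assemble the natural isomorphisms witnessing the equivalence. In one direction, for an arbitrary object $X$ of $\cc$, I need $(XA)^c\cong X$. Here one applies the factorization to the Hopf module $XA$ of Proposition~\ref{prop:comparison_functor}, whose comodule structure is $1t_1$ and whose action is $1t_2$; the morphism $w$ for this Hopf module should be computable explicitly, and the claim is that its epi-mono factorization recovers $X$ (with $p$ essentially built from the counit $e$ and $n$ restricting to an isomorphism $XA\cong$ something). In the other direction, for a Hopf module $V$, the morphisms $n\colon V^cA\to V$ of \eqref{eq:theta} and $\widetilde n\colon V\to V^cA$ of Lemma~\ref{lem:theta-tilde} are mutually inverse by parts (1) and (2) of that lemma, giving an isomorphism $V^cA\cong V$ in $\cc$. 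The substantive point is that this isomorphism is one of Hopf modules: I would check that $n$ (equivalently $\widetilde n$) intertwines the canonical Hopf-module structure on $V^cA$ coming from $(-)A$ applied to $V^c$ with the given structure on $V$, comparing comodule structures $v^1$ versus $1t_1$ and module structures $v_2$ versus $1t_2$. By Lemma~\ref{lem:comodule_morphism_nd} and Proposition~\ref{prop:action-module2} it suffices to check compatibility with a single coaction and a single action, and the defining square of $\widetilde n$ (relating $v$, $v^1$, and $p1$) together with the construction of $n$ from $i$, $\epsilon$ should deliver these.

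The main obstacle I anticipate is verifying that $n$ and $\widetilde n$ are morphisms of Hopf modules, rather than merely inverse isomorphisms in the base category. Naturality of the whole comparison, and the requirement that $(-)^c$ be well-defined on morphisms, both lean on the strong-epi/$A$-pure-mono factorization being functorial and stable under $(-)A$; the $A$-purity of $i$ is exactly what guarantees that $iA$ remains monic so that equations can be cancelled after tensoring with $A$, and the strongness of $p$ is what supplies the unique diagonal fill-ins. I would therefore organize the argument so that every cancellation of a monomorphism is justified by $A$-purity and every induced map is obtained by the orthogonality of strong epis to monos. Once functoriality of $(-)^c$ and the two natural isomorphisms $(XA)^c\cong X$ and $V^cA\cong V$ are in hand, the standard characterization of an adjoint equivalence completes the proof; the bulk of the remaining work is the diagrammatic bookkeeping, much of which is already encapsulated in Lemma~\ref{lem:theta-tilde} and Corollary~\ref{cor:v-tilde}.
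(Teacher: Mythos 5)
Your proposal is correct and follows essentially the same route as the paper's proof: functoriality of $(-)^c$ via orthogonality of the strong epimorphism $p$ against monomorphisms, the isomorphism $(XA)^c\cong X$ obtained by computing \eqref{eq:w} for the Hopf module $XA$ (it factors as the strong epimorphism $1e$ followed by the monomorphism $[A,c].\eta$) and invoking uniqueness of such factorizations, and the isomorphism $V^cA\cong V$ supplied by the mutually inverse morphisms $n$ and $\widetilde n$ of Lemma~\ref{lem:theta-tilde}. The one place you go beyond the paper is in explicitly verifying that $n$ (equivalently $\widetilde n$) is a morphism of Hopf modules --- the paper only remarks on naturality --- and your proposed reduction via Lemma~\ref{lem:comodule_morphism_nd} and Proposition~\ref{prop:action-module2} does work: the defining square of $\widetilde n$ together with \eqref{eq:v^1-3_module_maps}, \eqref{eq:Hopf_module} and \eqref{eq:t_1_comodule} yields both the module and comodule compatibility after precomposing with the epimorphism $v1$.
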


\begin{proof}
Since the strong epi-mono factorization is unique up to isomorphism, the
object part of the functor $(-)^c$ is well-defined. Concerning its action on
the morphisms, we use the orthogonality property of the strong epimorphism
$p$. For any morphism of Hopf modules $h:V \to W$, in the diagram 
$$
\xymatrix@C=40pt{
&& V^c \ar@/^1pc/[rrd]^-i \\
V \ar[r]^-\eta \ar[d]_-h \ar@/^1pc/[rru]^-p&
[A,AV] \ar[r]^-{[A,c]} &
[A,VA] \ar[r]^-{[A,(v^1)^{-1}]} \ar[d]^-{[A,h1]}&
[A,VA] \ar[r]^-{[A,v]} \ar[d]^-{[A,h1]} &
[A,V] \ar[d]^-{[A,h]} \\
 W \ar[r]_-\eta \ar@/_1pc/[rrd]_-{ p}&
[A,A W] \ar[r]_-{[A,c]} &
[A, W  A] \ar[r]_-{[A, ( w^1)^{-1}]} &
[A, W  A] \ar[r]_-{[A, w]} &
[A, W ]\\
&&  W^c \ar@/_1pc/[rru]_-{ i}}
$$
the leftmost region commutes by naturality. The middle region commutes since
$h$ is a morphism of comodules and the region on the right commutes because
$h$ is a morphism of modules. 
%Since $i:W^c \to [A,W]$ is a monomorphism, 
Then orthogonality of the strong epimorphism $p:V\to V^c$ and the
monomorphism $i:W^c \to [A,W]$ implies the existence of a unique
morphism $h^c:V^c \to  W^c$ such that $ p.h=h^c.p$ (and consequently
also $ i.h^c=[A,h].i$). This defines the functor $(-)^c$ (up to an
unimportant natural isomorphism).    
 
For any object $X$ of $\cc$, the top-right path of the commutative diagram
$$
\xymatrix@C=40pt{
XA \ar[r]^-\eta \ar[d]_-{1e} &
[A,AXA] \ar[r]^-{[A,c_{A,XA}]} \ar[d]^-{[A,11e]} &
[A,XA^2] \ar[r]^-{[A,1t_1^{-1}]} \ar[rd]_-{[A,1e1]} &
[A,XA^2] \ar[d]^-{[A,1m]} \\
X \ar[r]_-\eta &
[A,AX] \ar[rr]_-{[A,c]} &&
[A,XA]}
$$
is the morphism \eqref{eq:w} for the Hopf module $XA$ in
Proposition \ref{prop:comparison_functor}. Now since $e\in \cq$, 
$$
\xymatrix{
XA \ar[r]^-{1e} &
X}
$$
is a regular, hence strong, epimorphism. On the other hand,
$e1\colon AX\to X$ is also a (regular) epimorphism, so the functor
$A(-)\colon\cc\to\cc$ is faithful, from which it follows that  
$$
\xymatrix{
X \ar[r]^-\eta &
[A,AX] \ar[r]^-{[A,c]} &
[A,XA]}
$$
is a monomorphism; thus these must give the factorization of \eqref{eq:w}, and
so $X$ is indeed isomorphic to $(XA)^c$. 

The natural isomorphism $V^cA\cong V$, for any Hopf module $V$, is provided by
the morphisms $n$ in \eqref{eq:theta} of Lemma \ref{lem:theta-tilde}. Their
naturality is an obvious consequence of the naturality of the constituent
morphisms.  
\end{proof}

\begin{remark}
If $A$ is a Hopf monoid with unit $u:I\to A$ in a closed braided monoidal
category $\cc$ in which idempotent morphisms split, then our assumptions in
Theorem \ref{thm:fthm} hold with any choice of \cq containing the split
epimorphisms. 

Indeed, the multiplication $m$ is evidently non-degenerate and the
counit $e$ is an epimorphism split by $u$. 

% Furthermore, in this case, for any object $X$ of $\cc$, the unit $\eta:X \to
% [A,AX]$ of the adjunction $A(-)\dashv [A,-]$ is a monomorphism split by 
% $$
% \xymatrix{
% [A,AX] \ar[r]^-{u1} &
% A[A,AX] \ar[r]^-\epsilon &
% AX \ar[r]^-{e1} &
% X\ . }
% $$

Furthermore, for any Hopf module $V$ there is an idempotent
morphism 
$$
\xymatrix{
V \ar[r]^-{1u} &
VA \ar[r]^-{(v^1)^{-1}} &
VA \ar[r]^-v &
V \ .}
$$
By assumption it admits a splitting
$
\xymatrix{
V \ar@{->>}[r]^-p &
V^c \ \ar@{>->}[r]^-j &
V}
$;
and we claim that it gives rise to a splitting 
\begin{equation}\label{eq:V^c_unital}
\xymatrix{
V  \ar[r]^-p &
V^c \ar[r]^-j &
V \ar[r]^-\eta &
[A,AV] \ar[r]^-{[A,c]} &
[A,VA] \ar[r]^-{[A,v]} &
[A,V]}
\end{equation}
as required in Theorem \ref{thm:fthm}.

Indeed, the morphism of \eqref{eq:V^c_unital} is equal to \eqref{eq:w} by the
commutativity of
$$
\xymatrix@C=40pt{
% 1.1
V \ar[r]^-{1u} \ar[d]_-\eta &
% 1.2
VA \ar[r]^-{(v^1)^{-1}} &
% 1.3
VA \ar[r]^-v &
% 1.4
V \ar[d]^-\eta \\
% 2.1
[A,AV] \ar[r]^-{[A,11u]} \ar[d]_-{[A,c]} &
% 2.2
[A,AVA] \ar[r]^-{[A,1(v^1)^{-1}]} \ar[d]^-{[A,c_{A,VA}]} &
% 2.3
[A,AVA] \ar[r]^-{[A,1v]} \ar[d]^-{[A,c_{A,VA}]} &
% 2.4
[A,AV] \ar[d]^-{[A,c]} \\
% 3.1
[A,VA] \ar[r]^-{[A,1u1]} \ar@{=}@/_1.3pc/[rd] &
% 3.2
[A,VA^2] \ar[r]^-{[A,(v^1)^{-1}1]} \ar[d]^-{[A,1m]} 
\ar@{}[rd]|-{\eqref{eq:v^1-3_module_maps}}&
% 3.3
[A,VA^2] \ar[r]^-{[A,v1]} \ar[d]^-{[A,1m]} &
% 3.4
[A,VA] \ar[d]^-{[A,v]} \\
&
% 4.2
[A,VA] \ar[r]_-{[A,(v^1)^{-1}]} &
% 4.3
[A,VA] \ar[r]_-{[A,v]} &
% 4.4
[A,V]}
$$
%where the middle region at the bottom commutes by \eqref{eq:v^1-3_module_maps}.
%
On the other hand, 
$$
\xymatrix{
V^c \ar[r]^-j &
V \ar[r]^-\eta &
[A,AV] \ar[r]^-{[A,c]} &
[A,VA] \ar[r]^-{[A,v]} &
[A,V]}
$$
is a split monomorphism since $j$ is so and also $[A,v.c].\eta$ is so by the
commutativity of
$$
\xymatrix{
% 1.1
V \ar[r]^-\eta \ar[d]^-{u1} \ar@/_1.3pc/[ddd]_-{1u} &
% 1.2
[A,AV] \ar[r]^-{[A,c]} &
% 1.3
[A,VA ] \ar[r]^-{[A,v]} &
% 1.4
[A,V] \ar[d]^-{u1} \\
% 2.1
AV \ar[r]^-{1\eta} \ar@{=}[d] &
% 2.2
A[A,AV] \ar[ld]^-\epsilon \ar[r]^-{1[A,c]} &
% 2.3
A[A,VA] \ar[r]^-{1[A,v]} &
% 2.4
A[A,V] \ar[dd]^-\epsilon \\
% 3.1
AV \ar[d]^-c \\
% 4.1
VA \ar[rrr]_-v &&&
V}
$$
in which the left-bottom path is an identity arrow by the unitality of $v$;
cf. Remark \ref{rmk:action-epi}. Since $p$ is 
a split hence strong epimorphism, this completes the proof. 
\end{remark}

\begin{remark}
Our assumptions in Theorem \ref{thm:fthm} also hold in another important
example, when $A$ is a non-zero multiplier Hopf algebra
\cite{VanDaele:multiplier_Hopf} in the closed symmetric monoidal
category of vector spaces over an arbitrary field. In this case 
\cq can be chosen to consist of all surjective linear transformations. (In
this category, the epimorphisms, the strong epimorphisms, the regular
epimorphisms, and the split epimorphisms all coincide: they are the
surjective linear maps.) 

The multiplication is non-degenerate hence non-zero
by assumption. This implies that the counit $e$ is a non-zero map to
the base field; hence it is surjective. 

% Furthermore, in this case, the map 
% $$
% X \to \mathsf{Hom}(A,A\otimes X), \qquad
% x \mapsto \{a\mapsto a \otimes x\}
% $$
% is injective for any vector space $X$, since $X$ is a faithfully flat module
% of the ground field. 

On the other hand, in the category of vector spaces any morphism (that
is, linear map) $f\colon X\to Y$ factorizes through the surjection 
$\xymatrix@C=15pt{X \ar@{->>}[r] & \mathsf{Im}(f)}$ via the inclusion
$\xymatrix@C=12pt{\mathsf{Im}(f)\ \ar@{>->}[r] & Y}$. In the (abelian)
category of vector spaces this is in turn a strong epi-mono
factorization. 
Since vector spaces are in particular flat modules over the ground field, for
any vector space $X$ the monoidal product functor $X \otimes (-)$ preserves
monomorphisms.  
\end{remark}

%%%%%%%%%%%%%%%%%%%%%%%%%%%%%%% BIBLIOGRAPHY   %%%%%%%%%%%%%%%%%%%%%%%%%%%%%%

\bibliographystyle{plain}

\end{document}